\documentclass[12pt,reqno]{amsart}
\usepackage{amsmath,amssymb,latexsym, amsthm, amscd, mathrsfs, stmaryrd} 
\usepackage[linktocpage=true]{hyperref}
\hypersetup{colorlinks,linkcolor=blue,urlcolor=cyan,citecolor=blue}

\usepackage[all]{xy}
\usepackage{hyperref}
\usepackage{color}
\newcommand{\nc}{\newcommand}
\nc{\browntext}[1]{\textcolor{brown}{#1}}
\nc{\greentext}[1]{\textcolor{green}{#1}}
\nc{\redtext}[1]{\textcolor{red}{#1}}
\nc{\bluetext}[1]{\textcolor{blue}{#1}}
\nc{\brown}[1]{\browntext{ #1}}
\nc{\green}[1]{\greentext{ #1}}
\nc{\red}[1]{\redtext{ #1}}
\nc{\blue}[1]{\bluetext{ #1}}
\nc{\zb}[1]{\redtext{From zb: #1}}

\newtheorem{theorem}{Theorem}  [section]
\newtheorem{corollary}[theorem]{Corollary}
\newtheorem{lemma}[theorem]{Lemma}

\newtheorem{proposition}[theorem]{Proposition}

\theoremstyle{remark}
\newtheorem{remark}[theorem]{Remark}

\numberwithin{equation}{section}

\def \haX{\widehat{X}}

\def \bt{\mathbf t}
\def \bn{\mathbf n}
\def \bl{\mathbf l}
\def \bL{\mathbb L}

\def \bh{H}

\newcommand{\cc}{{\mathcal C}}

\def \bd{\mathbf d}

\def\M{\mathcal{M}}

\def \co{\mathcal O}

\def \ch{{\mathcal H}}
\def \cm{{\mathcal M}}
\def \ct{\widetilde{{{\mathcal T}}}}
\def \bS{\mathbf S}

\def \tMH{{\cs\cd\widetilde{\ch}}}

\def \bm{\mathbf{m}}

\renewcommand{\mod}{\operatorname{mod}\nolimits}

\numberwithin{equation}{section}

\newtheorem{innercustomthm}{{\bf Theorem}}
\newenvironment{customthm}[1]
{\renewcommand\theinnercustomthm{#1}\innercustomthm}
{\endinnercustomthm}

\renewcommand{\ker}{\operatorname{Ker}\nolimits}

\newcommand{\Hom}{\operatorname{Hom}\nolimits}

\newcommand{\Aut}{\operatorname{Aut}\nolimits}

\newcommand{\Sym}{\operatorname{Sym}\nolimits}
\newcommand{\Ext}{\operatorname{Ext}\nolimits}

\newcommand{\add}{\operatorname{add}\nolimits}

\newcommand{\mbf}{\mathbf}

\newcommand{\mrm}{\mathrm}

\newcommand{\End}{\mrm{End}}
\newcommand{\rank}{\mrm{rank}}
\newcommand{\de}{\delta}

\newcommand{\N}{\mathbb N}
\newcommand{\bbZ}{\mathbb Z}

\newcommand{\ov}{\overline}
\newcommand{\qbinom}[2]{\begin{bmatrix} #1\\#2 \end{bmatrix} }
\newcommand{\Q}{\mathbb Q}
\newcommand{\sll}{\mathfrak{sl}}

\newcommand{\U}{\mbf U}

\newcommand{\F}{\mathbb F}

\newcommand{\arxiv}[1]{\href{http://arxiv.org/abs/#1}{\tt arXiv:\nolinkurl{#1}}}

\newcommand{\Y}{\mathbb Y}
\newcommand{\Z}{\mathbb Z}

\def \X{\mathbb X}
\def \fg{\mathfrak{g}}
\def \bU{{\mathbf U}}
\newcommand{\tK}{K}
\def \I{\mathbb{I}}
\def \bv{v}

\newcommand{\tUD}{{}^{\text{Dr}}\tU}
\newcommand{\UD}{{}^{\text{Dr}}\U}
\def \nua{a}

\renewcommand{\P}{\mathbb{P}}

\newcommand{\sqq}{{\bf v}}

\newcommand{\coker}{\operatorname{Coker}\nolimits}

\newcommand{\tU}{\widetilde{\mathbf U}}

\def \cl{L}

\def \R{\mathbb{R}}
\def \SS{\mathbb{S}}

\def \cc{\mathcal C}
\def\ca{\mathcal A}
\newcommand{\Iso}{\operatorname{Iso}\nolimits}
\renewcommand{\Im}{\operatorname{Im}\nolimits}
\newcommand{\res}{\operatorname{res}\nolimits}
\newcommand{\iH}{{}^\imath\widetilde{\ch}}

\newcommand{\coh}{\operatorname{coh}\nolimits}

\newcommand{\rep}{\operatorname{rep}\nolimits}

\def \PL{\mathbb{P}^1_{\bfk}}

\def \scrf{\mathscr F}
\def \scrt{\mathscr T}
\def \P{\mathbb P}

\def \cs{{\mathcal{S}}}
\def \ch{\mathcal H}
\def \cd{\mathcal D}
\def\bfk{\mathbf{k}}

\def \bp{\mathbf p}

\def \II{\I_0}
\def \Lg{L\fg}

\def \cn{\mathcal N}

\newcommand{\gr}{\operatorname{gr}\nolimits}

\def \cR{\mathcal R}

\def \bla{\boldsymbol{\lambda}}
\def \blx{x}

\def \vx{\vec{x}}
\def \vc{\vec{c}}
\def \vw{\vec{\omega}}
\def \bX{{\mathbb{X} }} 
\def \c0X{{\coh_0(\bX)}}
\def \cX{{\coh(\bX)}}

\def \Pk{{\mathbb{P}_{\bfk}^1}}

\allowdisplaybreaks

\usepackage{tikz}
\usepackage{tabularx}
\usepackage{chngcntr} 
\counterwithin{figure}{section}
\counterwithin{table}{section}
\usetikzlibrary{
	cd,
	shapes,
	arrows,
	positioning,
	decorations.markings,
	decorations.pathmorphing,
	circuits.logic.US,
	circuits.logic.IEC,
	fit,
	calc,
	plotmarks,
	matrix
}

\begin{document}

	\title[Twisted quantum loop
	algebras via semi-derived Ringel-Hall algebras]
	{Twisted quantum loop
		algebras via semi-derived Ringel-Hall algebras}
	
	\author[Ming Lu]{Ming Lu}
	\address{Department of Mathematics, Sichuan University, Chengdu 610064, P.R.China}
	\email{luming@scu.edu.cn}

	\author[Shiquan Ruan]{Shiquan Ruan}
	\address{ School of Mathematical Sciences,
		Xiamen University, Xiamen 361005, P.R.China}
	\email{sqruan@xmu.edu.cn}

	\subjclass[2020]{Primary 17B37, 14F06, 16E35,16G70.}
	\keywords{Hall algebras; Quantum loop algebras; Twisted quantum affine algebras; Weighted projective lines}
	
	\begin{abstract}
		Twisted quantum loop algebras are a generalization of twisted quantum affine algebras in Drinfeld new presentation. The Hall algebras of Geigle--Lenzing's weighted projective lines are used to realize  (untwisted) quantum loop algebras of simply-laced type associated to star-shaped graphs by Schiffmann and Dou--Jiang--Xiao. 
		In this paper, we use the semi-derived Ringel-Hall algebras of more general weighted projective lines to realize the twisted quantum loop algebras associated to the valued star-shaped graphs, including the twisted quantum affine algebras in
		Drinfeld new presentation.
	\end{abstract}
	
	\maketitle
	\setcounter{tocdepth}{1}
	\tableofcontents

	\section{Introduction}
	
	\subsection{Background}
	
	In addition to the Serre presentation given by Drinfeld--Jimbo, quantum affine algebras have a current realization (called Drinfeld new presentation) \cite{Dr88,Be94,Da15}, which play a crucial role in (algebraic and geometric) representation theory; see \cite{CP91,CP98,FH11,Her05}. 
	Quantum loop algebras are defined for any Kac-Moody algebras, as a generalization of quantum affine algebras; see \cite{GKV95,Sch04,Her05,CJKT20,CJKT23}. Similar to affine Lie algebras, quantum loop algebras have two types: twisted and untwisted types. The Drinfeld new presentation of twisted quantum loop algebras is much more complicated than the untwisted type. The interest of the twisted case resides not only in that it is a generalization of the untwisted frame, it also appears quite naturally while studying the untwisted setting, due to the fact that the
	transposition of matrices establishes a duality among the aﬃne Cartan matrices through which untwisted Cartan matrices can correspond
	to twisted ones; see e.g. \cite{CP98,FH11,Da12}. 
	
	Inspired by the Hall algebra realization of the current half of quantum affine $\sll_2$ via the projective line \cite{Ka97,BKa01}, Schiffmann \cite{Sch04} developed 
	the Hall algebra of a weighted projective line 
	to realize a half part of a (untwisted) simply-laced quantum loop algebra. This was then upgraded to the whole quantum loop algebra via Drinfeld double technique  \cite{DJX12, BS12, BS13}, in particular, Dou--Jiang--Xiao \cite{DJX12} found out a collection of generators of the Drinfeld double Hall algebra of the weighted projective line and verify  them to satisfy all Drinfeld relations.
	
	The geometric realization given in \cite{Sch04,DJX12} is restricted to (untwisted) quantum loop algebras of simply-laced type. This naturally raises the question of obtaining a Hall algebra realization of general quantum loop algebras. Furthermore, the construction of Drinfeld double Hall algebras constitutes an algebraic technique with numerous variations, which arise from the use of different Hopf pairings (see \cite{Gr95, X97, DJX12, BS12, BS13} for details). Notably, the Hall algebra of a weighted projective line is merely a topological bialgebra, since its comultiplication must take values in a completed space. This necessitates the introduction of a new class of Hall algebras.
	
	Bridgeland \cite{Br13} has realized the whole quantum group (in Serre presentation) via the Hall algebra of $2$-periodic
	complexes, building on the classic construction of Ringel-Hall algebra of a quiver which
	realizes half a quantum group \cite{Rin90, Gr95}.
	In order to generalize Bridgeland's construction to arbitrary hereditary abelian categories, especially the categories of nilpotent representations of arbitrary quivers and the categories of coherent sheaves over projective and weighted projective lines, the first author and Peng \cite{LP16} introduced the  semi-derived Ringel-Hall algebras (compared with semi-derived Hall algebras defined in \cite{Gor18}).
	
	Semi-derived Ringel-Hall algebras are isomorphic to Bridgeland's Hall algebras if the hereditary abelian categories have enough projective objects, and also  isomorphic to the Drinfeld double Hall algebras. So the Hall algebra realization of quantum groups in their Serre presentation can be achieved by using the  semi-derived Ringel-Hall algebras.
	It also implies a geometric realization of simply-laced quantum loop algebras based on the works \cite{Sch04, DJX12} by considering  semi-derived Ringel-Hall algebras of the weighted projective lines; see \cite[Remark 4.16]{LP16}. 
	
	\subsection{Goal}
	
	The (ordinary) weighted projective lines considered in \cite{Sch04,DJX12,BS13} are the ones with the degrees of weighted points being $1$, and their Hall algebras are used to realize the (untwisted) simply-laced quantum loop algebras. We also have (more general) weighted projective lines with the degrees of weighted points arbitrary \cite{Ku09}. The goal of the paper is to use the semi-derived Ringel-Hall algebras of this general setting to realize twisted quantum loop algebras, in particular, including  twisted quantum affine algebras in Drinfeld new presentation; see Table \ref{tab1}. Compared with \cite{Sch04,DJX12}, we need to do some much more difficult computations since the current categories of coherent sheaves are more complicated; see \S\ref{sec:serre} and Appendix \ref{Appendix C}.
	

	\subsection{Main results}
	Throughout this paper, $\bfk=\F_q$ is the finite field and $\sqq=\sqrt{q}$. 
	Let $\bd=(d_1,d_2,\dots,d_\bt)$ and $\bp=(p_1,p_2,\dots,p_\bt)$ be $\bt$-tuples of positive integers. Let $\X=\X(\bp,\bd,\bla)$ be the weighted projective line of type $(\bp,\bd,\bla)$, that is the projective line $\P_\bfk^1$  with
	a finite number of marked points $\bla_1,..., \bla_\bt$ of degrees $d_1,\dots,d_\bt$ with attached weights $p_1,...,p_\bt$. Let $\coh(\X)$ be the category of coherent sheaves over $\X$. 
	
	Associated to $\X$, we construct a star-shaped graph $\Gamma=\Gamma_{p_1,\dots,p_\bt}$; see 
	\eqref{star-shaped}. Let $\fg$ be the (symmetrizable) Kac-Moody Lie algebra associated to the valued graph $(\Gamma,\bd)$, 
	and $\Lg$ be its loop algebra, which is a generalization of (twisted) affine Lie algebras; see \cite{MRY90,CJKT20}. Denote the set of vertices in $\Gamma$ by $\II$. 
	The (Drinfeld double) quantum loop algebra $\tU_v(\Lg)$ is generated by $X^{\pm}_{\star,l}$,  $X^\pm_{[i,j],l}$, $H_{\star,\pm m}$, $H_{[i,j],\pm m}$, $\psi_{\star,m}$, $\psi_{[i,j],m}$, $\varphi_{\star,-m}$, $\varphi_{[i,j],-m}$ and the invertible elements $K_\star$, $K'_\star$, $K_{[i,j]}$, $K'_{[i,j]}$, $C,C'$ for $l\in\Z,m>0$,  $1\leq i\leq \bt$, $1\leq j\leq p_i-1$, subject to \eqref{DR1}--\eqref{DR8}. The quantum loop algebra $\U_v(\Lg)$ can be described as a quotient algebra of $\tU_v(\Lg)$ modulo the ideal generated by
	$$CC'-1,\qquad K_\mu K_\mu'-1, \,\,\forall \mu\in\II.$$
	The quantum loop algebra  introduced by Drinfeld \cite{Dr88} (see also \cite{Be94,Da12}) is obtained from $\U_v(\Lg)$
	by adding two central generators $C^{\pm\frac{1}{2}}$ such that $C^{\frac{1}{2}}C^{-\frac{1}{2}}=1$ and $(C^{\frac{1}{2}})^2=C$. 
	In this paper, we focus mainly on the Drinfeld double quantum loop algebra $\tU_v(\Lg)$.


	Let $\X_\bfk$ be the weighted projective line over a finite field $\bfk=\F_q$ associated to $(\Gamma,\bd)$. Let $\tMH(\X_\bfk)$ be the  semi-derived Ringel-Hall algebra of $\coh(\X_\bfk)$. Our main result (see Theorem \ref{thm:morphi}) is to construct the $\Q(\sqq)$-algebra homomorphism
	\begin{align}
		\Omega: \tU_\sqq(\Lg)\longrightarrow \tMH(\X_\bfk),
	\end{align}
	which is defined on generators.
	To show that $\Omega: \tU_\sqq(\Lg) \rightarrow \tMH(\X_\bfk)$ is a homomorphism, we must verify the Drinfeld relations \eqref{DR1}--\eqref{DR8} for the twisted quantum loop algebra $\tU_v(\Lg)$, which is more difficult than  \cite{Sch04,DJX12} for simply-laced type.
	
	The counterparts in $\tMH(\X_\bfk)$ of the relations at the $\star$ point follow from the realization of $\tU_v(\widehat{\mathfrak{sl}}_2)$ by $\tMH(\P^1_\bfk)$ (see Proposition \ref{prop:Realizationsl2}), and the counterparts in $\tMH(\X_\bfk)$ of the relations involving all the vertices $[i,j]$ follow from the above definitions and the fact any two torsion sheaves supported at distinct points has zero Hom and $\Ext^1$-spaces. In order to check the remaining Drinfeld relations in $\tMH(\X_\bfk)$, the key part is to verify the relations between $\star$ and $[i,1]$ for $1\leq i\leq \bt$, which consists of plenty of highly non-trivial computations in \S\ref{sec:relationstari1}--\S\ref{sec:serre}. In fact, some of them are known by \cite{Sch04,DJX12}. It is remarkable that the verification of  Serre relation \eqref{DR8} in $\cs\cd\widetilde{\ch}(\X_\bfk)$ can be reduced to $\X_\bfk$ of weight type  $\left(\begin{array}{ccc}
		2 \\ d 
	\end{array}\right)$, however, it 
	is very difficult, which occupies \S\ref{sec:serre} and Appendix \ref{Appendix C}; see Remark \ref{remark
		on type (2,d)}.


	\subsection{Perspectives}
	
	This work is an application of semi-derived Ringel-Hall algebras. It is interesting to describe the composition subalgebra of the semi-derived Ringel-Hall algebra of a weighted projective line, 
	and give a PBW basis for the quantum loop algebra via coherent sheaves; cf. \cite{BS13}. 
	
	We shall also show that the morphism  from the quantum loop algebra to the semi-derived Ringel-Hall algebra is injective for $\fg$ of finite or affine type (if $\fg$ is simply-laced, the morphism from the ``positive" part of $\tU_v(\Lg)$ to the Hall algebra of $\X_\bfk$ is known to be injective by \cite{Sch04}). 
	We expect the morphism $\Omega: \tU_\sqq(\Lg)\rightarrow \tMH(\X_\bfk)$ to be injective for arbitrary Kac-Moody algebra $\fg$.
	It is also interesting to study the semi-derived Ringel-Hall algebras of higher genus curves, in particular, of elliptic curves. 
	
	We can further show that the Drinfeld--Beck isomorphism of the quantum group of type  $A_n^{(1)}$ or $D_{n+1}^{(2)}$ in two (Serre vs Drinfeld) presentations is induced from derived equivalence of the categories underlying the two (quivers vs weighted projective lines) Hall algebra realizations. This generalizes the result in \cite{BS12} for affine $\mathfrak{sl}_2$ by using Drinfeld double Hall algebras.
	
	In order to realize the Drinfeld new presentation of quantum affine algebras of untwisted BCFG type, we shall develop the theory of weighted projective lines further, by introducing exceptional curves over finite fields; see \cite{Ku09} for exceptional curves over real number field. Unlike weighted projective lines over finite field in this paper, the coordinate rings of exceptional curves are non-commutative, which makes this generalization highly non-trivial.

	\subsection{Organization}
	
	This paper is organized as follows. Section \ref{sec:Excep-curves} is devoted to reviewing the weighted projective lines and their coherent sheaves, and  \S\ref{sec:Quanloop} is for preliminaries on quantum loop algebras and their Drinfeld new presentations. 
	In \S\ref{sec:Semi-derived}, we review the semi-derived Ringel-Hall algebras for the categories of $2$-periodic complexes.

	We realize the quantum affine algebra of $\widehat{\mathfrak{sl}}_2$ via the semi-derived Ringel-Hall algebra of the projective line in \S\ref{sec:P1}, and  realize the quantum affine algebra of type A via the semi-derived Ringel-Hall algebra of the cyclic quiver in \S\ref{sec:cyclic}. We formulate the morphism $\Omega: \tU_\sqq(\Lg)\rightarrow \tMH(\X_\bfk)$ and state the main result in \S\ref{sec:hom}. The proof of $\Omega$  being an algebra homomorphism consists of Sections~\ref{sec:relationstari1}--\ref{sec:serre} and Appendix \ref{Appendix C}. 


	\subsection{Acknowledgments}
	We deeply thanks Fulin Chen for  helpful discussions on quantum loop algebras and (twisted) quantum affine algebras. ML is partially supported by the National Natural Science Foundation of China (No. 12171333, 12261131498). 
	SR is partially supported by Fujian Provincial Natural Science Foundation of China (No. 2024J010006) and
	the National Natural Science Foundation of China (No. 12271448).
	
	\section{Weighted projective lines and coherent sheaves}
	\label{sec:Excep-curves}
	
	Throughout $\bfk=\F_q$, a finite field of $q$ elements. In this section, we shall review the weighted projective lines over $\bfk$ and their coherent sheaves given in \cite{GL87,Ku09}. 
	
	\subsection{Coordinate ring}
	\label{subsec:coord-ring}
	
	Let $\bS=\bfk[T_0,T_1]$ be the coordinate ring of the classical projective line $\Pk$ over the field $\bfk$. 
	Let $\bla=(\bla_1, \bla_2,\cdots, \bla_{\bt})$ be a sequence of pairwise distinct homogeneous prime ideals $\bla_i=(f_i)$, where $f_i\in\bS\; 
	(1\leq i\leq \bt)$ are irreducible polynomials with $\deg(f_i)=d_i$. Denote by $\bd=(d_1,d_2,\cdots, d_{\bt})$.
	
	Let $\bp=(p_1,p_2,\cdots,p_{\bt})$ be a $\bt$-tuple of integers with $p_i\geq 1$, called the weight sequence. 
	Let $\bL(\bp,\bd)$ denote the rank one abelian group on generators $\vx_1,\vx_2,\cdots,\vx_{\bt}, \vc$ with relations $$p_i\vx_i=d_i\vc;\quad 1\leq i\leq \bt.$$ 
	Hence any element $\vx\in\bL(\bp,\bd)$ can be uniquely written in normal form as follows:
	$$\vx=\sum\limits_{1\leq i\leq \bt}l_i\vx_i+l\vc, \quad 0\leq l_i\leq p_i-1,\; l\in\bbZ.$$
	The element $\vc$ is called the canonical element, and $$\vw:=\sum\limits_{1\leq i\leq \bt}(p_i-1)\vx_i-2\vc
	=(\sum\limits_{1\leq i\leq \bt}d_i-2)\vc-\sum\limits_{1\leq i\leq \bt}\vx_i$$ is called the dualizing element of $\bL(\bp,\bd)$. 
	
	Denote by $p={\text{l.c.m}(p_1,p_2,\cdots,p_{\bt})}$, the least common multiple of $p_i$'s.
	Define a linear map on $\bL(\bp,\bd)$ via 
	$$\mathfrak{d}: \bL(\bp,\bd)\rightarrow \bbZ; \quad \vx_i\mapsto \frac{p d_i}{p_i},\;\; 1\leq i\leq \bt.$$
	Then we have $\mathfrak{d}(\vc)=p$. 
	
	Let $$\bS=\bS(\bp,\bla):=\bfk[T_0,T_1, X_1,X_2,\cdots,X_{\bt}]/(X_i^{p_i}-f_i:\;1\leq i\leq \bt).$$ Then $\bS(\bp,\bla)$ is $\bL(\bp,\bd)$-graded in the sense that $\deg(X_i)=\vx_i$ and $\deg(T_0)=\deg(T_1)=\vc$.
	
	The $\bL(\bp,\bd)$-graded nonzero principle prime ideals of $\bS(\bp,\bla)$ consist of 
	\begin{itemize}
		\item[-] Exceptional prime ideals: $(X_i)$ for $1\leq i\leq \bt$;
		\item[-] Ordinary prime ideals: all the other prime ideals of $\bfk[T_0,T_1]$ except  $\bla_i$ for $1\leq i\leq \bt$.
	\end{itemize}
	

	\subsection{Category of coherent sheaves}
	
	The weighted projective line $\bX:=\bX(\bp,\bd, \bla)$ of type $(\bp,\bd, \bla)$ is defined as the stack $\text{Proj}^{\bL(\bp,\bd)}\bS(\bp,\bla)$.
	The classification of the closed points in $\X$ is as follows: the \textit{exceptional points}, corresponding to the exceptional prime ideals $(X_i)$'s; and the \textit{ordinary points}, corresponding to the ordinary prime ideals. 
	
	The category of coherent sheaves over $\bX$ is defined by Serre's construction $$\cX:=\frac{\mod^{\bL(\bp,\bd)}\bS(\bp,\bla)}{\mod^{\bL(\bp,\bd)}_0\bS(\bp,\bla)}.$$
	All the line bundles in $\cX$ has the form $\co(\vx)$ for $\vx\in\bL(\bp,\bd)$.
	There is a natural quotient functor from $\mod^{\bL(\bp,\bd)}\bS(\bp,\bla)$ to $\cX$, such that the image of the free module $\bS:=\bS(\bp,\bla)$ serves as the structure sheaf $\co$ of $\cX$, and the degree shift $\bS(\vx)$ for any $\vx\in\bL(\bp,\bd)$ serves as the line bundle $\co(\vx)$. Moreover, we have $$\Hom(\co(\vx),\co(\vec{y}))\cong \Hom(\bS(\vx), \bS(\vec{y}))\cong \bS_{\vec{y}-\vx}.$$
	
	There is an autoequivalence $\tau=(\vw)$
	on $\cX$ such that the Serre duality 
	$$\Ext^1(X, Y)\cong D\Hom(Y, \tau X)$$
	holds functorially in $X, Y \in\cX$, where $D$ is the duality $\Hom_\bfk(-, \bfk)$.
	Moreover, $\cX$ has almost
	split sequences and the functor $\tau:\cX\to\cX$ serves as the Auslander-Reiten translation.
	
	Let $\scrf$ be the full subcategory of $\coh(\X)$ consisting of vector bundles, and $\scrt$ be the full subcategory consisting of all torsion sheaves. Then $\scrf$ and $\scrt$ are extension-closed, and any indecomposable sheaf is either a vector bundle or a torsion sheaf. Moreover, there are no non-zero homomorphisms from torsion sheaves to vector bundles.

	\subsection{Torsion sheaves}
	
	In order to describe the subcategory $\scrt$ of torsion sheaves, we shall introduce the representations of cyclic quivers.
	We consider the oriented cyclic quiver $C_n$ for $n\geq2$ with the vertex set $\Z_n=\{0,1,2,\dots,n-2,n-1\}$:
	\begin{center}\setlength{\unitlength}{0.5mm}
		\begin{equation}
			\label{fig:Cn}
			\begin{picture}(100,30)
				\put(2,0){\circle*{2}}
				\put(22,0){\circle*{2}}
				
				\put(82,0){\circle*{2}}
				\put(102,0){\circle*{2}}
				\put(52,25){\circle*{2}}
				
				\put(20,0){\vector(-1,0){16}}
				\put(40,0){\vector(-1,0){16}}
				\put(47.5,-2){$\cdots$}
				\put(80,0){\vector(-1,0){16}}
				\put(100,0){\vector(-1,0){16}}
				
				\put(54,24.5){\vector(2,-1){47}}
				\put(3,1){\vector(2,1){47}}
				
				\put(50.5,27){\tiny $0$}
				\put(1,-6){\tiny $1$}
				\put(21,-6){\tiny $2$}
				\put(75,-6){\tiny $n-2$}
				\put(95,-6){\tiny $n-1$}
			\end{picture}
		\end{equation}
		\vspace{-0.2cm}
	\end{center}
	Let $C_1$ be the Jordan quiver, i.e., the quiver with only one vertex and one loop arrow.
	
	Denote by $\rep^{\rm nil}_\bfk(C_n)$ the category of finite-dimensional nilpotent representations of $C_n$ over the field $\bfk$. Then there are $n$ simple representations $S_j$ ($0\leq j\leq n-1$) in $\rep^{\rm nil}_\bfk(C_n)$. Denote by $S_j^{(a)}$ the unique indecomposable representation with top $S_j$ and length $a$. Then $\{S_j^{(a)}\mid 0\le j\le n-1, a\in\N\}$ is  the set of isoclasses of indecomposable objects of $\rep^{\rm nil}_\bfk(C_n)$. For any $0\le j\le n-1$, define $S_j^{(\mu)}=\bigoplus_{i=1} ^{l}S_j^{(\mu_i)}$ for any partition $\mu=(\mu_1\geq \cdots\geq \mu_l)$. 
	

	Then the structure of $\scrt$ is described in the following.
	\begin{lemma}[\cite{GL87}; also see \cite{Sch04,Ku09}]
		\label{lem:isoclasses Tor}
		(1) The category $\scrt$ decomposes as a coproduct $\scrt=\coprod_{x\in\X} \scrt_{x}$, where $\scrt_{x}$ is the subcategory of torsion sheaves with support at $x$.
		
		(2) For any ordinary point $x$ of degree $d$, let $\bfk_{x}$ denote the residue field at $x$, i.e., $[\bfk_x:\bfk]=d$. Then $\scrt_{x}$ is equivalent to the category $\rep^{\rm nil}_{\bfk_{x}}(C_1)$.
		
		(3) For any exceptional point $\bla_i$ ($1\leq i\leq \bt$), let $\bfk_{i}$ denote the residue field at $\bla_i$. Then the category $\scrt_{\bla_i}$ is equivalent to $\rep^{\rm nil}_{\bfk_i}(C_{p_i})$.
	\end{lemma}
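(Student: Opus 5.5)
Our plan is to work on the level of graded modules and pass to the quotient, following Geigle--Lenzing. A torsion sheaf $\mathcal F$ is represented by a finitely generated $\bL(\bp,\bd)$-graded $\bS$-module $M$ that is torsion in the sense that it is annihilated by some nonzero homogeneous element. For (1), we use that $\bS$ is $\bL$-graded factorial: its homogeneous prime elements are $X_i$ and the irreducible forms $f\in\bfk[T_0,T_1]$ different from the $f_i$, up to units. We take a homogeneous annihilator $h$ and factor it into powers of pairwise coprime graded primes, $h=\prod_x \pi_x^{n_x}$. Then graded Chinese remainder gives $M\cong\bigoplus_x M_x$ modulo finite length, with $M_x$ annihilated by $\pi_x^{n_x}$. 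For distinct points, the corresponding modules have $\Hom$ and $\Ext^1$ vanishing in the quotient category. Indeed, $\pi_x$ acts invertibly (locally) on $M_y$, and we can compute after localizing at the multiplicative set of homogeneous elements outside the prime. This yields the coproduct decomposition $\scrt=\coprod_x\scrt_x$.

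For (2), let $x$ be an ordinary point given by an irreducible form $f$ of degree $d$. We localize $\bS$ at the homogeneous elements not in $(f)$ and take the degree-$0$ part (more precisely, the $\bL$-graded localization). Since $f$ is not one of the $f_i$, every $X_i$ becomes invertible. The graded local ring is therefore strongly graded, so graded modules are equivalent to modules over its degree-$0$ component. That component is the local ring $\co_{\P^1,x}$, a DVR with residue field $\bfk_x$ of degree $d$ over $\bfk$. Finite-length modules over this DVR form a uniserial category with a single simple object and self-extensions. Its completion is $\bfk_x[[t]]$ (the field is finite, hence perfect), so the category is equivalent to nilpotent $\bfk_x[t]$-modules, i.e.\ $\rep^{\rm nil}_{\bfk_x}(C_1)$.

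For (3), at an exceptional point $\bla_i$ we localize at homogeneous elements outside $(X_i)$. Then all $X_j$ with $j\neq i$ become invertible, as do the forms $g$ coprime to $f_i$. This leaves a graded ring of the form $R[X_i]/(X_i^{p_i}-f_i)$ with $R=\co_{\P^1,\bla_i}$, graded by $\bL$ modulo the subgroup generated by degrees of units. That quotient group is $\bbZ/p_i$ generated by $\vx_i$, because the relation $p_i\vx_i=d_i\vc$ is absorbed once $\vc$-shifts by units are allowed. Finite-length graded modules over this ring decompose into graded pieces $M_0,\dots,M_{p_i-1}$ with maps $X_i:M_j\to M_{j+1}$. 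Each piece is a module over $\bfk_i=R/(f_i)$ after completion, and the composite of $p_i$ consecutive maps is multiplication by $f_i$, which is nilpotent. This gives exactly nilpotent representations of $C_{p_i}$ over $\bfk_i$. The simples $S_j$ correspond to the $p_i$ simple sheaves concentrated at $\bla_i$.

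The main obstacle will be (3): specifically, making rigorous that the quotient group is $\bbZ/p_i$ and that the local degree-zero ring has residue field $\bfk_i$ of degree $d_i$. This is where $d_i>1$ differs from the classical case. Here we would invoke the explicit normal form $\vx=\sum l_j\vx_j+l\vc$ and check surjectivity of the functor on simples, then apply Ext-comparison to conclude the equivalence, following \cite{GL87,Ku09}.
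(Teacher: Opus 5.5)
The paper does not prove this lemma; it is quoted from \cite{GL87}, with \cite{Ku09} for marked points of degree $d_i>1$, so there is no internal argument to compare yours with. Your outline is the standard graded-localization proof in the spirit of Geigle--Lenzing, and it is sound in substance:
graded factoriality of $\bS$ handles (1), and for (2) and (3) you localize at the homogeneous prime $(\pi_x)$, reduce to the degree-zero ring $\co_{\P^1,x}$, and then complete. The reduction uses strong grading at ordinary points, and the subgroup $H$ of degrees of units, with $\bL/H\cong\Z/p_i$, at $\bla_i$. A merit of this route is that $d_i>1$ costs nothing extra. The only new feature is the residue field $\bfk_i$, and the completion step is exactly what shows you get the ordinary, untwisted cyclic quiver over $\bfk_i$.

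Two places need to be stated correctly.

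First, in (3) the sentence ``each piece is a module over $\bfk_i=R/(f_i)$'' is wrong as written. The element $f_i$ is not in $R$. Moreover, the graded pieces $M_j$ are finite-length $R$-modules on which the uniformizer acts nilpotently but in general not by zero: for an indecomposable of length $p_i+1$, one piece has length two over $R$. The correct version runs as follows.
Identify $M_{p_i\vx_i}$ with $M_0$ via a unit $u$ of degree $d_i\vc$, so that the composite of the $p_i$ arrows is multiplication by $\pi=f_iu^{-1}$, a uniformizer of $R$. Fix a coefficient field $\bfk_i\subseteq\widehat R$ containing $\bfk$; it exists because $\bfk_i/\bfk$ is separable. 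Then $\widehat R[X_i]/(X_i^{p_i}-\pi)\cong\bfk_i[[X_i]]$ with $X_i$ in degree $1\in\Z/p_i$. Its finite-length graded modules are exactly $\rep^{\rm nil}_{\bfk_i}(C_{p_i})$, and the arrows are $\bfk_i$-linear because they are $\widehat R$-linear.

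Second, what you call the main obstacle ($\bL/H\cong\Z/p_i$ and $[\bfk_i:\bfk]=d_i$) follows immediately from the normal form. The step that genuinely needs an argument in (2) and (3) is that graded localization identifies $\scrt_x$, which lives in the Serre quotient, with finite-length graded modules over the localized ring.
The key observation is this. Take homogeneous $s\notin(\pi_x)$ and $M$ annihilated by $\pi_x^n$. Multiplication by $s$ on $M$ has kernel and cokernel annihilated by $(s,\pi_x^n)$, which is primary to the irrelevant ideal, so both have finite length. Hence $s$ already acts invertibly on $M$ in $\coh(\X)$. This gives full faithfulness and essential surjectivity of the localization functor on $\scrt_x$.
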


	For any ordinary point $\blx$ of degree $d$, let $\pi_{\blx}$ be the prime homogeneous polynomial corresponding to $\blx$. The multiplication by $\pi_{\blx}$ gives the following exact sequence
	$$0\longrightarrow \co\stackrel{\pi_{\blx}}{\longrightarrow} \co(d\vec{c})\longrightarrow S_{\blx}\longrightarrow0,$$
	where $S_{\blx}$ is the unique (up to isomorphism) simple sheaf in the category $\scrt_{\blx}$. Note that $\End(S_{\blx})\cong \bfk_{\blx}$ and $S_{\blx}(\vec{l})=S_{\blx}$ for any $\vec{l}\in \bL(\bp,\bd)$. 
	
	For any exceptional point $\bla_i$, multiplication by $X_i$ yields the short exact sequence
	$$0\longrightarrow \co((j-1)\vec{x}_i)\stackrel{X_i}{\longrightarrow} \co(j\vec{x}_i)\longrightarrow S_{ij}\longrightarrow0,\text{ for } 1\le j\le p_i;$$
	where $\{S_{ij}\mid  j\in\Z_{p_i}\}$ is a complete set of pairwise non-isomorphic simple sheaves in the category $\scrt_{\bla_i}$ for any $1\leq i\leq \bt$. Moreover, $\End(S_{ij})\cong \bfk_{i}$ and $S_{ij}(\vec{l})=S_{i,j+l_i}$ for any $\vec{l}=\sum_{1\leq i\leq \bt} l_i\vec{x}_i$.

	\subsection{Grothendieck groups and Euler forms}
	
	Denote by  $K_0(\cX)$ the Grothendieck group of $\cX$,
	and denote by $\widehat{X}$ the class
	in $K_0(\cX)$ of an object $X\in\cX$.
	
	Let $\delta=\widehat{S_{x}}$ for a rational ordinary point $x$, i.e., degree of $x$ equals 1. Then we have $$\delta=\widehat{\co(\vc)}-\widehat{\co} \quad\text{and} \quad\sum\limits_{j=0}^{p_i-1}\widehat{S_{ij}}=d_i\delta.$$
	Moreover, the Grothendieck group $K_0(\coh(\X))$ of $\coh(\X)$ satisfies
	\begin{align}
		K_0(\coh(\X))\cong \Big(\Z\widehat{\co}\oplus \Z\widehat{\co(\vec{c})}\oplus \bigoplus_{i,j}\Z\widehat{S_{ij}}\Big)/I,
	\end{align}
	where $I$ is the subgroup generated by $\{\sum_{j=1}^{p_i} \widehat{S_{ij}} +d_i\widehat{\co} -d_i\widehat{\co(\vec{c})}\mid i=1,\dots,\bt\}$; see \cite{GL87,Ku09}.

	The $K_0(\cX)$ is equipped with the Euler form $\langle-,-\rangle$. This bilinear form
	is defined on classes of objects $X, Y$ in $\cX$ by
	$$\langle \widehat{X}, \widehat{Y}\rangle = \dim_{\bfk} \Hom (X, Y ) -\dim_{\bfk} \Ext^1 (X, Y).$$
	
	Recall that 
	$$\Hom(\co, S_{i0})\cong \bfk_{i}, \text{ and } \Ext^1(S_{i,j}, S_{i,j'})\cong \begin{cases}
		\bfk_{i} & \text{ if }j+1\equiv j'(\mod p_i)
		\\
		0&\text{ otherwise}.
	\end{cases}$$ 
	Then by definition, we have the following. 
	\begin{lemma}
		\label{lem:Euler-form}
		The Euler form $\langle-,-\rangle$ on $K_0(\coh(\X))$ is described as follows:
		\begin{align}
			&\langle \widehat{\co},\widehat{\co}\rangle=1,\quad \langle \widehat{\co},\delta\rangle=1,\quad  \langle\delta,\widehat{\co}\rangle=-1,
			\\
			&\langle \delta,\delta\rangle=0,\quad \langle \widehat{S_{ij}},\delta\rangle=0,\quad  \langle \delta, \widehat{S_{ij}}\rangle=0,
			\\
			&\langle \widehat{\co}, \widehat{S_{ij}}\rangle =\begin{cases} d_i& \text{ if } j=p_i, \\ 0& \text{ if }j\neq p_i , \end{cases}
			\qquad \langle \widehat{S_{ij}}, \widehat{\co}\rangle =\begin{cases} -d_i& \text{ if } j=1, \\ 0& \text{ if }j\neq 1 ,\end{cases}
			\\
			&\langle\widehat{ S_{ij}}, \widehat{S_{i',j'}}\rangle =\begin{cases} d_i& \text{ if } i=i', j=j',
				\\ -d_i& \text{ if }i=i', j\equiv j'+1 (\mod p_i),
				\\
				0& \text{ else. }\end{cases}
		\end{align}
	\end{lemma}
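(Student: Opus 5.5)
The plan is to compute each value directly from $\langle \widehat{X},\widehat{Y}\rangle=\dim_\bfk\Hom(X,Y)-\dim_\bfk\Ext^1(X,Y)$. We use Serre duality $\Ext^1(X,Y)\cong D\Hom(Y,\tau X)$ with $\tau=(\vw)$, the vanishing of $\Hom$ from torsion sheaves to vector bundles, and the fact that torsion sheaves at distinct points have zero $\Hom$ and $\Ext^1$ (Lemma \ref{lem:isoclasses Tor}). Bilinearity then extends everything to $\delta=\widehat{\co(\vc)}-\widehat{\co}$, and to $\delta=\widehat{S_x}$ for a rational ordinary point $x$.

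\emph{Line bundle terms.} $\Hom(\co,\co)=\bS_0=\bfk$. Also $\Ext^1(\co,\co)\cong D\Hom(\co,\co(\vw))=D\bS_{\vw}$, and $\bS_{\vw}=0$: written in normal form, $\vw$ has $\vc$-coefficient $-2$. Hence $\langle\widehat{\co},\widehat{\co}\rangle=1$. For $\delta=\widehat{S_x}$ with $x$ ordinary rational, $\Hom(\co,S_x)\cong\bfk_x=\bfk$ (from $0\to\co\to\co(\vc)\to S_x\to0$). Further, $\Ext^1(\co,S_x)\cong D\Hom(S_x,\co(\vw))=0$, which gives $\langle\widehat{\co},\delta\rangle=1$. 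Dually, $\Hom(S_x,\co)=0$ and $\Ext^1(S_x,\co)\cong D\Hom(\co,S_x(\vw))=D\Hom(\co,S_x)$ is one-dimensional, so $\langle\delta,\widehat{\co}\rangle=-1$.

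\emph{Torsion terms.} Since $S_x$ and $S_{ij}$ lie in different blocks, $\langle\widehat{S_{ij}},\delta\rangle=\langle\delta,\widehat{S_{ij}}\rangle=0$. For $\langle\delta,\delta\rangle$ we compute $\dim\Hom(S_x,S_x)-\dim\Ext^1(S_x,S_x)=1-1=0$, using $\Ext^1(S_x,S_x)\cong D\End(S_x)$ because $S_x(\vw)=S_x$. Next, $\Hom(\co,S_{ij})\cong\bfk_i$ exactly when $j\equiv0\equiv p_i \pmod{p_i}$, since the sequence $0\to\co((j-1)\vx_i)\to\co(j\vx_i)\to S_{ij}\to0$ shows that $\co$ maps onto $S_{ij}$ only for $j=p_i$. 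Moreover $\Ext^1(\co,S_{ij})=0$. This gives the stated value $d_i=\dim_\bfk\bfk_i$. For the reverse pairing, $\Ext^1(S_{ij},\co)\cong D\Hom(\co,S_{ij}(\vw))=D\Hom(\co,S_{i,j-1})$, which is nonzero precisely when $j-1\equiv0$, i.e.\ $j=1$. Together with $\Hom(S_{ij},\co)=0$ this gives $-d_i$ for $j=1$ and $0$ otherwise.

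\emph{Simples at exceptional points.} For simples at the same exceptional point, $\Hom(S_{ij},S_{ij'})=\bfk_i$ if $j=j'$ and is $0$ otherwise. The recalled $\Ext^1$ formula supplies the remaining term, and simples at different points contribute $0$. This gives the last display, up to matching the index convention $j'\equiv j\pm1$ with the orientation of $C_{p_i}$ and the shift $S_{ij}(\vw)=S_{i,j-1}$.

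The main obstacle is this last bookkeeping step. The $\tau$-shift convention must be reconciled consistently with the stated $\Ext^1$ formula and with the ``$j=p_i$'' versus ``$j=1$'' cases. I would fix the convention via $S_{ij}(\vx_i)=S_{i,j+1}$ and $\vw\equiv-\vx_i$ locally at $\bla_i$, and check every case against it.
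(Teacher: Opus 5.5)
Your approach is the paper's own: it reads every value off from $\dim\Hom-\dim\Ext^1$. Everything you actually compute is correct, including $\langle\widehat{S_{ij}},\widehat{\co}\rangle$ via $S_{ij}(\vw)=S_{i,j-1}$.

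\textbf{The gap.} You do not prove the last display. You say the recalled $\Ext^1$ formula ``supplies the remaining term'' and defer the index matching. Taken literally, that formula does not give the lemma. As printed, it reads $\Ext^1(S_{ij},S_{ij'})\cong\bfk_i$ iff $j+1\equiv j'$, which would put the $-d_i$ at $j'\equiv j+1$. That is the transpose of the claimed condition $j\equiv j'+1$. This is not a harmless choice of convention. For $p_i\geq 3$ the transposed version contradicts your (correct) values of $\langle\widehat{\co},\widehat{S_{ij}}\rangle$: using $\widehat{\co(\vx_i)}=\widehat{\co}+\widehat{S_{i1}}$, it would give $\langle\widehat{\co(\vx_i)},\widehat{S_{i0}}\rangle=d_i$, whereas directly this equals $\dim\Hom(\co,S_{i,-1})=0$. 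So the printed recollection has its indices swapped, and you should not lean on it.

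\textbf{The fix} is your own Serre duality argument, carried through. Write $\vw=-\vx_i+\sum_{k\neq i}(p_k-1)\vx_k+(d_i-2)\vc$. Shifting by $\vx_k$ ($k\neq i$) or by $\vc$ fixes every sheaf supported at $\bla_i$, since multiplication by $X_k$, resp.\ by a linear form not divisible by $f_i$, is invertible there. Hence $\tau S_{ij}=S_{ij}(-\vx_i)=S_{i,j-1}$, and
\[
\Ext^1(S_{ij},S_{ij'})\cong D\Hom(S_{ij'},S_{i,j-1}),
\]
which is $\bfk_i$ exactly when $j'\equiv j-1$, i.e.\ $j\equiv j'+1$ modulo $p_i$, and $0$ otherwise. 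Equivalently, apply $\Hom(-,S_{ij'})$ to $0\to\co((j-1)\vx_i)\to\co(j\vx_i)\to S_{ij}\to0$, using $\Hom(\co(m\vx_i),S_{ij'})\cong\Hom(\co,S_{i,j'-m})$ and the vanishing of $\Ext^1$ from line bundles to torsion sheaves.

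Combine this with $\Hom(S_{ij},S_{ij'})\cong\bfk_i$ for $j=j'$ and $0$ otherwise. This gives exactly the stated last display for $p_i\geq 2$, and it matches the orientation $j\to j-1$ of the cyclic quiver $C_n$ as drawn in the paper. If $p_i=1$, the two nonzero cases of the display coincide and the correct value is $d_i-d_i=0$.
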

	
	
	
	

	The degree function on $K_0(\cX)$ (or $\cX$) is given by 
	$$\deg(\co(\vx))=\mathfrak{d}(\vx),$$
	In particular, $$\deg(\co)=0; \quad \deg(\co(\vc))=p; \quad \deg(S_{ij})=\frac{pd_i}{p_i}.
	$$
	
	The rank function on $K_0(\cX)$ is given by 
	$$\rank(\widehat{X})=\langle \widehat{X}, \delta\rangle,$$
	and $\rank (X) = \rank(\widehat{X})$ for each $X \in\cX$. For an indecomposable sheaf $X$ in $ \cX$, $\rank (X)=0$ if and only if $X\in\mathscr{T}$, and $\rank(X)>0$ if and only if $X\in\mathcal{F}$.

	If $\bd$ is trivial, i.e., $d_i = 1$\ for $1\leq i\leq \bt$, we write $\bL(\bp)$ and $\bS(\bp)$ instead. In this case, $\bS(\bp)$ are just the projective coordinate algebras of the ordinary weighted projective lines
	described in \cite{GL87,Sch04}.

	
	
	\subsection{Virtual genus and Trichotomy}
	
	The virtual genus of $\bX$ is defined as $$g_{\bX}=1+\frac{p}{2}\big(\sum\limits_{i=1}^{\bt}d_i(1-\frac{1}{p_i})-2\big).$$
	The weighted projective line $\bX$ is called of domestic, tubular or wild type if the virtual genus $g_{\bX}<1$, $g_{\bX}=1$ or $g_{\bX}>1$, respectively.
	By direct calculations we obtain (see \cite{Ku09})	
	\begin{itemize}
		\item[(1)]
		$g_{\bX}<1$ if and only if $\bp=(p_1,p_2),(2,2,n),(2,3,3),(2,3,4), (2,3,5)$ and $\bd$ is trivial,
		or
		$$\left(\begin{array}{ccc}
			\bp  \\
			\bd
		\end{array}\right)=
		\left(\begin{array}{ccc}
			n  \\
			2 
		\end{array}\right),
		\left(\begin{array}{ccc}
			2&n  \\
			2&1 
		\end{array}\right),
		\left(\begin{array}{ccc}
			2  \\
			3 
		\end{array}\right), 
		\left(\begin{array}{ccc}
			3&2  \\
			2&1 
		\end{array}\right);   \quad(\text{for } n\geq 2)
		$$\\
		\item[(2)]
		$g_{\bX}=1$ if and only if $\bp=(2,2,2,2),(3,3,3),(4,4,2), (6,3,2)$ and $\bd$ is trivial,
		or
		$$\left(\begin{array}{ccc}
			\bp  \\
			\bd
		\end{array}\right)=
		\left(\begin{array}{ccc}
			2&2&2  \\
			2&1&1 
		\end{array}\right),
		\left(\begin{array}{ccc}
			2&2  \\
			3&1 
		\end{array}\right),
		\left(\begin{array}{ccc}
			2&2  \\
			2&2 
		\end{array}\right),
		\left(\begin{array}{ccc}
			2 \\
			4 
		\end{array}\right),
		\left(\begin{array}{ccc}
			3&3  \\
			2&1 
		\end{array}\right),
		\left(\begin{array}{ccc}
			3 \\
			3 
		\end{array}\right),
		\left(\begin{array}{ccc}
			4&2  \\
			2&1 
		\end{array}\right);$$\\
		\item[(3)]
		$g_{\bX}>1$ for all the other cases.
	\end{itemize}
	
	\begin{remark}
		Our notation for the type 
		of a weighted projective line
		$\left(\begin{array}{ccc}
			\bp  \\
			\bd
		\end{array}\right)$ 
		corresponds to the notation $\left(\begin{array}{ccc}
			\bp  \\
			\bd\\
			\bd
		\end{array}\right)$ 
		in the paper \cite{Ku09}.
	\end{remark}
	
	\begin{remark}\label{remark
			on type (2,d)}
		For $\X$ of weight type $\left(\begin{array}{ccc}
			2 \\ d 
		\end{array}\right)$, observe that it is of domestic type if $1\leq d\leq 3$, it is of tubular type if $d=4$, and it is of wild type for any $d\geq 5$. When dealing with the realization of simply-laced quantum loop algebras via weighted projective lines, as shown in \cite{Sch04,DJX12,BS12,BS13}, it suffices to deal with $d=1$, in which case every indecomposable bundle is a line bundle. 
		For general $d$, the complexity increases significantly, and the explicit structure of $\coh(\X)$ remains unknown to date.
		
		In order to check the Serre relation \eqref{Serre relation in specail case}, or equivalently \eqref{eq:Serre2-reduced}, we need to investigate the extensions between two line bundles. 
		The strategy is highly non-trivial and quite novel, and is of particular interest in itself (see Section \ref{Mk and Nk} and Appendix \ref{Appendix C}). Furthermore, we believe the result will be useful for further investigations of the category $\coh(\X)$.
	\end{remark}

	\section{Quantum loop algebras}
	\label{sec:Quanloop}

	In this section, we review quantum groups and quantum loop algebras (which are a generalization of twisted quantum affine algebras in the Drinfeld new presentation).

	\subsection{Quantum groups}
	Let $C=(c_{ij})_{\I\times \I}$ be a symmetrizable generalized Cartan matrix (GCM) with $D={\rm diag}(d_i\mid i\in\I)$ being its symmetrizer. 
	Let $v$ be the quantum parameter, and $v_i=v^{d_i}$ for $i\in\I$. 
	Denote, for $r,m \in {\Bbb N}$,
	\[
	[r]_{v_i}=\frac{\bv_i^r-\bv_i^{-r}}{\bv_i-\bv_i^{-1}},
	\quad
	[r]^!_{v_i}:=\prod_{k=1}^r [k]_{v_i}, \quad \qbinom{m}{r}_{v_i} =\frac{[m]_{v_i}[m-1]_{v_i}\ldots [m-r+1]_{v_i}}{[r]_{v_i}^!}.
	\]
	For $A, B$ in a $\Q(v)$-algebra, we  denote $[A,B] =AB - BA$ and $[A,B]_{a} =AB -aBA$ for $a\in\Q(v)$. 
	
	Recall that the Kronecker delta symbol $\delta_{ij}$ and the Iverson bracket $\delta\{P\}$ as follow:
	\[
	\delta_{ij}= 
	\begin{cases}
		1, & \text{if } i=j,\\[2pt]
		0, & \text{if } i\ne j;
	\end{cases}\quad\qquad
	\delta\{P\} = 
	\begin{cases}
		1, & \text{if the statement } P \text{ is true}, \\[2pt]
		0, & \text{if the statement } P \text{ is false}.
	\end{cases}
	\]


	The Drinfeld double quantum group $\tU := \tU_\bv(\fg)$ is defined to be the $\Q(\bv)$-algebra generated by $E_i,F_i, \tK_i,\tK_i'$, $i\in \I$, where $\tK_i, \tK_i'$ are invertible, subject to the following relations:
	\begin{align}
		[E_i,F_j]= \delta_{ij} \frac{\tK_i-\tK_i'}{\bv_i-\bv_i^{-1}},  &\qquad [\tK_i,\tK_j]=[\tK_i,\tK_j']  =[\tK_i',\tK_j']=0,
		\label{eq:KK}
		\\
		\tK_i E_j=\bv_i^{c_{ij}} E_j \tK_i, & \qquad \tK_i F_j=\bv_i^{-c_{ij}} F_j \tK_i,
		\label{eq:EK}
		\\
		\tK_i' E_j=\bv_i^{-c_{ij}} E_j \tK_i', & \qquad \tK_i' F_j=\bv_i^{c_{ij}} F_j \tK_i',
		\label{eq:K2}
	\end{align}
	and the following quantum Serre relations (for $i\neq j \in \I$),
	\begin{align}
		& \sum_{r=0}^{1-c_{ij}} (-1)^r \left[ \begin{array}{c} 1-c_{ij} \\r \end{array} \right]_{v_i}  E_i^r E_j  E_i^{1-c_{ij}-r}=0,
		\label{eq:serre1} \\
		& \sum_{r=0}^{1-c_{ij}} (-1)^r \left[ \begin{array}{c} 1-c_{ij} \\r \end{array} \right]_{v_i}  F_i^r F_j  F_i^{1-c_{ij}-r}=0.
		\label{eq:serre2}
	\end{align}
	Note that $\tK_i \tK_i'$ are central in $\tU$ for all $i$.

	
	Analogously as for $\tU$, the quantum group $\bU$ is defined to be the $\Q(v)$-algebra generated by $E_i,F_i, K_i, K_i^{-1}$, $i\in \I$, subject to the relations \eqref{eq:KK}--\eqref{eq:serre2} with $\tK_i'$ replaced by $K_i^{-1}$. 
	In fact, $\bU$ is the quotient algebra $\tU$ modulo the two-sided ideal generated by $K_iK_i'-1$ for all $i\in\I$.
	For any $\alpha\in\Z\I$, we can define $K_\alpha,K'_{\alpha}$ naturally.

	\subsection{Star-shaped graphs and loop algebras}

	For $\bp=(p_1,\dots,p_\bt)\in\Z_{>0}^\bt$, let us consider the following star-shaped graph $\Gamma=\Gamma_{p_1,\dots,p_\bt}$:
	
	\begin{center}\setlength{\unitlength}{0.8mm}
		\begin{equation}
			\label{star-shaped}
			\begin{picture}(110,30)(0,35)
				\put(0,40){\circle*{1.4}}
				\put(2,42){\line(1,1){16}}
				\put(20,60){\circle*{1.4}}
				\put(23,60){\line(1,0){13}}
				\put(40,60){\circle*{1.4}}
				\put(43,60){\line(1,0){13}}
				\put(60,58.5){\large$\cdots$}
				\put(70,60){\line(1,0){13}}
				\put(88,60){\circle*{1.4}}

				\put(3,41){\line(4,1){13}}
				\put(20,45){\circle*{1.4}}
				\put(23,45){\line(1,0){13}}
				\put(40,45){\circle*{1.4}}
				\put(43,45){\line(1,0){13}}
				\put(60,43.5){\large$\cdots$}
				\put(70,45){\line(1,0){13}}
				\put(88,45){\circle*{1.4}}
				
				\put(19,30){\Large$\vdots$}
				
				\put(39,30){\Large$\vdots$}
				
				\put(87,30){\Large$\vdots$}
				
				\put(2,38){\line(1,-1){16}}
				\put(20,20){\circle*{1.4}}
				\put(23,20){\line(1,0){13}}
				\put(40,20){\circle*{1.4}}
				\put(43,20){\line(1,0){13}}
				\put(60,18.5){\large$\cdots$}
				\put(70,20){\line(1,0){13}}
				\put(88,20){\circle*{1.4}}
				
				\put(-4.5,39){$\star$}
				
				\put(16,62){\tiny$[1,1]$}
				
				\put(36,62){\tiny$[1,2]$}
				\put(81,62){\tiny$[1,p_1-1]$}

				\put(16.5,47){\tiny$[2,1]$}
				\put(36.5,47){\tiny$[2,2]$}
				\put(81,47){\tiny$[2,p_2-1]$}

				\put(16.5,16){\tiny$[\bt,1]$}
				\put(36.5,16){\tiny$[\bt,2]$}
				\put(81,16){\tiny$[\bt,p_\bt-1]$}

				
			\end{picture}
		\end{equation}
		\vspace{1cm}
	\end{center}
	As marked in the graph, the central vertex is denoted by $\star$. Let $J_1,\dots,J_n$ be the branches, which are subdiagrams of type $A_{p_1-1},\dots,A_{p_\bt-1}$ respectively. Denote by $[i,j]$ the $j$-th vertex in the $i$-th branch. These examples includes all finite-type Dynkin diagrams as well as the affine Dynkin diagrams of types 
	$A_2^{(2)}$, $A_5^{(2)}$,
	$C_2^{(1)}$, $D_4^{(1)}$, $D_4^{(3)}$,  $E_6^{(1)}$,
	$E_6^{(2)}$,
	$E_7^{(1)}$, $E_8^{(1)}$, $F_4^{(1)}$, $G_2^{(1)}$. 
	
	The set of vertices is denoted by $\II$. 
	Recall $\bd=(d_1,d_2,\cdots, d_{\bt})$ in
	\S\ref{subsec:coord-ring}. 
	We endow a valuation $\bd$ to $\Gamma$ by setting $d_{\star}=1$, and $d_{[i,j]}=d_i$ for each vertex $[i,j]\in\II$.
	The GCM $C=(c_{\mu\nu})_{\II\times \II}$ of the valued graph $(\Gamma,\bd)$ consists of the following nonzero entries  $$c_{\mu\mu}=2, \;
	c_{\star,[i,1]}=-d_i, \; c_{[i,1],\star}=-1, \;\text{and}\; c_{[i,j],[i,j+ 1]}=-1=c_{[i,j+1],[i,j]}$$
	for $1\leq i\leq \bt, 1\leq j\leq p_i-2$.

	Let $\fg$ be the Kac-Moody Lie algebra corresponding to the valued graph $(\Gamma,\bd)$ or equivalently the GCM $C$. Let $\cR_0$ be the root system of $\fg$ with simple roots $\{\alpha_\star,\alpha_{ij}\mid 1\leq i\leq\bt, 1\leq j< p_i\}$, and let $Q=\Z\alpha_\star\oplus\bigoplus_{[i,j]\in\II}\Z\alpha_{ij}$ be the root lattice. 
	Denote by $\cR=\Z^\times\delta\cup\{\cR_0+\Z\delta\}$  the root system of the loop algebra $L\fg$, and $\widehat{Q}=Q\oplus\Z\delta$ its root lattice.
	
	Recall the Grothendieck group $K_0(\coh(\X))$ of the corresponding weighted projective line. Then there is a natural isomorphism of $\Z$-modules $K_0(\coh(\X))\cong \widehat{Q}$ given as below for $1\leq i\leq \bt,\;1\le j\le p_i-1, $ $r\in\Z_{>0}$ and $l\in\Z$:
	\begin{align*}
		&\widehat{S_{ij}} \mapsto \alpha_{ij}, 
		&\widehat{S_{i,0}}\mapsto d_i\de-\sum_{j=1}^{p_i-1}\alpha_{ij},\quad
		&\widehat{S_{i,0}^{(p_i-1)}}\mapsto d_i\de-\alpha_{i1}, 
		&\widehat{S_{i,0}^{(rp_i)}}\mapsto rd_i\de,\quad
		&\widehat{\co(l\vec{c})}\mapsto \alpha_\star+l\de.
	\end{align*}
	
	Following \cite{MRY90,CJKT20}, the (twisted) loop algebra $\cl \fg$ is generated by
	the set 
	$$\{h_{\mu,m},x_{\mu,m}^{\pm},c\mid \mu\in\II, m\in d_{\mu}\Z\},$$
	and subject to the following relations $(\mu,\nu\in \II, m,m_1,m_2\in d_\mu\Z,n\in d_\nu\Z)$
	\begin{align}
		& c\text{ is central, }[h_{\mu,m},h_{\nu,n}]=\frac{c_{\mu,\nu}}{d_\nu}\delta_{m+n,0}mc,
		\\
		&[h_{\mu,m},x_{\nu,n}^{\pm}]=\pm c_{\mu\nu}x_{\nu,m+n}^{\pm}, \quad (x^\pm_{\nu,m+n}=0 \text{ if }m\notin d_\nu \Z),
		\\
		&[x_{\mu,m}^+,x_{\nu,n}^-]=\delta_{\mu,\nu}(h_{\nu,m+n}+\frac{m}{d_\mu}\delta_{m+n,0}c),
		\\
		&[x_{\mu,m+d_{\mu\nu}}^\pm,x_{\nu,n}^\pm]=[x_{\mu,m}^\pm,x_{\nu,n+d_{\mu\nu}}^\pm],
		\\
		&[x_{\mu,m}^\pm,x_{\nu,n}^\pm]=0,\text{ if }c_{\mu\nu}=0,
		\\
		\label{lie-serre1}
		&\big[x_{\mu,m_1}^\pm,[x_{\mu,m_2}^\pm,x_{\nu,n}^\pm]\big]=0,\text{ if }c_{\mu\nu}=-1
		,
		\\
		\label{lie-serre2}
		&\sum_{k=0}^{d_\nu-1}\big[x_{\mu,m_1+k}^\pm,[x^\pm_{\mu,m_2+d_\nu-1-k},x^\pm_{\nu,n}]\big]=0 \quad \text{if }\ c_{\mu,\nu}<-1, \text{(i.e., }\mu=\star,\nu=[i,1]).
	\end{align}
	Here 
	\begin{align}
		d_{\mu\nu}:=\max\{d_\mu,d_\nu\}.
	\end{align}
	

	%
	%
	
	\subsection{Quantum loop algebras of star-shaped graphs}
	\label{sec:QLA}

	The Drinfeld double (twisted) quantum loop algebra $\tU_v(\Lg)$ associated to the valued star-shaped graph $(\Gamma,\bd)$ is the $\Q(v)$-algebra generated by $X_{\mu,m}^{\pm}$, $H_{\mu,n}$ and the invertible elements $K_\mu$, $K_\mu'$, $C$, $C'$ for $\mu\in\II$, $m\in d_\mu\Z$, $n\in d_\nu\Z$ subject to the following relations: 
	\begin{align}
		\label{DR1}
		&		C, C' \text{ are  central},\quad [K_\mu,K_\nu]  =[K_\mu,K_\nu']=[K_\mu',K_\nu']=0,
		\\ &[K_\mu,H_{\nu ,n}] =0=[K_\mu',H_{\nu,n}], 
		\\
		\label{DR3}
		&K_\mu X_{\nu,n}^{\pm} =v_\mu^{\pm c_{\mu\nu}} X_{\nu,n}^{\pm}K_\mu,\qquad K_\mu'X_{\nu,n}^{\pm} =v_\mu^{\pm c_{\mu\nu}} X_{\nu,n}^{\pm}K_\mu',
		\\
		\label{DR2}
		&[H_{\mu,\pm m},H_{\nu,\pm n}]=0,\quad 
		[H_{\mu,m},H_{\nu,-n}] = 
		\delta_{m, n} b_{\mu\nu m} \frac{C^m -C'^{m}}{v_\nu -v_\nu^{-1}}, \text{ for }m,n>0,
		\\
		\label{DR4}
		\hspace{-1cm}
		&[H_{\mu,m},X_{\nu, n}^{\pm}] =\begin{cases}\pm b_{\mu\nu m} C^{\frac{m\mp m}{2}} X_{\nu,m+n}^{\pm},&\text{ if }m>0,
			\\
			\pm b_{\mu\nu m} C'^{-\frac{m\pm m}{2}} X_{\nu,m+n}^{\pm},&\text{ if }m<0,
		\end{cases}
		\quad (X^\pm_{\nu,m+n}=0\text{ if }m\notin d_\nu \Z),
		\\
		\label{DR5}
		&[X_{\mu ,m}^+,X_{\nu,n }^-] =\delta_{\mu,\nu}\frac{1}{v_\mu-v_\mu^{-1}} {(C^{-n} K_\mu\psi_{\mu,m+n} - C'^{m} K_\mu' \varphi_{\mu,m+n})}, 
		\\
		\label{DR6}
		&X_{\mu,m+d_{\mu\nu}}^{\pm} X_{\nu,n}^{\pm}-v_\mu^{\pm c_{\mu\nu}} X_{\nu,n}^{\pm} X_{\mu,m+d_{\mu\nu}}^{\pm} =v_\mu^{\pm c_{\mu\nu}} x_{\mu,m}^{\pm} X_{\nu,n+d_{\mu\nu}}^{\pm}- X_{\nu,n+d_{\mu\nu}}^{\pm} X_{\mu,m}^{\pm},
		\\
		&[X^\pm_{\mu,m},X^\pm_{\nu,n}]=0,\text{ if }c_{\mu\nu}=0,
		\\
		\label{DR*}
		&\Sym_{m_1,m_2}\big(X^{\pm}_{\nu,n}X^{\pm}_{\mu,m_1}X^{\pm}_{\mu,m_2}-[2]_{v_\mu}X^{\pm}_{\mu,m_1}X^{\pm}_{\nu,n}X^{\pm}_{\mu,m_2}+X^{\pm}_{\mu,m_1}X^{\pm}_{\mu,m_2}X^{\pm}_{\nu,n}\big)=0,\notag\\
		&\hspace{7cm}\text{ for } \mu,\nu\in\II\setminus\{\star\}, \text{ with }c_{\mu\nu}=-1,
		\\
		\notag
		&\Sym_{m_1,m_2}\sum_{t=0}^{d_\nu-1}v^{d_\nu-1-2t}\big( X_{\nu,n}^{\pm}X_{\mu,m_1\pm (d_\nu-1-t)}^{\pm}X_{\mu,m_2\pm t}^{\pm}-[2]_{v_\nu}X_{\mu,m_1\pm (d_\nu-1-t)}^{\pm}X_{\nu,n}^{\pm}X_{\mu,m_2\pm t}^{\pm}  \\
		\label{DR7}
		&\hspace{3cm}+X_{\mu,m_1\pm (d_\nu-1-t)}^{\pm}X_{\mu,m_2\pm t}^{\pm} X_{\nu,n}^{\pm} \big)=0, \text{ for }\nu=[i,1],\mu=\star,
		\\\notag
		&\Sym_{k_1,\dots,k_r}\sum_{t=0}^{r} (-1)^t \qbinom{r}{t}_{d_\mu}
		X_{\mu,k_1}^{\pm}\cdots
		X_{\mu,k_t}^{\pm} X_{\nu,l}^{\pm}  X_{\mu,k_{t+1}}^{\pm} \cdots X_{\mu,k_r}^{\pm} =0,
		\\
		&\hspace{7cm}\text{ for } r= 1-c_{\mu,\nu}, \nu=\star,\mu=[i,1]. \label{DR8}
	\end{align}
	Here
	$b_{\mu\nu m}$ is defined by
	\begin{align}
		b_{\mu\nu m}=\begin{cases}
			0&\text{ if }d_{\mu\nu}\nmid m,\\
			\frac{[\widetilde{m}c_{\mu\nu}]_{v_\mu}}{\widetilde{m}} & \text{ otherwise, with }\widetilde{m}=\frac{m}{d_{\mu\nu}};
		\end{cases}
	\end{align}
	$\Sym_{m_1,m_2}$ denotes the symmetrization with respect to the indices $m_1,m_2$; $\psi_{\mu,d_\mu m}$ and $\varphi_{\mu,-d_\mu m}$ for $m>0$ are defined by the following functional equations:
	\begin{align}
		\label{exp h+}
		1+ \sum_{m\geq 1} \psi_{\mu,d_\mu m}u^m &=  \exp\Big((v_\mu -v_\mu^{-1}) \sum_{m\ge 1}  H_{\mu,d_\mu m}u^m\Big),
		\\
		\label{exp h-}
		1+ \sum_{m\geq1 }\varphi_{\mu, -d_\mu m}u^{-m} &= \exp \Big((v_\mu^{-1} -v_\mu) \sum_{m\ge 1} H_{\mu,-d_\mu m}u^{-m}\Big);
	\end{align}
	and $\psi_{\mu,r}=0=\varphi_{\mu,-r}$ if $r<0$ or $d_\mu\nmid r$. For convenience, we set $H_{\mu,0}=0$, and 
	$\psi_{\mu,0}=1=\varphi_{\mu,0}$ in the following.
	
	We call \eqref{DR*}--\eqref{DR8} the quantum Serre relations of $\tU_v(\Lg)$.
	
	\begin{remark}[cf. \cite{Be94}]
		By restricting to the $i$-th branch, the subalgebra of $\tU(L\fg)$
		generated by 
		$X_{[i,j],m}^{\pm1}$, $H_{[i,j],n}$ for $1\leq j\leq p_i-1$, $K_{[i,j]}^{\pm1}$, $(K_{[i,j]}')^{\pm1}$, $C$, $C'$ for $m\in d_i\Z$, $n\in d_i\Z^\times$ is 	the Drinfeld new presentation $\tUD_{v_i}(\widehat{\mathfrak{sl}}_{p_i})$ of $\tU_{v_i}(\widehat{\mathfrak{sl}}_{p_i})$.
	\end{remark}
	
	\begin{remark}
		\label{rem:gen-reduce}
		For any (fixed) $\mu\in\II$, the $\tU(L\fg)$ is generated by $H_{\mu,\pm1}$,  $X_{\nu,0}^{\pm1}$, and the invertible elements $C$, $C'$, $K_\nu$, $K_\nu'$ for all $\nu\in\II$.
	\end{remark}
	
	\begin{proof}
		The proof is similar to \cite[Lemma 2.7]{LR24b}, hence omitted here.
	\end{proof}
	
	Introduced by Drinfeld \cite{Dr88,Da12,CJKT23}, the quantum loop algebra $\U_v(\Lg)$ can be described as a quotient algebra of $\tU_v(\Lg)$ modulo the ideal generated by
	$$CC'-1,\qquad K_\mu K_\mu'-1, \,\,\forall \mu\in\II.$$

	For $(\Gamma,\bd)$ of Dynkin type, 
	from \cite{Dr88,Be94,Da12}, we know that $\tU_v(L\fg)$ is isomorphic to the (twisted) affine quantum group,  we give their precise type in Table \ref{tab1}. In this case, we also write $\tU_v(L\fg)$ to be $\tUD_v(\widehat{\fg})$ in the following.

	%
	%
	
	
	\begin{table}[h]  
		\begin{center}
			\centering
			\begin{tabular}{|m{7cm}<{\centering}|m{8cm}<{\centering}|}
				\hline
				Star-shaped graphs ($\Gamma,\bd$)& Types of quantum affine algebras\\
				\hline
				\begin{tikzpicture}[baseline=0.0, scale=1.5]
					\node at (-1,0.4){\tiny $\star$};
					\node at (-0.5,0.1) {\tiny$\bullet$};
					\node at (-0.5,0.7) {\tiny$\bullet$};
					\draw[-] (-0.95,0.38) to (-0.5,0.1);
					\draw[-] (-.95,0.42) to (-0.5,0.7);
					\node at (0,0.1) {\tiny$\bullet$};
					\node at (0,0.7) {\tiny$\bullet$};
					\draw[-] (-0.5,0.1) to (0,0.1);
					\draw[-] (-0.5,0.7) to (0,0.7);
					
					\node at (0.7,0.1) {\tiny$\bullet$};
					\node at (0.7,0.7) {\tiny$\bullet$};
					
					\draw[dashed] (0.1,0.1) to (0.6,0.1);
					\draw[dashed] (0.1,0.7) to (0.6,0.7);
					
					\node at (1.2,0.1) {\tiny$\bullet$};
					\node at (1.2,0.7) {\tiny$\bullet$};
					
					\draw[-] (0.7,0.1) to (1.2,0.1);
					\draw[-] (0.7,0.7) to (1.2,0.7);
					
					\node at (-0.5,0.85) {\tiny$[1,1]$};
					\node at (0,0.85) {\tiny$[1,2]$};
					\node at (1.2,0.85) {\tiny$[1,p_1-1]$};
					
					\node at (-0.5,-0.05) {\tiny$[2,1]$};
					\node at (0,-0.05) {\tiny$[2,2]$};
					\node at (1.2,-0.05) {\tiny$[2,p_2-1]$};
					
					\node at (-1.8,0.4) {\tiny $A_n$ };
					\node at (-1.8,0.1) {\tiny ($n=p_1+p_2-1$)};
				\end{tikzpicture}
				&
				\begin{tikzpicture}[baseline=0.0, scale=1.5]
					\node at (-1,0.4){\tiny $\star$};
					\node at (-0.5,0.1) {\tiny$\bullet$};
					\node at (-0.5,0.7) {\tiny$\bullet$};
					\draw[-] (-0.95,0.38) to (-0.5,0.1);
					\draw[-] (-.95,0.42) to (-0.5,0.7);
					\node at (0,0.1) {\tiny$\bullet$};
					\node at (0,0.7) {\tiny$\bullet$};
					\draw[-] (-0.5,0.1) to (0,0.1);
					\draw[-] (-0.5,0.7) to (0,0.7);
					
					\node at (0.7,0.1) {\tiny$\bullet$};
					\node at (0.7,0.7) {\tiny$\bullet$};
					
					\draw[dashed] (0.1,0.1) to (0.6,0.1);
					\draw[dashed] (0.1,0.7) to (0.6,0.7);
					
					\node at (1.2,0.1) {\tiny$\bullet$};
					\node at (1.2,0.7) {\tiny$\bullet$};
					
					\draw[-] (0.7,0.1) to (1.2,0.1);
					\draw[-] (0.7,0.7) to (1.2,0.7);
					
					\node at (-0.5,0.85) {\tiny$[1,1]$};
					\node at (0,0.85) {\tiny$[1,2]$};
					\node at (1.2,0.85) {\tiny$[1,p_1-1]$};
					
					\node at (-0.5,-0.05) {\tiny$[2,1]$};
					\node at (0,-0.05) {\tiny$[2,2]$};
					\node at (1.2,-0.05) {\tiny$[2,p_2-1]$};
					
					\node at (1.7,0.4) {\tiny$\bullet$};
					\node at (1.7,0.25){\tiny$0$};
					\draw[-] (1.2,0.7) to (1.7,0.4);
					\draw[-] (1.2,0.1) to (1.7,0.4);

					\node at (-1.8,0.4) {\tiny $A^{(1)}_n$ };
					\node at (-1.8,0.1) {\tiny ($n=p_1+p_2-1$)};
				\end{tikzpicture}
				\\
				\hline
				\begin{tikzpicture}[baseline=0.0, scale=1.5]
					\node at (-0.5,0.1) {\tiny $\bullet$};
					\node at (-0.5,0.7) {\tiny $\bullet$};
					\node at (0,0.4) {\tiny $\star$};
					\node at (0.5,0.4) {\tiny $\bullet$};
					\node at (1.5,0.4) {\tiny $\bullet$};
					\node at (2.0,0.4) {\tiny $\bullet$};
					\draw[-] (0.05,0.4) to (0.45,0.4);
					\draw[-] (-0.45,0.67) to (-0.05,0.44);
					\draw[-] (-0.45,0.14) to (-0.05,0.37);
					\draw[-] (1.55,0.4) to (1.95,0.4);
					
					\draw[dashed] (0.55,0.4) to (1.45,0.4);
					
					\node at (2,0.2) {\tiny $[1,p_1-1]$};
					\node at (0.5,0.2) {\tiny $[1,1]$};
					\node at (-0.5,-.1) {\tiny $[3,1]$};
					\node at (-0.5,0.9) {\tiny $[2,1]$};

					\node at (-1.3,0.4) {\tiny $D_n$ };
					\node at (-1.3,0.1) {\tiny ($n=p_1+2$)};
				\end{tikzpicture}
				&  \begin{tikzpicture}[baseline=0.0, scale=1.5]
					\node at (-0.5,0.1) {\tiny $\bullet$};
					\node at (-0.5,0.7) {\tiny $\bullet$};
					\node at (0,0.4) {\tiny $\star$};
					\node at (0.5,0.4) {\tiny $\bullet$};
					\node at (1.5,0.4) {\tiny $\bullet$};
					\node at (2.0,0.4) {\tiny $\bullet$};
					\node at (2.5,0.1) {\tiny $\bullet$};
					\node at (2.5,0.7) {\tiny $\bullet$};
					\draw[-] (0.05,0.4) to (0.45,0.4);
					\draw[-] (-0.45,0.67) to (-0.05,0.44);
					\draw[-] (-0.45,0.14) to (-0.05,0.37);
					\draw[-] (1.55,0.4) to (1.95,0.4);
					\draw[-] (2.0,0.4) to (2.5,0.7);
					\draw[-] (2.0,0.4) to (2.5,0.1);
					
					\draw[dashed] (0.55,0.4) to (1.45,0.4);
					
					\node at (1.8,0.2) {\tiny $[1,p_1-2]$};
					\node at (2.5,-0.1) {\tiny $0$};
					\node at (2.5,0.9) {\tiny $[1,p_1-1]$};
					\node at (0.5,0.2) {\tiny $[1,1]$};
					\node at (-0.5,-.1) {\tiny $[3,1]$};
					\node at (-0.5,0.9) {\tiny $[2,1]$};
					
					\node at (-1.5,0.4) {\tiny $D^{(1)}_n$ };
					\node at (-1.5,0.1) {\tiny ($n=p_1+2$)};
				\end{tikzpicture}
				\\
				\hline
				\begin{tikzpicture}[baseline=0, scale=1.5]
					\node at (-1.5,0.4) {\tiny $E_6$ };
					\node	at (-0.5,0.4){$\quad$};
					\node at (-0.5,0.2) {\tiny$\bullet$};
					\node at (0,0.2) {\tiny$\bullet$};
					\node at (0.5,0.2) {\tiny$\star$};
					\node at (1.0,0.2) {\tiny$\bullet$};
					\node at (1.5,0.2) {\tiny$\bullet$};
					
					\node at (0.5,0.7) {\tiny$\bullet$};
					\draw[-] (0.5,0.27) to (0.5,0.7);
					\draw[-] (-0.45,0.2) to (-0.05,0.2);
					\draw[-] (0.05,0.2) to (0.45,0.2);
					\draw[-] (1.05,0.2) to (1.45,0.2);
					\draw[-]  (0.55, 0.2) to (0.95, 0.2);
					\node at (1.5,0) {\tiny $[1,2]$};
					\node at (0,0) {\tiny $[3,1]$};
					\node at (1,0) {\tiny $[1,1]$};
					\node at (-0.5,0) {\tiny $[3,2]$};
					\node	at (-0.5,-0.1){$\quad$};
					\node at (0.8,0.7) {\tiny$[2,1]$};
				\end{tikzpicture}
				
				&\begin{tikzpicture}[baseline=0, scale=1.5]
					\node at (-1.5,0.4) {\tiny $E^{(1)}_6$ };
					\node	at (-0.5,0.4){$\quad$};
					\node at (-0.5,0.2) {\tiny$\bullet$};
					\node at (0,0.2) {\tiny$\bullet$};
					\node at (0.5,0.2) {\tiny$\star$};
					\node at (1.0,0.2) {\tiny$\bullet$};
					\node at (1.5,0.2) {\tiny$\bullet$};
					
					\node at (0.5,0.7) {\tiny$\bullet$};
					\node at (0.5,1.2) {\tiny$\bullet$};
					\draw[-] (0.5,0.27) to (0.5,0.7);
					\draw[-] (0.5,0.7) to (0.5,1.2);
					\draw[-] (-0.45,0.2) to (-0.05,0.2);
					\draw[-] (0.05,0.2) to (0.45,0.2);
					\draw[-] (1.05,0.2) to (1.45,0.2);
					\draw[-]  (0.55, 0.2) to (0.95, 0.2);
					\node at (1.5,0) {\tiny $[1,2]$};
					\node at (0,0) {\tiny $[3,1]$};
					\node at (1,0) {\tiny $[1,1]$};
					\node at (-0.5,0) {\tiny $[3,2]$};
					\node	at (-0.5,-0.1){$\quad$};
					\node at (0.8,0.7) {\tiny$[2,1]$};
					\node at (0.7,1.2) {\tiny$0$};
				\end{tikzpicture}
				\\\hline
				\begin{tikzpicture}[baseline=0, scale=1.5]
					\node at (-1.5,0.4) {\tiny $E_7$ };
					\node	at (-0.5,0.4){$\quad$};
					\node at (-0.5,0.2) {\tiny$\bullet$};
					\node at (0,0.2) {\tiny$\bullet$};
					\node at (0.5,0.2) {\tiny$\star$};
					\node at (1.0,0.2) {\tiny$\bullet$};
					\node at (1.5,0.2) {\tiny$\bullet$};
					
					\node at (0.5,0.7) {\tiny$\bullet$};
					\node at (2,0.2) {\tiny$\bullet$};
					\draw[-] (0.5,0.27) to (0.5,0.7);
					\draw[-] (1.5,0.2) to (2,0.2);
					
					\draw[-] (-0.45,0.2) to (-0.05,0.2);
					\draw[-] (0.05,0.2) to (0.45,0.2);
					\draw[-] (1.05,0.2) to (1.45,0.2);
					\draw[-]  (0.55, 0.2) to (0.95, 0.2);
					\node at (1.5,0) {\tiny $[1,2]$};
					\node at (0,0) {\tiny $[3,1]$};
					\node at (2,0) {\tiny $[1,3]$};
					\node at (1,0) {\tiny $[1,1]$};
					\node at (-0.5,0) {\tiny $[3,2]$};
					\node	at (-0.5,-0.1){$\quad$};
					\node at (0.8,0.7) {\tiny$[2,1]$};
				\end{tikzpicture}
				
				&\begin{tikzpicture}[baseline=0, scale=1.5]
					\node at (-1.5,0.4) {\tiny $E^{(1)}_7$ };
					\node	at (-0.5,0.4){$\quad$};
					\node	at (-1,0.2){\tiny$\bullet$};
					\node	at (-1,0){\tiny$0$};
					\draw[-] (-1,0.2) to (-0.5,0.2);
					\node at (-0.5,0.2) {\tiny$\bullet$};
					\node at (0,0.2) {\tiny$\bullet$};
					\node at (0.5,0.2) {\tiny$\star$};
					\node at (1.0,0.2) {\tiny$\bullet$};
					\node at (1.5,0.2) {\tiny$\bullet$};
					
					\node at (0.5,0.7) {\tiny$\bullet$};
					\node at (2,0.2) {\tiny$\bullet$};
					\draw[-] (0.5,0.27) to (0.5,0.7);
					\draw[-] (1.5,0.2) to (2,0.2);
					
					\draw[-] (-0.45,0.2) to (-0.05,0.2);
					\draw[-] (0.05,0.2) to (0.45,0.2);
					\draw[-] (1.05,0.2) to (1.45,0.2);
					\draw[-]  (0.55, 0.2) to (0.95, 0.2);
					\node at (1.5,0) {\tiny $[1,2]$};
					\node at (0,0) {\tiny $[3,1]$};
					\node at (2,0) {\tiny $[1,3]$};
					\node at (1,0) {\tiny $[1,1]$};
					\node at (-0.5,0) {\tiny $[3,2]$};
					\node	at (-0.5,-0.1){$\quad$};
					\node at (0.8,0.7) {\tiny$[2,1]$};
				\end{tikzpicture}
				\\\hline
				\begin{tikzpicture}[baseline=0, scale=1.5]
					\node at (-1.5,0.4) {\tiny $E_8$ };
					\node	at (-0.5,0.4){$\quad$};
					\node at (-0.5,0.2) {\tiny$\bullet$};
					\node at (0,0.2) {\tiny$\bullet$};
					\node at (0.5,0.2) {\tiny$\star$};
					\node at (1.0,0.2) {\tiny$\bullet$};
					\node at (1.5,0.2) {\tiny$\bullet$};
					
					\node at (0.5,0.7) {\tiny$\bullet$};
					\node at (2,0.2) {\tiny$\bullet$};
					\node at (2.5,0.2) {\tiny$\bullet$};
					\draw[-] (2.5,0.2) to (2,0.2);
					\draw[-] (0.5,0.27) to (0.5,0.7);
					\draw[-] (1.5,0.2) to (2,0.2);
					
					\draw[-] (-0.45,0.2) to (-0.05,0.2);
					\draw[-] (0.05,0.2) to (0.45,0.2);
					\draw[-] (1.05,0.2) to (1.45,0.2);
					\draw[-]  (0.55, 0.2) to (0.95, 0.2);
					\node at (1.5,0) {\tiny $[1,2]$};
					\node at (0,0) {\tiny $[3,1]$};
					\node at (2,0) {\tiny $[1,3]$};
					\node at (2.5,0) {\tiny $[1,4]$};
					\node at (1,0) {\tiny $[1,1]$};
					\node at (-0.5,0) {\tiny $[3,2]$};
					\node	at (-0.5,-0.1){$\quad$};
					\node at (0.8,0.7) {\tiny$[2,1]$};
				\end{tikzpicture}
				
				&\begin{tikzpicture}[baseline=0, scale=1.5]
					\node at (-1.2,0.4) {\tiny $E_8^{(1)}$ };
					\node	at (-0.5,0.4){$\quad$};
					\node at (-0.5,0.2) {\tiny$\bullet$};
					\node at (0,0.2) {\tiny$\bullet$};
					\node at (0.5,0.2) {\tiny$\star$};
					\node at (1.0,0.2) {\tiny$\bullet$};
					\node at (1.5,0.2) {\tiny$\bullet$};
					
					\node at (0.5,0.7) {\tiny$\bullet$};
					\node at (2,0.2) {\tiny$\bullet$};
					\node at (2.5,0.2) {\tiny$\bullet$};
					\node at (3,0.2) {\tiny$\bullet$};
					\draw[-] (2.5,0.2) to (2,0.2);
					\draw[-] (2.5,0.2) to (3,0.2);
					\draw[-] (0.5,0.27) to (0.5,0.7);
					\draw[-] (1.5,0.2) to (2,0.2);
					
					\draw[-] (-0.45,0.2) to (-0.05,0.2);
					\draw[-] (0.05,0.2) to (0.45,0.2);
					\draw[-] (1.05,0.2) to (1.45,0.2);
					\draw[-]  (0.55, 0.2) to (0.95, 0.2);
					\node at (1.5,0) {\tiny $[1,2]$};
					\node at (0,0) {\tiny $[3,1]$};
					\node at (2,0) {\tiny $[1,3]$};
					\node at (2.5,0) {\tiny $[1,4]$};
					\node at (3,0) {\tiny $0$};
					\node at (1,0) {\tiny $[1,1]$};
					\node at (-0.5,0) {\tiny $[3,2]$};
					\node	at (-0.5,-0.1){$\quad$};
					\node at (0.8,0.7) {\tiny$[2,1]$};
				\end{tikzpicture}
				\\\hline
				
				\begin{tikzpicture}[baseline=0, scale=1.5]
					\node at (-1,0.4) {\tiny $B_n$ };
					\node	at (-1,0){\tiny$(n=p_1+1)$};
					\node at (0,0.2) {\tiny$\bullet$};
					\node at (0.5,0.2) {\tiny$\star$};
					\node at (1.0,0.2) {\tiny$\bullet$};
					\node at (1.5,0.2) {\tiny$\bullet$};
					
					\node at (2,0.2) {\tiny$\bullet$};
					\node at (2.5,0.2) {\tiny$\bullet$};
					\draw[-] (2.5,0.2) to (2,0.2);
					
					\draw[dashed] (1.5,0.2) to (2,0.2);
					
					\draw[-implies, double equal sign distance] (0.05,0.2) to (0.45,0.2);
					
					\draw[-] (1.05,0.2) to (1.45,0.2);
					\draw[-]  (0.55, 0.2) to (0.95, 0.2);
					\node at (1.5,0) {\tiny $[1,2]$};
					\node at (0,0) {\tiny $[2,1]$};
					\node at (2.5,0) {\tiny $[1,p_1-1]$};
					\node at (1,0) {\tiny $[1,1]$};
					\node	at (-0.5,0.6){$\quad$};
				\end{tikzpicture}
				
				&
				\begin{tikzpicture}[baseline=0, scale=1.5]
					\node at (-1,0.4) {\tiny $A_{2n-1}^{(2)}$ };
					\node	at (-1,0){\tiny$(n=p_1+1)$};
					\node at (0,0.2) {\tiny$\bullet$};
					\node at (0.5,0.2) {\tiny$\star$};
					\node at (1.0,0.2) {\tiny$\bullet$};
					\node at (1.5,0.2) {\tiny$\bullet$};
					
					\node at (2,0.2) {\tiny$\bullet$};
					\node at (2.5,0.2) {\tiny$\bullet$};
					\node at (3,0.5) {\tiny$\bullet$};
					\node at (3,-0.1) {\tiny$\bullet$};
					\draw[-] (2.5,0.2) to (2,0.2);
					\draw[-] (2.5,0.2) to (3,-0.1);
					\draw[-] (2.5,0.2) to (3,0.5);
					
					\draw[dashed] (1.5,0.2) to (2,0.2);
					
					\draw[-implies, double equal sign distance] (0.05,0.2) to (0.45,0.2);
					
					\draw[-] (1.05,0.2) to (1.45,0.2);
					\draw[-]  (0.55, 0.2) to (0.95, 0.2);
					\node at (1.5,0) {\tiny $[1,2]$};
					\node at (0,0) {\tiny $[2,1]$};
					\node at (3,0.7) {\tiny $[1,p_1-1]$};
					\node at (3,-0.3) {\tiny $0$};
					\node at (1,0) {\tiny $[1,1]$};
					\node	at (-0.5,-0.1){$\quad$};
				\end{tikzpicture}
				\\\hline
				\begin{tikzpicture}[baseline=0, scale=1.5]
					\node at (-0.5,0.4) {\tiny $C_n$ };
					\node	at (-0.5,0){\tiny$(n=p_1)$};
					\node at (0.5,0.2) {\tiny$\star$};
					\node at (1.0,0.2) {\tiny$\bullet$};
					\node at (1.5,0.2) {\tiny$\bullet$};
					
					\node at (2,0.2) {\tiny$\bullet$};
					\node at (2.5,0.2) {\tiny$\bullet$};
					\draw[-] (2.5,0.2) to (2,0.2);
					
					\draw[dashed] (1.5,0.2) to (2,0.2);
					
					\draw[-implies, double equal sign distance] (0.95,0.2) to (0.55,0.2);
					
					\draw[-] (1.05,0.2) to (1.45,0.2);
					\node at (1.5,0) {\tiny $[1,2]$};
					\node at (2.5,0) {\tiny $[1,p_1-1]$};
					\node at (1,0) {\tiny $[1,1]$};
					\node	at (-0.5,-0.1){$\quad$};
				\end{tikzpicture}
				
				&
				\begin{tikzpicture}[baseline=0, scale=1.5]
					\node at (-0.5,0.4) {\tiny $D^{(2)}_{n+1}$ };
					\node	at (-0.5,0){\tiny$(n=p_1)$};
					\node at (0.5,0.2) {\tiny$\star$};
					\node at (1.0,0.2) {\tiny$\bullet$};
					\node at (1.5,0.2) {\tiny$\bullet$};
					\node at (3,0.2) {\tiny$\bullet$};
					\node at (3,0) {\tiny$0$};
					\node at (2,0.2) {\tiny$\bullet$};
					\node at (2.5,0.2) {\tiny$\bullet$};
					\draw[-] (2.5,0.2) to (2,0.2);
					
					\draw[dashed] (1.5,0.2) to (2,0.2);
					
					\draw[-implies, double equal sign distance] (0.95,0.2) to (0.55,0.2);
					
					\draw[-implies, double equal sign distance] (2.55,0.2) to (2.95,0.2);
					\draw[-] (1.05,0.2) to (1.45,0.2);
					\node at (1.5,0) {\tiny $[1,2]$};
					\node at (2.5,0) {\tiny $[1,p_1-1]$};
					\node at (1,0) {\tiny $[1,1]$};
					\node	at (-0.5,-0.1){$\quad$};
				\end{tikzpicture}
				\\\hline
				\begin{tikzpicture}[baseline=0, scale=1.5]
					\node at (-1.2,0.2) {\tiny $F_4$ };
					\node at (-0.5,0.2) {\tiny$\bullet$};
					\node at (0,0.2) {\tiny$\bullet$};
					\node at (0.5,0.2) {\tiny$\star$};
					\node at (1.0,0.2) {\tiny$\bullet$};
					
					
					
					\draw[-] (-0.45,0.2) to (-0.05,0.2);
					\draw[-implies, double equal sign distance] (0.05,0.2) to (0.45,0.2);
					
					\draw[-] (1,0.2) to (0.55,0.2);
					\node at (-0.5,0) {\tiny $[2,2]$};
					\node at (0,0) {\tiny $[2,1]$};
					\node at (1,0) {\tiny $[1,1]$};
					\node	at (-2,0.4){$\quad$};
				\end{tikzpicture}
				
				&
				\begin{tikzpicture}[baseline=0, scale=1.5]
					\node at (-1.2,0.2) {\tiny $E_6^{(2)}$ };
					\node at (-0.5,0.2) {\tiny$\bullet$};
					\node at (0,0.2) {\tiny$\bullet$};
					\node at (0.5,0.2) {\tiny$\star$};
					\node at (1.0,0.2) {\tiny$\bullet$};
					\node at (1.5,0.2) {\tiny$\bullet$};
					\node at (1.5,0) {\tiny$0$};
					\node at (-0.5,0) {\tiny $[2,2]$};
					
					
					\draw[-] (-0.45,0.2) to (-0.05,0.2);
					\draw[-implies, double equal sign distance] (0.05,0.2) to (0.45,0.2);
					
					\draw[-] (1,0.2) to (0.55,0.2);
					\draw[-] (1,0.2) to (1.5,0.2);
					\node at (0,0) {\tiny $[2,1]$};
					\node at (1,0) {\tiny $[1,1]$};
					\node	at (-1.5,-0.1){$\quad$};
				\end{tikzpicture}
				\\
				\hline
				\begin{tikzpicture}[baseline=0, scale=2.2]
					\node at (-0.7,0.2) {\tiny $G_2$ };
					\node at (0,0.2) {\tiny$\bullet$};
					\node at (0.5,0.2) {\tiny$\star$};
					
					
					
					\draw[-implies, double equal sign distance] (0.05,0.2) to (0.45,0.2);
					\draw[-] (0.05,0.2) to (0.45,0.2);
					\node at (0,0) {\tiny $[1,1]$};
					\node	at (-1,0.4){$\quad$};
				\end{tikzpicture}
				
				&
				\begin{tikzpicture}[baseline=0, scale=2.2]
					\node at (-0.7,0.2) {\tiny $D_4^{(3)}$ };
					\node at (0,0.2) {\tiny$\bullet$};
					\node at (0.5,0.2) {\tiny$\star$};
					\node at (1.0,0.2) {\tiny$\bullet$};
					\node at (1.0,0) {\tiny$0$};
					
					
					
					\draw[-implies, double equal sign distance] (0.05,0.2) to (0.45,0.2);
					\draw[-] (0.05,0.2) to (0.45,0.2);
					\draw[-]  (0.55, 0.2) to (0.95, 0.2);
					\node at (0,0) {\tiny $[1,1]$};
					\node	at (-0.5,-0.1){$\quad$};
				\end{tikzpicture}
				\\
				\hline
			\end{tabular} 
		\end{center}
		\caption{Types of star-shaped graphs and quantum affine algebras }
		\label{tab1}
	\end{table}

	
	\section{Semi-derived Ringel-Hall algebras}
	
	\label{sec:Semi-derived}
	
	Let $\Z_n=\Z/n\Z$ for $n\geq1$.
	In this section, we shall review the categories of periodic complexes, Ringel-Hall algebras, semi-derived Ringel-Hall algebras and $\imath$Hall algebras for arbitrary hereditary abelian categories. 
	
	\subsection{Categories of $\Z_2$-graded complexes}
	We assume that $\ca$  is an abelian category. 
	Let $\cc_{\Z_2}(\ca)$ be the  category of $\Z_2$-graded complexes over $\ca$. Namely, an object $M\in\cc_{\Z_2}(\ca)$ is a diagram with objects in $\ca$:
	$$\xymatrix{ M^0 \ar@<0.5ex>[r]^{d^0}& M^1 \ar@<0.5ex>[l]^{d^1}  },\quad d^1d^0=d^0d^1=0.$$
	There is a forgetful functor $\res: \cc_{\Z_2}(\ca)\rightarrow \ca\coprod\ca$, which maps $M=(\xymatrix{ M^0 \ar@<0.5ex>[r]^{d^0}& M^1 \ar@<0.5ex>[l]^{d^1}  })$ to $(M^0, M^{1})$.
	
	We can define the $i$-th cohomology group for $M$, denoted by $H^i(M)$, for any $i\in\Z_2$. A complex is called acyclic if its cohomology group is zero. The subcategory formed by all acyclic complexes are denoted by $\cc_{\Z_2,ac}(\ca)$.

	For any object $X\in\ca$, we define
	\begin{align}
		\label{stalks}
		\begin{split}
			K_X:=&(\xymatrix{ X \ar@<0.5ex>[r]^{1}& X \ar@<0.5ex>[l]^{0}  }),\qquad \,\, K_X^*:=(\xymatrix{ X \ar@<0.5ex>[r]^{0}& X \ar@<0.5ex>[l]^{1}  }),
			\\
			C_X:=&(\xymatrix{ 0 \ar@<0.5ex>[r]& X \ar@<0.5ex>[l]  }),\qquad \quad C_X^*:=(\xymatrix{ X\ar@<0.5ex>[r]& 0 \ar@<0.5ex>[l]  })
		\end{split}
	\end{align}
	in $\cc_{\Z_2}(\ca)$.  Note that $K_X,K_X^*$ are acyclic complexes.
	

	In the following, we always assume that $\ca$ is a hereditary abelian $\bfk$-linear category which is essentially small with finite-dimensional homomorphism and extension spaces.

	By \cite[Proposition 2.3]{LP16}, for any $K,M\in \cc_{\Z_2}(\ca)$ with $K$ acyclic, we define 
	\begin{align*}
		\langle K,M\rangle=\dim\Hom_{\cc_{\Z_2}({\ca})}(K,M)-\dim\Ext^1_{\cc_{\Z_2}({\ca})}(K,M)
	\end{align*}
	and
	\begin{align*}
		\langle M,K\rangle =\dim\Hom_{\cc_{\Z_2}({\ca})}(M,K)-\dim\Ext^1_{\cc_{\Z_2}({\ca})}(M,K).
	\end{align*}
	We call them the Euler forms of $\cc_{\Z_2}(\ca)$. They descend to bilinear forms on the Grothendieck groups $K_0(\cc_{\Z_2,ac}(\ca))$
	and $K_0(\cc_{\Z_2}(\ca))$, denoted by the same symbol:
	$$\langle\cdot,\cdot\rangle:K_0(\cc_{\Z_2,ac}({\ca}))\times K_0(\cc_{\Z_2}(\ca))\longrightarrow \Z,$$
	and
	$$\langle\cdot,\cdot\rangle:K_0(\cc_{\Z_2}(\ca))\times K_0(\cc_{\Z_2,ac}({\ca}))\longrightarrow \Z.$$
	We can use the same symbol, since these two forms coincide on $K_0(\cc_{\Z_2,ac}({\ca}))\times K_0(\cc_{\Z_2,ac}({\ca}))$.

	We also use $\langle \cdot,\cdot\rangle$ to denote the Euler form of $\ca$ or $K_0(\ca)$, i.e.,
	\begin{align*}
		\langle \widehat{A}, \widehat{B}\rangle=\dim\Hom_\ca(A,B)-\dim\Ext^1_\ca(A,B), \text{ for any }A,B\in\ca.
	\end{align*}
	Let $(\cdot,\cdot)$ be the symmetrized Euler form of $\ca$, i.e., $(\widehat{A},\widehat{B})= \langle \widehat{A}, \widehat{B}\rangle+\langle \widehat{B}, \widehat{A}\rangle$.
	
	


	
	
	\subsection{Ringel-Hall algebras}
	
	Denote by $|\mathcal{J}|$ for the  cardinality of a finite set $\mathcal{J}$.	
	Given objects $A,B,C\in\ca$, define $\Ext^1_\ca(A,C)_B\subseteq \Ext^1_\ca(A,C)$ to be the subset parameterising extensions with the middle term  isomorphic to $B$. We define the Hall algebra (also called Ringel-Hall algebra) $\ch(\ca)$ of the abelian category $\ca$ to be the $\Q$-vector space whose basis is formed by the isoclasses $[A]$ of objects $A\in\ca$, with the multiplication
	defined by
	\begin{align}
		\label{eq:mult}
		[A]\diamond [C]=\sum_{[B]\in \Iso(\ca)}\frac{|\Ext_\ca^1(A,C)_B|}{|\Hom_\ca(A,C)|}[B].
	\end{align}
	It is well known that
	the algebra $\ch(\ca)$ is associative and unital. The unit is given by $[0]$, where $0$ is the zero object of $\ca$; see \cite{Rin90,Br13}. 

	For any three objects $A,B,C$, let
	\begin{align}
		\label{eq:Fxyz}
		F_{AC}^B:= \big |\{L\subseteq B \mid L \cong C,  B/L\cong A\} \big |.
	\end{align}
	The Riedtmann-Peng formula states that
	\[
	F_{AC}^B= \frac{|\Ext^1(A,C)_B|}{|\Hom(A,C)|} \cdot \frac{|\Aut(B)|}{|\Aut(A)| |\Aut(C)|}.
	\]
	
	For any object $A$, 
	let
	\begin{align*}
		[\![A]\!]:=\frac{[A]}{|\Aut(A)|}.
	\end{align*}
	Then the Hall multiplication \eqref{eq:mult} can be reformulated to be
	\begin{align}
		[\![A]\!]\diamond [\![C]\!]=\sum_{[\![B]\!]}F_{AC}^B[\![B]\!],
	\end{align}
	which is the version of Hall multiplication used in \cite{Rin90}.

	\subsection{Semi-derived Ringel-Hall algebras}

	Let $\bfk=\F_q$ and $\sqq=\sqrt{q}$. 
	Let $\widetilde{\ch}(\cc_{\Z_2}(\ca))$ be the twisted Ringel-Hall algebra of $\cc_{\Z_2}(\ca)$ over $\Q(\sqq)$, that is, $\widetilde{\ch}(\cc_{\Z_2}(\ca))$ has a basis formed by the isoclasses $[M]$ of objects $M\in\cc_{\Z_2}(\ca)$, with the multiplication given by
	\begin{align}\label{multiplication formula}
		[L]* [M]=&\sqq^{\langle \res L,\res M\rangle}[L]\diamond [M]
		\\\notag
		=&\sqq^{\langle \res L,\res M\rangle}\sum_{[X]\in \Iso(\cc_{\Z_2}(\ca))}\frac{|\Ext^1_{\cc_{\Z_2}(\ca)}(L,M)_X|}{|\Hom_{\cc_{\Z_2}(\ca)}(L,M)|}[X].
	\end{align}
	
	Let $I_{\Z_2}$ be the two-sided ideal of $\widetilde{\ch}(\cc_{\Z_2}(\ca))$ generated by all differences $[L]-[K\oplus M]$ if there is a short exact sequence in $\cc_{\Z_2}(\ca)$ as follows with $K$ acyclic:
	\begin{equation*}
		\label{eq:ideal}
		0 \longrightarrow K \longrightarrow L \longrightarrow M \longrightarrow 0.
	\end{equation*}
	Let $\widetilde{\ch}(\cc_{\Z_2}(\ca))/I_{\Z_2}$ be the quotient algebra. 
	We also denote by $*$ the induced multiplication in $\widetilde{\ch}(\cc_{\Z_2}(\ca))/I_{\Z_2}$. In the following, we shall use the same symbols both in $\widetilde{\ch}(\cc_{\Z_2}(\ca))$ and $\widetilde{\ch}(\cc_{\Z_2}(\ca))/I_{\Z_2}$.
	Let
	\begin{align}
		\label{multiset}
		S_{\Z_2}:=\{a[K] \in \widetilde{\ch}(\cc_{\Z_2}(\ca))/I_{\Z_2} \mid  a \in \Q(\sqq)^\times, K\in\cc_{\Z_2,ac}(\ca)\},
	\end{align}
	which is a multiplicatively closed subset with the identity $[0]\in S_{\Z_2}$.

	\begin{proposition}
		[\cite{LP16}]
		\label{proposition localizaition of Ringel-Hall algebra}
		The multiplicatively closed subset $S_{\Z_2}$ is a right Ore, right reversible subset of $\widetilde{\ch}(\cc_{\Z_2}(\ca))/I_{\Z_2}$. Equivalently, the right localization of $\widetilde{\ch}(\cc_{\Z_2}(\ca))/I_{\Z_2}$ with respect to $S_{\Z_2}$ exists, which is called the twisted semi-derived Ringel-Hall algebra of $\ca$, and denoted by $\tMH(\ca)$.
	\end{proposition}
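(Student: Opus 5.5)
Write $\ch=\widetilde{\ch}(\cc_{\Z_2}(\ca))/I_{\Z_2}$. The plan is to reduce both the Ore and the reversibility conditions to two structural facts about multiplying by classes of acyclic complexes. \textbf{Step 1: acyclic complexes in $\cc_{\Z_2}(\ca)$.} Since $\ca$ is hereditary, every acyclic $K$ is isomorphic to $K_X\oplus K_Y^*$ for some $X,Y\in\ca$: take $X=\Im d^1$ and $Y=\Im d^0$, and split off using the short exact sequences $0\to\ker\to K^i\to\Im\to 0$. Moreover $K_X$ and $K_X^*$ are projective–injective relative to the exact structure induced from $\res$.

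\textbf{Step 2: quasi-commutation.} I will prove that for every $M\in\cc_{\Z_2}(\ca)$ and every $X\in\ca$,
\[
[K_X]*[M]=\sqq^{a}\,[K_X\oplus M] \quad\text{and}\quad [M]*[K_X]=\sqq^{b}\,[K_X\oplus M]
\]
in $\ch$, for explicit exponents $a,b$ given by Euler forms. The same statement holds for $K_X^*$.

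The argument goes in two parts.
\begin{enumerate}
\item[(i)] In the product $[K_X]*[M]$, every middle term $L$ of an extension of $M$ by $K_X$ fits into $0\to K_X\to L\to M\to 0$ with $K_X$ acyclic. Hence $[L]=[K_X\oplus M]$ modulo $I_{\Z_2}$. Summing the Hall coefficients then gives $|\Ext^1(K_X,M)|/|\Hom(K_X,M)|$.
\item[(ii)] The quantity $\dim\Ext^1-\dim\Hom$ is $-\langle K_X,M\rangle$, which only depends on classes. Together with the twist $\sqq^{\langle\res K_X,\res M\rangle}$, this yields the power $\sqq^{a}$.
\end{enumerate}
The reverse product $[M]*[K_X]$ is handled symmetrically; there the sequence $0\to M\to L\to K_X\to 0$ splits because $K_X$ is projective in the relevant sense. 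Consequently $[K]*[M]=\sqq^{c}[M]*[K]$ for an integer $c=c(\widehat K,\widehat M)$. Note also that $[K_X]*[K_Y]$ is a scalar multiple of $[K_{X\oplus Y}]$ and of $[K_Y]*[K_X]$.

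\textbf{Step 3: Ore and reversibility.} From Step 2, $S_{\Z_2}$ is multiplicatively closed up to nonzero scalars. Every element of $S_{\Z_2}$ skew-commutes with each basis element $[M]$, via a scalar depending only on classes in $K_0$.
\begin{enumerate}
\item[(a)] Right Ore condition: given $s\in S_{\Z_2}$ and $r=\sum_M r_M[M]$, I need $s'$ and $r'$ with $rs'=sr'$. Take $s'=s$ and $r'=\sum_M r_M\sqq^{-c(\widehat K,\widehat M)}[M]$. Grouping by Grothendieck class handles the finite sum.
\item[(b)] Right reversibility: if $sr=0$, then $r s=\sum_M r_M\sqq^{-c}\,s[M]$ regrouped. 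The cleanest route is to show that left multiplication by $[K]$ is injective on $\ch$. It maps the basis $[M]$ (modulo $I_{\Z_2}$) to scalar multiples of $[K\oplus M]$, and $M\mapsto K\oplus M$ is injective on isoclasses by Krull–Schmidt.
\end{enumerate}
Both conditions then follow, so the right localization $\tMH(\ca)$ exists.

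\textbf{Main obstacle.} The hardest point is (b): I need a well-defined description of a basis of $\ch$, that is, to know that $I_{\Z_2}$ does not collapse distinct $[K\oplus M]$. My first attempt would be to show that $\ch$ has basis $\{[K\oplus M]\}$ indexed by pairs (class of acyclic $K$, isoclass of $M$ with no acyclic summands). I would identify $\ch$ with a twisted tensor product of the quantum torus of acyclics and the span of such $M$, as in the construction of \cite{LP16}. With that basis in hand, injectivity of multiplication by $[K]$, and hence reversibility, is immediate.
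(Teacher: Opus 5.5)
Your proposal has genuine gaps; the two main ones are in Step~2 and Step~3(b).

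\textbf{Step 2 (quasi-commutation).} The step fails on the nontrivial side. Consider $0\to M\to L\to K_X\to 0$, where the acyclic complex is a \emph{quotient}. With the paper's convention $[A]\diamond[C]=\sum_B\frac{|\Ext^1(A,C)_B|}{|\Hom(A,C)|}[B]$ this is the product $[K_X]*[M]$; your labels of the two products are swapped. Here $I_{\Z_2}$ says nothing, and the sequence need not split. $K_X$ is projective only relative to sequences split in degree $0$, and absolutely projective only when $X$ is projective in $\ca$. The Hall product, however, counts all extensions in $\cc_{\Z_2}(\ca)$. Since $X\mapsto K_X$ is an exact left adjoint of the exact functor $M\mapsto M^0$, one has $\Ext^1_{\cc_{\Z_2}(\ca)}(K_X,M)\cong\Ext^1_{\ca}(X,M^0)$. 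This is typically nonzero, e.g.\ $X=\co$, $M=C^*_{\co(-2)}$ over $\P^1_\bfk$, a category with no nonzero projectives. So the exact quasi-commutation $[K]*[M]=\sqq^{c}[M]*[K]$ in $\widetilde{\ch}(\cc_{\Z_2}(\ca))/I_{\Z_2}$ is unjustified, and 3(a) with $s'=s$ falls with it.

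\textbf{Step 1 (structure of acyclics).} This is false as well: being hereditary does not make $0\to\ker d^i\to K^i\to\Im d^i\to 0$ split. Take $K^0=K^1=S_x^{(2)}$ with $d^0=d^1$ the nilpotent endomorphism. Then $K$ is acyclic, but $\ker d^0=S_x$ is not a summand of $K^0$, and $K$ has no nonzero proper acyclic subcomplex at all.

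\textbf{Step 3(b) (reversibility).} The basis you propose cannot be right. Any non-split $0\to K\to L\to M\to 0$ with $K$ acyclic and $L$ without acyclic summands gives $[L]=[K\oplus M]$, identifying two of your ``basis'' vectors. An example is $L^0=S_x$, $L^1=S_x^{(2)}$, $d^0$ the socle inclusion, $d^1=0$, which contains $K_{S_x}$ with quotient $C_{S_x}$. So injectivity of $[K]*-$ is not available this way.

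\textbf{The missing idea.} The hard side becomes correct only after right multiplication by an auxiliary acyclic complex, and this is exactly the shape the right Ore and right reversibility conditions require.
\begin{itemize}
\item Put $E=L^0$. By adjunction, $1_E$ gives $K_E\to L$ lifting $K_E\to K_X$, so $L$ is the pushout of $0\to K_{M^0}\to K_E\to K_X\to 0$ along $K_{M^0}\to M$.
\item Hence $0\to K_{M^0}\to M\oplus K_E\to L\to 0$ is exact, and in the quotient $[K_{M^0}\oplus L]=[M\oplus K_E]=[K_{M^0}\oplus K_X\oplus M]$.
\item Multiplying on the right by $[K_{M^0}]$, which is the easy side, gives $[K_X]*[M]*[K_{M^0}]=c\,[M]*[K_X\oplus K_{M^0}]$. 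Here $c$ is a power of $\sqq$ depending only on classes. This is the right Ore condition with $s'\neq s$.
\item $I_{\Z_2}$ is homogeneous for the $K_0$-grading and all scalars depend only on classes. Take one common auxiliary $Q=\bigoplus_M K_{M^0}$ over the finitely many $M$ in the support of $r$. Then $[K_X]*r=0$ implies $r*[K_X\oplus Q]=0$, which is right reversibility, with no basis of the quotient needed.
\item The same works for $K_Y^*$ using degree $1$.
\item An arbitrary acyclic $K$ reduces to these via the exact sequence $0\to K[1]\to K_{K^0}\oplus K^*_{K^1}\to K\to 0$, valid for any $K$. It shows that $[K]*[K[1]]$ is a nonzero multiple of $[K_{K^0}]*[K^*_{K^1}]$. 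Elements of $S_{\Z_2}$ commute up to scalars, since both orders are on the easy side.
\end{itemize}
Relations of the type in Lemma~\ref{lem:KC} then hold in the localization.
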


	Let $\ct_{\Z_2}(\ca)$ be the subalgebra of $\tMH(\ca)$ generated by $[K]^{\pm1}$ for all acyclic complexes $K$. Then $\ct_{\Z_2}(\ca)$ is a commutative algebra; see \cite[\S4.3]{LP16}.
	Moreover, $\ct_{\Z_2}(\ca)$ is isomorphic to the group algebra of the Grothendieck group $K_0(\cc_{\Z_2,ac}(\ca))$.
	
	For any $\alpha\in K_0(\ca)$, there exist $A,B\in\ca$ such that $\alpha=\widehat{A}-\widehat{B}$, and we set
	\begin{align}
		\label{eq:Kalpha}
		[K_\alpha]:=[K_A]* [K_B]^{-1},\qquad [K_\alpha^*]:=[K_A^*]*[K_B^*]^{-1}
	\end{align}
	Then $[K_\alpha],[K_\alpha^*]$ are well defined in $\tMH(\ca)$ (and then in $\ct_{\Z_2}(\ca)$); see \cite[\S3.4]{LP16}.

	Let $\widetilde{\ch}(\ca)$ be the twisted Ringel-Hall algebra, that is, the same vector space as $\ch(\ca)$ equipped with the twisted multiplication
	$$[A]* [B]=\sqq^{\langle A,B\rangle}[A]\diamond [B]$$
	for $[A],[B]\in\Iso(\ca)$.
	Then the maps
	\begin{align}
		\label{eq:Rpm}
		R^+:\widetilde{\ch}(\ca)&\longrightarrow \tMH(\ca),\qquad\qquad R^-:\widetilde{\ch}(\ca)\longrightarrow \tMH(\ca),
		\\\notag
		[A]&\mapsto [C_A],\qquad\qquad\qquad\qquad\qquad [A]\mapsto [C_A^*],
	\end{align}
	are embeddings of algebras.
	Moreover, we have the following triangular decomposition
	\begin{align}
		\tMH(\ca)\cong \widetilde{\ch}(\ca)\otimes \ct_{\Z_2}(\ca)\otimes \widetilde{\ch}(\ca).
	\end{align}

	\begin{lemma}
		[\text{\cite[Lemma 4.3]{LP16}}]
		\label{lem:KC}
		For any $\alpha,\beta\in K_0(\ca)$, $M\in \ca$, we have
		\begin{align}
			\big[[K_\alpha],[K_\beta]\big]=&\big[[K_\alpha],[K_\beta^*]\big]=\big[[K_\alpha^*],[K_\beta^*]\big]=0,
			\\
			[K_\alpha] *[C_M]=&\sqq^{(\alpha,\widehat{M})}[C_M]* [K_\alpha], \qquad \qquad [K_\alpha] *[C_M^*]= \sqq^{-(\alpha,\widehat{M})} [C_M^*]*[K_\alpha],
			\\
			[K_\alpha^*]*[C_M]=&\sqq^{-(\alpha,\widehat{M})} [C_M]*[K_\alpha^*],\qquad\quad [K_\alpha^*]*[C_M^*]=\sqq^{(\alpha,\widehat{M})}[C_M^*]*[K_\alpha^*].
		\end{align}
	\end{lemma}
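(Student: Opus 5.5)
The plan is to reduce everything to the generators of $\ct_{\Z_2}(\ca)$ and to a single computation of a product of a stalk complex with a $K$-complex. First I would check the commutation relations among the $[K_\alpha]$, $[K_\beta^*]$. By \eqref{eq:Kalpha} and bilinearity it suffices to treat $\alpha=\widehat{A}$, $\beta=\widehat{B}$ with $A,B\in\ca$. The classes of acyclic complexes generate the commutative algebra $\ct_{\Z_2}(\ca)$, as recalled above from \cite[\S4.3]{LP16}, so all $[K]$ with $K$ acyclic commute pairwise. Their inverses then commute as well, and this gives the first line.

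For the twisting relations I would again take $\alpha=\widehat{A}$; the case of a general $\alpha$ follows by inverting, since both sides are multiplicative in $\alpha$. I would compute $[K_A]*[C_M]$ and $[C_M]*[K_A]$ directly from \eqref{multiplication formula}. The object $K_A$ is projective and injective in the exact subcategory relevant here. More precisely, $\Ext^1_{\cc_{\Z_2}(\ca)}(K_A,C_M)$ and $\Ext^1_{\cc_{\Z_2}(\ca)}(C_M,K_A)$ are computed via the adjunctions $\Hom_{\cc_{\Z_2}}(K_A,N)\cong\Hom_\ca(A,N^0)$ and $\Hom_{\cc_{\Z_2}}(N,K_A)\cong\Hom_\ca(N^1,A)$. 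Hence every extension of $C_M$ by $K_A$, in either order, has middle term $L$ fitting into $0\to K_A\to L\to C_M\to0$ with $K_A$ acyclic. In the quotient by $I_{\Z_2}$ this gives $[L]=[K_A\oplus C_M]$. So each product equals a scalar times $[K_A\oplus C_M]$, where the scalar is $\sqq^{\langle\res,\res\rangle}$ times $\sum_L|\Ext^1(\cdot,\cdot)_L|/|\Hom(\cdot,\cdot)|=|\Ext^1|/|\Hom|=q^{-\langle\cdot,\cdot\rangle_{\cc_{\Z_2}}}$.

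It remains to compare the two scalars. Using the adjunctions and the Euler forms of \cite[Proposition 2.3]{LP16}, I expect $\langle K_A,C_M\rangle=\langle A,M\rangle$ and $\langle C_M,K_A\rangle=\langle M,A\rangle$, the latter coming through the degree-$1$ component. Also $\res K_A=(A,A)$ and $\res C_M=(0,M)$, so $\langle\res K_A,\res C_M\rangle=\langle A,M\rangle$ and $\langle\res C_M,\res K_A\rangle=\langle M,A\rangle$. Therefore
\[
[K_A]*[C_M]=\sqq^{\langle A,M\rangle-2\langle A,M\rangle}[K_A\oplus C_M],\qquad [C_M]*[K_A]=\sqq^{\langle M,A\rangle-2\langle M,A\rangle}[K_A\oplus C_M],
\]
and the ratio is $\sqq^{\langle M,A\rangle-\langle A,M\rangle}$.

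The main obstacle is exactly this sign and normalization bookkeeping. One has to make sure whether the ratio is $\sqq^{(\alpha,\widehat{M})}$ as claimed, which requires pinning down the correct Euler forms in the complex category rather than the naive ones. I would carefully recompute $\dim\Hom$ and $\dim\Ext^1$ between $K_A$ and $C_M$, including the possibly nonzero $\Ext^1_{\cc_{\Z_2}}(K_A,C_M)$ coming from $\Ext^1_\ca(A,M)$. I expect this to produce the symmetric exponent $(\widehat{A},\widehat{M})$. The remaining three identities, those for $C_M^*$ and $K^*_\alpha$, follow by the same computation with the roles of degrees $0$ and $1$ swapped, which flips the sign.
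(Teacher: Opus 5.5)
Your overall strategy is the right one: reduce to $\alpha=\widehat{A}$, compute both products using the Hom/Ext adjunctions for $K_A$, and use $I_{\Z_2}$ to identify the middle terms. The reduction and the first line are fine. But the decisive computation is wrong, and as written the proposal derives the exponent $\langle\widehat{M},\widehat{A}\rangle-\langle\widehat{A},\widehat{M}\rangle$, which contradicts the statement.

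The adjunction you quote gives $\Hom_{\cc_{\Z_2}(\ca)}(K_A,N)\cong\Hom_\ca(A,N^0)$, and likewise $\Ext^1_{\cc_{\Z_2}(\ca)}(K_A,N)\cong\Ext^1_\ca(A,N^0)$. So maps and extensions out of $K_A$ only see the degree-$0$ component of the target. Since $C_M^0=0$, both groups vanish and $\langle K_A,C_M\rangle=0$, not $\langle\widehat{A},\widehat{M}\rangle$; in effect you placed $M$ in degree $0$, i.e. computed with $C_M^*$. You can also see this directly. In $0\to C_M\to L\to K_A\to 0$ one has $L^0\cong A$, the differential $d^0_L$ is a section of $L^1\to A$, and $d^1_L=0$, so $L\cong K_A\oplus C_M$. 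Hence $[K_A]*[C_M]=\sqq^{\langle\widehat{A},\widehat{M}\rangle}[K_A\oplus C_M]$. Your formula $[C_M]*[K_A]=\sqq^{-\langle\widehat{M},\widehat{A}\rangle}[K_A\oplus C_M]$ is correct. The ratio is therefore $\sqq^{\langle\widehat{A},\widehat{M}\rangle+\langle\widehat{M},\widehat{A}\rangle}=\sqq^{(\widehat{A},\widehat{M})}$. Your proposed remedy, a possibly nonzero $\Ext^1(K_A,C_M)$ coming from $\Ext^1_\ca(A,M)$, points the wrong way: the symmetric exponent comes precisely from that group being zero.

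There is a second gap in the remaining relations. The claim that every extension, in either order, has $K_A$ as a subobject is false. It is harmless for $C_M$ only because $\Ext^1(K_A,C_M)=0$.

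For $[K_A]*[C_M^*]$ (and symmetrically $[K_A^*]*[C_M]$), the nonsplit extensions are $0\to C_M^*\to L\to K_A\to 0$, classified by $\Ext^1(K_A,C_M^*)\cong\Ext^1_\ca(A,M)$, which is generally nonzero. Here the acyclic complex is the quotient, a case the generators of $I_{\Z_2}$ do not cover. You need the extra fact that $[L]=[K\oplus N]$ in $\tMH(\ca)$ also when the acyclic $K$ is the quotient. One way to get it:
\begin{itemize}
\item such an $L$ is the pushout of $0\to K_{N^0}\to K_E\to K_A\to 0$ along the counit $K_{N^0}\to N$, which gives $0\to K_{N^0}\to K_E\oplus N\to L\to 0$;
\item compare this with the split sequence $0\to K_{N^0}\to K_E\oplus N\to K_A\oplus N\to 0$;
\item cancel the invertible $[K_{N^0}]$.
\end{itemize}

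Finally, swapping degrees (the shift) only carries the $K$--$C$ relation to the $K^*$--$C^*$ one. For the mixed relations the roles reverse. Now $[C_M^*]*[K_A]=\sqq^{\langle\widehat{M},\widehat{A}\rangle}[K_A\oplus C_M^*]$ is the split product, while $[K_A]*[C_M^*]=\sqq^{-\langle\widehat{A},\widehat{M}\rangle}[K_A\oplus C_M^*]$. This is where the sign $-(\alpha,\widehat{M})$ comes from, and these relations need their own computation.
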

	
	Let $\widetilde{\ch}^e(\ca)$ be the extended Hall algebra, which is defined as an extension of $\widetilde{\ch}(\ca)$ by adjoining symbols $k_\alpha$ for classes $\alpha\in K_0(\ca)$, and imposing relations
	\begin{align}
		k_\alpha*k_\beta= k_{\alpha+\beta},\qquad k_\alpha*[X]= \sqq^{(\alpha,\widehat{X})}[X]*k_\alpha.  
	\end{align}
	Denote by $\cd\widetilde{\ch}(\ca)$ the Drinfeld double Hall algebra of $\ca$, which is defined on $\widetilde{\ch}^e(\ca)\otimes \widetilde{\ch}^e(\ca)$; see \cite{X97}. 
	\begin{theorem}[{\cite[Theorem 4.9]{LP16}}]
		\label{thm:SDH-DH-iso}
		The twisted semi-derived 
		Ringel-Hall algebra $\cs\cd\widetilde{\ch}(\ca)$ is ismorphic to the Drinfeld double Hall algebra $\cd\widetilde{\ch}(\ca)$ of $\ca$, by mapping 
		\begin{align}
			& [C_X]\mapsto [X]\otimes 1,\qquad [C_X^*]\mapsto 1\otimes [X],\qquad \forall X\in\ca;
			\\
			& [K_\alpha]\mapsto k_\alpha\otimes 1,\qquad [K_\alpha^*]\mapsto 1\otimes k_\alpha,\qquad \forall \alpha\in K_0(\ca).
		\end{align}
	\end{theorem}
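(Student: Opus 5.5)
The statement to prove is Theorem \ref{thm:SDH-DH-iso}: $\cs\cd\widetilde{\ch}(\ca)\cong\cd\widetilde{\ch}(\ca)$. The plan has three stages: define a map on generators, check the defining relations on both sides, and compare bases using the triangular decompositions.

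\textbf{Step 1: a map out of the Drinfeld double.} Recall that $\cd\widetilde{\ch}(\ca)$ is defined on $\widetilde{\ch}^e(\ca)\otimes\widetilde{\ch}^e(\ca)$ with the following structure:
\begin{itemize}
\item each tensor factor is a subalgebra;
\item the two tori are identified, i.e.\ $k_\alpha\otimes 1$ and $1\otimes k_\alpha$ commute;
\item the cross relations come from the Green pairing $\varphi$ and the comultiplication $\Delta$ via
\[
\sum \varphi(a_{(2)},b_{(1)})\,a_{(1)}b_{(2)}=\sum \varphi(a_{(1)},b_{(2)})\,b_{(1)}a_{(2)}.
\]
\end{itemize}
I will define $\Phi:\cd\widetilde{\ch}(\ca)\to\cs\cd\widetilde{\ch}(\ca)$ by
\[
[X]\otimes1\mapsto[C_X],\qquad 1\otimes[X]\mapsto[C_X^*],\qquad k_\alpha\otimes1\mapsto[K_\alpha],\qquad 1\otimes k_\alpha\mapsto[K^*_\alpha].
\]
This is the inverse of the stated map. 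The relations are checked in three groups.
\begin{itemize}
\item Within each half: $R^\pm$ are algebra embeddings by \eqref{eq:Rpm}.
\item Torus relations: these are exactly Lemma \ref{lem:KC}, together with the fact that $\ct_{\Z_2}(\ca)$ is the group algebra of $K_0(\cc_{\Z_2,ac}(\ca))$. This group is spanned freely by the classes of $K_X$ and $K_X^*$, which handles both $[K_\alpha]$ and $[K^*_\alpha]$.
\end{itemize}

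\textbf{Step 2: the cross relations (main obstacle).} The remaining task is to show that $[C_A]$ and $[C_B^*]$ satisfy the Drinfeld double commutation relation. Since $\cd\widetilde{\ch}$ is generated by elements of the form $[A]\otimes 1$, $1\otimes[B]$ and $k$'s, it suffices to check this on basis elements.

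I will compute both products $[C_B^*]*[C_A]$ and $[C_A]*[C_B^*]$ in $\widetilde{\ch}(\cc_{\Z_2}(\ca))$ directly.
\begin{itemize}
\item An extension of $C_A$ by $C_B^*$ is split as a pair $(B,A)$, and its data is a pair of morphisms $A\to B$, $B\to A$ with zero composites. Because $\ca$ is hereditary, the middle terms can be parametrised by morphisms $f:A\to B$ (and symmetrically for the other order).
\item Each middle term $M_f$ fits into a short exact sequence with an acyclic sub- or quotient complex, isomorphic to $K_{\Im f}$ or $K^*_{\Im f}$. The relations of $I_{\Z_2}$ then rewrite $[M_f]$ as $[K_{\Im f}]*[C_{\ker f}\oplus C^*_{\coker f}]$, or the analogous expression.
\item Sorting by the triples $(\ker f,\Im f,\coker f)$ expresses both products as sums over $[C_{A'}]*[K_{I}]*[C^*_{B'}]$. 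The coefficients are counts of morphisms with prescribed kernel, image and cokernel, which are expressible via the Hall numbers $F^{A}_{A'I}$ and $F^B_{IB'}$.
\end{itemize}
On the Drinfeld double side, I will expand Green's comultiplication $\Delta([A])=\sum \sqq^{\langle A',A''\rangle}\frac{|\Ext^1(A',A'')_A|}{|\Hom(A',A'')|}[A']k_{A''}\otimes[A'']$ and the pairing $\varphi([X],[Y])=\delta_{XY}/|\Aut X|$. The expected identification of the two expansions uses the Riedtmann--Peng formula, and matching the $\sqq$-powers uses the twist by $\langle\res L,\res M\rangle$. This bookkeeping of powers of $\sqq$ and automorphism factors is the step I expect to be hardest. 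To control it, I will first check the primitive case where $A$ and $B$ are simple, and then induct on length via the Hall product, using that both sides respect products in each argument.

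\textbf{Step 3: bijectivity.} By the triangular decomposition $\tMH(\ca)\cong\widetilde{\ch}(\ca)\otimes\ct_{\Z_2}(\ca)\otimes\widetilde{\ch}(\ca)$, the products $[C_A]*[K_\alpha]*[K^*_\beta]*[C^*_B]$ form a basis of the target. The Drinfeld double has the analogous basis $([A]k_\alpha)\otimes(k_\beta[B])$. The map $\Phi$ sends the second basis bijectively onto the first, so $\Phi$ is an isomorphism. Its inverse is the map in the statement.
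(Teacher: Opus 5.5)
The paper does not prove this statement; it only quotes \cite[Theorem 4.9]{LP16}. Your Steps 1 and 3 are sound in outline, with two small corrections. First, in $\cd\widetilde{\ch}(\ca)$ the two tori commute but are not identified; identification only happens in $\cd\widetilde{\ch}_{red}(\ca)$, and Step 3 needs them independent. Second, $K_0(\cc_{\Z_2,ac}(\ca))\cong K_0(\ca)\oplus K_0(\ca)$ via $K_\alpha$, $K^*_\beta$; it is not freely spanned by all $K_X,K^*_X$.

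The gap is in Step 2, which carries the whole content of the theorem, and it occurs in two places. The first is the rewriting of $[M_f]$, where $M_f^0=B$, $M_f^1=A$, $d^0=0$, $d^1=f$. A subcomplex of $M_f$ isomorphic to $K^*_{\Im f}$ exists when $A\twoheadrightarrow\Im f$ splits (e.g.\ $f$ injective), and such a quotient exists when $\Im f\hookrightarrow B$ splits, but neither exists in general. For example, in $\rep^{\rm nil}_\bfk(C_1)$ (torsion sheaves at a closed point of $\P^1_\bfk$) take $A=B=S_0^{(2)}$ and $f$ the nonzero radical endomorphism. An acyclic subcomplex $(N^0,N^1)$ needs $N^1\cap\ker f=0$. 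This forces $N^1=0$, since every nonzero submodule contains the socle $\ker f$. One checks similarly that $M_f$ has no nonzero acyclic quotient. Besides, $I_{\Z_2}$ only involves acyclic \emph{sub}complexes. So no single relation of $I_{\Z_2}$ rewrites $[M_f]$. The identity you want, $[M_f]=\sqq^{e}[K^*_{\Im f}]*[C_{\ker f}\oplus C^*_{\coker f}]$ for a suitable exponent $e$, needs a genuine argument that uses heredity. You invoke heredity only to parametrise the middle terms by $\Hom(A,B)$, which holds in any abelian category. Moreover $[C_{\ker f}\oplus C^*_{\coker f}]\neq[C_{\ker f}]*[C^*_{\coker f}]$. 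So sorting by $(\ker f,\Im f,\coker f)$ gives an expansion in elements $[K_\alpha]*[K^*_\beta]*[C_X\oplus C^*_Y]$, not in the PBW elements $[C_{A'}]*[K_I]*[C^*_{B'}]$ as you claim. The two sides of the cross relation must be compared in the former, or after a triangular inversion.

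The second place is your proposed reduction: check the cross relation for simple $A,B$ and induct on length using multiplicativity in each argument. This fails. Multiplicativity only propagates the relation to the subalgebra generated by the elements where it was checked, and $\widetilde{\ch}(\ca)$ is not generated by simples. In $\coh(\P^1_\bfk)$ or $\coh(\X)$, the categories this paper needs, vector bundles have infinite length and no simple subobjects. Even in $\rep^{\rm nil}_\bfk(C_1)$ the simple generates a one-variable polynomial subalgebra not containing $[S_0^{(2)}]$. As written you only control the composition subalgebra, so $\Phi$ is not shown to be well defined on $\cd\widetilde{\ch}(\ca)$. The cross relation must be verified for arbitrary $A,B$. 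This is feasible because the basic count
\[
\big|\{f\in\Hom(A,B)\mid \ker f\cong A_1,\ \Im f\cong I,\ \coker f\cong B_2\}\big|=F^{A}_{I A_1}\,|\Aut(I)|\,F^{B}_{B_2 I}
\]
holds for all objects: choose $\ker f\subseteq A$, $\Im f\subseteq B$, and an isomorphism $A/\ker f\cong\Im f$. Combined with the Riedtmann--Peng formula and the twists $\sqq^{\langle\res L,\res M\rangle}$, this is what must be matched against Green's comultiplication; as it stands your proposal only announces this matching. For $\coh(\X)$ you should also explain why the cross relation is a finite identity although $\Delta$ is an infinite sum. The reason is that only subobjects of $A$ that are quotients of $B$ (or vice versa) pair nontrivially, and these are images of the finitely many morphisms between $A$ and $B$.
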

	
	Following \cite{X97,BS12,BS13}, we define the double Hall algebra $\cd\widetilde{\ch}_{red}(\ca)$ to be the quotient	of $\cd\widetilde{\ch}(\ca)$  by the ideal generated by $k_\alpha\otimes 1-1\otimes k_\alpha^{-1}$ ($\alpha\in K_0(\ca)$). Correspondingly, the reduced semi-derived Ringel-Hall algebra $\cs\cd\widetilde{\ch}_{red}(\ca)$ is the quotient of $\cs\cd\widetilde{\ch}(\ca)$ by the ideal generated by $[K_\alpha]*[K_\alpha^{*}]-1$ ($\alpha\in K_0(\ca)$). Then we have 
	\begin{align}
		\label{eqn:isored}
		\cs\cd\widetilde{\ch}_{red}(\ca)\cong \cd\widetilde{\ch}_{red}(\ca).
	\end{align}

	\section{Semi-derived Ringel-Hall algebra of the projective line}
	\label{sec:P1}

	In this section, we shall use the semi-derived Ringel-Hall algebra of the projective line $\P_\bfk^1$ to realize the quantum affine $\mathfrak{sl}_2$. 
	\subsection{Drinfeld new presentation of $\tU_v(\widehat{\sll}_2)$}
	\label{subsec:Drsl2}
	
	We write the Drinfeld new presentation of $\tU_v(\widehat{\sll}_2)$ explicitly. From \S\ref{sec:QLA}, $\tUD_v(\widehat{\sll}_2)$ is generated by
	$X^{\pm}_{1,m}$, $H_{1,l}$ for $m\in\Z,n\in\Z^\times$, and the invertible elements $K_{1}$, $K_1'$, $C$ and $C'$, subject to the following relations:
	\begin{align}
		\label{Drsl2-1}
		&C,C' \text{ are central},
		\\
		\label{Drsl2-2}
		&[K_1,K_1']  =  [K_1,H_{1, n}] =[K_1',H_{1, n}]=0,
		\\
		\label{Drsl2-3}
		&[H_{1,\pm m},H_{1,\pm n}]=0,\qquad
		[H_{1,m},H_{1,-n}] = \delta_{m, n} \frac{[2m]}{m} \frac{C^m -C'^{m}}{v -v^{-1}}, \text{ for }m,n>0,
		\\
		\label{Drsl2-4}
		&K_1X_{1,m}^{\pm} =v^{\pm 2} X_{1,m}^{\pm} K_1,\qquad K'_1X_{1,m}^{\pm} =v^{\mp 2} X_{1,m}^{\pm} K'_1,
		\\
		\label{Drsl2-5}
		&[H_{1 ,m},X_{1, n}^{\pm}] =\begin{cases}
			\pm\frac{[2m]}{m} C^{ \frac{m\mp m}2} X_{1,m+n}^{\pm},&\text{ if }m>0,
			\\
			\pm\frac{[2m]}{m}C'^{-\frac{m\pm m}{2}} X_{1,m+n}^{\pm},&\text{ if }m<0,
		\end{cases}
		\\
		\label{Drsl2-6}
		&[X_{1, m}^+,X_{1, n}^-] = \frac{C^{-n} K_1\psi_{1,m+n} - C'^{m} K'_1 \varphi_{1,m+n}}{v-v^{-1}}, 
		\\
		\label{Drsl2-7}
		&X_{1,m+1}^{\pm} X_{1,n}^{\pm}-v^{\pm 2} X_{1,n}^{\pm} X_{1,m+1}^{\pm} =v^{\pm 2} X_{1,m}^{\pm} X_{1,n+1}^{\pm}- X_{1,n+1}^{\pm} X_{1,m}^{\pm}.
	\end{align}
	Here
	$\psi_{1,m}$ and $\varphi_{1,n}$ are defined via \eqref{exp h+}--\eqref{exp h-}.

	\subsection{Realizing  $\tUD_v(\widehat{\sll}_2)$}

	For any closed point $x\in\P_\bfk^1$, denote by $d_x$ the degree of the defining irreducible polynomial associated to $x$.   
	For a partition $\lambda=(\lambda_1,\lambda_2,\cdots)$, let $\ell(\lambda)$ be the number of nonzero parts in $\lambda$, and $|\lambda|=\sum_{j\geq1}\lambda_j$.	We introduce the following elements for $m\ge 1$ in $\tMH(\P^1_\bfk):=\tMH(\coh(\P^1_\bfk))$:
	\begin{align}
		\label{eq:hTm}
		\widehat{\psi}_{1,m}= &\frac{1}{(q-1)\sqq^{m}}\sum_{0\neq f:\co\rightarrow \co(m) } [C_{\coker(f)}],
		\\
		\widehat{\varphi}_{1,-m}=& \frac{1}{(q-1)\sqq^{m}}\sum_{0\neq f:\co\rightarrow \co(m) } [C^*_{\coker(f)}],
		\\
		\label{def:Theta2}
		\widehat{H}_{1,m}=& \sum_{x,d_x|m} \frac{[m]_\sqq}{m} d_x \sum_{|\lambda|=\frac{m}{{d_x}}} \bn_{d_x}(\ell(\lambda)-1)[\![C_{S_x^{(\lambda)}}]\!],
		\\
		\label{def:Theta3}
		\widehat{H}_{1,-m}=& - \sum_{x,d_x|m} \frac{[m]_\sqq}{m} d_x \sum_{|\lambda|=\frac{m}{{d_x}}} \bn_{d_x}(\ell(\lambda)-1)[\![C^*_{S_x^{(\lambda)}}]\!],
	\end{align}
	where 	\[\bn_{d_x}(l)=\prod_{j=1}^l(1-\sqq^{2d_xj}),\quad \forall l\geq0.\]
	We also set
	\[\widehat{\psi}_{1,0}= \widehat{\varphi}_{1,0}=1,\qquad \widehat{\psi}_{1,-m}=\widehat{\varphi}_{1,m}=0, \,\forall m>0.\]

	\begin{proposition}[\text{cf. \cite{Ka97,BKa01,BS12}}]
		\label{prop:Realizationsl2}
		There is an injective algebra homomorphism $\Omega: \tUD_\sqq(\widehat{\mathfrak{sl}}_2) \rightarrow \tMH(\P^1_\bfk)$, which sends
		\begin{align}
			K_1\mapsto [K_\co],\qquad K_1'\mapsto [K_\co^*],
			\qquad	C\mapsto [K_\de],\qquad C'\mapsto [K_\de^*],
			\\\label{real root via line bundles}
			X^+_{1,k}\mapsto \frac{-1}{q-1}[C_{\co(k)}],\qquad X^-_{1,k}\mapsto \frac{\sqq}{q-1}[C^*_{\co(-k)}],
			\\
			\psi_{1,m}\mapsto \widehat{\psi}_{1,m}   \qquad \varphi_{1,-m}\mapsto \widehat{\varphi}_{1,-m},
			\qquad
			H_{1,l}\mapsto \widehat{H}_{1,l},
		\end{align}
		for any $k\in\Z,m>0,l\in\Z^\times$.
	\end{proposition}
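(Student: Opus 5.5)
The plan is to transport the known Drinfeld double realization of quantum affine $\mathfrak{sl}_2$ through the isomorphism of Theorem \ref{thm:SDH-DH-iso}. By Theorem \ref{thm:SDH-DH-iso}, $\tMH(\P^1_\bfk)\cong\cd\widetilde{\ch}(\coh(\P^1_\bfk))$ via $[C_X]\mapsto[X]\otimes1$, $[C_X^*]\mapsto 1\otimes[X]$, $[K_\alpha]\mapsto k_\alpha\otimes1$ and $[K^*_\alpha]\mapsto 1\otimes k_\alpha$. So it suffices to show that the images of the proposed generators in the Drinfeld double satisfy \eqref{Drsl2-1}--\eqref{Drsl2-7}, and that the resulting map is injective.

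\textbf{Relations inside each half.} The relations involving only the positive half, or only the negative half together with the torus, are taken from Kapranov \cite{Ka97} and Baumann--Kassel \cite{BKa01}. Their work realizes the positive current part inside $\widetilde{\ch}^e(\coh(\P^1_\bfk))$:
\begin{itemize}
\item the line bundles $\co(k)$ give $X^+_{1,k}$;
\item the torsion elements \eqref{def:Theta2} give $H_{1,m}$;
\item the elements $\widehat\psi_{1,m}$, built from cokernels of maps $\co\to\co(m)$, give $\psi_{1,m}$.
\end{itemize}
In this half, \eqref{Drsl2-7}, the $+$ case of \eqref{Drsl2-5}, commutativity of the $H_{1,m}$ for $m>0$, and the exponential identity \eqref{exp h+} all hold. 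The check then has three steps.
\begin{enumerate}
\item Verify the normalizations: the scalar $-1/(q-1)$ and the twist $\sqq^{\langle\cdot,\cdot\rangle}$ should match those in \cite{BKa01}. Recall $\langle\co,\co\rangle=1$, $\langle\co,\delta\rangle=1$ and $\langle\delta,\co\rangle=-1$.
\item Obtain the negative-half relations by applying the anti-automorphism/duality that exchanges $C_X$ and $C_X^*$. This explains the sign in $\widehat H_{1,-m}$ and the factor $\sqq/(q-1)$ for $X^-$.
\item Check the torus relations \eqref{Drsl2-2} and \eqref{Drsl2-4} directly from Lemma \ref{lem:KC}. We have $(\widehat\co,\widehat{\co(k)})=2$, which gives $v^{\pm2}$. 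Also $(\widehat\co,\text{torsion})=0$, so $K_1$ commutes with $\widehat H_{1,n}$. Finally $(\delta,\cdot)$ vanishes on everything, so $C$ and $C'$ are central.
\end{enumerate}

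\textbf{Relations across the two halves.} These are \eqref{Drsl2-6}, the second part of \eqref{Drsl2-3}, and \eqref{Drsl2-5} with mixed signs. They are the Drinfeld double computations of \cite[\S4--5]{BS12}; here the reduced double is lifted to the non-reduced double by keeping $k_\alpha\otimes1$ and $1\otimes k_\alpha$ separate, which produces the factors $C^{-n}K_1$ and $C'^mK'_1$. Concretely, in $\cd\widetilde{\ch}$ the commutator $[X^+_{1,m},X^-_{1,n}]$ is computed from the Green coproduct $\Delta[\co(m)]$, whose components are $[\co(m-l)]\otimes[\text{torsion of degree } l]\,k$ plus the term $1\otimes[\co(m)]$, and from the Hopf pairing. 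The pairing of $\co(m)$ with $\co(-n)$ forces $m+n=l\ge0$ (or $\le0$), and the torsion coefficients appearing are exactly those of $\widehat\psi_{1,m+n}$ or $\widehat\varphi_{1,m+n}$. For $[H_{1,m},H_{1,-n}]$ one pairs primitive-like torsion elements. The value $\frac{[2m]}{m}\frac{C^m-C'^m}{v-v^{-1}}$ uses the fact that $\widehat H_{1,m}$ is the logarithmic derivative of $\widehat\psi$, i.e.\ it is primitive up to $k_{m\delta}$, together with the count of points on $\P^1$.

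\textbf{Injectivity.} By Remark \ref{rem:gen-reduce}, and using the PBW/triangular decomposition of $\tUD_v(\widehat{\mathfrak{sl}}_2)$, injectivity reduces to the two halves and the torus. The torus part is injective because $\ct_{\Z_2}$ is the group algebra of $K_0$ of acyclic complexes, in which $\widehat\co,\delta$ together with their starred copies are independent. Each half is injective by \cite{Ka97,BKa01}, since the composition subalgebra is isomorphic to the positive part of $\mathbf U_v(L\mathfrak{sl}_2)$. Combined with the triangular decomposition $\tMH\cong\widetilde{\ch}\otimes\ct_{\Z_2}\otimes\widetilde{\ch}$, this gives injectivity of $\Omega$.

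The main obstacle is bookkeeping in the cross relation \eqref{Drsl2-6}: matching the powers of $\sqq$ and the signs of $\widehat\varphi$ with the non-reduced double and the convention $[K_\alpha]*[K^*_\alpha]\neq1$. I would settle it first in the lowest cases $m+n\in\{0,1\}$, where only $\co$, $\co(1)$ and the degree-one points occur, and then appeal to the generating-function argument of \cite{BS12}.
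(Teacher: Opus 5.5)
Your proposal is correct and follows essentially the same route as the paper: transport along the isomorphism of Theorem~\ref{thm:SDH-DH-iso} to the Drinfeld double Hall algebra, and rely on \cite{Ka97,BKa01,BS12} for the relations, keeping $k_\alpha\otimes 1$ and $1\otimes k_\alpha$ separate. The paper itself only cites the reduced-double result of \cite{BS12} via \eqref{eqn:isored} and asserts that the lift to $\tUD_\sqq(\widehat{\mathfrak{sl}}_2)$ is routine. Your explicit injectivity argument is a reasonable expansion of what the paper leaves to \cite{BS12}, provided the triangular decomposition you use is the Kapranov-type one: the halves are generated by $X^\pm_{1,k}$ and $H_{1,\pm m}$, the torus by $K_1^{\pm1},(K_1')^{\pm1},C^{\pm1},C'^{\pm1}$, and only the spanning property $\tU=\tU^+\tU^0\tU^-$ is then needed.
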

	\begin{proof}
		In \cite{BS12}, the homomorphism from $\UD_\sqq(\widehat{\mathfrak{sl}}_2)$ to the double Hall algebra $\cd\widetilde{\ch}_{red}(\P^1_\bfk)$ (equivalently $\cs\cd\widetilde{\ch}_{red}(\P^1_\bfk)$ by \eqref{eqn:isored}) is established. Based on Theorem \ref{thm:SDH-DH-iso}, although we consider Drinfeld double $\tUD_\bfk(\widehat{\sll}_2)$, the proof therein can be easily generalized to the current setting, and is therefore omitted here.  
	\end{proof}

	\section{Semi-derived Ringel-Hall algebras of cyclic quivers}
	\label{sec:cyclic}

	Recall $d_i$ for $1\leq i\leq \bt$. Let $\bfk_i=\F_{q^{d_i}}$. 
	Let $C_{p_i}$ be the cyclic quiver with the vertex set $\Z_{p_i}=\{0,1,2,\dots,p_i-2,p_i-1\}$; see \eqref{fig:Cn}.
	Recall that $\rep^{\rm nil}_{\bfk_i}(C_{p_i})$ is the category of finite-dimensional nilpotent representations of $C_{p_i}$ over $\bfk_i$. For the Grothendieck group $K_0(\rep^{\rm nil}_{\bfk_i}(C_{p_i}))$, we denote $\alpha_j=\widehat{S_j}$ and $d_i\de=\sum_{j=1}^{p_i}\widehat{S_j}$ by abusing notations. For any positive real root $\beta$ of $\widehat{\mathfrak{sl}}_{p_i}$, by Gabriel-Kac Theorem, we denote by $M(\beta)$ the unique 
	(up to isomorphism) indecomposable object in $\rep^{\rm nil}_{\bfk_i}(C_{p_i})$ with its class $\beta$ in $K_0(\rep^{\rm nil}_{\bfk_i}(C_{p_i}))$.

	\subsection{Root vectors}
	Let $\tMH(\bfk_i C_{p_i})$ be the twisted semi-derived Ringel-Hall algebra of $\rep^{\rm nil}_{\bfk_i}(C_{p_i})$. The following proposition is a generalization of Bridgeland's result \cite{Br13} (without assuming that the quiver is acyclic). Set
	\begin{align*}
		\sqq_i=\sqq^{d_i},\quad \forall 1\leq i\leq \bt.
	\end{align*}
	\begin{proposition}[cf. \text{\cite[Theorem 4.9]{LP16},\cite[Theorem 5.8]{X97}}]
		\label{lem:UslnSDH}
		For the cyclic quiver $C_{p_i}$, there exists an algebra embedding
		\begin{align}
			\label{eq:semiXi}
			&\widetilde{\psi}_{C_{p_i}}:\tU_{\sqq_i}(\widehat{\mathfrak{sl}}_{p_i})\longrightarrow \cs\cd\widetilde{\ch}(\bfk_i C_{p_i})
			\\
			E_j\mapsto&\frac{-1}{q^{d_i}-1}[C_{S_j}], \qquad F_j\mapsto  \frac{\sqq_i}{q^{d_i}-1}[C_{S_{j}}^*],
			\\
			\tK_j\mapsto&[K_{S_j}], \qquad \qquad \tK_j'\mapsto[K^*_{S_j}],
			\qquad\forall 0\leq j\leq p_i-1.
		\end{align}
	\end{proposition}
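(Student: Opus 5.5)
The plan is to deduce the statement from Theorem \ref{thm:SDH-DH-iso} together with Xiao's construction of the Drinfeld double of the Ringel--Hall algebra of a cyclic quiver \cite{X97}, modified in the manner of Bridgeland \cite{Br13}.

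First, I would check that the images satisfy the defining relations \eqref{eq:KK}--\eqref{eq:serre2} of $\tU_{\sqq_i}(\widehat{\mathfrak{sl}}_{p_i})$. For the category $\rep^{\rm nil}_{\bfk_i}(C_{p_i})$, counted over $\F_q$, we have $\langle \widehat{S_j},\widehat{S_{j'}}\rangle=d_i\delta_{jj'}-d_i\delta_{j'\equiv j+1}$. Hence
\[
(\widehat{S_j},\widehat{S_{j'}})=d_i c_{jj'},
\]
where $c_{jj'}$ is the Cartan matrix of $\widehat{\mathfrak{sl}}_{p_i}$ (for $p_i=2$ one gets $c_{01}=-2$, as required). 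Lemma \ref{lem:KC} then gives \eqref{eq:EK}--\eqref{eq:K2} with $v_i=\sqq^{d_i}$, and commutativity of the $[K_\alpha],[K^*_\beta]$ gives the rest of \eqref{eq:KK} apart from the commutator $[E_i,F_j]$.

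Next come the quantum Serre relations \eqref{eq:serre1}, \eqref{eq:serre2}. Since $R^\pm$ in \eqref{eq:Rpm} are algebra embeddings, these reduce to the classical statement that the twisted Ringel--Hall algebra $\widetilde{\ch}(\bfk_iC_{p_i})$ satisfies the Serre relations on simple modules. This is Ringel's theorem for the cyclic quiver over $\F_{q^{d_i}}$. The normalisation $-1/(q^{d_i}-1)$ only rescales the generators, so the relations are unaffected.

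The step I expect to be the main obstacle is the commutator relation
\[
[E_j,F_{j'}]=\delta_{jj'}\frac{K_j-K_j'}{\sqq_i-\sqq_i^{-1}}.
\]
I would compute $[C_{S_j}]*[C^*_{S_{j'}}]$ and $[C^*_{S_{j'}}]*[C_{S_j}]$ directly in $\widetilde{\ch}(\cc_{\Z_2})/I_{\Z_2}$.
\begin{itemize}
\item If $j\neq j'$, the two stalk complexes have only the split extension in either order, because $\Hom(S_j,S_{j'})=0$. Hence the products coincide.
\item If $j=j'$, then $\Ext^1_{\cc_{\Z_2}}(C_{S_j},C^*_{S_j})\cong\Hom(S_j,S_j)=\bfk_i$. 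The nonsplit extensions give $K_{S_j}$ in one order and $K^*_{S_j}$ in the other, and the split term cancels.
\end{itemize}
Tracking the factors $|\Hom|$ and $\sqq^{\langle\cdot,\cdot\rangle}$ should give exactly $(q^{d_i}-1)\sqq^{-d_i}([K_{S_j}]-[K^*_{S_j}])$ up to sign. The constants $-1/(q^{d_i}-1)$ and $\sqq_i/(q^{d_i}-1)$ are chosen so that this becomes $(K_j-K_j')/(\sqq_i-\sqq_i^{-1})$.

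Finally, injectivity. Under Theorem \ref{thm:SDH-DH-iso}, the map factors through Xiao's embedding of $\tU_{\sqq_i}(\widehat{\mathfrak{sl}}_{p_i})$ into the Drinfeld double of the composition subalgebra \cite[Theorem 5.8]{X97}. Alternatively, one uses the triangular decomposition $\tMH\cong\widetilde{\ch}\otimes\ct_{\Z_2}\otimes\widetilde{\ch}$ together with three facts:
\begin{itemize}
\item the positive half $U^+$ embeds via Ringel's theorem, since the composition algebra of the cyclic quiver is isomorphic to $U^+_{\sqq_i}(\widehat{\mathfrak{sl}}_{p_i})$;
\item the same holds for $U^-$;
\item the Cartan part maps isomorphically onto the group algebra of $K_0(\cc_{\Z_2,ac})$, because the $\widehat{S_j}$, $0\le j\le p_i-1$, form a basis of $K_0$.
\end{itemize}
These three facts together give injectivity.
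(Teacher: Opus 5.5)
Your proposal is correct and follows essentially the paper's route. The paper gives no argument beyond citing \cite[Theorem 4.9]{LP16} (i.e.\ Theorem~\ref{thm:SDH-DH-iso}) and \cite[Theorem 5.8]{X97}, and your direct check of the relations plus the triangular-decomposition injectivity argument simply unpacks those two results.

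One small correction in the commutator step: $\res C_{S_j}$ and $\res C^*_{S_j}$ live in different degrees, so the Euler twist is trivial. Hence $[C_{S_j}]*[C^*_{S_j}]-[C^*_{S_j}]*[C_{S_j}]=(q^{d_i}-1)([K^*_{S_j}]-[K_{S_j}])$ exactly, with no factor $\sqq^{-d_i}$. It is precisely this normalization that the constants $\frac{-1}{q^{d_i}-1}$ and $\frac{\sqq_i}{q^{d_i}-1}$ turn into $\frac{[K_{S_j}]-[K^*_{S_j}]}{\sqq_i-\sqq_i^{-1}}$; with your extra factor they would not match.
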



	Let $\tU:=\tU_{v}(\widehat{\mathfrak{sl}_{p_i}})$ be the universal affine quantum group of type $A_{p_i-1}$, and $\tUD$ be its Drinfeld new presentation throughout this section; see \cite[\S5.2]{LR24b}, cf. \cite{Be94,Da12}.

	Recall that $\Phi:\tUD\rightarrow\tU$ is the Drinfeld-Beck isomorphism. We choose the sign function $o(\cdot)$ such that $o(j)=(-1)^j$ for any $1\leq j\leq p_i-1$ throughout this paper.
	
	Define
	\begin{align}\label{the map Psi A}
		\Omega_{C_{p_i}}:=\widetilde{\psi}_{C_{p_i}}\circ \Phi:\tUD_{\sqq_i}\longrightarrow \tMH(\bfk_i C_{p_i}).
	\end{align}
	Then $\Omega_{C_{p_i}}$ sends:
	\begin{align}
		\label{eq:HaDrA1}K_j\mapsto [K_{S_j}],\qquad K_j' \mapsto [K_{S_j}^*],\qquad
		C\mapsto [K_{{d_i}\de}],\qquad C'\mapsto [K_{{d_i}\de}^*],
		\\
		X^+_{j,0}\mapsto \frac{-1}{q^{d_i}-1}[C_{S_j}], \qquad X^-_{j,0}\mapsto \frac{\sqq_i}{q^{d_i}-1}[C_{S_j}^*],\qquad \forall \,\,1\leq j\leq p_i-1.
	\end{align}
	For any $1\leq j\leq p_i-1$, $l\in\Z$ and $r\geq 1$, we define
	\begin{align}
		\label{def:haB}
		\haX^+_{j,l}:=(1-q^{d_i})\Omega_{C_{p_i}}(X^+_{j,l}),&\qquad \haX^-_{j,l}:=(\sqq_i-\sqq_i^{-1})\Omega_{C_{p_i}}(X^-_{j,l}), \qquad \widehat{\bh}_{j,l}:= \Omega_{C_{p_i}}(H_{j,l}),
		\\
		\label{def:haphi}
		\widehat{\psi}_{j,r}:=& \Omega_{C_{p_i}}(\psi_{j,r}),\qquad\widehat{\varphi}_{j,-r}:= \Omega_{C_{p_i}}(\varphi_{j,-r}).
	\end{align}
	In particular, $\haX^+_{j,0}=[C_{S_j}]$, $\haX^-_{j,0}=[C^*_{S_j}]$ for any $1\leq j\leq p_i-1$.

	It is interesting to give explicit formulas for all root vectors in $\tMH(\bfk_i C_{p_i})$, which is difficult in general. We describe some of them below.

	For $1\leq j\leq p_i$ and any $\alpha\in K_0(\rep^{\rm nil}_{\bfk_i} (C_{p_i}))$, set
	\begin{align}
		\label{def:Mjalpha}
		\cm_{j,\alpha}:=\{[M]\mid \widehat{M}=\alpha,{\rm soc}(M)\subseteq S_1\oplus  \cdots \oplus S_j\}.
	\end{align}
	
	\begin{proposition}[\cite{Hu}]
		\label{prop:DrGenA}
		For any $1\leq j\leq p_i-1$, we have
		\begin{align}
			&\widehat{X}^+_{j,-1}=\sqq_i^{2-j}\sum\limits_{\cm_{j+1,d_i\delta-\alpha_{j}}}(-1)^{\dim_{\bfk_i}\End(M)}[K^*_{d_i\de-\alpha_j}]^{-1}*[C_M^*],
			\\
			\label{eqn:Xj1-}
			&\widehat{X}^-_{j,1}=\sqq_i^{2-j}\sum\limits_{\cm_{j+1,d_i\delta-\alpha_{j}}}(-1)^{\dim_{\bfk_i}\End(M)}[K_{d_i\de-\alpha_j}]^{-1}*[C_M],
		\end{align}
	\end{proposition}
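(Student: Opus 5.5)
The plan is to compute $\widehat{X}^-_{j,1}$ directly from a braid-group expression for the Drinfeld generator, and then obtain $\widehat{X}^+_{j,-1}$ from it by a symmetry. Under the Drinfeld--Beck isomorphism $\Phi$ (with sign function $o(j)=(-1)^j$), the generator $X^-_{j,1}$ of $\tUD_{\sqq_i}(\widehat{\mathfrak{sl}}_{p_i})$ is, up to a sign and a power of $v$, the image $T_{\omega_j}^{-1}(E_j)$ times a $K$-factor. Concretely I expect
\[
\Phi(X^-_{j,1})=o(j)\,T_{\omega_j}^{-1}(E_j)\,K_{\delta-\alpha_j}^{-1}
\]
or a variant of it. Since $T_{\omega_j}^{-1}(E_j)$ is a root vector of degree $\delta-\alpha_j$, I would expand it as an iterated $q$-commutator of Chevalley generators. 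Following Beck--Damiani, this can be taken in the form
\[
[E_{j-1},[E_{j-2},\dots,[E_0,[E_{p_i-1},\dots,E_{j+1}]_{v}\dots]_v
\]
with suitable ordering, namely a path around the cycle $C_{p_i}$ starting at $j+1$ and ending at $j-1$ that skips $j$. Applying $\widetilde{\psi}_{C_{p_i}}$ from Proposition \ref{lem:UslnSDH}, each $E_k$ becomes $\frac{-1}{q^{d_i}-1}[C_{S_k}]$. The normalization $\haX^-_{j,1}=(\sqq_i-\sqq_i^{-1})\Omega_{C_{p_i}}(X^-_{j,1})$ is set up to absorb these scalars.

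The main step is to evaluate the iterated $v$-commutator of the $[C_{S_k}]$ in the Hall algebra of $\rep^{\rm nil}_{\bfk_i}(C_{p_i})$; only its positive part matters, since $R^+$ is an embedding. I would do this by induction on the number of simples adjoined. At each stage the partial commutator should equal a signed sum over representations $M$ of dimension vector a segment of the cycle, with socle inside a prescribed set of simples, weighted by $(-1)^{\dim\End M}$. The key input is that commuting $[C_{S_k}]$ with such a sum, twisted by $\sqq^{\langle\cdot,\cdot\rangle}$, cancels exactly those extensions in which $S_k$ enters as a new socle summand, and keeps the ones where it attaches as a new top. This is verified from the formula $F^B_{AC}$ and the fact that $\Ext^1(S_k,S_{k'})\neq0$ only when $k'=k+1$.

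At the final stage the dimension vector is $d_i\delta-\alpha_j$, and the socle condition becomes ${\rm soc}(M)\subseteq S_1\oplus\cdots\oplus S_{j+1}$. This gives exactly the set $\cm_{j+1,d_i\delta-\alpha_j}$ of \eqref{def:Mjalpha}. The sign $(-1)^{\dim\End M}$ accounts for non-indecomposable $M$, which are direct sums of two uniserials. Tracking the powers of $\sqq_i$ from the Euler form twists yields $\sqq_i^{2-j}$. Multiplying by $[K_{d_i\delta-\alpha_j}]^{-1}$ then gives \eqref{eqn:Xj1-}.

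For $\widehat{X}^+_{j,-1}$, I would use the involution $\omega$ of $\tUD$ exchanging $X^+_{j,l}$ with $X^-_{j,-l}$ and $K$ with $K'$. On the Hall side this corresponds to the symmetry $C_M\leftrightarrow C_M^*$, $K_\alpha\leftrightarrow K_\alpha^*$ of $\tMH$, which is compatible with $\widetilde{\psi}_{C_{p_i}}$ up to the scalars fixed by the normalizations in \eqref{def:haB}. Applying this symmetry to \eqref{eqn:Xj1-} gives the first formula.

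I expect the main obstacle to be the inductive commutator computation, in particular the exact bookkeeping of sign and $\sqq$-powers. Decomposable $M$, with $\dim\End M=2$ and two summands, arise through cancellations that I would check first in the case $p_i=2$ before doing the general induction.
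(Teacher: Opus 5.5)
The paper gives no proof of this proposition; it is quoted from \cite{Hu}. Your sketch therefore has to stand on its own, and its central computation does not.

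Two preliminary corrections first.

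First, $T_{\omega_j}(E_j)$ and $T_{\omega_j}^{-1}(E_j)$ have degrees $\alpha_j-\de$ and $\alpha_j+\de$, never $\de-\alpha_j$. The correct input is Beck's formula $X^-_{j,1}=o(j)T_{\omega_j}(F_j)$, which up to sign equals $K_{\de-\alpha_j}^{-1}T_{\omega_js_j}(E_j)$. So the element you actually have to compute is $T_{\omega_js_j}(E_j)$.

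Second, the statement forces $\Ext^1(S_k,S_{k-1})\neq0$ in $\rep^{\rm nil}_{\bfk_i}(C_{p_i})$, i.e.\ arrows $k\to k-1$ as in the drawn quiver. With your convention $\Ext^1(S_k,S_{k+1})\neq0$, every module of class $d_i\de-\alpha_1$ has $S_0$ in its socle, so $\cm_{2,d_i\de-\alpha_1}$ would be empty for $p_i\geq3$. With the right convention:
\begin{itemize}
\item modules of class $d_i\de-\alpha_j$ are direct sums of intervals of the path $j-1\to\cdots\to1\to0\to p_i-1\to\cdots\to j+1$;
\item $\cm_{j+1,d_i\de-\alpha_j}$ consists exactly of those split only at arrows $k\to k-1$ with $1\leq k\leq j-1$;
\item in particular they can have up to $j$ summands, not two.
\end{itemize}

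\textbf{The gap: your nested commutator does not produce $\cm_{j+1,d_i\de-\alpha_j}$.} In your nesting, $[C_{S_k}]$ is put on the left and $S_k$ sits at the top end of the path. Then a $\sqq_i^{-1}$-twisted commutator kills the split term and keeps only the glued one. A $\sqq_i$-twisted commutator keeps both, with opposite signs and equal coefficients $\pm(\sqq_i-\sqq_i^{-1})$. Hence your uniformly $v$-twisted expression yields the alternating sum over all $2^{p_i-2}$ splittings, and the opposite uniform choice yields only the indecomposable. Your choice matches the statement only for $j=p_i-1$, the opposite one only for $j=1$, and no uniform choice works for $2\leq j\leq p_i-2$: the admissible splittings change type at the vertex $0$.

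Concretely, take $p_i=3$, $j=1$. Your element $[E_0,E_2]_v$ maps to a nonzero multiple of $[C_{S_0^{(2)}}]-[C_{S_0\oplus S_2}]$. But $\cm_{2,d_i\de-\alpha_1}=\{S_0^{(2)}\}$, which is exactly what the paper later uses for $\haX^-_{[i,1],d_i}$. The element matching the statement is $E_0E_2-v^{-1}E_2E_0$; this is $\tau T_2(E_1)$ for $t_{\omega_1}=\tau s_2s_1$ and Lusztig's $T''_{i,1}$.

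Your description of the inductive step (split extensions cancel, glued ones survive) would likewise leave only the indecomposable, contradicting the signed sum you want. The test case $p_i=2$ you propose is vacuous, since then $M=S_0$.

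What is missing is an explicit computation of $T_{\omega_js_j}(E_j)$ from a reduced expression of $t_{\omega_j}$ (which contains $\tau^j$). It must be precise enough to show that exactly the splittings at $k=1,\dots,j-1$ survive, and to produce the factor $\sqq_i^{2-j}$.

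Finally, for $\haX^+_{j,-1}$: the shift automorphism $[C_M]\leftrightarrow[C_M^*]$, $[K_\alpha]\leftrightarrow[K_\alpha^*]$ of $\tMH$ does exist. You would still have to show that $\Phi$ intertwines a rescaled Chevalley involution with $X^\pm_{j,l}\mapsto X^\mp_{j,-l}$. It is simpler to repeat the computation in the negative half, starting from $X^+_{j,-1}=o(j)T_{\omega_j}(E_j)$.
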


	For any $1\leq j\leq p_i$ and $r>0$, we define
	\begin{align}
		c_{j,r}^+:=& (-1)^r \sqq_i^{-2jr}\sum_{[M]\in \cm_{j,rd_i\de}} (-1)^{\dim_{\bfk_i} \End(M)}[C_M],
		\\
		c_{j,r}^-:= &(-1)^r \sqq_i^{-2jr}\sum_{[M]\in \cm_{j,rd_i\de}} (-1)^{\dim_{\bfk_i} \End(M)}[C^*_M].
	\end{align}
	
	For convenience we set $c^\pm_{j,0}:=1$.
	The elements $\pi_{j,r}^{\pm}$ can be defined recursively in terms of $c_{j,r}^{\pm}$ via
	$$rc^\pm_{j,r}=(\sqq_i-\sqq_i^{-1}) \sum_{s=1}^r s\sqq_i^{-js} \pi^\pm_{j,s}*c^\pm_{j,r-s}.$$
	In particular, we have
	\begin{align}
		\label{eqn:pici1}
		\pi_{j,1}^\pm=\frac{\sqq_i^{j}c_{j,1}^{\pm}}{\sqq_i-\sqq_i^{-1}}.
	\end{align}
	Moreover, by \cite[(4.1)]{Sch04} (see also \cite{Hu}), for any $r>0$ we have
	\begin{align}
		\label{eqn:pi1r}\pi_{1,r}^+
		=\frac{[r]_{\sqq_i}}{r}\sum\limits_{|\lambda|=r} \bn_{d_i}(\ell(\lambda)-1)[\![C_{S_{0}^{(\lambda p_i)}}]\!],\qquad \pi_{1,r}^-
		=\frac{[r]_{\sqq_i}}{r}\sum\limits_{|\lambda|=r} \bn_{d_i}(\ell(\lambda)-1)[\![C^*_{S_{0}^{(\lambda p_i)}}]\!].\end{align}
	Here,  $S_{0}^{(\lambda p_i)}:=\bigoplus_{j}S_{0}^{(\lambda_jp_i)}$ for any partition $\lambda=(\lambda_1, \lambda_2,\cdots)$.
	
	For convenience, set $\pi^{\pm}_{0,r}:=0$ for $r>0$.
	
	\begin{lemma}
		[\cite{Hu}]
		\label{lem:Hjr-pi}
		For any $r>0$, we have
		\begin{align}
			\label{eqn:Hjr-pi}\widehat{H}_{j,\pm r}=&\pm\big(\pi_{j+1,r}^\pm-(\sqq_i^{r}+\sqq_i^{-r}) \pi_{j,r}^\pm+\pi_{j-1,r}^\pm\big).
		\end{align}
	\end{lemma}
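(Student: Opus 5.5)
The strategy is to pass from $H$'s to $\pi$'s through the generating-function relation between $\psi$ and $H$, and to read both off as ratios of the explicit series $c^\pm_{j}(u)$. I treat the $+$ case in detail; the $-$ case follows by the same argument with $C_M$ replaced by $C^*_M$ and the appropriate sign changes, or by applying the symmetry $[C_M]\leftrightarrow[C^*_M]$, which intertwines the two halves of $\tU$.

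\emph{Step 1 (reduction to generating functions).} Write $c^+_j(u)=\sum_{r\ge0}c^+_{j,r}u^r$. Differentiating the defining recursion
\[
rc^+_{j,r}=(\sqq_i-\sqq_i^{-1})\sum_{s=1}^r s\sqq_i^{-js}\pi^+_{j,s}*c^+_{j,r-s}
\]
shows that it says exactly
\[
c^+_j(u)=\exp\Big((\sqq_i-\sqq_i^{-1})\sum_{s\ge1}\pi^+_{j,s}\sqq_i^{-js}u^s\Big).
\]
This step requires the $c^+_{j,r}$ to commute among themselves, which I take from the fact that they lie in a commutative subalgebra.

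\emph{Step 2 (Hubery's formula).} Next I invoke the result of Hubery \cite{Hu}, in the form also used by Schiffmann \cite{Sch04}, which identifies the image of the Drinfeld generating series under $\Omega_{C_{p_i}}$:
\[
\widehat\psi_j(u)=\frac{c^+_{j+1}(\sqq_i^{\,?}u)\,c^+_{j-1}(\sqq_i^{\,?}u)}{c^+_j(\sqq_i^{\,?}u)\,c^+_j(\sqq_i^{\,?}u)}.
\]
The shifts marked $?$ are to be determined from the normalisation $\sqq_i^{-2jr}$ built into the definition of $c^\pm_{j,r}$.

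\emph{Step 3 (take logarithms and compare).} Taking logarithms turns the product formula into an additive one, and comparing coefficients of $u^r$ with \eqref{exp h+} gives the claimed identity. In the shift relation $\widehat\psi_j(u)=\ldots$, the factor $\sqq_i^{r}+\sqq_i^{-r}$ arises precisely from the two distinct shifts applied to the $c_j$ factors; these shifts combine with the weights $\sqq_i^{-js}$ to cancel the $j$-dependence. The boundary cases $j=1$ and $j=p_i-1$ need separate attention. Here $\pi_{0,r}=0$ is forced because $c^+_{0}(u)=1$, since $\cm_{0,\alpha}$ is empty for $\alpha\neq0$. For $j+1=p_i$, the series $c_{p_i}$ involves all modules, and its logarithm is governed by \eqref{eqn:pi1r}-type formulas.

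\emph{Main obstacle.} The real content lies in Step 2, namely showing that the image of $\psi_{j,r}$ under the Drinfeld–Beck isomorphism equals the above ratio. Proving this from scratch would require computing, within the Hall algebra of the cyclic quiver, the commutators $[\widehat X^+_{j,0},\widehat X^-_{j,r}]$. One would use Proposition~\ref{prop:DrGenA} to obtain the case $r=1$ via \eqref{eqn:pici1}, and then argue by induction using the relation $[H_{j,1},X^-_{j,r}]$. In practice I would rely on \cite{Hu} for this identification and carry out only the generating-function bookkeeping myself.
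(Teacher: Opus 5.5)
Your proposal ends up where the paper does: the paper gives no argument beyond attributing the identity to \cite{Hu}, and your Steps 1--3 only repackage that citation in generating-function form. Indeed, with $\log c^{\pm}_j(u)=(\sqq_i-\sqq_i^{-1})\sum_{s\geq 1}\pi^{\pm}_{j,s}\sqq_i^{-js}u^s$, your undetermined shifts are forced to be $\sqq_i^{j+1}$ (on $c_{j+1}$ and one factor $c_j$) and $\sqq_i^{j-1}$ (on $c_{j-1}$ and the other $c_j$), so your Step 2 is literally equivalent to the lemma; one minor slip is that the case $j+1=p_i$ needs no separate treatment, and \eqref{eqn:pi1r} describes $\pi_{1,r}$ (modules with socle a sum of copies of $S_1$), not the central series $c_{p_i}$.
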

	
	\subsection{Embedding from $C_n$ to $C_{n+1}$}
	\label{subsec:embedding-cyclic}
	
	We consider the oriented cyclic quiver $C_n$ for $n\geq2$ with the vertex set $\Z_n=\{0,1,2,\dots,n-2,n-1\}$; see \eqref{fig:Cn}.
	Let $C_1$ be the Jordan quiver, i.e., the quiver with only one vertex and one loop arrow.

	There is a natural embedding $$\F_{n+1,n}: \rep^{\rm nil}_{\bfk_i}(C_n)\longrightarrow \rep^{\rm nil}_{\bfk_i}(C_{n+1}),$$ preserving the simples $S_j$ for $1\leq j\leq n-1$, and sending $S_0$ to $S_{0}^{(2)}$.
	Note that $\langle S_{n}\rangle=\add S_n$ is a Serre subcategory of $\rep^{\rm nil}_{\bfk_i}(C_{n+1})$. 
	We have an equivalence $$\rep^{\rm nil}_{\bfk_i}(C_{n+1})/\langle S_{n}\rangle\cong{}^{\perp}S_{n}= 
	S_{0}^{\perp} \cong
	\rep^{\rm nil}_\bfk(C_n).$$ 
	Here $S_0^\perp=\{M\in \rep^{\rm nil}_{\bfk_i}(C_{n+1})\mid \Hom(S_0,M)=0=\Ext^1(S_0,M)\}$ is the perpendicular category.
	Moreover, the natural quotient functor $$\rep^{\rm nil}_{\bfk_i}(C_{n+1})\longrightarrow \rep^{\rm nil}_{\bfk_i}(C_{n+1})/\langle S_{n}\rangle\cong \rep^{\rm nil}_{\bfk_i}(C_n)$$ yields the right adjoint functor of $\F_{n+1,n}$.

	Note that $\F_{n+1,n}$ is an exact functor, hence it induces an algebraic embedding $$
	F_{n+1,n}:\tMH(\bfk_i C_{n})\longrightarrow \tMH(\bfk_i C_{n+1})
	.$$  
	
	Observe that the functor $\F_{n+1,n}$ maps 
	$$S_j^{(an)}\mapsto S_j^{(a(n+1))}, \quad\forall\;\; 0\leq j\leq n-1, a\geq 1.$$ Then by Lemma \ref{lem:Hjr-pi}, $F_{n+1,n}$ preserves $\widehat{H}_{1,\pm1}$.
	Obviously, $F_{n+1,n}$ preserves  $\widehat{X}^{\pm}_{j,0}=$ for $1\leq j\leq n-1$. It follows that $F_{n+1,n}$ preserves all root vectors; see Remark \ref{rem:gen-reduce}.



	
	

	\section{Semi-derived Ringel-Hall algebra and quantum loop algebras}
	\label{sec:hom}
	
	In this section, we shall formulate the main result of this paper.

	\subsection{Embedding from a tube to the category of coherent sheaves}
	\label{subsec:embeddingtube}
	
	\label{subsec:embeddingtube}
	Let $\X=\X_{\bfk}$ be a weighted projective line of weight type $(\bp,\bd,\bla)$ over $\bfk$.
	Recall that $\bla_i$ is the exceptional closed point of $\X$ of degree $d_i$ and weight $p_i$ for any $1\leq i\leq \bt$, and $\scrt_{\bla_i}$ is the Serre subcategory of $\coh(\X)$ consisting of torsion sheaves supported at $\bla_i$. Note that there is an equivalence $\scrt_{\bla_i}\cong\rep^{\rm nil}_{\bfk_i}(C_{p_i})$, which induces a canonical embedding of Hall algebras
	$\widetilde{\ch}(\cc_{\Z_2}(\rep_{\bfk_i}^{\rm nil} (C_{p_i})))\rightarrow \widetilde{\ch}(\cc_{\Z_2}(\coh(\X_\bfk)))$, and then an embedding of semi-derived Ringel-Hall algebras:
	\begin{equation}
		\label{eq:embeddingx}
		\iota_i: \tMH(\bfk_i C_{p_i})\longrightarrow\tMH(\X_\bfk).
	\end{equation}
	
	Inspired by \eqref{def:haB}, we define
	\begin{align}
		\label{def:haBThH}
		\widehat{X}^+_{[i,j],d_il}:= \iota_i (\widehat{X}^+_{j,l}),&\qquad \widehat{X}^-_{[i,j],d_il}:= \iota_i (\widehat{X}^-_{j,l}),\qquad \widehat{\bh}_{[i,j],d_il}:=\iota_i(\widehat{\bh}_{j,l}),\\
		\label{def:haBThHpsi}
		\widehat{\psi}_{[i,j],d_ir}:=&\iota_{i}(\widehat{\psi}_{j,r}), \qquad \widehat{\varphi}_{[i,j],-r}:=\iota_{i}(\widehat{\varphi}_{j,-r})
	\end{align}
	for any $1\leq j\leq p_i-1$, $l\in\Z$ and $r>0$.

	%
	%
	\subsection{Embedding from projective line to weighted projective line}\label{Embedding from projective line to weighted projective line}
	
	Let $\mathcal{C}$ be the Serre subcategory of $\coh(\X_\bfk)$ generated by those simple sheaves $S$ satisfying $\Hom(\co, S)=0$. Recall from \cite{BS13} that the Serre quotient $\coh(\X_\bfk)/\mathcal{C}$ is equivalent to $\coh(\P^1_\bfk)$ and the canonical functor $\coh(\X_\bfk)\rightarrow\coh(\X_\bfk)/\mathcal{C}$ has an exact fully faithful right adjoint functor
	\begin{equation}
		\label{the embedding functor F}
		\mathbb{F}_{\X,\P^1}: \coh(\P^1_\bfk)\rightarrow\coh(\X_\bfk),
	\end{equation}
	which sends $$\co_{\P^1}(l)\mapsto\co(l\vec{c}),\quad S_{\bla_i}^{(r)}\mapsto S_{i,0}^{(rp_i)},\quad S_{x}^{(r)}\mapsto S_{x}^{(r)}$$ 
	for any $l\in\Z, r\geq 1$, $1\leq i\leq \bt$ and $x\in\PL\setminus\{\bla_1,\cdots, \bla_\bt\}$. Then $\mathbb{F}_{\X,\P^1}$ induces an exact fully faithful functor $\cc_{\Z_2}(\coh(\P^1_\bfk))\rightarrow \cc_{\Z_2}(\coh(\X_\bfk))$, which is also denoted by $\mathbb{F}_{\X,\P^1}$. This functor induces a canonical embedding of Hall algebras $\ch\big(\cc_{\Z_2}(\coh(\P^1_\bfk))\big)\rightarrow \ch\big(\cc_{\Z_2}(\coh(\X_\bfk))\big)$, and then an embedding
	\begin{equation}
		\label{the embedding functor F on algebra}
		F_{\X,\P^1}:\tMH(\P^1_\bfk) \longrightarrow \tMH(\X_\bfk).
	\end{equation}
	
	We define for $r>0$: 
	\begin{align}
		\label{eqn:hTm}    \widehat{\psi}_{\star,r}:=&F_{\X,\P^1}(\widehat{\psi}_{1,r}),\qquad \widehat{\varphi}_{\star,-r}:=F_{\X,\P^1}(\widehat{\varphi}_{1,-r}),\qquad \widehat{\bh}_{\star,\pm r}:=F_{\X,\P^1}(\widehat{\bh}_{1,\pm r}).
	\end{align}
	In particular, 
	\begin{align}
		\label{formula for Hm}
		\widehat{\bh}_{\star,m}
		=\begin{cases}
			\sum_{x,d_x|m} \frac{[m]_\sqq}{m} d_x \sum_{|\lambda|=\frac{m}{{d_x}}} \bn_{d_x}(\ell(\lambda)-1)
			F_{\X,\P^1}([\![C_{S_x^{(\lambda)}}]\!] )&\text{ if }m>0,\\
			-\sum_{x,d_x|m} \frac{[m]_\sqq}{m} d_x \sum_{|\lambda|=\frac{m}{{d_x}}} \bn_{d_x}(\ell(\lambda)-1)
			F_{\X,\P^1}([\![C^*_{S_x^{(\lambda)}}]\!] )
			&\text{ if }m<0. \end{cases}
	\end{align}

	

	%
	%
	\subsection{The homomorphism $\Omega$}
	\label{subsec:homo}
	
	Recall the star-shaped graph $\Gamma=\Gamma_{p_1,\dots,p_\bt}$ in \eqref{star-shaped}.
	The main result of this paper states that:
	
	\begin{theorem}
		\label{thm:morphi}
		For any valued star-shaped graph $(\Gamma,\bd)$, let $\fg$ be the Kac-Moody algebra and $\X$ be the weighted projective line associated to $\Gamma$. Then there exists a $\Q(\sqq)$-algebra homomorphism
		\begin{align}
			\Omega: \tU_\sqq(\Lg)\longrightarrow \tMH(\X_\bfk),
		\end{align}
		which sends
		\begin{align}
			\label{eq:mor1}
			&K_{\star}\mapsto [K_{\co}], \qquad K_{[i,j]}\mapsto [K_{S_{ij}}], \qquad C\mapsto [K_\de],
			&
			\\
			&K'_{\star}\mapsto [K_{\co}^*], \qquad K'_{[i,j]}\mapsto [K_{S_{ij}}^*], \qquad C'\mapsto [K_\de^*],
			\\
			\label{eq:mor2}
			&{X^+_{\star,l}\mapsto \frac{-1}{q-1}[C_{\co(l\vec{c})}]},\qquad{X^-_{\star,l}\mapsto \frac{\sqq}{q-1}[C^*_{\co(-l\vec{c})}]},
			\\
			&\psi_{\star,r} \mapsto {\widehat{\psi}_{\star,r}}, \qquad \varphi_{\star,-r} \mapsto {\widehat{\varphi}_{\star,-r}}, \qquad
			\bh_{\star,m} \mapsto {\widehat{\bh}_{\star,m}},\\
			\label{eq:mor3}
			&X^+_{[i,j],d_il}\mapsto {\frac{-1}{q^{d_i}-1}}\haX^+_{[i,j],d_il}, \qquad X^-_{[i,j],d_il}\mapsto {\frac{\sqq_i}{q^{d_i}-1}}\haX^-_{[i,j],d_il},
			\\
			\label{eq:mor4}
			&\psi_{[i,j],d_ir} \mapsto {\widehat{\psi}_{[i,j],d_ir}}, \qquad \varphi_{[i,j],-d_ir} \mapsto {\widehat{\varphi}_{[i,j],-d_ir}}, \qquad
			H_{[i,j],d_im} \mapsto {\widehat{\bh}_{[i,j],d_im}}.
		\end{align}
		for any $[i,j]\in\II-\{\star\}$, $l\in\Z$, $r>0$, $m\in\Z^\times$.
	\end{theorem}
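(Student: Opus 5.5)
Since $\tU_\sqq(\Lg)$ is given by generators and relations, the plan is to check that the proposed images of the generators in $\tMH(\X_\bfk)$ satisfy \eqref{DR1}--\eqref{DR8}. We sort the relations by the vertices they involve: (a) relations involving only $\star$; (b) relations involving only vertices $[i,j]$ of a single branch; (c) relations between different branches; (d) relations between $\star$ and $[i,j]$ with $j\geq2$; and (e) relations between $\star$ and $[i,1]$.

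Cases (a) and (b) come from earlier results. For (a), apply the embedding $F_{\X,\P^1}$ of \eqref{the embedding functor F on algebra} to Proposition \ref{prop:Realizationsl2}. This works because $F_{\X,\P^1}$ is induced by an exact fully faithful functor sending $\co(l)\mapsto\co(l\vec c)$ and $\delta\mapsto\delta$, so it is an algebra homomorphism matching the Cartan parts. For (b), apply $\iota_i$ to $\Omega_{C_{p_i}}$ from \eqref{the map Psi A}, using $\sqq_i=\sqq^{d_i}$ and the rescaling of the loop index $l\mapsto d_il$. The $K$--$X$ relations \eqref{DR3} follow in all cases from Lemma \ref{lem:KC} together with the Euler form computations of Lemma \ref{lem:Euler-form}. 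For (c), different branches are supported at distinct exceptional points, so both $\Hom$ and $\Ext^1$ vanish between them. Hence the corresponding Hall products commute, and $[X^+,X^-]=0$ by the semi-derived commutator formula.

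For (d) and (e), first use Remark \ref{rem:gen-reduce} to reduce the generating set. We only need relations among $H_{\star,\pm1}$, $X^\pm_{\nu,0}$ and the $K$'s, together with the $\star$--$[i,j]$ relations of type \eqref{DR4} for small $m$. Then:
\begin{itemize}
\item For $j\geq2$: $\Hom(\co(l\vec c),S_{ij})=0=\Ext^1(\co(l\vec c),S_{ij})$, and the same holds in the reverse direction. This gives the commutation relations, and $[H_{\star,m},X^\pm_{[i,j],n}]=0$, since $S_x^{(\lambda)}$ and the images $S_{i,0}^{(rp_i)}$ are orthogonal to $S_{ij}$ for $j\ne1$ up to Ext-computations inside the tube.
\item For $[i,1]$: compute $[\widehat H_{\star,\pm1},\haX^\pm_{[i,1],0}]$ and $[\widehat H_{[i,1],\pm d_i},[C_{\co}]]$ using \eqref{formula for Hm} and \eqref{eqn:Hjr-pi}. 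These are Hall products of torsion sheaves at $\bla_i$ with line bundles, where the only nontrivial extensions involve $\co\to\co(\vec x_i)$.
\item Next verify \eqref{DR5} with $\mu\ne\nu$, which is a vanishing statement, and the mixed relation \eqref{DR6} between $\co(l\vec c)$ and $\haX_{[i,1]}$. We do this by explicitly enumerating extensions $0\to S\to E\to \co(l\vec c)\to0$ with $E=\co(l\vec c+\vec x_i)$, and then using the symmetry of the two sides.
\end{itemize}

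The Serre relations remain. Relation \eqref{DR7}, with $\nu=[i,1]$, $\mu=\star$, involves two line bundles and one simple. We check it by expanding products of the form $[C_{\co(a\vec c)}]*[C_{\co(b\vec c)}]*[C_{S_{i1}}]$ in the Hall basis and comparing coefficients.

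The main obstacle is \eqref{DR8}, with $r=1+d_i$ copies of $X_{[i,1]}$ and one line bundle. It is the $+$ version that is the real issue (the $-$ version should follow by the symmetry $C\leftrightarrow C^*$, which should be checked). The relation only involves $\scrt_{\bla_i}$ and line bundles, so we first reduce to weight type $\binom{2}{d}$ via the perpendicular/Serre quotient functors killing the other exceptional points and the simples $S_{ij}$, $j\geq2$, analogous to \S\ref{subsec:embedding-cyclic}. In that case the products involve the extensions of $\co(a\vec c)$ by torsion of class $k\alpha_{i1}$. So we must classify the middle terms, which are bundles of rank one plus torsion or rank-two extensions between line bundles, and count the extension classes $\Ext^1(\co(\vec x),\co(\vec y))_E$. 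To do this, first compute $\Hom$ dimensions via $\bS_{\vec y-\vx}$, which is graded, with dimensions determined by $d$, and then stratify the extension classes by their middle terms. Finally, verify the $q$-binomial alternating sum vanishes stratum by stratum. We expect this counting to be the hardest part of the proof.
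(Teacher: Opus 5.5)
There is a genuine gap, and it sits where the substance of the theorem lies: you have the two Serre relations the wrong way round. In \eqref{DR8} one has $\nu=\star$, $\mu=[i,1]$ and $r=1-c_{[i,1],\star}=2$. It is $c_{\star,[i,1]}$, not $c_{[i,1],\star}$, that equals $-d_i$. So \eqref{DR8} is the ordinary cubic relation with two copies of $X_{[i,1]}$ and one $X_\star$, i.e.\ \eqref{DR72-Hall}. This relation is easy. Because $\Ext^1(S_{i1},S_{i1})=0$, it reduces to $\big[[C_{S_{i,1}}],[C_{\co(n\vc)}]\big]_{\sqq_i^{-1}}=(\sqq_i-\sqq_i^{-1})[C_{\co(n\vc+\vx_i)}]$ together with the $\sqq_i$-commutation of $[C_{\co(n\vc+\vx_i)}]$ with $[C_{S_{i,1}}]$, followed by index shifting. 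Your sketch for it is also internally inconsistent. With a single line bundle and torsion of class $k\alpha_{i1}$ (necessarily $S_{i1}^{\oplus k}$), every middle term has rank one. So no rank-two extensions between line bundles ever appear, and the stratified counting you propose has nothing to act on.

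The hard relation is \eqref{DR7}, in Hall form \eqref{DR7-Hall}: two copies of $X_\star$, one $X_{[i,1]}$, and the weights $\sqq^{d_i-1-2t}$. This is exactly the one you dismiss as ``expanding in the Hall basis and comparing coefficients''. Using $\big[[S_{11}],[\co(m\vc)]\big]_{\sqq^{-d}}=\sqq^{-d}(q^d-1)[\co(m\vc+\vx_1)]$, it becomes the identity \eqref{eq:Serre2-reduced} between products $[\co((d-1-r)\vc+\vx_1)]*[\co(r\vc)]$ and $[\co((d-1-r)\vc)]*[\co(r\vc+\vx_1)]$. The middle terms of these products include indecomposable rank-two bundles on a weighted projective line of type $\binom{2}{d}$. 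That category is wild for $d\ge5$ and its indecomposables are not classified, so the coefficients cannot simply be read off.

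Your plan for this relation is missing the following ingredients from the paper:
\begin{itemize}
\item the stratification of those middle terms into $\M_k\cap\cn_j$ and $\M_k\cap\ov{\cn}$ (Proposition \ref{Mi and Nj properties});
\item the facts $\End(E)\cong\bfk$ and the cardinalities of the strata (Proposition \ref{ext between O(x) and O(n)}, via Green's formula);
\item the recursive counts $\varphi_{i,j}^k$, $\varphi_{i,1}^k$ of surjections, which give the Hall numbers of Lemma \ref{Hall number for j=1}.
\end{itemize}
Only with these does the coefficient-by-coefficient cancellation go through. The tools in your last paragraph point in this direction, but they must be aimed at extensions between $\co(\vx_1+(k-1)\vc)$ and $\co((d-k)\vc)$ inside \eqref{DR7}, not at \eqref{DR8}.

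A smaller point: Remark \ref{rem:gen-reduce} shrinks the generating set, not the list of relations to verify. The paper checks each $\star$--$[i,1]$ relation for all loop indices. It does this via a base case plus index shifting with $\widehat H_{[i,2],d_ir}$, which commutes with the $\star$-images, or via Beck's $\psi,\varphi$ reformulation.
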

	
	\begin{proof}
		It is enough to verify that all relations~\eqref{DR1}--\eqref{DR8} are preserved by $\Omega$. Similarly to \cite[\S8]{LR21} (or \cite{DJX12}), most of 
		these relations can be easily checked (hence omitted here)
		except those between $\star$ and $[i,1]$ in $\cs\cd\widetilde{\ch}(\X_\bfk)$, whose proofs are given in the following two sections; see \eqref{eq:hhcomm}--\eqref{DR72-Hall} and \eqref{DR7-Hall}.
	\end{proof}
	
	For convenience, we sometimes denote for any $l\in\Z$:
	\begin{align*}
		\widehat{X}^+_{\star,l}:=[C_{\co(l\vec{c})}],\qquad \widehat{X}^-_{\star,l}:=[C^*_{\co(-l\vec{c})}].
	\end{align*}
	
	
	
	\subsection{Embedding of Hall algebras of weighted projective lines}
	\label{sub:embedding-XY}
	This subsection is inspired by \cite[\S4]{BS13}.
	
	Let $\X$ be the weighted projective line of type $(\bp,\bd,\bla)$, where $\bp=(p_1,p_2,\cdots,p_\bt)$. Let $\Y$ be  the weighted projective line of type $(\bp',\bd,\bla)$, where $\bp'=(p_1,\cdots, p_{i-1},p_i+1,p_{i+1},\cdots,p_\bt)$.

	Note that $\langle S_{i,p_i}\rangle =\add S_{i,p_i}$ is a Serre subcategory of $\coh(\mathbb{Y})$, 
	and the quotient category $\coh(\mathbb{Y})/\langle S_{i,p_i}\rangle\cong {}^{\perp}S_{i,p_i}=S_{i0}^{\perp}$ is equivalent to the category $\coh(\X)$. 
	Therefore, the natural quotient functor $\pi: \coh(\mathbb{Y})\to \coh(\X)$ admits a fully faithful right adjoint functor $\iota: \coh(\X)\to \coh(\mathbb{Y})$, which sends $S_{ij}^{(ap_i)}\mapsto S_{ij}^{a(p_i+1)}$ for $0\leq j\leq p_i-1,a\geq1$, and preserves $S_{kj}^{(ap_i)}\; (k\neq i)$ and $\co(n\vc)\; (n\in\Z)$.
	In other words, the image of $\iota$ identifies $\coh(\X)$ with the subcategory $S_{i0}^{\perp}$ of $\coh(\mathbb{Y})$. So $\iota$ induces an embedding of $\cc_{\Z_2}(\coh(\X))$ in $\cc_{\Z_2}(\coh(\Y))$, which gives an injective homomorphism $\widetilde{\ch}(\coh(\X))\to \widetilde{\ch}(\coh(\Y))$, and then an embedding $\iota:\cs\cd\widetilde{\ch}(\X)\to \cs\cd\widetilde{\ch}(\Y)$.
	
	By definition of root vectors and \S\ref{subsec:embedding-cyclic}, we can see that $\iota:\cs\cd\widetilde{\ch}(\X)\to \cs\cd\widetilde{\ch}(\Y)$ preserves all real and imaginary roots.

	\section{Relations between root vectors of $\star$ and $[i,1]$}
	\label{sec:relationstari1}
	
	In this section, we shall verify the relations \eqref{DR2}--\eqref{DR6} between $\star$ and $[i,1]$ in $\cs\cd\widetilde{\ch}(\X_\bfk)$, for $l,m>0$, $r,t,m_1,m_2,n\in\Z$: 
	\begin{align}
		\label{eq:hhcomm}
		&[\widehat{H}_{\star,l},\widehat{H}_{[i,1],d_im}]=0,\quad [\widehat{H}_{\star,-l},\widehat{H}_{[i,1],-d_im}]=0,
		\\
		&
		\label{eq:hmhi1-l}
		\big[\widehat{H}_{\star,m},\widehat{H}_{[i,1],-d_il}\big]=\delta_{m,d_il}\frac{[-m]_\sqq}{m/d_i}\frac{[K_\de]^m-[K_\de^*]^m}{\sqq_i-\sqq_i^{-1}},
		\\
		\label{eq:hi1mh-l}
		&\big[\widehat{H}_{[i,1],d_im},\widehat{H}_{\star,-l}\big]=\delta_{d_im,l}\frac{[-m]_{\sqq_i}}{m}\frac{[K_{d_i\de}]^m-[K_{d_i\de}^*]^m}{\sqq-\sqq^{-1}},
		\\
		& \label{eq:hrxm+}
		\big[\widehat{H}_{\star, m}, \widehat{X}^+_{[i,1],d_it}\big]=
		- \frac{[m]_{\sqq}}{m/d_i} \widehat{X}^+_{[i,1],d_it+ m},\quad \big[\widehat{H}_{\star,- m}, \widehat{X}^-_{[i,1],d_it}\big]=
		\frac{[m]_{\sqq}}{m/d_i} \widehat{X}^-_{[i,1],d_it- m},
		\\
		\label{eq:h-rxm+}
		&\big[\widehat{H}_{\star,-m}, \widehat{X}^+_{[i,1],d_it}\big]=-\frac{[m]_{\sqq}}{m/d_i} [K_{\de}^*]^m*\widehat{X}^+_{[i,1],d_it-m},
		\\
		\label{eq:h-rx-m+}
		&\big[\widehat{H}_{\star,m}, \widehat{X}^-_{[i,1],d_it}\big]=\frac{[m]_{\sqq}}{m/d_i} [K_{\de}]^m*\widehat{X}^-_{[i,1],d_it+m},
		\\
		&
		\label{eq:hrxk+}
		\big[\widehat{H}_{[i,1], d_im}, \widehat{X}^+ _{\star,t}\big]=-\frac{[m]_{\sqq_i}}{m} \widehat{X}^+_{\star,t+ d_im}, \quad
		\big[\widehat{H}_{[i,1],-d_im}, \widehat{X}^-_{\star,t}\big]=\frac{[m]_{\sqq_i}}{m}\widehat{X}^-_{\star,t-d_im},
		\\
		\label{Hiqxstar1}
		&\big[ \widehat{\bh}_{[i,1],-d_im}, \widehat{X}^+_{\star,t}\big]=-\frac{[m]_{\sqq_i}}{m} [K_{d_i\de}^*]^{m}* \widehat{X}^+_{\star,t-d_im},
		\\
		\label{Hiqxstar2}
		& \big[ \widehat{\bh}_{[i,1],d_im}, \widehat{X}^-_{\star,t}\big]=\frac{[m]_{\sqq_i}}{m}[K_{d_i\de}]^m* \widehat{X}^-_{\star,t+d_im},
		\\
		&		\big[\widehat{X}^+_{\star,t}, \widehat{X}^-_{[i,1],d_ir}\big]=0,
		\qquad 
		\label{x * and x_i1^+}
		\big[\widehat{X}^-_{\star,t}, \widehat{X}^+_{[i,1],d_ir}\big]=0,
		\\
		\label{eq:qcommxstarxi1+1}
		&\big[\widehat{X}^\pm_{\star,t+d_i}, \widehat{X}^\pm_{[i,1],d_ir} \big]_{\sqq_i^{\mp 1}}=\sqq_i^{\mp 1}\big[\widehat{X}^\pm_{\star,t}, \widehat{X}^\pm_{[i,1],d_i(r+1)}\big]_{\sqq_i^{\pm 1}},
		\\
		\label{DR72-Hall}
		&\Sym_{m_1,m_2}\big\{ \widehat{X}^\pm_{[i,1], m_1}*\widehat{X}^\pm_{[i,1], m_2} *\widehat{X}^\pm_{\star,n} -[2]_{\sqq_i}  \widehat{X}^\pm_{[i,1], m_1}*\widehat{X}^\pm_{\star,n}*\widehat{X}^\pm_{[i,1], m_2}
		\\
		\notag&\hspace{7cm}+\widehat{X}^\pm_{\star,n}*\widehat{X}^\pm_{[i,1], m_1}*\widehat{X}^\pm_{[i,1], m_2}  \big\}=0.
	\end{align}
	Here we set $\widehat{X}^\pm_{[i,1],t}=0$ if $d_i\nmid t$.
	
	%
	\subsection{The relation \eqref{eq:hhcomm}}
	
	For any closed point $x\in\X$, we define
	\begin{align}
		\label{def:Hxm}
		\widehat{H}_{x,m}:= \begin{cases}
			\frac{[m]_\sqq}{m} d_x \sum_{|\lambda|=\frac{m}{{d_x}}} \bn_x(\ell(\lambda)-1) F_{\X,\P^1}([\![C_{S_x^{(\lambda)}}]\!]) & \text{ if }m>0,
			\\
			\frac{[m]_\sqq}{m} d_x \sum_{|\lambda|=\frac{m}{{d_x}}} \bn_x(\ell(\lambda)-1)F_{\X,\P^1}([\![C_{S_x^{(\lambda)}}^*]\!]) & \text{ if }m<0,
		\end{cases}
	\end{align}
	if $d_x\mid m$; and $\widehat{H}_{x,m}=0$ otherwise.
	Then we have 
	\begin{align}
		\label{def:Hxm2}
		\widehat{H}_{\star,m}=\sum_{x\in\X} \widehat{H}_{x,m}
		\text{\quad for\; any\;} m\in\Z.
	\end{align}
	
	\begin{lemma}
		For $1\leq i\leq \bt$, $m>0$, we have 
		
		\begin{align}
			\label{eq:Hstar-pi}
			\widehat{H}_{\bla_i,\pm d_im}={[d_i]_\sqq}\cdot \pi_{[i,1],m}^\pm.
		\end{align}
		
	\end{lemma}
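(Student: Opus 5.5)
Here $\pi^\pm_{[i,1],m}$ is understood as $\iota_i(\pi^\pm_{1,m})$, the image under the embedding \eqref{eq:embeddingx} of the element $\pi^\pm_{1,m}$ in $\tMH(\bfk_iC_{p_i})$. The proof is a direct comparison of the two explicit formulas, \eqref{def:Hxm} for $\widehat H_{\bla_i,\pm d_im}$ and \eqref{eqn:pi1r} for $\pi^\pm_{1,m}$. I will treat the sign $+$; the case $-$ is identical after replacing $C_{(-)}$ by $C^*_{(-)}$ throughout.

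First unwind the left side. Take $x=\bla_i$, so $d_x=d_i$, and note that $d_i\mid d_im$. Then \eqref{def:Hxm} gives
\[
\widehat H_{\bla_i,d_im}=\frac{[d_im]_\sqq}{d_im}\,d_i\sum_{|\lambda|=m}\bn_{d_i}(\ell(\lambda)-1)\,F_{\X,\P^1}([\![C_{S_{\bla_i}^{(\lambda)}}]\!]).
\]
By \S\ref{Embedding from projective line to weighted projective line}, the functor $\mathbb F_{\X,\P^1}$ sends $S_{\bla_i}^{(r)}$ to $S_{i,0}^{(rp_i)}$ and is additive, so it sends $S^{(\lambda)}_{\bla_i}$ to $S_{i,0}^{(\lambda p_i)}$. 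Since it is fully faithful, it also preserves automorphism groups. Hence $F_{\X,\P^1}([\![C_{S_{\bla_i}^{(\lambda)}}]\!])=[\![C_{S_{i,0}^{(\lambda p_i)}}]\!]$. This element is exactly $\iota_i([\![C_{S_0^{(\lambda p_i)}}]\!])$, because $\scrt_{\bla_i}\cong\rep^{\rm nil}_{\bfk_i}(C_{p_i})$ matches $S_{i,0}^{(a)}$ with $S_0^{(a)}$ and the embedding is fully faithful.

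Next rewrite the right side. By \eqref{eqn:pi1r},
\[
[d_i]_\sqq\,\pi^+_{[i,1],m}=[d_i]_\sqq\frac{[m]_{\sqq_i}}{m}\sum_{|\lambda|=m}\bn_{d_i}(\ell(\lambda)-1)[\![C_{S_{i,0}^{(\lambda p_i)}}]\!].
\]
The sums on the two sides now agree term by term, including the factor $\bn_{d_i}(\ell(\lambda)-1)$. In \eqref{def:Hxm} the notation $\bn_x$ means $\bn_{d_x}$, which here is $\bn_{d_i}$. So it only remains to compare the scalar prefactors.

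The scalar identity to check is $\frac{[d_im]_\sqq}{m}=[d_i]_\sqq\frac{[m]_{\sqq^{d_i}}}{m}$. This is the standard identity
\[
[d_i]_\sqq\,[m]_{\sqq^{d_i}}=\frac{\sqq^{d_i}-\sqq^{-d_i}}{\sqq-\sqq^{-1}}\cdot\frac{\sqq^{d_im}-\sqq^{-d_im}}{\sqq^{d_i}-\sqq^{-d_i}}=[d_im]_\sqq .
\]
This completes the comparison.

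The main obstacle is bookkeeping rather than mathematics. One point is making sure that the $\bn$-factor conventions agree: $\bn_{d_x}$ in the $\P^1$ formula \eqref{def:Theta2} and $\bn_{d_i}$ in \eqref{eqn:pi1r} both have base $\sqq^{2d_i}$, since the residue field $\bfk_i$ has $q^{d_i}$ elements. The other point is checking that the normalization $[\![\cdot]\!]$, taken over $\bfk_i$ in $\tMH(\bfk_iC_{p_i})$, agrees with the normalization over $\bfk$ in $\tMH(\X_\bfk)$. It does: automorphism groups are the same finite groups in both categories, so $|\Aut|$ is the same under either description. If the source formula \eqref{eqn:pi1r} were instead normalized with different Hall structure constants, I would verify it directly through $c^\pm_{1,r}$ and the recursion defining $\pi^\pm_{1,r}$. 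However, assuming the cited formula, the argument above suffices.
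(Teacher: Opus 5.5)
Your proposal is correct and is essentially the paper's proof. You identify $\mathbb{F}_{\X,\P^1}(S_{\bla_i}^{(\lambda)})$ with $S_{i,0}^{(\lambda p_i)}$, match the sums in \eqref{def:Hxm} and \eqref{eqn:pi1r} term by term, and reduce to the identity $[d_i]_\sqq\,[m]_{\sqq_i}=[d_im]_\sqq$. Your extra remarks on the $\bn_{d_i}$ factors and the $|\Aut|$ normalization are points the paper leaves implicit.
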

	
	\begin{proof}
		For any partition $\lambda=(\lambda_1, \lambda_2,\cdots)$, $\F_{\X,\P^1}(S_{\bla_i}^{(\lambda)})=\bigoplus_{j}S_{i,0}^{(\lambda_jp_i)}=:S_{i,0}^{(\lambda p_i)}$. Then by \eqref{eqn:pi1r}, $$\pi_{[i,1],m}^+
		=\frac{[m]_{\sqq_i}}{m}\sum\limits_{|\lambda|=m} \bn_{d_i}(\ell(\lambda)-1)
		[\![C_{S_{i,0}^{(\lambda p_i)}}]\!].$$ 
		Note that
		$$[m]_{\sqq_i}\cdot[d_i]_\sqq=\frac{\sqq_i^{m}-\sqq_i^{-m}}{\sqq_i-\sqq_i^{-1}}\cdot \frac{\sqq_i-\sqq_i^{-1}}{\sqq-\sqq^{-1}}=[d_im]_\sqq.$$
		Hence $\widehat{H}_{\bla_i, d_im}={[d_i]_\sqq}\cdot \pi_{[i,1],m}^+$. The other case is similar.
	\end{proof}

	\begin{lemma}
		We have \eqref{eq:hhcomm} holds 
		for $1\leq i\leq \bt$, $l,m>0$.
	\end{lemma}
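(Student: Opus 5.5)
We need $[\widehat H_{\star,l},\widehat H_{[i,1],d_im}]=0$ and the analogous identity for negative indices. The plan is to reduce the question to commutativity inside a single tube, and then to the commutativity of the imaginary root vectors of the cyclic quiver algebra.

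First, decompose $\widehat H_{\star,l}$ using \eqref{def:Hxm2} as $\widehat H_{\star,l}=\sum_{x}\widehat H_{x,l}$. Each $\widehat H_{x,l}$ is a combination of classes $[C_T]$ with $T$ a torsion sheaf supported at $x$. Meanwhile $\widehat H_{[i,1],d_im}=\iota_i(\widehat H_{1,m})$ lies in the image of $\tMH(\bfk_iC_{p_i})$, which is generated by the $[C_M]$, $[C^*_M]$ and $[K_\alpha]^{\pm1}$ with $M\in\scrt_{\bla_i}$. By Lemma \ref{lem:Hjr-pi}, $\widehat H_{[i,1],d_im}=\iota_i(\pi^+_{2,m}-(\sqq_i^m+\sqq_i^{-m})\pi^+_{1,m})$, which is a combination of products of $[C_M]$ with $M\in\scrt_{\bla_i}$ only; no $K$'s or $C^*$'s appear in the positive case.

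For $x\neq\bla_i$, we have $\Hom(T,M)=\Hom(M,T)=\Ext^1(T,M)=\Ext^1(M,T)=0$ whenever $T\in\scrt_x$ and $M\in\scrt_{\bla_i}$. In $\widetilde{\ch}(\coh(\X))$ this gives $[T]*[M]=[T\oplus M]=[M]*[T]$, since the Euler form twists vanish as well. Through the embedding $R^+$ the same holds for $[C_T]$ and $[C_M]$, so $\widehat H_{x,l}$ commutes with $\widehat H_{[i,1],d_im}$ for every $x\neq\bla_i$.

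It therefore remains to treat $x=\bla_i$, where we need $[\widehat H_{\bla_i,l},\widehat H_{[i,1],d_im}]=0$. If $d_i\nmid l$, then $\widehat H_{\bla_i,l}=0$ and there is nothing to prove. Otherwise write $l=d_il'$ and apply \eqref{eq:Hstar-pi}, which gives $\widehat H_{\bla_i,l}=[d_i]_\sqq\,\pi^+_{[i,1],l'}=[d_i]_\sqq\,\iota_i(\pi^+_{1,l'})$. Both elements now lie in $\iota_i(\tMH(\bfk_iC_{p_i}))$, so the claim reduces to $[\pi^+_{1,l'},\widehat H_{1,m}]=0$ in $\tMH(\bfk_iC_{p_i})$.

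This last step is the main obstacle, and I would handle it with the quantum group structure rather than a direct Hall computation. The $\pi^+_{j,r}$ are related to the $\widehat H_{j,\pm r}$ by Lemma \ref{lem:Hjr-pi}, an invertible triangular system. Starting from $\pi^\pm_{0,r}=0$ and solving recursively, $\pi^+_{1,r}$ becomes a $\Q(\sqq)$-linear combination of the $\widehat H_{j,r}$ with $1\le j\le p_i-1$. For this I use $\pi^+_{p_i,r}=c$-terms attached to $\cm_{p_i,rd_i\delta}$, which is central-type, or equivalently expand along the chain $j=1,2,\dots$.

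Since $\Omega_{C_{p_i}}$ is an algebra homomorphism from $\tUD_{\sqq_i}(\widehat{\sll}_{p_i})$, relation \eqref{DR2} gives $[H_{j,r},H_{k,s}]=0$ for $r,s>0$, so all $\widehat H_{j,r}$ with $r>0$ commute. Hence $\pi^+_{1,l'}$ commutes with $\widehat H_{1,m}$.

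If the recursive inversion proves awkward, I would fall back on the structure of the Hall algebra of $\rep^{\rm nil}_{\bfk_i}(C_{p_i})$ itself. There the $c^+_{j,r}$, and hence the $\pi^+_{j,r}$, lie in the commutative subalgebra generated by central-type elements built from $\cm_{j,rd_i\delta}$, following Schiffmann's description \cite{Sch04} and \cite{Hu}, which gives commutativity directly.

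The negative case $[\widehat H_{\star,-l},\widehat H_{[i,1],-d_im}]=0$ is identical, working with $[C^*_T]$ through the embedding $R^-$.
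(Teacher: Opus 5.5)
Your reduction to the exceptional point matches the paper's. The summands $\widehat H_{x,l}$ with $x\neq\bla_i$ commute with everything supported at $\bla_i$, and \eqref{eq:Hstar-pi} turns $\widehat H_{\bla_i,d_il'}$ into $[d_i]_\sqq\,\pi^+_{[i,1],l'}$.

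The gap is in the local step. The system \eqref{eqn:Hjr-pi} is not an invertible triangular system. For fixed $r$ it has $p_i-1$ equations in the $p_i$ unknowns $\pi^+_{1,r},\dots,\pi^+_{p_i,r}$ (with $\pi^+_{0,r}=0$), and solving it only gives
\[
\pi^+_{j,r}=[j]_{\sqq_i^{r}}\,\pi^+_{1,r}+\sum_{k=1}^{j-1}[j-k]_{\sqq_i^{r}}\,\widehat H_{k,r},\qquad 1\le j\le p_i .
\]
So expanding along the chain never eliminates $\pi^+_{1,r}$. The equation for $j=p_i$ expresses $\pi^+_{1,r}$ through the $\widehat H_{k,r}$ \emph{and} $\pi^+_{p_i,r}$.

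In fact $\pi^+_{1,r}$ is in general not in the span of the $\widehat H_{k,r}$. Take $p_i=2$, $r=1$. Then $\widehat H_{1,1}=\pi^+_{2,1}-(\sqq_i+\sqq_i^{-1})\pi^+_{1,1}$ has a nonzero coefficient at $[C_{S_1^{(2)}}]$, coming from $\pi^+_{2,1}$. But $\pi^+_{1,1}=[\![C_{S_0^{(2)}}]\!]$, which is not a multiple of $\widehat H_{1,1}$.

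Hence \eqref{DR2}, which only says that the $\widehat H_{k,r}$ commute among themselves, does not give $[\pi^+_{1,l'},\widehat H_{1,m}]=0$. You call $\pi^+_{p_i,r}$ ``central-type'', but your concluding step never uses this.

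There are two ways to close the gap.

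\begin{itemize}
\item \textbf{The paper's route.} The paper imports $[\pi^{\pm}_{[i,j],l},\pi^{\pm}_{[i,k],m}]=0$ from \cite[Lemma 7.1]{DJX12}. Since $\widehat H_{[i,1],\pm d_im}$ is a combination of the $\pi^\pm_{[i,j],m}$ by \eqref{eqn:Hjr-pi}, the claim follows at once. Your fallback amounts to this, but its description is inaccurate: for $j<p_i$ the elements built from $\cm_{j,rd_i\de}$ are not central; they only commute with one another.
\item \textbf{Your route, repaired.} Add the one missing input: $\pi^+_{p_i,r}$, built from the modules of class $rd_i\de$ with square-free socle, is (up to normalization) the central element of the Hall algebra of the cyclic quiver (Schiffmann, Hubery). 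Taking $j=p_i$ in the display writes $\pi^+_{1,l'}$ as a combination of the $\widehat H_{k,l'}$ and a central element, and \eqref{DR2} then finishes the proof. This version even recovers the commutativity of the $\pi$'s that the paper cites, using only the Drinfeld relations plus centrality.
\end{itemize}
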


	\begin{proof}

		For any torsion sheaves $X,Y$ supported at distinct points, we have $\Ext^1_\X(X,Y)=0=\Hom_\X(X,Y)$.
		So in order to prove $[\widehat{H}_{\star,\pm l},\widehat{H}_{[i,1],\pm d_im}]=0$, we only need to check that
		\begin{align}
			[\widehat{H}_{\bla_i,\pm d_il},\widehat{H}_{[i,1],\pm d_im} ]=0.
		\end{align}
		By \cite[Lemma 7.1]{DJX12} and its proof, we know $$[\pi^{\pm}_{[i,j],l}, \pi^{\pm}_{[i,k],m}]=0$$
		for any $1\leq j,k\leq p_i-1$.
		So the desired formula follows by 
		\eqref{eqn:Hjr-pi} and \eqref{eq:Hstar-pi}. 
	\end{proof}

	\subsection{The relations \eqref{eq:hrxm+}--\eqref{eq:h-rx-m+}}
	
	\begin{lemma}
		\label{lem:hrxm+--h-rx-m+}
		We have \eqref{eq:hrxm+}--\eqref{eq:h-rx-m+} hold for any $m>0,t\in\Z$.
	\end{lemma}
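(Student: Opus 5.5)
We reduce \eqref{eq:hrxm+}--\eqref{eq:h-rx-m+} to a computation inside the tube at $\bla_i$, i.e.\ inside $\iota_i(\tMH(\bfk_i C_{p_i}))$, where the Drinfeld relations of $\tUD_{\sqq_i}(\widehat{\sll}_{p_i})$ are already available via $\Omega_{C_{p_i}}$.

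First, by \eqref{def:Hxm2}, write $\widehat{H}_{\star,\pm m}=\sum_{x\in\X}\widehat{H}_{x,\pm m}$. For $x\neq\bla_i$ the sheaves $F_{\X,\P^1}(S_x^{(\lambda)})$ are torsion sheaves supported away from $\bla_i$. The element $\widehat{X}^\pm_{[i,1],d_it}$ lies in $\iota_i(\tMH(\bfk_iC_{p_i}))$: it is a combination of products of elements $[C_M]$ (resp.\ $[C^*_M]$) and $[K_\alpha]^{\pm1},[K^*_\alpha]^{\pm1}$, with $M\in\scrt_{\bla_i}$. Take $N$ supported at $x\neq\bla_i$ and $M$ supported at $\bla_i$. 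Then $\Hom$ and $\Ext^1$ between them vanish in both directions, so $[C_N]*[C_M]=[C_{N\oplus M}]=[C_M]*[C_N]$, and likewise for the starred versions. The mixed products $[C_N]*[C_M^*]$ also commute, by the commutation formula for $[C_A]$ and $[C_B^*]$ in $\tMH$ (the Drinfeld double relation), whose correction terms all involve morphisms between $A$ and $B$. Moreover $(\widehat{N},\alpha)=0$ for every $\alpha$ supported at $\bla_i$, since $\langle\delta,\widehat{S_{ij}}\rangle=0=\langle\widehat{S_{ij}},\delta\rangle$ by Lemma~\ref{lem:Euler-form}. Hence $[K_\alpha]$ and $[K^*_\alpha]$ commute with $[C_N]$ by Lemma~\ref{lem:KC}. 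We conclude that only $x=\bla_i$ contributes, so that
\[
\big[\widehat{H}_{\star,\pm m},\widehat{X}^\pm_{[i,1],d_it}\big]=\big[\widehat{H}_{\bla_i,\pm m},\widehat{X}^\pm_{[i,1],d_it}\big],
\]
and similarly for the opposite signs. If $d_i\nmid m$, then $\widehat{H}_{\bla_i,\pm m}=0$, and the right-hand sides vanish too, by the convention $\widehat{X}^\pm_{[i,1],t}=0$ for $d_i\nmid t$. So assume $m=d_ik$.

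Next, by \eqref{eq:Hstar-pi}, $\widehat{H}_{\bla_i,\pm d_ik}=[d_i]_\sqq\,\pi^\pm_{[i,1],k}$, which is $\iota_i$ of $[d_i]_\sqq\pi^\pm_{1,k}$. Lemma~\ref{lem:Hjr-pi} with $\pi^\pm_{0,k}=0$ expresses each $\pi^\pm_{j,k}$ recursively through $\widehat{H}_{1,\pm k},\dots,\widehat{H}_{j-1,\pm k}$ and $\pi^\pm_{1,k}$. The cleanest route is to solve for $\pi^\pm_{1,k}$ as an explicit linear combination $\pm\sum_{j=1}^{p_i-1}a_j(k)\,\widehat{H}_{j,\pm k}$. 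Since $\pi^\pm_{p_i,k}$ involves modules whose socle may contain all simples, it is the natural central-type element of the tube, and it commutes with the $\widehat{X}_{j,l}$'s up to the $[K_{d_i\de}]$-twists. We then invert the $A_{p_i-1}$ quantum Cartan matrix $[\![(\sqq_i^k+\sqq_i^{-k})\delta_{jj'}-\delta_{|j-j'|,1}]\!]$ on the row of index $1$.

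Then we apply the relation \eqref{DR4} in $\tUD_{\sqq_i}(\widehat{\sll}_{p_i})$, transported through $\Omega_{C_{p_i}}$ and $\iota_i$, namely
\[
[\widehat{H}_{j,k},\widehat{X}^+_{1,t}]=\tfrac{[kc_{j1}]_{\sqq_i}}{k}\,\widehat{X}^+_{1,t+k}.
\]
Summing over $j$, the coefficient becomes $\frac{1}{k}\sum_j a_j(k)[kc_{j1}]_{\sqq_i}$. This combination should equal $-\frac{[k]_{\sqq_i}}{k}$ up to the $\pi_{p_i}$ contribution, and multiplying by $[d_i]_\sqq$ gives $-\frac{[d_ik]_\sqq}{k}=-\frac{[m]_\sqq}{m/d_i}$, which is \eqref{eq:hrxm+}. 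The $C$-, $C'$-twists in \eqref{DR4} produce the factors $[K_{d_i\de}]^k=[K_\de]^m$ and $[K^*_\de]^m$ in \eqref{eq:h-rxm+} and \eqref{eq:h-rx-m+}. Here we use $[K_{d_i\de}]$ in the tube, which maps to $[K_{\de}]^{d_i}$, since $\sum_j\widehat{S_{ij}}=d_i\delta$.

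The main obstacle is the linear algebra in the third step: making the contribution of $\pi^\pm_{p_i,k}$ precise, or else avoiding it. A cleaner alternative, which I would try first, is to reduce via the embeddings $F_{n+1,n}$ of \S\ref{subsec:embedding-cyclic} and \S\ref{sub:embedding-XY} to the case $p_i=2$. In that case $\pi^\pm_{1,k}$ and $\widehat{H}_{1,\pm k}$ are related by $\widehat{H}_{1,\pm k}=\pm(\pi^\pm_{2,k}-(\sqq_i^k+\sqq_i^{-k})\pi^\pm_{1,k})$. One then checks directly by a Hall-number computation that $\pi^\pm_{2,k}$ (the sum over all modules of class $kd_i\de$) quasi-commutes with $[C_{S_1}]$ and its shifts, contributing only the central twist. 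From there the coefficient identity reduces to $[2k]_{\sqq_i}-(\sqq_i^k+\sqq_i^{-k})[k]_{\sqq_i}\cdot(\ldots)$, checked by hand.
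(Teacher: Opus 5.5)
Your first step (only $x=\bla_i$ contributes) and the passage to $\pi^\pm_{[i,1],k}$ via \eqref{eq:Hstar-pi} agree with the paper. The linear algebra you flag is harmless. Let $A$ be the symmetric matrix with entries $-(\sqq_i^k+\sqq_i^{-k})\delta_{jj'}+\delta_{|j-j'|,1}$. Then Lemma~\ref{lem:Hjr-pi} reads $\widehat{H}_{\bullet,k}=A\pi^+_{\bullet,k}+\pi^+_{p_i,k}e_{p_i-1}$, and the coefficients $\frac{[kc_{j1}]_{\sqq_i}}{k}$ from \eqref{DR4} form the vector $-\frac{[k]_{\sqq_i}}{k}Ae_1$. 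Hence
\[
[\pi^+_{1,k},\widehat{X}^+_{1,t}]=-\tfrac{[k]_{\sqq_i}}{k}\widehat{X}^+_{1,t+k}-(A^{-1})_{1,p_i-1}[\pi^+_{p_i,k},\widehat{X}^+_{1,t}].
\]

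The genuine gap is the last term. You need $[\pi^\pm_{p_i,k},\widehat{X}^\pm_{1,t}]=0$ exactly and for every $t$. For \eqref{eq:h-rxm+}--\eqref{eq:h-rx-m+} you also need the cross version $[\pi^\mp_{p_i,k},\widehat{X}^\pm_{1,t}]=0$.
\begin{itemize}
\item Commuting ``up to $[K_{d_i\de}]$-twists'' would not suffice, since a nontrivial twist leaves a nonzero commutator. In fact no twist occurs, because $(d_i\de,\alpha_j)=0$.
\item You give no argument for this commutation. Your fallback to $p_i=2$ defers the same claim to an unspecified Hall-number computation ``for $[C_{S_1}]$ and its shifts''.
\item The citable input is centrality of $c_{p_i,r}$, hence of $\pi_{p_i,r}$, in the one-sided Hall algebra of the cyclic quiver (Schiffmann, Hubery). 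This only settles the cases where the root vector lies in the same half of the double: $t=0$ for $\pi^+$ against $\widehat{X}^+$, and $t=-1$ for $\pi^-$ against $\widehat{X}^+$. The latter works because $\widehat{X}^+_{1,-1}$ is a combination of $[K^*_\beta]^{-1}*[C^*_M]$ with $(\beta,kd_i\de)=0$.
\item For other $t$, the vectors $\widehat{X}^\pm_{1,t}$ involve both $[C_M]$ and $[C^*_M]$. Moving $\pi^+_{p_i,k}$ past $[C^*_{S_j}]$ needs a separate fact, such as primitivity of $\pi_{p_i,k}$, or a splitting of the double into $\tU_{\sqq_i}(\widehat{\sll}_{p_i})$ and a commuting Heisenberg algebra. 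You neither prove nor cite such a fact.
\end{itemize}

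The paper avoids $\pi_{p_i}$ and closes the gap with a shift argument.
\begin{itemize}
\item For $t=0$ it uses $\widehat{H}_{[i,1],d_im}=\pi^+_{[i,2],m}-(\sqq_i^m+\sqq_i^{-m})\pi^+_{[i,1],m}$ together with $[\pi^+_{[i,2],m},[C_{S_{i,1}}]]=0$. This gives the coefficient $-\frac{[d_i]_\sqq[2m]_{\sqq_i}}{(\sqq_i^m+\sqq_i^{-m})m}=-\frac{[d_im]_\sqq}{m}$.
\item It obtains the base case $t=-1$ of \eqref{eq:h-rxm+} similarly, from $[\pi^-_{[i,2],m},\widehat{X}^+_{[i,1],-d_i}]=0$.
\item It then \emph{enlarges} the weight to $p_i\geq 3$ via \S\ref{sub:embedding-XY}. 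This is the opposite of your reduction, which discards the tool needed next.
\item It writes $\widehat{X}^+_{[i,1],d_it}$ as a multiple of $[\widehat{H}_{[i,2],d_it},\widehat{X}^+_{[i,1],0}]$, by \eqref{DR4} for $\widehat{\sll}_{p_i}$.
\item Finally it uses $[\widehat{H}_{\star,\pm d_im},\widehat{H}_{[i,2],d_it}]=0$ and the Jacobi identity to reach all $t$.
\end{itemize}
To repair your proof, either adopt this shift step after your base case, or prove that $\pi^\pm_{p_i,k}$ commutes with the whole image of $\Omega_{C_{p_i}}$.
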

	
	\begin{proof}
		We only prove the first formula of \eqref{eq:hrxm+} and \eqref{eq:h-rxm+}.
		
		
		
		By \eqref{eq:Hstar-pi} we obtain
		\begin{align*}
			\big[\widehat{H}_{\star,d_im}, \widehat{X}^+_{[i,1],0}\big]
			=\big[\widehat{H}_{\bla_i,d_im}, \widehat{X}^+_{[i,1],0}\big]=\big[[d_i]_{\sqq}\cdot\pi^+_{[i,1],m},\widehat{X}^+_{[i,1],0}\big].
		\end{align*}
		Note that
		$\widehat{H}_{[i,1],d_im}=\pi^+_{[i,2],m}-(\sqq_i^{m}+\sqq_i^{-m})\pi^+_{[i,1],m}$ and  $\big[\pi^+_{[i,2],m},[C_{S_{i,1}}]\big]=0$. 
		Hence
		\begin{align*}
			\big[\widehat{H}_{\star,d_im}, \widehat{X}^+_{[i,1],0}\big]
			&=\frac{-[d_i]_{\sqq}}{\sqq_i^{m}+\sqq_i^{-m}}\big[\widehat{\bh}_{[i,1],d_im},\widehat{X}^+_{[i,1],0} \big]\\
			&=\frac{-[d_i]_{\sqq}}{\sqq_i^{m}+\sqq_i^{-m}} \frac{[2m]_{\sqq_i}}{m}\widehat{X}^+_{[i,1],d_im}
			\\
			&=\frac{-[d_im]_\sqq}{m} \widehat{X}^+_{[i,1],d_im},
		\end{align*}
		where the second equality holds since  $\Omega_{C_{p_i}}$ preserves \eqref{DR4} of $\tUD_v(\widehat{\mathfrak{sl}}_{p_i})$. 
		
		Using embedding as in \S\ref{sub:embedding-XY}, we can assume $p_i\geq 3$.
		Additionally, $[\widehat{H}_{\star,d_im}, \widehat{H}_{[i,2],d_it}]=0$ for any $t\neq 0$; see \cite[Lemma 7.1 (2) and Section 8.2]{DJX12}.
		Then we have 
		\begin{align*}
			\big[\widehat{H}_{\star,d_im}, \widehat{X}^+_{[i,1],d_it}\big]=&
			-\frac{t}{[t]_{\sqq_i}}\big[\widehat{H}_{\star,d_im}, [\widehat{H}_{[i,2],d_it},\widehat{X}^+_{[i,1],0}]\big]
			\\
			=& -\frac{t}{[t]_{\sqq_i}}\big[\widehat{H}_{[i,2],d_it}, [\widehat{H}_{\star,d_im},\widehat{X}^+_{[i,1],0}]\big]
			\\
			=&\frac{t}{[t]_{\sqq_i}} \frac{[d_im]_{\sqq}}{m}\big[\widehat{H}_{[i,2],d_it}, \widehat{X}^+_{[i,1],d_im}\big]
			\\
			=&- \frac{[d_im]_{\sqq}}{m}\widehat{X}^+_{[i,1],d_i(m+t)}.
		\end{align*}

		Similarly,
		since
		$\widehat{H}_{[i,1],-d_im}=\pi^-_{[i,2],m}-(\sqq_i^{m}+\sqq_i^{-m})\pi^-_{[i,1],m}$ and  $\big[\pi^-_{[i,2],m}, \widehat{X}^+_{[i,1],-d_i}\big]=0$, cf. \cite{Hu}.
		Hence,
		\begin{align*}
			\big[\widehat{H}_{\star, -d_im}, \widehat{X}^+_{[i,1],-d_i}\big]
			&=\big[\widehat{H}_{\bla_i, -d_im}, \widehat{X}^+_{[i,1], -d_i}\big]\\
			&=\big[[d_i]_{\sqq}\cdot\pi^-_{[i,1],m},\widehat{X}^+_{[i,1], -d_i}\big]\\
			&=\frac{-[d_i]_{\sqq}}{\sqq_i^{m}+\sqq_i^{-m}}\big[\widehat{\bh}_{[i,1],-d_im},\widehat{X}^+_{[i,1],-d_i} \big]\\
			&=\frac{-[d_i]_{\sqq}}{\sqq_i^{m}+\sqq_i^{-m}} \frac{[2m]_{\sqq_i}}{m}[K^*_{\de}]^{md_i}*\widehat{X}^+_{[i,1],-d_i(m+1)}
			\\
			&=\frac{-[d_im]_\sqq}{m} [K^*_{\de}]^{md_i}*\widehat{X}^+_{[i,1],-d_i(m+1)}.
		\end{align*}
		By similar arguments as above, and using the fact $[\widehat{H}_{\star,-d_im}, \widehat{H}_{[i,2],d_it}]=0$ for any $t\neq 0$, one can prove that
		\begin{align*}
			\big[\widehat{H}_{\star, -d_im}, \widehat{X}^+_{[i,1],-d_it}\big]&=\frac{-[d_im]_\sqq}{m} [K^*_{\de}]^{md_i}*\widehat{X}^+_{[i,1],-d_i(t+m)}.
		\end{align*}
		This finishes the proof.
	\end{proof}

	%
	\subsection{The relation \eqref{eq:qcommxstarxi1+1}}
	
	\begin{lemma}
		We have \eqref{eq:qcommxstarxi1+1} holds for $1\leq i\leq \bt$, $r,t\in\Z$.
	\end{lemma}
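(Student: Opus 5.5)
Our plan is to reduce \eqref{eq:qcommxstarxi1+1} to one base case by commuting with imaginary root vectors, in the same way as in the proof of Lemma \ref{lem:hrxm+--h-rx-m+}, and then to check that base case by a direct Hall-number computation. We treat the ``$+$'' case; the ``$-$'' case is symmetric, replacing $C$ by $C^*$ throughout, or alternatively using the anti-automorphism of the semi-derived Hall algebra that interchanges $[C_M]$ and $[C^*_M]$.

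\textbf{Reduction to the base case.} We shift the two indices separately.
\begin{itemize}
\item To shift $t$, we use \eqref{eq:hrxk+}: for $m>0$, $\operatorname{ad}\widehat{H}_{[i,1],d_im}$ multiplies $\widehat{X}^+_{\star,t}$ by $-\frac{[m]_{\sqq_i}}{m}$ and raises $t$ by $d_im$.
\item On the $[i,1]$ side, we use an element that commutes with $\widehat{X}^+_{\star,\cdot}$ but acts on $\widehat{X}^+_{[i,1],\cdot}$. As in Lemma \ref{lem:hrxm+--h-rx-m+}, we may enlarge $p_i$ via \S\ref{sub:embedding-XY} so that $p_i\ge 3$. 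Then $\widehat{H}_{[i,2],d_is}$ commutes with $[C_{\co(t\vc)}]$, since $c_{[i,2],\star}=0$; concretely, $\widehat{H}_{[i,2],d_is}$ is built from torsion sheaves at $\bla_i$ orthogonal to $\co$. It also shifts $\widehat{X}^+_{[i,1],d_ir}$ to $\widehat{X}^+_{[i,1],d_i(r+s)}$ with a nonzero scalar.
\item The key point is that both sides of \eqref{eq:qcommxstarxi1+1} are bilinear in the two generators and are of the same form. Applying $\operatorname{ad}\widehat{H}_{[i,2],d_is}$ to the identity for $(t,r)$ therefore gives the identity for $(t,r+s)$. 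This works for $s$ of either sign: for $s<0$ the commutator is still a pure shift, because the $K^*$ factors in \eqref{DR4} appear only when the signs are mixed relative to the $\pm$ of $X$.
\item The $\star$ index needs more care: $\operatorname{ad}\widehat{H}_{[i,1],d_im}$ acts on both factors. Using the analogous relation \eqref{eq:hrxm+} for $\widehat{H}_{\star}$ acting on $\widehat{X}^+_{[i,1]}$, together with $[\widehat{H}_{\star},\widehat{X}^+_{\star}]$ from Proposition \ref{prop:Realizationsl2}, we can form a linear combination of $\widehat{H}_{\star,d_im}$ and $\widehat{H}_{[i,2],d_im}$ that kills $\widehat{X}^+_{[i,1]}$ and shifts $\widehat{X}^+_{\star}$.
\end{itemize}
Together these reduce the statement to $t=0$ and $r=0$, or to any convenient pair such as $t=-d_i$, $r=0$.

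\textbf{Base case.} The base case is the identity
\begin{align*}
[C_{\co(d_i\vc)}]*[C_{S_{i1}}]-\sqq_i^{-1}[C_{S_{i1}}]*[C_{\co(d_i\vc)}]=\sqq_i^{-1}\big([C_{\co}]*\widehat{X}^+_{[i,1],d_i}-\sqq_i\,\widehat{X}^+_{[i,1],d_i}*[C_{\co}]\big).
\end{align*}
Its ingredients are as follows.
\begin{itemize}
\item We need an explicit formula for $\widehat{X}^+_{[i,1],d_i}$. By analogy with Proposition \ref{prop:DrGenA}, it should be obtained as $-[\widehat{H}_{[i,2],d_i},\widehat{X}^+_{[i,1],0}]$ up to scalar, which is a combination of $[C_M]$ with $\widehat M=d_i\delta+\alpha_{i1}$ supported at $\bla_i$, involving $S_{i,1}^{(p_i+1)}$-type modules and corrections. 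This would be computed in $\tMH(\bfk_iC_{p_i})$ using \eqref{eqn:pi1r} and Lemma \ref{lem:Hjr-pi}.
\item The Hall products needed are:
\begin{itemize}
\item $S_{i1}*\co(d_i\vc)$, which is a sum over extensions. Here $\Ext^1(S_{i1},\co(d_i\vc))=D\Hom(\co(d_i\vc),S_{i1}(\vw))=D\Hom(\co(d_i\vc),S_{i0})\cong\bfk_i$, and the middle terms are $\co(d_i\vc)\oplus S_{i1}$ and $\co(d_i\vc+\vx_i)$.
\item $\co(d_i\vc)*S_{i1}$, which is just $\sqq$-power times the direct sum, since $\Hom$ and $\Ext^1$ from a line bundle to $S_{i1}$ vanish.
\item The products of $\co$ with torsion modules at $\bla_i$ of class $d_i\delta+\alpha_{i1}$. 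Their extensions yield $\co(d_i\vc+\vx_i)$ (since $\co(\vx_i)\to\co(d_i\vc+\vx_i)$ has cokernel of that class) together with split terms.
\end{itemize}
\end{itemize}
Comparing coefficients of $[C_{\co(d_i\vc+\vx_i)}]$ and of the split terms should give the identity; the Euler form values come from Lemma \ref{lem:Euler-form}.

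\textbf{Main obstacle.} We expect the main difficulty to be the explicit form of $\widehat{X}^+_{[i,1],d_i}$ and the count of subsheaves $\co\hookrightarrow\co(d_i\vc+\vx_i)$ with a prescribed torsion quotient at $\bla_i$. These correspond to sections $X_i\cdot g$ with $g$ of degree $d_i$, restricted to the residue field $\bfk_i$. If the explicit form proves unwieldy, the fallback is to pick the base case $t=-d_i$, $r=0$, where only $\widehat{X}^+_{[i,1],0}=[C_{S_{i1}}]$ appears on one side. The other side then involves $\widehat{X}^+_{[i,1],d_i}$ only through an $\operatorname{ad}\widehat{H}_{[i,2],d_i}$ commutator, which reduces everything to commutators already computed in Lemma \ref{lem:hrxm+--h-rx-m+}.
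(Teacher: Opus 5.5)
\textbf{Gap: the base case is never established.} Your reduction step is the same as the paper's. After enlarging to $p_i\geq 3$ via \S\ref{sub:embedding-XY}, applying $[\widehat{H}_{[i,2],d_is},-]$ to one instance of \eqref{eq:qcommxstarxi1+1} shifts $r$ and leaves $t$ fixed. One small correction: for $s<0$ acting on $\widehat{X}^+$ the signs \emph{are} mixed, so a factor $[K^*_{d_i\de}]^{-s}$ does appear. This is harmless, since it is central, invertible, and occurs on both sides.

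At $r=0$ the relation involves $\widehat{X}^+_{[i,1],d_i}$, and nothing available gives an explicit Hall-algebra formula for it: Proposition~\ref{prop:DrGenA} only provides $\widehat{X}^+_{j,-1}$ and $\widehat{X}^-_{j,1}$. You leave both this formula and the resulting count of extensions of torsion sheaves of class $d_i\de+\alpha_{i1}$ by $\co$ as an anticipated obstacle.

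Your fallback $(t,r)=(-d_i,0)$ does not remove the problem, because $\widehat{X}^+_{[i,1],d_i}$ is still on the right-hand side. Substituting $\widehat{X}^+_{[i,1],d_i}=-[\widehat{H}_{[i,2],d_i},[C_{S_{i,1}}]]$ only turns that side into $\sqq_i^{-1}(q^{d_i}-1)\big[\widehat{H}_{[i,2],d_i},[C_{\co(\vx_i-d_i\vc)}]\big]$. Lemma~\ref{lem:hrxm+--h-rx-m+} does not compute this commutator. If you attack it through the relations, you land back on the identity you are trying to prove.

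\textbf{The fix: anchor the reduction at $r=-1$.} There the $[i,1]$ root vector is explicit: by Proposition~\ref{prop:DrGenA}, $\widehat{X}^+_{[i,1],-d_i}=-\sqq_i[K^*_{S_{i,0}^{(p_i-1)}}]^{-1}*[C^*_{S_{i,0}^{(p_i-1)}}]$. The relation then reads $[\widehat{X}^+_{\star,t+d_i},\widehat{X}^+_{[i,1],-d_i}]_{\sqq_i^{-1}}=\sqq_i^{-1}[\widehat{X}^+_{\star,t},[C_{S_{i,1}}]]_{\sqq_i}$, and it needs only four products:
\begin{itemize}
\item $[C_{\co(t\vc)}]*[C_{S_{i,1}}]=[C_{S_{i,1}\oplus\co(t\vc)}]$;
\item $[C_{S_{i,1}}]*[C_{\co(t\vc)}]=\sqq_i^{-1}[C_{S_{i,1}\oplus\co(t\vc)}]+(\sqq_i-\sqq_i^{-1})[C_{\co(t\vc+\vx_i)}]$, which is essentially what you computed;
\item $[C^*_{S_{i,0}^{(p_i-1)}}]*[C_{\co((t+d_i)\vc)}]$, which is just the direct sum;
\item $[C_{\co((t+d_i)\vc)}]*[C^*_{S_{i,0}^{(p_i-1)}}]$, which picks up the extra term $(q^{d_i}-1)[C_{\co(t\vc+\vx_i)}]*[K^*_{S_{i,0}^{(p_i-1)}}]$ from the surjections $\co((t+d_i)\vc)\to S_{i,0}^{(p_i-1)}$ with kernel $\co(t\vc+\vx_i)$.
\end{itemize}
After moving the $K^*$-factors with Lemma~\ref{lem:KC}, both sides equal $(\sqq_i^{-1}-\sqq_i)[C_{\co(t\vc+\vx_i)}]$ for \emph{every} $t$ at once. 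So your separate $t$-shifting argument is unnecessary. Only the $r$-shift by $[\widehat{H}_{[i,2],d_is},-]$ remains, exactly as in the paper.
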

	
	
	\begin{proof}

		
		First we assume $r=-1$. Recall 
		that $\widehat{X}^+_{\star,t}=[C_{\co(t\vc)}]$ for any $t\in\Z$,  $\widehat{X}^+_{[i,1],0}=[C_{S_{i,1}}]$ and  $\widehat{X}^+_{[i,1],-d_i}=-\sqq_i[K^*_{S_{i,0}^{(p_i-1)}}]^{-1}*[C^*_{S_{i,0}^{(p_i-1)}}]$ for any  $1\leq i\leq \bt$.
		A direct computation in $\tMH(\X_\bfk)$ shows that:
		
		\begin{align} \begin{split}\label{Si1 and co double direction1}
				[C_{\co(t\vec{c})}]*[C_{S_{i,1}}]=& [C_{S_{i,1}\oplus\co(t\vec{c})}],\\
				[C_{S_{i,1}]}* C_{[\co(t\vec{c})}]=& \sqq_i^{-1}[ C_{S_{i,1}\oplus\co(t\vec{c})}]+(\sqq_i-\sqq_i^{-1})[C_{\co(t\vec{c}+\vec{x}_i)}];
		\end{split}\end{align}
		and
		\begin{align} \begin{split}\label{Si0 and co double direction2}
				[C^*_{S_{i,0}^{(p_i-1)}}]* [C_{\co((t+d_i))\vec{c})}]=& [C^*_{S_{i,0}^{(p_i-1)}}\oplus C_{\co((t+d_i)\vec{c})}],\\
				[C_{\co((t+d_i)\vec{c})}]*[C^*_{S_{i,0}^{(p_i-1)}}]=&[C^*_{S_{i,0}^{(p_i-1)}}\oplus C_{\co((t+d_i)\vec{c})}]+(q^{d_i}-1)[C_{\co(t\vec{c}+\vec{x}_i)}]*[K^*_{ d_i\de-\alpha_{i}}].
		\end{split}\end{align}
		It follows that \eqref{eq:qcommxstarxi1+1} holds  when $r=-1$ since both sides  equal to $(\sqq_i^{-1}-\sqq_i)[C_{\co(t\vec{c}+\vec{x}_i)}]$.

		We also assume $p_i\geq3$. By applying $[\widehat{H}_{[i,2],d_ir},-]$, we know that \eqref{eq:qcommxstarxi1+1} holds for any $r$, similar to the proof of Lemma \ref{lem:hrxm+--h-rx-m+}. 
	\end{proof}

	\subsection{The relation \eqref{eq:hrxk+}}
	
	\begin{lemma}
		\label{lem:eq:hrxk+}
		We have \eqref{eq:hrxk+} holds for $m>0$, $t\in\Z$. 
	\end{lemma}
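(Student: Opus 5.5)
We need to prove $[\widehat H_{[i,1],d_im},\widehat X^+_{\star,t}] = -\frac{[m]_{\sqq_i}}{m}\widehat X^+_{\star,t+d_im}$, together with its minus analogue. The plan is to reduce to the commutator of $\widehat X^+_{\star,t}$ with $\pi^+_{[i,1],m}$, which is controlled by the embedding $F_{\X,\P^1}$ and the $\widehat{\mathfrak{sl}}_2$ realization.

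\emph{Step 1: decompose $\widehat H_{[i,1],d_im}$.} By Lemma \ref{lem:Hjr-pi},
\[
\widehat H_{[i,1],d_im}=\pi^+_{[i,2],m}-(\sqq_i^{m}+\sqq_i^{-m})\pi^+_{[i,1],m},
\]
where $\pi^+_{[i,0],m}=0$, and where we read $\pi^+_{[i,p_i],m}$ appropriately when $p_i=2$, or first pass to $p_i\ge 3$ via the embedding of \S\ref{sub:embedding-XY}, which preserves root vectors. So we need two commutators.

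\emph{Step 2: the term $\pi^+_{[i,2],m}$ commutes with $[C_{\co(t\vc)}]$.} The sheaves $M\in\cm_{2,md_i\de}$ have socle in $S_{i1}\oplus S_{i2}$, and so does each torsion subsheaf of $M$. Hence $\Hom(M,\co(t\vc))=0$. By Serre duality, $\Ext^1(M,\co(t\vc))=D\Hom(\co(t\vc),M(\vw))$, and $M(\vw)$ has socle in $S_{i0}\oplus S_{i1}$. Also $\Ext^1(\co(t\vc),M)=0$, since $M$ is torsion. Now $\Hom(\co(t\vc),M)$ is the top part at $S_{i0}$, and the relevant computation is that $\Hom(\co(t\vc),M)$ and $\Ext^1(M,\co(t\vc))$ have equal dimension. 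Both products $[C_M]*[C_{\co(t\vc)}]$ and $[C_{\co(t\vc)}]*[C_M]$ then reduce to the direct sum plus extension terms, and these must match.

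A cleaner route: $c^+_{2,r}$ is built from modules in ${}^\perp$-type subcategories, and $\co(t\vc)$ lies in the image of $\F_{\X,\P^1}$, the right adjoint identified with $\mathcal C^{\perp}$. I would compare with the proof of \cite[Lemma 7.1]{DJX12} and \cite[\S8]{DJX12}, where $[\pi^+_{[i,j],m},[C_{\co}]]=0$ is shown for $j\ge2$ in the $d_i=1$ case. That argument is formal in the Hom/Ext data and should extend verbatim, with $\bfk$ replaced by $\bfk_i$.

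\emph{Step 3: the term $\pi^+_{[i,1],m}$.} By \eqref{eq:Hstar-pi}, $\pi^+_{[i,1],m}=[d_i]_\sqq^{-1}\widehat H_{\bla_i,d_im}$. By \eqref{def:Hxm2},
\[
\widehat H_{\star,d_im}=\sum_x \widehat H_{x,d_im},
\]
and the commutator of each $\widehat H_{x,d_im}$ with $[C_{\co(t\vc)}]$ can be computed inside $F_{\X,\P^1}(\tMH(\P^1_\bfk))$, since every element involved lies in the image of the fully faithful exact functor. I would isolate the contribution of the single point $x$ by working with the point-wise version of the $\widehat{\mathfrak{sl}}_2$ relation: the known Hall computation on $\P^1$ (\cite{Ka97,BKa01}) gives
\[
[\widehat H_{x,n},[C_{\co(t)}]]=-c_x(n)\,[C_{\co(t+n)}],
\]
where the constant $c_x(n)$ is supported on $d_x\mid n$ and satisfies $\sum_x c_x(n)=\frac{[2n]_\sqq}{n}$. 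This is the $\widehat{\mathfrak{sl}}_2$ relation \eqref{Drsl2-5} realized through Proposition \ref{prop:Realizationsl2}. Concretely, one verifies $c_x(n)=\frac{[n]_\sqq}{n}d_x(\sqq^{n}+\sqq^{-n})$ by the usual count of subsheaves $\co(t)\subset\co(t+n)$ with cokernel supported at $x$. I expect this per-point formula to be the main obstacle. I would try first to derive it from the standard generating-function identity for $\sum_\lambda \bn_{d_x}(\ell(\lambda)-1)$ over torsion at $x$, as in Kapranov's proof.

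\emph{Step 4: combine.} Applying Step 3 with $x=\bla_i$ and $n=d_im$ gives
\[
[\pi^+_{[i,1],m},\widehat X^+_{\star,t}]=-\frac{[d_im]_\sqq d_i(\sqq^{d_im}+\sqq^{-d_im})}{d_im[d_i]_\sqq}\,\widehat X^+_{\star,t+d_im}=-\frac{[m]_{\sqq_i}}{m}(\sqq_i^{m}+\sqq_i^{-m})\,\widehat X^+_{\star,t+d_im}.
\]
Multiplying by $-(\sqq_i^m+\sqq_i^{-m})$ gives $\frac{[2m]_{\sqq_i}}{m}\widehat X^+_{\star,t+d_im}$, since $[m]_{\sqq_i}(\sqq_i^m+\sqq_i^{-m})=[2m]_{\sqq_i}$. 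Here I should double-check the sign and normalization: the target is $-\frac{[m]_{\sqq_i}}{m}$, so the $[i,2]$ term must contribute nonzero. If so, Step 2 has to be a computation rather than a vanishing statement, namely $[\pi^+_{[i,2],m},\widehat X^+_{\star,t}]=\frac{[m]_{\sqq_i}}{m}(\sqq_i^{m}+\sqq_i^{-m}-1)\widehat X^+_{\star,t+d_im}$, or the correct normalization of Lemma \ref{lem:Hjr-pi} at $j=1$ must be used. I would resolve this by testing the case $m=1$ directly with \eqref{eqn:pici1} and explicit $c^+_{j,1}$ sums. The minus version of the relation follows by the symmetric argument with $C^*$.
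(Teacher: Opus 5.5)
\textbf{There is a genuine gap: the argument does not close, as your Step 4 concedes.} Your decomposition route depends on the two commutators $[\pi^+_{[i,j],m},[C_{\co(t\vc)}]]$ for $j=1,2$, and neither is handled correctly.

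\emph{Step 2's vanishing claim is false.} Already at $m=1$ it fails. The element $\pi^+_{[i,1],1}=[\![C_{S_{i,0}^{(p_i)}}]\!]$ is the image under $F_{\X,\P^1}$ of $[\![C_{S_x}]\!]$, where $x=\bla_i$ is the point of degree $d_i$. In $\widetilde{\ch}(\coh(\P^1_\bfk))$ one has
\[
[S_x]*[\co(t)]-[\co(t)]*[S_x]=\sqq^{-d_i}(q^{d_i}-1)[\co(t+d_i)],
\]
which gives $[\pi^+_{[i,1],1},[C_{\co(t\vc)}]]=\sqq_i^{-1}[C_{\co((t+d_i)\vc)}]$. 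If $\pi^+_{[i,2],1}$ commuted with $[C_{\co(t\vc)}]$, you would get $[\widehat H_{[i,1],d_i},[C_{\co(t\vc)}]]=-(1+\sqq_i^{-2})[C_{\co((t+d_i)\vc)}]$ instead of $-[C_{\co((t+d_i)\vc)}]$.

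What the results of [DJX12] give is the vanishing of $[\widehat H_{[i,j]},\widehat X^\pm_{\star,t}]$ for $j\ge 2$. These are second differences $\pi_{[i,j+1]}-(\sqq_i^m+\sqq_i^{-m})\pi_{[i,j]}+\pi_{[i,j-1]}$, not individual $\pi_{[i,j]}$. Indeed, for $p_i\ge 3$, if every $\pi^+_{[i,j],m}$ with $j\ge 2$ commuted with line bundles, the case $j=2$ would force $\pi^+_{[i,1],m}$ to commute too.

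\emph{Step 3's per-point constant is wrong in both sign and size.} Since $\sum_{x:\,d_x\mid n}d_x=q^n+1$, your $c_x(n)$ cannot sum to $[2n]_\sqq/n$. The value consistent with the computation above is
\[
[\widehat H_{x,n},[C_{\co(t)}]]=\frac{[n]_\sqq}{n}\,d_x\,\sqq^{-n}\,[C_{\co(t+n)}].
\]
Per-point proportionality for general $n$ is itself something you would still have to prove. The arithmetic in Step 4 is also off: $(\sqq_i^m+\sqq_i^{-m})^2[m]_{\sqq_i}\neq[2m]_{\sqq_i}$.

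\emph{What your route actually needs.} With the corrected term $[\pi^+_{[i,1],m},[C_{\co(t\vc)}]]=\frac{[m]_{\sqq_i}}{m}\sqq_i^{-m}[C_{\co((t+d_im)\vc)}]$, you would need exactly
\[
[\pi^+_{[i,2],m},[C_{\co(t\vc)}]]=\frac{[m]_{\sqq_i}}{m}\sqq_i^{-2m}[C_{\co((t+d_im)\vc)}].
\]
This is the real content of the lemma. The element $\pi^+_{[i,2],m}$ involves torsion sheaves at $\bla_i$ whose socle meets $S_{i,2}$. These lie outside the image of $F_{\X,\P^1}$, so the $\widehat{\mathfrak{sl}}_2$ realization gives no information about them. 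Your proposal supplies no Hall computation for this term.

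\emph{How the paper avoids it.} The paper never touches $\pi_{[i,2]}$.
\begin{enumerate}
\item By Beck's Lemma 4.1 it rewrites the relation as \eqref{formula 8.19} for $\widehat\psi_{[i,1]}$.
\item It uses \eqref{DR5} for the cyclic quiver together with Proposition \ref{prop:DrGenA} to write $(q^{d_i}-1)\sqq_i\widehat\psi_{[i,1],d_im}=\bigl[\widehat X^+_{[i,1],d_i(m-1)},[C_{S_{i,0}^{(p_i-1)}}]\bigr]_{\sqq_i^2}$. After this, only real root vectors appear.
\item It checks $m=1$ by explicit Hall products of $[C_{S_{i,1}}]$ and $[C_{S_{i,0}^{(p_i-1)}}]$ with $[C_{\co(t\vc)}]$ and $[C_{\co(t\vc+\vx_i)}]$.
\item It inducts on $m$ using \eqref{eq:qcommxstarxi1+1} and $\bigl[[C_{S_{i,0}^{(p_i-1)}}],[C_{\co(t\vc)}]\bigr]_{\sqq_i}=0$.
\end{enumerate}
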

	
	\begin{proof}
		We only prove the first formula. 
		The proof method is similar as \cite[Lemma 7.6]{DJX12}.
		By \cite[Lemma 4.1]{Be94}, it is equivalent to prove
		\begin{align}
			\label{formula 8.19}
			\big[\widehat{\psi}_{[i,1],d_im}, [C_{\co(t\vec{c})}]\big]=\sqq_i^{-1}\big[\widehat{\psi}_{[i,1],d_i(m-1)},[C_{\co((t+d_i)\vec{c})}]  \big]_{\sqq_i^2}.
		\end{align}
		
		Since $\Omega_{C_{p_i}}$ preserves \eqref{DR5} of $\tUD_v(\mathfrak{sl}_{p_i})$, we know   that $$(1-q^{d_i})[K_{d_i\delta}]^{-1}*[K_{S_{i,1}}]*\widehat{\psi}_{[i,1],d_im}=[\widehat{X}^+_{[i,1],d_i(m-1)}, \widehat{X}^-_{[i,1],d_i}].$$
		It follows from \eqref{eqn:Xj1-} that 
		$$\widehat{X}^-_{[i,1],d_i}=-\sqq_i^{-1}[C_{S_{i,0}^{(p_i-1)}}]*[K_{S_{i,1}}]*[K_{d_i\delta}]^{-1}.$$
		Set $\xi^i_0=(q^{d_i}-1)\sqq_i$ and for $m>0$,
		\begin{align}
			\label{eq:xim}
			\xi^i_m:=(q^{d_i}-1)\sqq_i\cdot \widehat{\psi}_{[i,1],d_im}=\big[\widehat{X}^+_{[i,1],d_i(m-1)},[C_{S_{i,0}^{(p_i-1)}}]\big]_{\sqq_i^{2}}.
		\end{align}
		Then \eqref{formula 8.19}
		is equivalent to
		\begin{align}\label{xi and co}
			\big[\xi_m^i, [C_{\co(t\vec{c})}]\big]=\sqq_i^{-1}\big[\xi^i_{m-1},[C_{\co((t+d_i)\vec{c})}]  \big]_{\sqq_i^2}.
		\end{align}

		For $m=1$, we have
		\begin{align*}
			\xi_1^i&=\widehat{X}^+_{[i,1],0}*[C_{S_{i,0}^{(p_i-1)}}]-\sqq_i^{2}[C_{S_{i,0}^{(p_i-1)}}]*\widehat{X}^+_{[i,1],0}= 
			\big[  [C_{S_{i,1}}], [C_{S_{i,0}^{(p_i-1)}}]\big]_{\sqq_i^{2}}.
		\end{align*}
		By \eqref{Si1 and co double direction1} we have 
		$$\big[[C_{\co(t\vec{c})}], [C_{S_{i,1}}]\big]_{\sqq_i}=(1-q^{d_i})[C_{\co(t\vec{c}+\vec{x}_i)}];$$
		similarly,
		$$\big[[C_{\co(t\vec{c}+\vec{x}_i)}], [C_{S_{i,0}^{(p_i-1)}}]\big]_{\sqq_i}=(1-q^{d_i})[C_{\co((t+d_i)\vc)}].$$
		Hence,
		\begin{align*}
			\big[\xi_1^i, [C_{\co(t\vec{c})}]\big]&=-\Big[[C_{\co(t\vec{c})}], \big[  [C_{S_{i,1}}], [C_{S_{i,0}^{(p_i-1)}}]\big]_{\sqq_i^{2}}\Big]\\
			&=-\Big[\big[[C_{\co(t\vec{c})}],  [C_{S_{i,1}}]\big]_{\sqq_i},  [C_{S_{i,0}^{(p_i-1)}}]\Big]_{\sqq_i} \\
			&=-\Big[(1-q^{d_i})[C_{\co(t\vec{c}+\vec{x}_i)}],  [C_{S_{i,0}^{(p_i-1)}}]\Big]_{\sqq_i} \\
			&=-(1-q^{d_i})^2[C_{\co((t+d_i)\vc)}]. 
		\end{align*}
		Here, in the second equality we use the following identity
		$$\big[a,[b,c]_{\sqq_i^{2}}\big]=\big[[a,b]_{\sqq_i},c\big]_{\sqq_i}-\big[b,[c,a]_{\sqq_i}\big]_{\sqq_i},$$ 
		and the fact 
		$\big[[C_{S_{i,0}^{(p_i-1)}}],[C_{\co(t\vec{c})}]\big]_{\sqq_i}=0.$
		This proves \eqref{xi and co} when $m=1$.

		For $m\geq 2$,
		by \eqref{eq:qcommxstarxi1+1} and $\big[[C_{S_{i,0}^{(p_i-1)}}],[C_{\co(t\vec{c})}]\big]_{\sqq_i}=0$, we have
		\begin{align*}
			\big[\xi_m^i,C_{\co(t\vec{c})}\big]=&\Big[\big[\widehat{X}^+_{[i,1],d_i(m-1)},[C_{S_{i,0}^{(p_i-1)}}]\big]_{\sqq_i^{2}},[C_{\co(t\vec{c})}]\Big]
			\\
			=&\sqq_i\Big[\big[\widehat{X}^+_{[i,1],d_i(m-1)},[C_{\co(t\vec{c})}]\big]_{\sqq_i^{-1}}, [C_{S_{i,0}^{(p_i-1)}}]\Big]_{\sqq_i}
			\\
			=&\Big[\big[\widehat{X}^+_{[i,1],d_i(m-2)},[C_{\co((t+d_i)\vec{c})}]\big]_{\sqq_i}, [C_{S_{i,0}^{(p_i-1)}}]\Big]_{\sqq_i}
			\\
			=&\sqq_i^{-1}\Big[\big[\widehat{X}^+_{[i,1],d_i(m-2)}, [C_{S_{i,0}^{(p_i-1)}}]\big]_{\sqq_i^2}, [C_{\co((t+d_i)\vec{c})}] \Big]_{\sqq_i^2}
			\\
			=&\sqq_i^{-1}\big[\xi^i_{m-1},[C_{\co((t+d_i)\vec{c})}]  \big]_{\sqq_i^2}.
		\end{align*}
		The proof is completed. 
	\end{proof}

	\subsection{The relation  \eqref{x * and x_i1^+}}
	
	\begin{lemma}
		We have \eqref{x * and x_i1^+} holds for any $r,t\in\Z$.
	\end{lemma}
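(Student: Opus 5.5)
We need $[\widehat X^+_{\star,t},\widehat X^-_{[i,1],d_ir}]=0$ and $[\widehat X^-_{\star,t},\widehat X^+_{[i,1],d_ir}]=0$. By symmetry (the roles of $C$ and $C^*$ are interchanged by the natural duality on $\cc_{\Z_2}(\coh(\X_\bfk))$ swapping the two components), we treat only the first. As in the proofs of Lemma \ref{lem:hrxm+--h-rx-m+} and of \eqref{eq:qcommxstarxi1+1}, we first reduce to one value of $r$ and then move $r$ using the Heisenberg elements of the second vertex of the branch. Using the embedding of \S\ref{sub:embedding-XY}, which preserves all root vectors, we may assume $p_i\geq3$.

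The base cases are $r=0$ and $r=1$. For $r=0$ we have $\widehat X^-_{[i,1],0}=[C^*_{S_{i,1}}]$, and we must show that $[C_{\co(t\vc)}]$ and $[C^*_{S_{i,1}}]$ commute. In $\tMH(\X_\bfk)$, the product of a $C_A$ and a $C^*_B$ in either order is governed by $\Hom(A,B)$ and $\Hom(B,A)$: the reordering terms come from maps between the underlying objects, which produce acyclic factors. Here $\Hom(S_{i,1},\co(t\vc))=0$ because torsion sheaves admit no maps to bundles, and $\Hom(\co(t\vc),S_{i,1})=\bS$-component which vanishes since $\Hom(\co,S_{ij})\neq0$ only for $j=0$ (equivalently $\langle\widehat\co,\widehat{S_{i1}}\rangle=0$ with no $\Ext^1$ in this direction). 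So both products equal $[C_{\co(t\vc)}\oplus C^*_{S_{i,1}}]$ up to the same twist $\sqq^{\langle\res,\res\rangle}$. The exponent is symmetric because $(\widehat\co,\widehat{S_{i1}})=0$ by Lemma \ref{lem:Euler-form}. Hence the commutator vanishes.

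For $r=1$ we use \eqref{eqn:Xj1-}: $\widehat X^-_{[i,1],d_i}=-\sqq_i^{-1}[C_{S_{i,0}^{(p_i-1)}}]*[K_{S_{i,1}}]*[K_{d_i\de}]^{-1}$. By Lemma \ref{lem:KC}, $[K_{S_{i1}}][K_{d_i\de}]^{-1}$ commutes with $[C_{\co(t\vc)}]$ up to $\sqq^{(\widehat{S_{i1}}-d_i\de,\widehat\co)}=\sqq^{d_i}$. We therefore need $[C_{\co(t\vc)}]*[C_{S_{i,0}^{(p_i-1)}}]=\sqq_i^{-1}[C_{S_{i,0}^{(p_i-1)}}]*[C_{\co(t\vc)}]$ in the positive Hall algebra, which is exactly the identity $\big[[C_{S_{i,0}^{(p_i-1)}}],[C_{\co(t\vc)}]\big]_{\sqq_i}=0$ already used in the proof of Lemma \ref{lem:eq:hrxk+}. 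Indeed, the only extension of $\co(t\vc)$ by $S_{i,0}^{(p_i-1)}$ (quotient torsion, sub the bundle) is computed in one direction, while the other order splits. We will redo this short Hall computation to confirm the exponent.

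To pass to general $r$, we use the commutation relations $[\widehat H_{[i,2],d_is},\widehat X^-_{[i,1],d_ir}]$, which give a nonzero multiple of $\widehat X^-_{[i,1],d_i(r+s)}$ (times a power of $[K_{d_i\de}]$ for $s>0$), because $\Omega_{C_{p_i}}$ preserves \eqref{DR4}. We also need $[\widehat H_{[i,2],d_is},[C_{\co(t\vc)}]]=0$. This holds because $c_{\star,[i,2]}=0$, and at the Hall level $\widehat H_{[i,2],\pm d_is}$ is built from $\pi^\pm_{[i,1]},\pi^\pm_{[i,2]},\pi^\pm_{[i,3]}$ via \eqref{eqn:Hjr-pi}. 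The relation $[\widehat H_{[i,2]},\widehat X^+_{\star}]=0$ is among the "easily checked" relations, following \cite{DJX12} as in Lemma \ref{lem:hrxm+--h-rx-m+}. Applying $[\widehat H_{[i,2],d_is},-]$ to the base identity via the Jacobi identity then yields the relation for all $r$, using $s>0$ from $r=1$ and $s<0$ from $r=0$.

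The main obstacle is justifying the vanishing of $[\widehat H_{[i,2],\pm d_is},\widehat X^+_{\star,t}]$ for negative $s$. There the $C^*$-type imaginary elements interact with $C_{\co}$ through nonzero Hom spaces from $\co$ to $S_{i,0}$-supported sheaves. I would first try to deduce this from \cite[Lemma 7.1, \S8.2]{DJX12} via the reduced-double isomorphism \eqref{eqn:isored}, lifted as in Proposition \ref{prop:Realizationsl2}.
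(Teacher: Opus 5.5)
Your proposal follows the paper's proof. The case $r=0$ comes from $\Hom(\co(t\vc),S_{i,1})=0=\Hom(S_{i,1},\co(t\vc))$; then you reduce to $p_i\geq3$ and shift $r$ by $[\widehat{H}_{[i,2],d_is},-]$, using \eqref{DR4} for the cyclic quiver together with $[\widehat{X}^+_{\star,t},\widehat{H}_{[i,2],d_is}]=0$ for all $s\neq0$, which the paper simply cites from \cite[Lemmas 7.7 and 8.6]{DJX12}. Your separate $r=1$ base case is redundant, since shifts of both signs from $r=0$ already give every $r$, and two side remarks need correcting: $(\widehat{\co},\widehat{S_{i1}})=-d_i$, not $0$ (the twist between $[C_A]$ and $[C^*_B]$ is trivial only because $\res$ places $A$ and $B$ in different components), and correspondingly $[K_{S_{i1}}]*[K_{d_i\de}]^{-1}$ skew-commutes with $[C_{\co(t\vc)}]$ by $\sqq^{-d_i}$, not $\sqq^{d_i}$.
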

	
	\begin{proof}
		We only prove the first formula.
		
		Since there are no nonzero homomorphisms between $\co(t\vec{c})$ and $S_{i,1}$ for any $t\in\bbZ$ and $1\leq i\leq \bt$, we know that
		\begin{align}\label{equation 9.22}
			\big[[C_{\co(t\vec{c})}], [C_{S_{i,1}}^*]\big]=0,
		\end{align}
		which proves the desired formula for $r=0$.
		
		
		Using \S\ref{sub:embedding-XY}, we assume $p_i>2$. 
		Since $\Omega_{C_{p_i}}$ preserves \eqref{DR4} of $\tUD_v(\widehat{\mathfrak{sl}}_{p_i})$, we have
		\begin{align}
			\label{eq:hi1xi10-}
			[\widehat{\bh}_{[i,2],d_ir},\widehat{X}_{[i,1],0}^{-}] =
			\frac{[r]_{\sqq_i}}{r} [K_{d_i\de}]^{r\cdot\delta\{r>0\}}* \widehat{X}_{[i,1],d_ir}^{-}.
		\end{align}
		We know $[\widehat{X}^+_{\star,t}, \widehat{\bh}_{[i,2],d_ir}]=0$ for any $r\neq0$; see \cite[Lemmas 7.7 and 8.6]{DJX12}.
		Then    
		\begin{align*}
			\big[\widehat{X}^+_{\star,t}, [\widehat{\bh}_{[i,2],d_ir},\widehat{X}_{[i,1],0}^{-}]\big]&=\big[[\widehat{X}^+_{\star,t}, \widehat{\bh}_{[i,2],d_ir}],\widehat{X}_{[i,1],0}^{-}\big]=0.
		\end{align*}
		Together with \eqref{eq:hi1xi10-}, we are done.
	\end{proof}
	
	\subsection{The relations \eqref{Hiqxstar1} and \eqref{Hiqxstar2}}
	
	\begin{lemma}
		We have \eqref{Hiqxstar1} and \eqref{Hiqxstar2} hold for any $m>0$, $t\in\Z$.
	\end{lemma}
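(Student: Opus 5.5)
We prove \eqref{Hiqxstar1}; \eqref{Hiqxstar2} follows by the symmetric argument with $C$ and $C^*$, and $\psi$ and $\varphi$, interchanged. As in Lemma \ref{lem:eq:hrxk+}, by \cite[Lemma 4.1]{Be94} it suffices to prove the generating-function version of \eqref{Hiqxstar1}:
\begin{align*}
\big[\widehat{\varphi}_{[i,1],-d_im}, [C_{\co(t\vec{c})}]\big]=\sqq_i^{-1}[K^*_{d_i\de}]*\big[\widehat{\varphi}_{[i,1],-d_i(m-1)},[C_{\co((t-d_i)\vec{c})}]\big]_{\sqq_i^{-2}}
\end{align*}
(up to the exact placement of $\sqq_i$-powers dictated by \eqref{DR4}). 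We argue by induction on $m$.

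\textbf{Expressing $\widehat{\varphi}$ as a commutator.} Since $\Omega_{C_{p_i}}$ preserves \eqref{DR5}, $\widehat{\varphi}_{[i,1],-d_im}$ times a Cartan factor $[K^*_{S_{i,1}}]$, $[K^*_{d_i\de}]^{\pm}$ equals a multiple of $[\widehat{X}^+_{[i,1],-d_i},\widehat{X}^-_{[i,1],-d_i(m-1)}]$. We will use Proposition \ref{prop:DrGenA}:
\[
\widehat{X}^+_{[i,1],-d_i}=-\sqq_i[K^*_{S_{i,0}^{(p_i-1)}}]^{-1}*[C^*_{S_{i,0}^{(p_i-1)}}].
\]
Define
\[
\eta^i_m:=\big[[C^*_{S_{i,0}^{(p_i-1)}}],\widehat{X}^-_{[i,1],-d_i(m-1)}\big]_{\sqq_i^{2}},
\]
normalized so that $\eta^i_m$ is a scalar times $[K\text{-factor}]*\widehat{\varphi}_{[i,1],-d_im}$. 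We will move all $K$-factors to one side using Lemma \ref{lem:KC}. This turns the relation into an identity of the form
\[
\big[\eta^i_m,[C_{\co(t\vc)}]\big]=c\cdot\big[\eta^i_{m-1},[C_{\co((t-d_i)\vc)}]\big]_{\sqq_i^{\pm2}}.
\]

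\textbf{Base case $m=1$.} Here $\eta^i_1=[[C^*_{S_{i,0}^{(p_i-1)}}],[C^*_{S_{i,1}}]]_{\sqq_i^2}$. We need the commutators of $[C_{\co(t\vc)}]$ with $[C^*_{S_{i,1}}]$ and with $[C^*_{S_{i,0}^{(p_i-1)}}]$.
\begin{itemize}
\item The first commutator vanishes by \eqref{equation 9.22}.
\item The second is computed from \eqref{Si0 and co double direction2}: $[C_{\co(t\vc)}]$ and $[C^*_{S_{i,0}^{(p_i-1)}}]$ commute up to a term $(q^{d_i}-1)[C_{\co((t-d_i)\vc+\vx_i)}]*[K^*_{d_i\de-\alpha_{i1}}]$.
\end{itemize}
Next we need the commutator of $[C_{\co((t-d_i)\vc+\vx_i)}]$ with $[C^*_{S_{i,1}}]$. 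This is an analogous direct Hall computation. Since $\Hom(\co((t-d_i)\vc+\vx_i),S_{i,1})\neq0$, the ses $0\to\co((t-d_i)\vc)\to\co((t-d_i)\vc+\vx_i)\to S_{i,1}\to0$ yields a correction term $[C_{\co((t-d_i)\vc)}]*[K^*_{S_{i,1}}]$. Combining these via the same $q$-Jacobi identity as in Lemma \ref{lem:eq:hrxk+} should give $[\eta^i_1,[C_{\co(t\vc)}]]$ as a multiple of $[K^*_{d_i\de}]*[C_{\co((t-d_i)\vc)}]$. This matches the right-hand side, since $\eta^i_0$ is a scalar.

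\textbf{Inductive step.} For $m\ge2$ we mimic Lemma \ref{lem:eq:hrxk+}. We commute $[C_{\co(t\vc)}]$ past $[C^*_{S_{i,0}^{(p_i-1)}}]$, picking up the explicit correction term above. Then we apply \eqref{eq:qcommxstarxi1+1} in its ``$-$'' version (the analogue for $\widehat{X}^-_{[i,1]}$ against $\widehat{X}^+_{\star}$ is not directly available). We therefore expect instead to use \eqref{x * and x_i1^+}, $[\widehat{X}^+_{\star,t},\widehat{X}^-_{[i,1],d_ir}]=0$, which kills the inner commutator. The only surviving term comes from the correction $[C_{\co((t-d_i)\vc+\vx_i)}]$, and it should reassemble into $\eta^i_{m-1}$.

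The main obstacle is precisely this bookkeeping: the correction term carries $[C_{\co((t-d_i)\vc+\vx_i)}]$ rather than a $\star$-root vector. To turn it back into an $\eta^i_{m-1}$-commutator, we must show that its commutator with $\widehat{X}^-_{[i,1],-d_i(m-1)}$ is expressible via $[C_{\co((t-d_i)\vc)}]$. The first thing we would try is to rewrite $[C_{\co((t-d_i)\vc+\vx_i)}]$ as $(1-q^{d_i})^{-1}[[C_{\co((t-d_i)\vc)}],[C_{S_{i,1}}]]_{\sqq_i}$ using \eqref{Si1 and co double direction1}. We would then use \eqref{x * and x_i1^+} together with the $\widetilde{\mathfrak{sl}}_{p_i}$ relation \eqref{DR5} for $[C_{S_{i,1}}]=\widehat X^+_{[i,1],0}$ against $\widehat{X}^-_{[i,1],-d_i(m-1)}$, which produces exactly $\widehat{\varphi}_{[i,1],-d_i(m-1)}$. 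Finally we track the $K^*$-factors with Lemma \ref{lem:KC}.
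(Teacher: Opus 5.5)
Your proposal is correct. For $m\ge 2$ it takes a genuinely different route from the paper. The base case $m=1$ is the same direct Hall computation the paper does: it uses \eqref{equation 9.22}, \eqref{Si0 and co double direction2}, and the commutator of $[C_{\co((t-d_i)\vc+\vx_i)}]$ with $[C^*_{S_{i,1}}]$.

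\textbf{The paper's route for $m\ge 2$.} The paper writes $\widehat\varphi_{[i,1],-d_im}$ through $[\widehat X^+_{[i,1],d_im},\widehat X^-_{[i,1],-2d_im}]$. The $\widehat X^-$-factor commutes with $[C_{\co(t\vc)}]$ by \eqref{x * and x_i1^+}. Then \eqref{eq:qcommxstarxi1+1} trades the pair $(\widehat X^+_{[i,1],d_im},[C_{\co(t\vc)}])$ for $(\widehat X^+_{[i,1],d_i(m+1)},[C_{\co((t-d_i)\vc)}])$, so the step is purely formal.

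\textbf{Your route.} You freeze the $\widehat X^+$-factor at the explicit element $\widehat X^+_{[i,1],-d_i}$. This rules out the shift relation, and you compensate with:
\begin{itemize}
\item the explicit corrections \eqref{Si0 and co double direction2} and \eqref{Si1 and co double direction1};
\item a second use of \eqref{x * and x_i1^+};
\item \eqref{DR5} for $[\widehat X^+_{[i,1],0},\widehat X^-_{[i,1],-d_i(m-1)}]$.
\end{itemize}
The bookkeeping you left open does close. With $s=t-d_i$ and $m\ge 2$ one finds
\begin{align*}
\big[\widehat\varphi_{[i,1],-d_im},[C_{\co(t\vc)}]\big]&=\sqq_i^{-1}\big[[C_{\co(s\vc+\vx_i)}],\widehat X^-_{[i,1],-d_i(m-1)}\big]*[K^*_{S_{i,0}^{(p_i-1)}}],\\
\big[[C_{\co(s\vc+\vx_i)}],\widehat X^-_{[i,1],-d_i(m-1)}\big]&=\sqq_i[K^*_{S_{i,1}}]*\big[\widehat\varphi_{[i,1],-d_i(m-1)},[C_{\co(s\vc)}]\big]_{\sqq_i^{-2}}.
\end{align*}
Combining these with $[K^*_{S_{i,1}}]*[C_{\co(s\vc)}]=\sqq_i[C_{\co(s\vc)}]*[K^*_{S_{i,1}}]$ gives exactly the paper's recursion
$$\big[\widehat\varphi_{[i,1],-d_im},[C_{\co(t\vc)}]\big]=\sqq_i\big[\widehat\varphi_{[i,1],-d_i(m-1)},[C_{\co(s\vc)}]\big]_{\sqq_i^{-2}}*[K^*_{d_i\de}].$$

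\textbf{Small corrections.}
\begin{itemize}
\item The constant is $\sqq_i$, not $\sqq_i^{-1}$.
\item $\eta^i_m$ must be the $\sqq_i^{-2}$-commutator, not the $\sqq_i^{2}$-commutator. This is because $[K^*_{S_{i,0}^{(p_i-1)}}]$ $\sqq_i^{-2}$-commutes with $\widehat X^-_{[i,1],\cdot}$; with $\sqq_i^{2}$, $\eta^i_m$ is not proportional to $\widehat\varphi_{[i,1],-d_im}$.
\item No inductive hypothesis is actually used: each $m$ is reduced directly to $m-1$, as in the paper.
\end{itemize}

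\textbf{What each route buys.} Your route is independent of \eqref{eq:qcommxstarxi1+1}. It also handles $m=1$ uniformly, since the $[K_{S_{i,1}}]$-term of $[\widehat X^+_{[i,1],0},\widehat X^-_{[i,1],0}]$ drops out because $[[C_{\co(s\vc)}],[K_{S_{i,1}}]]_{\sqq_i}=0$. The cost is more explicit Hall-product and $K$-commutation bookkeeping. The paper's route is shorter, because all the Hall-algebra input is already packaged in \eqref{x * and x_i1^+} and \eqref{eq:qcommxstarxi1+1}.
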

	
	\begin{proof}

		We only need to prove \eqref{Hiqxstar1} since the other one is similar. The proof is similar to \cite[Lemma 8.5]{DJX12}.
		By \cite[Lemma 4.1]{Be94}, \eqref{Hiqxstar1} is equivalent to
		\begin{align*}
			\big[\widehat{\varphi}_{[i,1],-d_im}, [C_{\co(t\vec{c})}]\big]=&\sqq_i\big[\widehat{\varphi}_{[i,1],d_i(1-m)},[C_{\co((t-d_i)\vec{c})}]\big]_{\sqq_i^{-2}}*[K_{d_i\de}^*].
		\end{align*}
		
		Since $\Omega_{C_{p_i}}$ preserves \eqref{DR5} of $\tUD_v(\mathfrak{sl}_{p_i})$, we know that $$(q^{d_i}-1)[K^*_{d_i\delta}]^{m}*[K^*_{S_{i,1}}]*\widehat{\varphi}_{[i,1],-d_im}=[\widehat{X}^+_{[i,1],d_im}, \widehat{X}^-_{[i,1],-2d_im}].$$
		Set $\zeta_0^i=q^{d_i}-1$ and $$\zeta_m^i:=(q^{d_i}-1)\widehat{\varphi}_{[i,1],-d_im}=[\widehat{X}^+_{[i,1],d_im}, \widehat{X}^-_{[i,1],-2d_im}]*[K^*_{S_{i,1}}]^{-1}*[K^*_{d_i\delta}]^{-m},\quad \forall m>0.$$
		Then \eqref{Hiqxstar1} is equivalent to
		\begin{align}\label{zeta and co}
			\big[\zeta_m^i, [C_{\co(t\vec{c})}]\big]=&\sqq_i\big[\zeta_{m-1}^i,[C_{\co((t-d_i)\vec{c})}]\big]_{\sqq_i^{-2}}*[K_{d_i\de}^*].
		\end{align}

		For $m=1$, we have
		\begin{align*}
			\zeta_1^i&=(q^{d_i}-1)\widehat{\varphi}_{[i,1],-d_i}\\
			&=[K^*_{S_{i,1}}]^{-1}*[K^*_{d_i\delta}]*\big[\widehat{X}^+_{[i,1],-d_i}, \widehat{X}^-_{[i,1],0}\big]\\
			&=[K^*_{S_{i,1}}]^{-1}*[K^*_{d_i\delta}]*\big[-\sqq_i[K^*_{S_{i,0}^{(p_i-1)}}]^{-1}*[C_{S_{i,0}^{(p_i-1)}}^*], [C^*_{S_{i,1}}]\big]\\
			&=-\sqq_i\big[ [C_{S_{i,0}^{(p_i-1)}}^*], [C^*_{S_{i,1}}] \big]_{\sqq_i^{-2}}.
		\end{align*}
		Here in the last equation, we use $$[C^*_{S_{i,1}}]*[K^*_{S_{i,0}^{(p_i-1)}}]^{-1}=\sqq_i^{-2}[K^*_{S_{i,0}^{(p_i-1)}}]^{-1}*[C^*_{S_{i,1}}].$$
		Hence,
		\begin{align*}
			\big[\zeta_1^i, [C_{\co(t\vec{c})}]\big]
			&=-\Big[ [C_{\co(t\vec{c})}], -\sqq_i\big[ [C_{S_{i,0}^{(p_i-1)}}^*], [C^*_{S_{i,1}}] \big]_{\sqq_i^{-2}}\Big]\\
			&=\sqq_i\Big[\big[ [C_{\co(t\vec{c})}],  [C_{S_{i,0}^{(p_i-1)}}^*]\big], [C^*_{S_{i,1}}] \Big]_{\sqq_i^{-2}}\\
			&=\sqq_i\cdot\Big[ \sqq_i^{-1}(q^{d_i}-1)[K^*_{S_{i,0}^{(p_i-1)}}]*[C_{\co((t-d_i)\vec{c}+\vx_i)}], [C^*_{S_{i,1}}] \Big]_{\sqq_i^{-2}}\\
			&=(q^{d_i}-1) [K^*_{S_{i,0}^{(p_i-1)}}]*\big[[C_{\co((t-d_i)\vec{c}+\vx_i)}], [C^*_{S_{i,1}}] \big]\\
			&=(q^{d_i}-1) [K^*_{S_{i,0}^{(p_i-1)}}]*\big(\sqq_i^{-1}(q^{d_i}-1)[K^*_{S_{i,1}}]*[C_{\co((t-d_i)\vec{c})}]\big)\\
			&=\sqq_i^{-1}(q^{d_i}-1)^2  [K^*_{d_i\delta}]*[C_{\co((t-d_i)\vec{c})}].
		\end{align*}
		This proves \eqref{zeta and co} when $m=1$.
		
		By  using \eqref{x * and x_i1^+} and \eqref{eq:qcommxstarxi1+1}, we obtain
		\begin{align*}
			\big[\zeta_m^i, [C_{\co(t\vec{c})}]\big]=&\sqq_i^{-1}\big[[\widehat{X}^+_{[i,1],d_im}, \widehat{X}^-_{[i,1],-2d_im}], [C_{\co(t\vec{c})}]\big]_{\sqq_i}*[K^*_{S_{i,1}}]^{-1}*[K^*_{d_i\delta}]^{-m}
			\\
			=&\sqq_i^{-1}\big[[\widehat{X}^+_{[i,1],d_im}, [C_{\co(t\vec{c})}]]_{\sqq_i}, \widehat{X}^-_{[i,1],-2d_im}\big]*[K^*_{S_{i,1}}]^{-1}*[K^*_{d_i\delta}]^{-m}
			\\
			=&\big[[\widehat{X}^+_{[i,1],d_i(m+1)}, [C_{\co((t-d_i)\vec{c})}]]_{\sqq_i^{-1}}, \widehat{X}^-_{[i,1],-2d_im}\big]*[K^*_{S_{i,1}}]^{-1}*[K^*_{d_i\delta}]^{-m}
			\\
			=&\big[[\widehat{X}^+_{[i,1],d_i(m+1)}, \widehat{X}^-_{[i,1],-2d_im}], [C_{\co((t-d_i)\vec{c})}] \big]_{\sqq_i^{-1}}*[K^*_{S_{i,1}}]^{-1}*[K^*_{d_i\delta}]^{-m}
			\\
			=&\sqq_i\big[[\widehat{X}^+_{[i,1],d_i(m+1)}, \widehat{X}^-_{[i,1],-2d_im}]*[K^*_{S_{i,1}}]^{-1}*[K^*_{d_i\delta}]^{-m-1}, [C_{\co((t-d_i)\vec{c})}] \big]_{\sqq_i^{-2}}*[K^*_{d_i\delta}]
			\\
			=&\sqq_i\big[\zeta_{m-1}^i, [C_{\co((t-d_i)\vec{c})}] \big]_{\sqq_i^{-2}}*[K^*_{d_i\delta}],
		\end{align*}
		where the last equality follows 
		from the following equation:
		\begin{align*}&(q^{d_i}-1)[K^*_{d_i\delta}]^{m+1}*[K^*_{S_{i,1}}]*\widehat{\varphi}_{[i,1],-d_i(m-1)}=[\widehat{X}^+_{[i,1],d_i(m+1)}, \widehat{X}^-_{[i,1],-2d_im}].\qedhere
		\end{align*}
	\end{proof}

	\subsection{The relation \eqref{eq:hmhi1-l}--\eqref{eq:hi1mh-l}}
	
	\begin{lemma}
		\label{lem:imagecomm}
		We have \eqref{eq:hmhi1-l}--\eqref{eq:hi1mh-l} hold for any $l,m>0$.
	\end{lemma}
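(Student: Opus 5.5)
We reduce the statement to a commutator computation inside the image of the cyclic quiver algebra at the exceptional point $\bla_i$. By \eqref{def:Hxm2}, $\widehat{H}_{\star,m}=\sum_{x}\widehat{H}_{x,m}$. We will prove \eqref{eq:hmhi1-l}, and \eqref{eq:hi1mh-l} is entirely symmetric. Each summand $\widehat{H}_{x,m}$ with $x\neq\bla_i$ is a combination of $[C_T]$ with $T$ torsion supported at $x$. The element $\widehat{H}_{[i,1],-d_il}$ lies in $\iota_i(\tMH(\bfk_iC_{p_i}))$ and is a combination of $[C^*_{T'}]$ (times $K^*$-factors) with $T'$ supported at $\bla_i$. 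Since $\Hom$ and $\Ext^1$ vanish between such sheaves in both directions, the corresponding elements commute. Indeed, the Hall product of a $C$-stalk and a $C^*$-stalk complex is the direct sum in either order, and the Euler form twists are trivial. Also, $K^*_\alpha$ with $\alpha$ supported at $\bla_i$ commutes with $[C_T]$ by Lemma \ref{lem:KC}, since $(\alpha,\widehat{T})=0$. Hence
\[
\big[\widehat{H}_{\star,m},\widehat{H}_{[i,1],-d_il}\big]=\big[\widehat{H}_{\bla_i,m},\widehat{H}_{[i,1],-d_il}\big].
\]
If $d_i\nmid m$, this is $0$ by definition, which is consistent with $\delta_{m,d_il}=0$. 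Otherwise write $m=d_ik$. By \eqref{eq:Hstar-pi} we have $\widehat{H}_{\bla_i,d_ik}=[d_i]_\sqq\,\pi^+_{[i,1],k}$, so everything happens in $\iota_i(\tMH(\bfk_iC_{p_i}))$.

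Next we express $\pi^+_{[i,1],k}$ through Drinfeld generators of $\tUD_{\sqq_i}(\widehat{\mathfrak{sl}}_{p_i})$. Using the embedding of \S\ref{sub:embedding-XY} we may assume $p_i$ is as large as we like; the embedding preserves $\pi$'s and root vectors by \S\ref{subsec:embedding-cyclic}. Lemma \ref{lem:Hjr-pi} gives, for $1\le j\le p_i-1$,
\[
\widehat{H}_{[i,j],d_ik}=\pi^+_{[i,j+1],k}-(\sqq_i^k+\sqq_i^{-k})\pi^+_{[i,j],k}+\pi^+_{[i,j-1],k},
\]
with $\pi_{[i,0]}=0$ and $\pi^+_{[i,p_i],k}$ the central-type element built from $\cm_{p_i,kd_i\de}$. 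This is a Cartan-matrix-type linear system. We invert it to write $\pi^+_{[i,1],k}$ as a $\Q(\sqq)$-combination $\sum_j a_j(k)\widehat{H}_{[i,j],d_ik}+b(k)\pi^+_{[i,p_i],k}$. The coefficients come from the inverse of the affine $q$-Cartan matrix truncated at $p_i$; in particular, $a_j(k)$ is a ratio of quantum integers $[(p_i-j)k]_{\sqq_i}/[p_ik]_{\sqq_i}$ up to correction.

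Rather than inverting in full, a cleaner route is to use the pairing with $\widehat{H}_{[i,1],-d_il}$. Since $\Omega_{C_{p_i}}$ preserves \eqref{DR2} for $\widehat{\mathfrak{sl}}_{p_i}$, we have
\[
[\widehat{H}_{[i,j],d_ik},\widehat{H}_{[i,1],-d_il}]=\delta_{k,l}\frac{[kc_{j1}]_{\sqq_i}}{k}\frac{[K_{d_i\de}]^k-[K^*_{d_i\de}]^k}{\sqq_i-\sqq_i^{-1}},
\]
which is nonzero only for $j=1,2$. It then suffices to know the commutator of $\pi^+_{[i,p_i],k}$ with $\widehat{H}_{[i,1],-d_il}$, and to know $\pi^+_{[i,1],k}$ in terms of $\widehat{H}$'s modulo elements commuting with $\widehat{H}_{[i,1],-d_il}$. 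Alternatively, and this is what I would try first, compute $[\pi^+_{[i,1],k},\pi^-_{[i,1],l}]$ directly. From \eqref{eqn:pi1r}, $\pi^\pm_{[i,1],k}$ is exactly $\mathbb{F}_{\X,\P^1}$ applied to the Kapranov imaginary root vector of the tube at $\bla_i$ (a degree $d_i$ point of $\P^1$). Then $[\pi^+_{[i,1],k},\pi^-_{[i,1],l}]$ is the image of the commutator of the $\P^1$ local imaginary elements at a degree $d_i$ point. This commutator is $\delta_{k,l}$ times an explicit scalar times $([K_{d_i\de}]^k-[K^*_{d_i\de}]^k)$, computable from Proposition \ref{prop:Realizationsl2} by restricting \eqref{Drsl2-3} to the local summand at $\bla_i$ (local pieces at distinct points commute). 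Combining this with $[\pi^+_{[i,1],k},\pi^-_{[i,2],l}]$, which is the mixed term obtained from Lemma \ref{lem:Hjr-pi} with $j=1$, we would get
\[
[\pi^+_{[i,1],k},\widehat{H}_{[i,1],-d_il}]=-[\pi^+_{[i,1],k},\pi^-_{[i,2],l}]+(\sqq_i^l+\sqq_i^{-l})[\pi^+_{[i,1],k},\pi^-_{[i,1],l}].
\]
The mixed commutator $[\pi^+_{[i,1],k},\pi^-_{[i,2],l}]$ is the main obstacle. I would obtain it by applying the same restriction argument to $\mathbb{F}$ from the weighted line with $p_i=2$, or via the embedding $F_{n+1,n}$, so that it is the same as the local $\P^1$ commutator. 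Finally, we multiply by $[d_i]_\sqq$, use $[d_i]_\sqq[k]_{\sqq_i}=[d_ik]_\sqq$, and simplify. Matching the result to $\delta_{m,d_il}\frac{[-m]_\sqq}{m/d_i}\frac{[K_\de]^m-[K^*_\de]^m}{\sqq_i-\sqq_i^{-1}}$ with $m=d_ik$, using $[K_{d_i\de}]=[K_\de]^{d_i}$, is then a routine quantum-integer identity.
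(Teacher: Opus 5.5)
Your reduction to the tube is sound. The local pieces at points $x\neq\bla_i$ commute with $\widehat H_{[i,1],-d_il}$, and \eqref{eq:Hstar-pi} then leaves $[d_i]_\sqq[\pi^+_{[i,1],k},\widehat H_{[i,1],-d_il}]$ with $m=d_ik$. But this commutator is the entire content of the lemma, it is never computed, and both routes you sketch lack a key input.

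\emph{First route.} Inverting \eqref{eqn:Hjr-pi}, whose matrix $M$ is tridiagonal with diagonal $-(\sqq_i^k+\sqq_i^{-k})$ and off-diagonal entries $1$, would close quickly. Only $(M^{-1})_{11}$ and $(M^{-1})_{12}$ meet the nonzero brackets from \eqref{DR2} for $\widehat{\mathfrak{sl}}_{p_i}$, and $(M^{-1}M)_{11}=1$ produces exactly the required scalar. This works \emph{provided} $[\pi^+_{[i,p_i],k},\widehat H_{[i,1],-d_il}]=0$. Its coefficient is $\pm1/[p_i]_{\sqq_i^k}\neq 0$ for every $p_i$, so enlarging $p_i$ via \S\ref{sub:embedding-XY} does not help. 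The vanishing amounts to primitivity and centrality of the central elements of the tube's Hall algebra, so that they commute with the $[C^*_{S_j}]$ in the double. You neither prove nor cite this, and nothing earlier in the paper supplies it; the paper only uses same-sign commutativity of the $\pi$'s.

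\emph{Second route.} ``Restricting \eqref{Drsl2-3} to the local summand at $\bla_i$'' is not a valid step. \eqref{Drsl2-3} only controls the sum $\sum_x[\widehat H_{x,m},\widehat H_{x,-m}]$ over all closed points with $d_x\mid m$, not a single summand. Worse, the mixed term $[\pi^+_{[i,1],k},\pi^-_{[i,2],l}]$ cannot come from $\P^1$ at all. The element $\pi^-_{[i,2],l}$ involves objects such as $S_{i,1}^{(p_i)}$ and $S_{i,1}\oplus S_{i,0}^{(p_i-1)}$, which lie outside the image $\add\{S_{i,0}^{(rp_i)}\}$ of $\mathbb{F}_{\X,\P^1}$ in the tube. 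For $p_i=2$ it \emph{is} $\pi^-_{[i,p_i],l}$, which returns you to the missing input of the first route.

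\emph{What the paper does.} It never touches the imaginary elements at $\bla_i$. By the argument of Beck's Lemmas 4.4--4.5, \eqref{eq:hmhi1-l} is equivalent to \eqref{eq:hmhi1-lreform}, a formula for $[\widehat H_{[i,1],-d_il},\widehat\psi_{\star,m}]$. Via \eqref{Drsl2-6} one writes $(1-q)[K_\de]*[K_\co]*\widehat\psi_{\star,m}=[\widehat X^+_{\star,m+1},\widehat X^-_{\star,-1}]$. Applying the Leibniz rule with the already proved \eqref{eq:hrxk+} and \eqref{Hiqxstar1} yields two brackets of real root vectors at $\star$. Applying \eqref{Drsl2-6} once more, the $\widehat\varphi_\star$-terms cancel and the $\widehat\psi_\star$-terms give $\frac{[l]_{\sqq_i}}{l}([K_{d_i\de}]^l-[K^*_{d_i\de}]^l)*\widehat\psi_{\star,m-d_il}$. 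The idea you are missing is to use these previously established relations between $\widehat H_{[i,1],\pm d_il}$ and $\widehat X^\pm_\star$, rather than computing inside the tube.
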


	\begin{proof}
		Let us prove \eqref{eq:hmhi1-l}. 
		Similar to \cite[Lemmas 4.4--4.5]{Be94}, \eqref{eq:hmhi1-l} is equivalent to
		\begin{align}
			\label{eq:hmhi1-lreform}
			[\widehat{H}_{[i,1],-d_il},\widehat{\psi}_{\star,m}]=\begin{cases} 
				\frac{[l]_{\sqq_i}}{l} ([K_{d_i\de}]^l-[K_{d_i\de}^*]^l)*\widehat{\psi}_{\star,m-d_il},&\text{ if }m\geq d_il,
				\\
				0&\text{ if }m<d_il,
			\end{cases}
		\end{align}

		Since $\Omega:\tUD_\sqq(\widehat{\mathfrak{sl}_2})\rightarrow \cs\cd\widetilde{\ch}(\P^1_\bfk)$ preserves \eqref{Drsl2-6}, we know that
		\begin{align*}[\widehat{X}_{\star, m+1}^+,\widehat{X}_{\star, -1}^-] = (1-q)[K_\de] *[K_\co]*\widehat{\psi}_{\star,m}.
		\end{align*}
		It follows from \eqref{eq:hrxk+} and \eqref{Hiqxstar1}  that
		\begin{align}
			\label{eq:hi1-lpsimkde}
			&\big[\widehat{\bh}_{[i,1],-d_il}, (1-q)[K_\de] *[K_\co]*\widehat{\psi}_{\star,m}\big]\\\notag
			&=\big[\widehat{\bh}_{[i,1],-d_il}, [\widehat{X}_{\star, m+1}^+,\widehat{X}_{\star, -1}^-]\big]\\\notag
			&=\big[[\widehat{\bh}_{[i,1],-d_il}, \widehat{X}_{\star, m+1}^+],\widehat{X}_{\star, -1}^-\big]+\big[\widehat{X}_{\star, m+1}^+,[\widehat{\bh}_{[i,1],-d_il},\widehat{X}_{\star, -1}^-]\big]\\\notag
			&=\frac{[-l]_{\sqq_i}}{l} [K_{d_i\de}^*]^{l}* \big[\widehat{X}^+_{\star,m-d_il+1},\widehat{X}_{\star, -1}^-\big]+
			\frac{[l]_{\sqq_i}}{l}\big[\widehat{X}_{\star, m+1}^+,  \widehat{X}^-_{\star,-d_il-1} \big].
		\end{align}
		Then the desired formula follows by noting that  \eqref{Drsl2-6} is preserved by $\Omega:\tUD_\sqq(\widehat{\mathfrak{sl}_2})\rightarrow \cs\cd\widetilde{\ch}(\P^1_\bfk)$.
	\end{proof}

	\subsection{The relation \eqref{DR72-Hall}}

	\begin{lemma}
		\label{lem:Serre1}
		We have \eqref{DR72-Hall} hold for any $m_1,m_2,n\in\Z$.
	\end{lemma}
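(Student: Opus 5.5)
We treat the $+$ case; the $-$ case is identical, with $C^*$ in place of $C$ (equivalently, it follows by applying the standard anti-automorphism exchanging $[C_M]$ and $[C_M^*]$). The plan has three steps. We first reduce to a single base instance, then check that instance by an explicit Hall computation, and finally propagate it to all indices using the imaginary root vectors. Throughout we use $\widehat{X}^+_{[i,1],m}=0$ unless $d_i\mid m$.

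\emph{Reduction to a base instance.} Write $\mathcal{S}(m_1,m_2,n)$ for the left-hand side of \eqref{DR72-Hall}. By the embedding of \S\ref{sub:embedding-XY} we may assume $p_i\geq 3$. By the relations already established, \eqref{eq:hrxm+} and \eqref{eq:hrxk+}, together with the fact from \cite{DJX12} that $[\widehat{H}_{[i,2],d_ir},\widehat{X}^+_{\star,n}]=0$ for $r\neq0$, we have
\[[\widehat{H}_{[i,2],d_ir},\mathcal{S}(m_1,m_2,n)]=\frac{[r]_{\sqq_i}}{r}\big(\mathcal{S}(m_1+d_ir,m_2,n)+\mathcal{S}(m_1,m_2+d_ir,n)\big).\]
Commuting instead with $\widehat{H}_{\star,d_ir}$ shifts the three indices with the weights $-[d_ir]_\sqq/r$ on each $[i,1]$ slot and $[2d_ir]_\sqq/(d_ir)$ on the $\star$ slot. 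These two operators act on $(m_1,m_2,n)$ with independent weight vectors. As in \cite{Be94,DJX12}, an induction on $|m_1-m_2|$ and on the distance of $n$ from $0$ then shows that it suffices to prove two cases: $\mathcal{S}(0,0,n)=0$ and $\mathcal{S}(0,d_i,n)=0$. Using \eqref{eq:qcommxstarxi1+1}, which lets us trade $d_i$ in the $\star$ index for $d_i$ in an $[i,1]$ index, these reduce further to the case $m_1=m_2=0$, $n=0$ (after an overall twist by line bundle degree $\vc$), together with one neighbouring case.

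\emph{Base computation.} Now $\widehat{X}^+_{[i,1],0}=[C_{S_{i,1}}]$ and $\widehat{X}^+_{\star,n}=[C_{\co(n\vc)}]$, and everything lives in the positive part $R^+(\widetilde{\ch}(\coh\X))$. So it is enough to compute in the twisted Hall algebra $\widetilde{\ch}(\coh \X)$. From \eqref{Si1 and co double direction1} we get
\[[C_{S_{i,1}}]*[C_{\co(n\vc)}]=\sqq_i^{-1}[C_{S_{i,1}\oplus\co(n\vc)}]+(\sqq_i-\sqq_i^{-1})[C_{\co(n\vc+\vx_i)}],\]
and we then multiply once more by $[C_{S_{i,1}}]$ on the left or on the right. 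We need the extensions of $\co(n\vc+\vx_i)$ by $S_{i,1}$. Since $\Ext^1(S_{i,1},\co(n\vc+\vx_i))\cong D\Hom(\co(n\vc+\vx_i),S_{i,2})=0$ when $p_i\geq3$, the product $[C_{S_{i,1}}]*[C_{\co(n\vc+\vx_i)}]$ is a twisted direct sum. For the other order, $\Ext^1(\co(n\vc+\vx_i),S_{i,1})=0$ as well, and $\Hom(\co(n\vc+\vx_i),S_{i,1})=0$. The only nontrivial input is the product $[C_{S_{i,1}}]*[C_{S_{i,1}\oplus\co(n\vc)}]$, which produces the terms $S_{i,1}^{\oplus2}\oplus\co$, $S_{i,1}\oplus\co(n\vc+\vx_i)$, and (via $S_{i,1}^{(2)}$-type quotients, which vanish because $\Ext^1(S_{i,1},S_{i,1})=0$) nothing else. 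Collecting the coefficients of $[C_{S_{i,1}^{\oplus2}\oplus\co(n\vc)}]$ and of $[C_{S_{i,1}\oplus\co(n\vc+\vx_i)}]$ in the three words, the alternating sum with $[2]_{\sqq_i}$ should vanish. This follows from $|\Aut(S_{i,1}^{\oplus 2})|=|GL_2(\bfk_i)|$ and the standard identity $1-[2]_{\sqq_i}\sqq_i^{-1}+\sqq_i^{-2}=0$. The neighbouring case is handled the same way, with $\widehat{X}^+_{[i,1],d_i}$ expressed through the $\xi^i_1$-type formula \eqref{eq:xim}, or more simply obtained by applying $[\widehat{H}_{[i,2],d_i},-]$ to the base case.

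\emph{Main obstacle.} We expect the main difficulty to be the reduction step rather than the base computation. The $\star$ index moves in steps of $1$, while the $[i,1]$ indices move in steps of $d_i$, so one must check carefully that the two Heisenberg actions, together with \eqref{eq:qcommxstarxi1+1}, really reach every triple $(m_1,m_2,n)$, including those with $n\not\equiv 0 \pmod{d_i}$. If this fails, the fallback is to repeat the base computation directly for general $n\in\Z$, which is uniform in $n$ as sketched above. In that case only the shifts of $m_1$ and $m_2$ are needed, and those are provided by $[\widehat{H}_{[i,2],d_ir},-]$ alone.
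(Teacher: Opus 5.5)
\textbf{The gap is in the base computation, which is the only non-formal step of the lemma.} You assert $\Hom(\co(n\vc+\vx_i),S_{i,1})=0$. This is false: the defining sequence $0\to\co(n\vc)\xrightarrow{X_i}\co(n\vc+\vx_i)\to S_{i,1}\to 0$ shows $\Hom(\co(n\vc+\vx_i),S_{i,1})\cong\bfk_i$; only $\Ext^1$ vanishes.

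Put $A=[C_{S_{i,1}}]$, $B=[C_{\co(n\vc)}]$, $Y=[C_{\co(n\vc+\vx_i)}]$. The nonzero Hom gives $Y*A=\sqq_i^{-1}A*Y$, not $Y*A=A*Y$, and this factor is exactly what the relation needs. With $P=[C_{S_{i,1}^{\oplus 2}\oplus\co(n\vc)}]$ and $Q=[C_{S_{i,1}\oplus\co(n\vc+\vx_i)}]$ one gets
\[A*A*B=\sqq_i^{-3}P+(\sqq_i-\sqq_i^{-3})Q,\qquad A*B*A=\sqq_i^{-2}P+(1-\sqq_i^{-2})Q,\qquad B*A*A=\sqq_i^{-1}P,\]
and both coefficients of $A*A*B-[2]_{\sqq_i}A*B*A+B*A*A$ vanish. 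With your value of the Hom, the $Q$-coefficient of $A*B*A$ becomes $\sqq_i-\sqq_i^{-1}$. The $Q$-coefficient of the alternating sum is then $(\sqq_i-\sqq_i^{-3})-[2]_{\sqq_i}(\sqq_i-\sqq_i^{-1})\neq 0$. So your ``should vanish'' is not merely unverified; with your stated inputs it is false.

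The cleanest verification, which is the paper's, rewrites the expression as $A*[A,B]_{\sqq_i^{-1}}-\sqq_i[A,B]_{\sqq_i^{-1}}*A$. It then uses $[A,B]_{\sqq_i^{-1}}=(\sqq_i-\sqq_i^{-1})Y$ together with $Y*A=\sqq_i^{-1}A*Y$.

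A smaller slip: $\tau S_{i,1}=S_{i,0}$, not $S_{i,2}$. Hence $\Ext^1(S_{i,1},\co(n\vc+\vx_i))\cong D\Hom(\co(n\vc+\vx_i),S_{i,0})=0$ for every $p_i\geq 2$. The reduction to $p_i\geq 3$ is needed only so that $\widehat{H}_{[i,2],\cdot}$ exists.

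\textbf{The rest of the proposal.} Once your $\mathcal{S}(0,0,n)=0$ is established for all $n$ (the computation above is uniform in $n$), your fallback is the right propagation and is essentially the paper's argument. Since $\widehat{H}_{[i,2],d_ir}$ commutes with $\widehat{X}^\pm_{\star,n}$, one commutator gives $\mathcal{S}(d_ir,0,n)=0$ and a second gives $\mathcal{S}(d_ir,d_is,n)=0$. The paper uses $\widehat{H}_{[i,1],\cdot}$ for the second step instead; the extra $\star$-shifted term is killed by the all-$n$ base case.

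You should drop your first reduction. The claimed reduction to $\mathcal{S}(0,0,n)$ and $\mathcal{S}(0,d_i,n)$, and then to $n=0$ plus a ``neighbouring case'' via \eqref{eq:qcommxstarxi1+1}, is not justified. That relation is quadratic, and you do not show how it transports the symmetrized cubic expression. It is also unnecessary. If you want to avoid general $n$, the autoequivalence $-\otimes\co(\vc)$, which fixes $S_{i,1}$, does shift $n$, but even that is not needed.
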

	
	\begin{proof}
		First, we prove it for $m_1=m_2=0$. It is enough to check the following equality in $\cs\cd\widetilde{\ch}(\X_\bfk)$:
		\begin{align}
			\label{serreSSO}
			[C_{S_{i,1}}]*[C_{S_{i,1}}]*[C_{\co(n\vc)}]-[2]_{\sqq_i}[C_{S_{i,1}}]*[C_{\co(n\vc)}]*[C_{S_{i,1}}]+[C_{\co(n\vc)}]*[C_{S_{i,1}}]*[C_{S_{i,1}}]=0.
		\end{align}
		It is equivalent to 
		\begin{align*}
			[C_{S_{i,1}}]*\big[[C_{S_{i,1}}], [C_{\co(n\vc)}]\big]_{\red{\sqq_i^{-1}}}-\sqq_i\big[[C_{S_{i,1}}], [C_{\co(n\vc)}]\big]_{\red{\sqq_i^{-1}}}* [C_{S_{i,1}}]=0,
		\end{align*}
		which follows from the following equalities:
		$$\big[[C_{S_{i,1}}], [C_{\co(n\vc)}]\big]_{\red{\sqq_i^{-1}}}=(\sqq_i-\sqq_i^{-1})[C_{\co(n\vc+\vx_i)}] 
		\text{\; and \;}[C_{\co(n\vc+\vx_i)}]*[C_{S_{i,1}}]=\sqq_i^{-1}[C_{S_{i,1}}]*[C_{\co(n\vc+\vx_i)}].$$

		Note that \eqref{DR4} of $\tUD_v(\widehat{\mathfrak{sl}}_{p_i})$ is preserved by $\Omega_{C_{p_i}}$.  Using \S\ref{sub:embedding-XY}, we can assume $p_i \geq 3$. Then $[\widehat{H}_{[i,2],m},\widehat{X}^\pm_{\star,n}]=0$. By applying $[\widehat{H}_{[i,2],m},-]$ to \eqref{serreSSO}, we obtain 
		\begin{align*}
			\Sym_{m,0}\big\{ \widehat{X}^\pm_{[i,1], m}*\widehat{X}^\pm_{[i,1], 0} *\widehat{X}^\pm_{\star,n}
			-[2]_{\sqq_i}  \widehat{X}^\pm_{[i,1], m}*\widehat{X}^\pm_{\star,n}*\widehat{X}^\pm_{[i,1], 0}
			+\widehat{X}^\pm_{\star,n}*\widehat{X}^\pm_{[i,1], m}*\widehat{X}^\pm_{[i,1], 0}  \big\}=0.
		\end{align*}
		Then by applying $[\widehat{H}_{[i,1],m_2},-]$ to the above equality, the desired formula follows.    
	\end{proof}

	\section{The Serre relation}
	\label{sec:serre}
	
	In this section, we shall verify the Serre relation \eqref{DR7} in $\cs\cd\widetilde{\ch}(\X_\bfk)$, for $m_1,m_2,n\in\Z$:
	\begin{align}\label{DR7-Hall}
		&\Sym_{m_1,m_2}\sum_{t=0}^{d_i-1}\sqq^{d_i-1-2t}\big( \widehat{X}_{[i,1],n}^{\pm}*\widehat{X}_{\star,m_1\pm (d_i-1-t)}*\widehat{X}_{\star,m_2\pm t}^{\pm}-[2]_{\sqq_i}\widehat{X}_{\star,m_1\pm (d_i-1-t)}^{\pm}\\\notag
		&\hspace{3cm}*\widehat{X}_{[i,1],n}^{\pm}*\widehat{X}_{\star,m_2\pm t}^{\pm}+\widehat{X}_{\star,m_1\pm (d_i-1-t)}^{\pm}*\widehat{X}_{\star,m_2\pm t}^{\pm} *\widehat{X}_{[i,1],n}^{\pm} \big)=0.
	\end{align}
	
	Denote the left hand side of \eqref{DR7-Hall} by 
	$\SS(\star,[i,1]\mid m_1,m_2,n)$ for simplicity. Using the same proof of Lemma \ref{lem:Serre1}, it is reduced to 
	\begin{align}
		\label{S000}
		\SS(\star,[i,1]\mid 0,0,0)=0,
	\end{align}
	whose proof occupies the remainder of this section.
	
	We assume $i=1$, denote by $d:=d_1$ in the following.
	Let us prove \eqref{S000}. 
	By using the algebra embedding $R^\pm:\widetilde{\ch}(\X_\bfk)\rightarrow \cs\cd\widetilde{\ch}(\X_\bfk)$, it is enough to prove 
	\begin{align}
		\notag
		\sum_{r=0}^{d-1}\sqq^{d-1-2r}&\Big([S_{11}]*[\co((d-1-r)\vec{c})]*[\co(r\vec{c})]-[2]_{\sqq^d}[\co((d-1-r)\vec{c})]*[S_{11}]*[\co(r\vec{c})] \\
		&+ [\co((d-1-r)\vec{c})]*[\co(r\vec{c})]*[S_{11}]\Big)=0,
	\end{align}
	in $\widetilde{\ch}(\X_\bfk)$, 
	which is equivalent to
	\begin{align}
		\sum_{r=0}^{d-1}\sqq^{d-1-2r}&\Big(\big[[S_{11}], [\co((d-1-r)\vec{c})]\big]_{\sqq^{-d}} *[\co(r\vec{c})]
		\\\notag
		&-\sqq^d[\co((d-1-r)\vec{c})]*\big[[S_{11}],[\co(r\vec{c})]\big]_{\sqq^{-d}}\Big)=0.
	\end{align}
	Since 
	\begin{align*}
		\big[[S_{11}],[\co(m\vec{c})]\big]_{\sqq^{-d}}=\sqq^{-d}(q^d-1) \cdot [\co(m\vec{c}+\vec{x}_1)], \quad \forall m\in\Z,
	\end{align*}
	it reduces to prove that
	\begin{proposition}
		\label{Serre relation in specail case}
		We have in $\widetilde{\ch}(\X_\bfk)$: 
		\begin{align}
			\label{eq:Serre2-reduced}
			\sum_{r=0}^{d-1}\sqq^{d-1-2r}&\Big([\![\co((d-1-r)\vec{c}+\vec{x}_1)]\!] *[\![\co(r\vec{c})]\!]-\sqq^d[\![\co((d-1-r)\vec{c})]\!]*[\![\co(r\vec{c}+\vec{x}_1)]\!]\Big)=0.
		\end{align}   
	\end{proposition}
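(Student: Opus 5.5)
The plan is to compare both sides of \eqref{eq:Serre2-reduced} coefficient by coefficient in the basis of isoclasses of $\widetilde{\ch}(\X_\bfk)$. Every term is a product of two line bundles in the class $\widehat{\co}\cdot 2+\widehat{S_{11}}+(d-1)\delta$, i.e.\ $\widehat{\co((d-1)\vc+\vx_1)}+\widehat{\co}$. A product $[\![L]\!]*[\![L']\!]$ of line bundles is supported on middle terms $E$ of extensions $0\to L'\to E\to L\to 0$. Such an $E$ is torsion-free of rank $2$, hence a vector bundle. Since $\Ext^1(L,L')\cong D\Hom(L',L(\vw))$, the split summand and the possible non-split ones can be enumerated.

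\textbf{Step 1: reorganize by classes.} Set $A_r=\co((d-1-r)\vc+\vx_1)$ and $B_r=\co(r\vc)$, and similarly $A'_r=\co((d-1-r)\vc)$ and $B'_r=\co(r\vc+\vx_1)$. Using the twisted Riedtmann--Peng formula, I will write each product as
\[
[\![L]\!]*[\![L']\!]=\sqq^{\langle L,L'\rangle}\sum_{[E]} F^E_{L L'}[\![E]\!].
\]
The Euler forms are computed from Lemma \ref{lem:Euler-form}, using $\dim\Hom(\co(\vx),\co(\vec y))=\dim \bS_{\vec y-\vx}$. The middle terms $E$ fall into two kinds. The first kind are decomposable, $E\cong\co(a\vc+\epsilon\vx_1)\oplus\co(b\vc+\epsilon'\vx_1)$. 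The second kind are indecomposable rank-two bundles; these exist only when $d\ge 2$ and $\Ext^1\neq0$, which is the phenomenon absent in \cite{Sch04,DJX12}.

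\textbf{Step 2: split the sum.} For each decomposable $E=\co(u)\oplus\co(w)$, the Hall number $F^E_{LL'}$ counts subobjects $L'\subset E$ with quotient $L$. This reduces to counting pairs of homogeneous elements $(f,g)\in\bS_{u-\cdot}\times\bS_{w-\cdot}$ with no common zero, modulo scalars. The count is an explicit polynomial in $q$, obtained by inclusion--exclusion over closed points, in the spirit of the proof of Proposition \ref{prop:Realizationsl2} for $\P^1$. I then check that the weighted alternating sum $\sum_r \sqq^{d-1-2r}(\cdots)$ of these coefficients vanishes for each fixed decomposable $E$. Symmetry $r\leftrightarrow d-1-r$ should make this telescoping, in analogy with the $\widehat{\mathfrak{sl}}_2$ relation \eqref{Drsl2-7}.

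\textbf{Step 3: the indecomposable bundles (main obstacle).} One needs to classify the rank-two indecomposable bundles $E$ in this class that occur as extensions of $L$ by $L'$ for the relevant pairs. The structure of $\coh(\X)$ for weight type $\left(\begin{smallmatrix}2\\d\end{smallmatrix}\right)$ is not known in general (Remark \ref{remark on type (2,d)}). I would therefore parametrize these $E$ as cokernels
\[
0\to\co(-\vc)\to\co\oplus\co(\vx_1)\to\cdots
\]
or, more directly, by the extension classes in $\Ext^1(L,L')\cong D\bS_{L-L'+\vw}$.

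For each such $E$, I compute $|\Aut E|$ and the number of line subbundles $L'\subset E$ with each possible quotient $L$. Here an invariant $k$ attached to $E$ should stratify the bundles; I expect these strata to be the families later denoted $M_k,N_k$. The counts then become sums over polynomials in $\bS$ with prescribed divisibility by $f_1$, weighted by $\sqq$-powers.

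Finally I will show that, for each stratum, the contributions from $r$ and from the $\sqq^d$-weighted second product cancel. This amounts to a combinatorial identity about the number of degree-$k$ polynomials coprime to or divisible by $f_1$, with the degree being $d$. I expect this counting identity to be the hardest part. I would first verify it numerically for $d=2,3$ to find the pattern of the cancellation.
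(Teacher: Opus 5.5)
\textbf{Verdict: there is a genuine gap, in Step 3.}

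Your framework is the paper's: expand both products as $\sqq^{\langle L,L'\rangle}\sum_E F^E_{LL'}[\![E]\!]$ and compare coefficients isoclass by isoclass. Your Step 2 is essentially Lemma \ref{lem:Ox1O}: coprime pairs counted via \cite[Lemma 9]{BKa01}, followed by a telescoping sum in $r$. Step 3, however, is where all the work lies, and there you give only expectations. You need three things you do not supply:
\begin{enumerate}
\item[(a)] a description of which indecomposable $E$ occur;
\item[(b)] a proof that every indecomposable middle term of $[\![\co((d-1-r)\vc+\vx_1)]\!]*[\![\co(r\vc)]\!]$ also occurs in some $[\![\co((d-1-s)\vc)]\!]*[\![\co(s\vc+\vx_1)]\!]$, since otherwise its coefficient cannot vanish;
\item[(c)] the Hall numbers, i.e.\ $|\Aut E|$ and the number of epimorphisms $E\to\co(\vx_1+(k-1)\vc)$, for bundles whose structure you do not know.
\end{enumerate}
Parametrizing by extension classes does not give (b): one isoclass arises from extensions with different end terms, and you have no way to recognize it across different $r$. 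Your suggested resolution $0\to\co(-\vc)\to\co\oplus\co(\vx_1)\to\cdots$ has a rank-one cokernel, so it cannot produce $E$ at all. Testing $d=2,3$ would not reveal the pattern either. For $d=2$ there are no indecomposable middle terms, so your ``$d\ge 2$'' should read $d\ge 3$. For $d=3$ the only stratum is $\M_1\cap\overline{\cn}$; the strata $\M_i\cap\cn_j$ first appear at $d=4$.

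\textbf{The missing idea} is that you never need to classify rank-two bundles, only the sets $\M_k$ and $\cn_k$.
\begin{itemize}
\item \emph{Second product.} No $\vec y$ satisfies $r\vc+\vx_1\le\vec y\le(d-1-r)\vc$, so every non-split extension there has indecomposable middle term. These terms are exactly $\M_{r+1}$ (nonempty iff $r+1<d/2$), with $\End(E)=\bfk$ and Hall number $1$ by Proposition \ref{ext between O(x) and O(n)}.
\item \emph{First product.} Its indecomposable terms form $\cn_{d-r}$. The inequality $\dim\Hom(\co((d-k)\vc),\co((d-1)\vc))=k>k-1=\dim\Ext^1(\co(\vx_1+(k-1)\vc),\co((d-1)\vc))$ forces $\Hom(E,\co((d-1)\vc))\neq 0$ for $E\in\cn_k$. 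Hence $\cn_k\subseteq\bigcup_{j<k}\M_j$, which is exactly (b).
\item \emph{Strata.} With the disjointness statements of Proposition \ref{Mi and Nj properties}(3), all indecomposable terms are partitioned into strata $\M_i\cap\cn_j$ and $\M_i\cap\overline{\cn}$.
\item \emph{Hall numbers.} These are constant on strata and come from a recursion in $k$. Count all nonzero maps $E\to\co(\vx_1+(k-1)\vc)$, with the dimension given by Lemma \ref{Hom from E to O(x1)} and its analogue for $\overline{\cn}$. Then subtract those whose image is one of the few line bundles the strata permit. This yields Lemmas \ref{lem:varphi ijk} and \ref{lem:varphi i1k}.
\end{itemize}
After this, the cancellation for each stratum is a short geometric-series identity. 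No count of polynomials coprime to $f_1$ enters, because the comparison is made isoclass by isoclass.
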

	
	The proof of Proposition \ref{Serre relation in specail case} occupies the remainder of this section and
	Appendix \ref{Appendix C}.

	\subsection{$\M_k$ and $\cn_k$}\label{Mk and Nk} 
	
	Note that the formula \eqref{eq:Serre2-reduced}  only involves line bundles of the forms $\co(k\vec{c})$ and $\co(k\vec{c}+\vec{x}_1)$ for certain $k$.
	Using \S\ref{sub:embedding-XY}, we only need to consider the weighted projective line $\X$ of weight $p=2$. Therefore, from now on, we always assume $\X$ has weight type $$\left(\begin{array}{ccc}
		2 \\ d 
	\end{array}\right).$$ 

	In order to prove Proposition \ref{Serre relation in specail case}, 
	we need to   
	stratify, in a suitable manner, the middle terms of the extensions between line bundles $\co(\vx_1+(k-1)\vc)$ and $\co((d-k)\vc)$  for any $1\leq k\leq d$.
	For this, we introduce the following notations. 
	
	Denote by $\text{ind}\coh(\X)$ the set of indecomposable objects in $\coh(\X)$. 
	For any $1\leq k\leq d$, set
	\begin{align*}
		\M_k:=\{E\in \text{ind}\coh(\X)\mid  0\to \co(\vx_1+(k-1)\vc)\to E\to \co((d-k)\vc)\to 0 \text{ is exact}\};
	\end{align*} and 
	\begin{align*}
		\cn_k:=\{E\in \text{ind}\coh(\X)\mid 0\to\co((d-k)\vc) \to E\to \co(\vx_1+(k-1)\vc)\to 0 \text{ is exact}\}.
	\end{align*}
	
	
	We have the following key observation. \begin{proposition}
		\label{Mi and Nj properties} Keep notations as above. Then the following statements hold:
		\begin{itemize}
			\item[(1)]  $\M_k\neq\emptyset$ if and only if $1\leq k<d/2$;
			\item[(2)]  $\cn_{k}\subseteq \bigcup_{1\leq j<k} \M_{j}$ for any $1\leq k\leq d$;
			\item[(3)]  Assume $1\leq k, j\leq d/2$, then
			\begin{itemize}
				\item[(i)] $\M_{k}\cap \M_{j}=\emptyset$ for any $k\neq j$;
				\item[(ii)] $\cn_{k}\cap \cn_{j}=\emptyset$ for any $k\neq j$;
				\item[(iii)] $\M_{k}\cap \cn_{j}=\emptyset$ if and only if $k\geq j$.
			\end{itemize}
		\end{itemize}
	\end{proposition}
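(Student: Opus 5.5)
The plan is to reduce every statement to Hom/Ext computations between line bundles, using Serre duality $\Ext^1(X,Y)\cong D\Hom(Y,X(\vw))$ and $\Hom(\co(\vx),\co(\vec y))\cong \bS_{\vec y-\vx}$.

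\textbf{Setup.} In weight type $\left(\begin{smallmatrix}2\\ d\end{smallmatrix}\right)$ we have $2\vx_1=d\vc$ and $\vw=(d-2)\vc-\vx_1$. An element $a\vx_1+b\vc$ in normal form ($a\in\{0,1\}$) satisfies $\bS_{a\vx_1+b\vc}\neq0$ if and only if $b\ge0$. In that case $\dim_\bfk \bS_{b\vc}=b+1$, since $\bS_{b\vc}$ is spanned by the monomials in $T_0,T_1$ of degree $b$; moreover $\bS_{\vx_1+b\vc}=X_1\bS_{b\vc}$.

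Every object of $\M_k$ or $\cn_k$ is a rank two bundle with determinant $\vx_1+(d-1)\vc$. Hence all of them have the same class in $K_0(\coh(\X))$, and the same degree $3d-2$. Computing the relevant extension groups by Serre duality gives
\begin{align*}
\Ext^1(\co((d-k)\vc),\co(\vx_1+(k-1)\vc))&\cong D\,\bS_{(d-2k-1)\vc},\\
\Ext^1(\co(\vx_1+(k-1)\vc),\co((d-k)\vc))&\cong D\,\bS_{(2k-3)\vc}.
\end{align*}

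\textbf{Part (1).} If $k\ge d/2$, the first Ext group vanishes. Every extension then splits, so no indecomposable middle term occurs and $\M_k=\emptyset$.

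For $1\le k<d/2$ the Ext group has dimension $d-2k$, and I must produce a non-split extension with indecomposable middle term $E$. Suppose instead the middle term decomposes as $\co(\vec a)\oplus\co(\vec b)$ with $\vec a+\vec b=\vx_1+(d-1)\vc$. The inclusion of $\co(\vx_1+(k-1)\vc)$ and the surjection onto $\co((d-k)\vc)$ give nonzero components. The Hom criterion above then pins down $\vec a,\vec b$ to a short explicit list.

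I expect the main obstacle to be showing that the decomposable cases cannot exhaust the nonzero extension classes. I would first try a counting argument over $\F_q$: the classes with middle term $\co(\vec a)\oplus\co(\vec b)$ are counted by Riedtmann--Peng type numbers $|\Hom|/|\Aut|$ orbit sizes, and I would show their total is strictly less than $q^{d-2k}-1$. If that is messy, I would fall back on an explicit construction: take a generic class (corresponding to a generic form in $\bS_{(d-2k-1)\vc}$ with no common factor issues) and check directly that $\End(E)$ is local.

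\textbf{Part (2).} Let $E\in\cn_k$. Since $E$ is indecomposable, the defining sequence is non-split, which forces $k\ge2$. Choose $j$ minimal such that $\Hom(\co(\vx_1+(j-1)\vc),E)\neq0$. Composing with $E\to\co(\vx_1+(k-1)\vc)$ lands in $\bS_{(k-j)\vc}$, so such maps exist for $j\le k$.

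Take a nonzero map whose image has maximal degree and saturate it; by maximality the cokernel is torsion-free, hence a line bundle determined by the determinant. This exhibits $E$ as an extension of $\co((d-j)\vc)$ by $\co(\vx_1+(j-1)\vc)$. Indecomposability of $E$ then forces this extension to be non-split, so $E\in\M_j$ with $j<d/2$ by (1).

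Two inequalities remain. First, $j<k$: the case $j=k$ would give the map $\co(\vx_1+(k-1)\vc)\to E$ composing isomorphically onto the quotient, splitting the $\cn_k$ sequence. Second, the saturated subsheaf really is of $\vx_1$-type rather than of the form $\co(m\vc)$. For this I would compare degrees: a subbundle $\co(m\vc)$ with $m$ large would produce a quotient $\co(\vx_1+\dots)$ with an Ext group vanishing by the formula above.

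\textbf{Part (3).} These are Hom-vanishing bookkeeping arguments. For (i), suppose $E\in\M_k\cap\M_j$ with $k<j$. Map $\co(\vx_1+(j-1)\vc)\hookrightarrow E\to\co((d-k)\vc)$. The relevant degree is $(d-k-j+1)\vc-\vx_1=\vx_1+(1-k-j)\vc$, which is not effective, so the composite vanishes. The map therefore factors through $\co(\vx_1+(k-1)\vc)$, which needs $j\le k$, a contradiction.

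Case (ii) is the same argument with the roles of the two line bundles swapped.

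For (iii), suppose $E\in\M_k\cap\cn_j$. The same factorisation argument shows $j>k$ is needed: for $k\ge j$, composing $\co((d-j)\vc)\to E\to\co((d-k)\vc)$ and using the Ext formulas yields a splitting. Conversely, for $k<j$ I would show $\M_k\cap\cn_j\neq\emptyset$ by reversing the construction in (2): starting from a suitable $E\in\M_k$, I would exhibit a subbundle $\co((d-j)\vc)$ with cokernel $\co(\vx_1+(j-1)\vc)$, using $\dim\Hom(\co((d-j)\vc),E)$ computed from the $\M_k$ sequence.
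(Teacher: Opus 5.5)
Your arguments for (3)(i), (3)(ii) and the forward half of (3)(iii) are fine; they are Hom-vanishing arguments dual to the paper's use of $\Hom(E,-)$. However, (1), (2) and the converse of (3)(iii) each have a genuine gap.

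\textbf{Part (2).} The most serious gap is here. You claim maps $\co(\vx_1+(j-1)\vc)\to E$ exist for $j\le k$ because the composite ``lands in $\bS_{(k-j)\vc}$''. That is not a lifting argument, and the claim is false at $j=k$. Since $\Hom(\co(\vx_1+(k-1)\vc),\co((d-k)\vc))=\bS_{\vx_1-(2k-1)\vc}=0$, any nonzero map $\co(\vx_1+(k-1)\vc)\to E$ would split the $\cn_k$ sequence, so $\Hom(\co(\vx_1+(k-1)\vc),E)=0$. You also want $j$ maximal, not minimal, since nonvanishing passes to smaller $j$.

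This existence statement is exactly what your last step needs. A $\vc$-type saturation $\co(m\vc)$ of the image of $\co(\vx_1+(j-1)\vc)$ forces $m\ge d+j-1$. But $\Ext^1(\co(\vx_1+(d-1-m)\vc),\co(m\vc))\cong D\bS_{(2d-2m-3)\vc}$ vanishes only when $m\ge d-1$. So you need the maximal $j$ to be at least $1$. You must also maximize $j$ over $\vx_1$-type sources, not the degree over all line subbundles. For instance, when $d=6$ any $E\in\M_1\cap\cn_2$ has maximal-degree line subbundle $\co(4\vc)$, and there the Ext group does not vanish.

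The missing count is this: $\Hom(\co(\vx_1),\co((d-k)\vc))=0$, $\dim\Hom(\co(\vx_1),\co(\vx_1+(k-1)\vc))=k$ and $\dim\Ext^1(\co(\vx_1),\co((d-k)\vc))=k-1$. Hence $\Hom(\co(\vx_1),E)\neq0$, and your saturation argument then gives $E\in\M_{j_0}$ with $1\le j_0<k$. The paper uses the dual count ($k$ versus $k-1$) to show $\Hom(E,\co((d-1)\vc))\neq0$. It then reads off $E\in\M_j$ from the image of a nonzero map $E\to\co((d-1)\vc)$.

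\textbf{Part (1).} You never prove indecomposability, and the obstacle you anticipate does not exist. Suppose $0\to\co(\vx_1+(k-1)\vc)\to L\oplus L'\to\co((d-k)\vc)\to0$ is exact with $L,L'$ line bundles. No summand $\co(\vec y)$ can have both components nonzero, because $(d-k)\vc-\vx_1-(k-1)\vc=\vx_1-(2k-1)\vc\not\ge0$. Exactness then forces the two nonzero components to be isomorphisms, so the sequence splits. Hence every nonzero class has an indecomposable middle term, with no counting needed; this is the paper's argument.

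\textbf{Converse of (3)(iii).} The count you propose is too weak. From the $\M_k$ sequence you only get $\dim\Hom(\co((d-j)\vc),E)\ge 2j-d+1$, which is $\le0$ unless $j=d/2$. Indeed, by (ii) a given $E\in\M_k$ lies in at most one $\cn_j$, so no argument valid for all $E\in\M_k$ can work. You would have to choose the extension class to be nonzero and killed by pullback along some $g\in\bS_{(j-k)\vc}$; that kernel has dimension $j-k\ge1$. You would then need to show the lifted map $\co((d-j)\vc)\to E$ has torsion-free cokernel, for example by taking $g$ irreducible. The paper instead gets nonemptiness from the Green's formula count $|\M_i\cap\cn_j|=\frac{1}{q-1}\sum_{[A]\in\Iso(\mathcal S_{j-i})}|\Aut(A)|$ in Lemma~\ref{Mi intersect with Nj for i<j}.
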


	
	
	\begin{proof}
		
		(1) By Serre duality we have $\Ext^1(\co((d-k)\vc), \co(\vx_1+(k-1)\vc))\cong \bS_{(d-1-2k)\vc}$. It is nonzero if and only if $d-1-2k\geq 0$, i.e., $1\leq k< d/2$. Moreover,  
		there are no element $\vec{y}$ satisfying $\vx_1+(k-1)\vc\leq \vec{y}\leq (d-k)\vc$ since $(d-k)\vc-(\vx_1+(k-1)\vc)=\vx_1-(2k-1)\vc\not\geq0$. That is, there are no line bundles $\co(\vec{y})$ satisfying $\Hom(\co(\vx_1+(k-1)\vc), \co(\vec{y}))\neq 0$ and $\Hom(\co(\vec{y}), \co((d-k)\vc))\neq 0$.
		Hence the middle term of any non-split extension in $\Ext^1(\co((d-k)\vc), \co(\vx_1+(k-1)\vc))$ must be indecomposable. This proves (1).
		
		(2) For any $E\in\cn_{k}$, there exists an exact sequence 
		$$ 0\longrightarrow \co((d-k)\vc)\longrightarrow E\longrightarrow \co(\vx_1+(k-1)\vc)\longrightarrow 0.$$
		Applying $\Hom(-,\co((d-1)\vc))$ we obtain an exact sequence
		\begin{align*}
			0&\longrightarrow \Hom(E, \co((d-1)\vc)) \longrightarrow \Hom(\co((d-k)\vc), \co((d-1)\vc))\\
			&\longrightarrow \Ext^1( \co(\vx_1+(k-1)\vc), \co((d-1)\vc)).
		\end{align*}
		Since $\dim \Hom(\co((d-k)\vc), \co((d-1)\vc))=k$, and $\dim\Ext^1( \co(\vx_1+(k-1)\vc), \co((d-1)\vc))=k-1$ by using Serre duality, we have  $\Hom(E, \co((d-1)\vc)) \neq 0$. Then for any nonzero $f:E\to \co((d-1)\vc))$, $\Im(f)$ is a nonzero subsheaf of $\co((d-1)\vc)$, which must be a line bundle $\co((d-j)\vc)$ for some $1\leq j<k$, that is, $E\in \M_{j}$.  
		Then we are done.
		
		
		(3) Fix $1\leq k< j\leq d/2$. For any $E\in\M_k$, there exists an exact sequence
		$$0\longrightarrow \co(\vx_1+(k-1)\vc)\longrightarrow E\longrightarrow \co((d-k)\vc)\longrightarrow 0.$$ 
		By applying $\Hom(-, \co((d-j)\vc))$ we obtain $\Hom(E, \co((d-j)\vc))=0$, hence $E\not\in \M_j$. This implies $ \M_{k}\cap \M_{j}=\emptyset$.
		
		Similarly, for any $E\in\cn_j$, there exists an exact sequence $$0\longrightarrow \co((d-j)\vc)\longrightarrow E\longrightarrow \co(\vx_1+(j-1)\vc)\longrightarrow 0.$$ By applying $\Hom(-, \co(\vx_1+(k-1)\vc))$ we obtain $\Hom(E, \co(\vx_1+(k-1)\vc))=0$, hence $E\not\in \cn_k$. This implies $ \cn_{k}\cap \cn_{j}=\emptyset$.
		Then (i) and (ii) hold.
		
		Now we prove  (iii). On the one hand, if $k<j$, then $\M_{k}\cap \cn_{j}\neq \emptyset$ by Lemma \ref{Mi intersect with Nj for i<j}.
		On the other hand, if $ \M_{k}\cap \cn_{j}\neq \emptyset$, then we have the following graph given by two non-split short exact sequences:
		$$\xymatrix{
			&\co(\vec{x}_1+(k-1)\vc) \ar[d]_f &\\
			\co((d-j)\vc)\ar[r]&E \ar[r]\ar[d]&\co(\vx_1+(j-1)\vc)\\
			&\co((d-k))\vc).}
		$$
		Clearly, $k\neq j$ since $\co(\vec{x}_1+(k-1)\vc)$ is a direct summand of $E$. We claim that $\Hom(\co(\vec{x}_1+(k-1)\vc), \co(\vec{x}_1+(j-1)\vc))\neq 0$. Otherwise, the nonzero map $f$ in the above graph factors through $\co((d-j)\vc)$,  which contradicts to $\Hom(\co(\vec{x}_1+(k-1)\vc), \co((d-j)\vc))=0$. Hence $k< j$. 
		Then the proof is finished.
	\end{proof}
	
	\begin{remark}
		According to Proposition \ref{Mi and Nj properties} (2), we obtain that $\cn_1=\emptyset$.
		Moreover, we will prove that  $\cn_k=\emptyset$ if and only if $k=1$ in Corollary \ref{Nk nonempty}.
		For further properties of $\M_k$ and $\cn_k$, we refer to Appendix \ref{Cardinalities}.
	\end{remark}
	
	
	Based on Proposition \ref{Mi and Nj properties}, we define \begin{align}
		\label{def:coN}
		\overline{\cn}=(\bigcup\limits_{1\leq k<d/2} \M_k )\backslash (\bigcup\limits_{2\leq k\leq d/2} \cn_k).
	\end{align}
	Then we have 
	\begin{align}
		\label{small pieces}
		\bigcup\limits_{1\leq k<d/2} \M_k =\overline{\cn}\cup \bigcup\limits_{2\leq k\leq d/2} \cn_k= \bigcup\limits_{1\leq k< d/2} (\M_k\cap \overline{\cn})\cup\bigcup\limits_{1\leq k<j\leq d/2} (\M_k\cap \cn_j).
	\end{align}

	\subsection{Hall numbers}

	In this section we will calculate the Hall numnber 
	$F_{\co(\vec{x}_1+(k-1)\vc),\co((d-k)\vc)}^{E}$ for any $E\in\M_i$ with $1\leq i< d/2$. For this we introduce the following notations.

	For  $1\leq i< j\leq d/2$, $1\le k\leq d$, and $E\in\M_i\cap\cn_j$,  $E'\in\M_i\cap\overline{\cn}$, denote by
	$$\varphi_{i,j}^k:=|\{f:E\to \co(\vx_1+(k-1)\vc)\mid f \text{ is surjective}\}|;$$ and 
	$$\varphi_{i,1}^k:=|\{f: E'\to \co(\vx_1+(k-1)\vc)\mid f \text{ is surjective} \}|.$$
	The notation $\varphi_{i,j}^k$ (resp. $\varphi_{i,1}^k$) makes sense since it does not rely on the choice of $E$ (resp. $E'$); see 
	Remark \ref{independent}. 
	
	Note that the Hall number $F_{\co(\vec{x}_1+(k-1)\vc),\co((d-k)\vc)}^{E}$ is equal to $\frac{1}{q-1}\varphi_{i,j}^k$ or $\frac{1}{q-1}\varphi_{i,1}^k$ for $E\in\M_i\cap\cn_j$ or $E\in\M_i\cap\overline{\cn}$, respectively. 
	Moreover, the explicit formulas of $\varphi_{i,j}^k$'s have been calculated in Lemmas \ref{lem:varphi ijk} and \ref{lem:varphi i1k}.
	Consequently, we have the following result.

	\begin{lemma}\label{Hall number for j=1} 
		Let $1\leq i<j\leq d/2$ and $1\leq k\leq d$. Then
		\begin{itemize}
			\item[(1)] 
			for any $E\in\M_i\cap\cn_j$,
			\begin{align*}
				F_{\co(\vec{x}_1+(k-1)\vc),\co((d-k)\vc)}^{E} =&\begin{cases}
					1 & \text{ if } k=j,
					\\
					q^2 & \text{ if } d=2\mathfrak{t}, k=\mathfrak{t}+1, j=\mathfrak{t},
					\\
					\delta\{k+j\geq d+2\}\cdot q^{2k-d-2}(q^2-1)\\
					\qquad\quad+ \delta_{k+j, d+1}\cdot q^{2k-d}-\delta_{k+i,d+1} & \text{ if } [d/2]+2\leq k\leq d,
					\\
					0 & \text{ else.}
				\end{cases}
			\end{align*} 
			\item[(2)] 
			for any $E\in\M_i\cap\overline{\cn}$, 
			\begin{align*}
				F_{\co(\vec{x}_1+(k-1)\vc),\co((d-k)\vc)}^{E} 
				=&\begin{cases}
					q+1 & \text{ if } d=2\mathfrak{t}+1, k=\mathfrak{t}+1, 
					\\
					q^{2k-d-2}(q^2-1)-\delta_{k+i,d+1} & \text{ if } d=2\mathfrak{t}+1, k\geq \mathfrak{t}+2,
					\\
					0 & \text{ else.}
				\end{cases}
			\end{align*}
		\end{itemize}
	\end{lemma}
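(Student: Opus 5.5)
The plan is to derive the lemma from the identity $F_{AB}^E=\frac{1}{q-1}\varphi$ noted just before the statement, where $A=\co(\vec{x}_1+(k-1)\vc)$ and $B=\co((d-k)\vc)$. The reason is that subobjects $L\subseteq E$ with $L\cong B$ and $E/L\cong A$ correspond bijectively to surjections $E\to A$ modulo $\Aut(A)=\bfk^\times$. This correspondence uses two facts. First, the kernel of a surjection $E\to A$ is a line bundle of class $\widehat{E}-\widehat{A}=\widehat{B}$. Second, a line bundle is determined by its class, since its class determines its degree and determinant. Counting surjections $E\to\co(\vx_1+(k-1)\vc)$ therefore gives the answer, and these counts are recorded as $\varphi_{i,j}^k$ (for $E\in\M_i\cap\cn_j$) and $\varphi_{i,1}^k$ (for $E\in\M_i\cap\overline{\cn}$). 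I will invoke Lemmas \ref{lem:varphi ijk} and \ref{lem:varphi i1k} for their explicit values, divide by $q-1$, and simplify into the stated case list.

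Should those values need re-deriving, I would compute each count by inclusion–exclusion. Write $\varphi=|\Hom(E,A)|-|\{\text{non-surjective } f\}|$. A nonzero non-surjective $f:E\to A$ has image a proper line subbundle $\co(\vec y)\subsetneq A$ with $\vec y<\vx_1+(k-1)\vc$, through which $f$ factors surjectively; this gives a recursion over $\vec y$. The dimension $\dim\Hom(E,\co(\vec z))$ I would get from the two defining sequences of $E$: the $\M_i$-sequence $0\to\co(\vx_1+(i-1)\vc)\to E\to\co((d-i)\vc)\to0$, and, when $E\in\cn_j$, also $0\to\co((d-j)\vc)\to E\to\co(\vx_1+(j-1)\vc)\to0$. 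Applying $\Hom(-,\co(\vec z))$ gives a six-term sequence with $\Hom$ and $\Ext^1$ terms, the latter computed by Serre duality as $D\bS_{\ast}$ with $\tau=(\vw)$ and $\vw=(d-2)\vc-\vx_1$. The connecting map is nonzero exactly when the extension class does not die under pullback, and the non-splitness of the extension controls this.

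The main obstacle is the rank of these connecting maps, uniformly in $\vec z$. The rank depends on whether $E$ also lies in some $\cn_j$, which is why the cases $\M_i\cap\cn_j$ and $\M_i\cap\overline{\cn}$ are separated. I would first decide the rank using Proposition \ref{Mi and Nj properties}: membership of $E$ in $\cn_j$ forces an extra nonzero map $E\to\co(\vx_1+(j-1)\vc)$. This is the source of the term $1$ when $k=j$ and of the correction terms $\delta_{k+j,d+1}$ and $\delta_{k+i,d+1}$, the latter reflecting the quotient $E\to\co((d-i)\vc)$ when $\vx_1+(k-1)\vc$ relates to $(d-i)\vc$. Next I would sort the values by the parity of $d$, which determines the boundary values $k=\mathfrak t,\mathfrak t+1$. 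There the counts $q^2$ and $q+1$ come from $\dim\Hom=2$ or $3$ minus the contributions from the sublattice. For $k\ge[d/2]+2$ the generic count is $q^{2k-d}-q^{2k-d-2}=q^{2k-d-2}(q^2-1)$: all maps minus those factoring through $\co(\vx_1+(k-2)\vc)$. Finally I would check that every other range gives $0$, because then $\Hom(E,A)$ consists only of maps factoring through a proper subbundle.
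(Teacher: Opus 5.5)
Your proposal is correct and follows the paper's route. The paper likewise notes that $F^{E}_{\co(\vec{x}_1+(k-1)\vc),\co((d-k)\vc)}=\varphi/(q-1)$ and reads the cases off Lemmas \ref{lem:varphi ijk} and \ref{lem:varphi i1k}, using $q^3-q^2-q+1=(q-1)(q^2-1)$ and $q^2-1=(q-1)(q+1)$; your justification of that identity (the kernel of any surjection is a line bundle of class $\widehat{E}-\widehat{A}=\widehat{B}$, hence isomorphic to $\co((d-k)\vc)$) fills in a step the paper leaves implicit, and your looser fallback sketch for re-deriving the $\varphi$'s is not needed.
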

	

	\subsection{The product $[\co((d-1-r)\vec{c})]*[\co(r\vec{c}+\vec{x}_1)]$
	}
	
	In this subsection, we compute $[\co((d-1-r)\vec{c})]*[\co(r\vec{c}+\vec{x}_1)]$ for $0\leq r\leq d-1$.
	
	
	\begin{lemma}
		\label{lem:OOx1}
		The middle term $E$ of any exact sequence in $\Ext^1(\co((d-1-r)\vec{c}), \co(r\vec{c}+\vec{x}_1))$ has one of the following forms:
		\begin{itemize}
			\item[(1)] $E=\co((d-1-r)\vec{c})\oplus\co(r\vec{c}+\vec{x}_1)$;
			\item[(2)] $E\in \M_{r+1}$.
		\end{itemize}
		Moreover, in both cases we have
		$$F_{\co((d-1-r)\vec{c}), \co(r\vec{c}+\vec{x}_1)}^{E}=1.$$   
	\end{lemma}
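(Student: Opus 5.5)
The plan is to split into the split and non-split extensions, and to use the structure of $\M_{r+1}$ from Proposition \ref{Mi and Nj properties} together with a Hom count. Put $A=\co((d-1-r)\vec{c})$ and $B=\co(r\vec{c}+\vec{x}_1)$. With $k=r+1$ we have $A=\co((d-k)\vc)$ and $B=\co(\vx_1+(k-1)\vc)$, so a non-split extension $0\to B\to E\to A\to 0$ is exactly of the type defining $\M_{r+1}$.

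First I would classify the middle terms. In the proof of Proposition \ref{Mi and Nj properties}(1) we saw two things. There is no $\vec{y}$ with $\vx_1+(k-1)\vc\le \vec{y}\le (d-k)\vc$. Also $\Ext^1(A,B)\cong \bS_{(d-1-2k)\vc}$. Since $\X$ has no intermediate line bundle, the middle term of a non-split extension is indecomposable. That middle term therefore lies in $\M_{r+1}$ by definition. The split extension gives $E=A\oplus B$, which is case (1). (When $r+1\ge d/2$ the Ext group vanishes and only case (1) occurs, consistent with Proposition \ref{Mi and Nj properties}(1).)

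Next I would compute the Hall number $F_{A,B}^E$, which counts subobjects $L\subseteq E$ with $L\cong B$ and $E/L\cong A$. Equivalently, it counts monomorphisms $B\to E$ with cokernel $\cong A$, divided by $|\Aut B|=q-1$. The key input is $\Hom(A,B)=\bS_{\vx_1-(2k-1)\vc}=0$, since $\vx_1-(2k-1)\vc\not\ge 0$ for $k\ge1$. In case (1), applying $\Hom(B,-)$ gives $\Hom(B,A\oplus B)=\Hom(B,A)\oplus\bfk$. A morphism $(g,\lambda)$ with $\lambda\ne0$ composed with an automorphism of $A\oplus B$ of the form $\begin{pmatrix}1&g\lambda^{-1}\\0&1\end{pmatrix}$ reduces to the standard inclusion. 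Its image is the graph of $\lambda^{-1}g$, and the cokernel is $\cong A$. If $\lambda=0$, the cokernel of $g:B\to A$ would contain $B$ as a summand and have rank $1$ plus torsion-free, so it is not $\cong A$. Hence the subobjects are parametrized by $g\in\Hom(B,A)$, giving $|\Hom(B,A)|$ many. This has to be reconciled with the Riedtmann–Peng formula: $F=\frac{|\Ext^1(A,B)_{A\oplus B}|}{|\Hom(A,B)|}\cdot\frac{|\Aut(A\oplus B)|}{|\Aut A||\Aut B|}$. The first factor is $1/1=1$. The second factor is $(q-1)^2|\Hom(B,A)|/(q-1)^2=|\Hom(B,A)|$, because $\Hom(A,B)=0$ makes $\Aut(A\oplus B)$ upper triangular. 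This does not give $1$ unless $\Hom(B,A)=0$. So I would recheck which convention is used. With $F_{AC}^B$ counting $L\cong C$, $B/L\cong A$, the first argument is the quotient. Thus $F_{A,B}^E$ has $E/L\cong A$ and $L\cong B$, so it counts subobjects isomorphic to $B=\co(r\vc+\vx_1)$ whose quotient is $A$.

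Given this, I would instead run the count directly on the Riedtmann–Peng side. The most robust route is to note $F^E_{A,B}=|\Ext^1(A,B)_E|\,|\Aut E|/(|\Hom(A,B)||\Aut A||\Aut B|)$ with $\Hom(A,B)=0$. In case (2), $E$ is indecomposable with $\End E$ local. I expect $\End(E)=\bfk$ (to be checked via $\Hom(B,A)$ factoring), and then $|\Aut E|=q-1$. The nonzero classes giving $E$ form a single $\Aut A\times\Aut B$-orbit modulo scalars, i.e. $|\Ext^1(A,B)_E|=q-1$. Together these give $F=1$.

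The main obstacle is precisely showing that $\End(E)=\bfk$ and that each isoclass in $\M_{r+1}$ arises from exactly one line $\bfk^\times\eta$ in $\Ext^1(A,B)$. For this I would argue that two extensions with isomorphic indecomposable middle term are related through $\Hom(B,A)$-twists. Since $\Hom(A,B)=0$, any isomorphism of middle terms preserves the subobject $B$, because the composite $B\to E\to A$ must vanish on the unique such filtration. It therefore induces automorphisms of $A$ and $B$, which are scalars. In case (1) the analogous computation has to be reconciled with $|\Aut(A\oplus B)|$ as above. If that reconciliation fails, I would fall back on the direct subobject count, checking whether $\Hom(B,A)=\bS_{(2k-1)\vc-\vx_1}$ contributes or whether the intended ordering makes it trivial.
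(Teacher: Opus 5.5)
Your classification of the middle terms is the paper's argument and is fine. The gap is in the Hall-number step: you have swapped the two Hom spaces. Since $\Hom(\co(\vx),\co(\vec y))\cong\bS_{\vec y-\vx}$ and $d\vc=2\vx_1$, the correct values are
\[
\Hom(B,A)\cong\bS_{\vx_1-(2k-1)\vc}=0,\qquad \Hom(A,B)\cong\bS_{(2k-1)\vc-\vx_1}=\bS_{\vx_1+(2k-1-d)\vc}.
\]
The second space is nonzero exactly when $2r+1\ge d$, which is a range where only the split case occurs. The paper's observation $\vx_1-(2r+1)\vc\not\ge 0$ is the statement $\Hom(B,A)=0$.

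Because of the swap, your case (1) count ends with $F^{A\oplus B}_{A,B}=|\Hom(B,A)|$, which you could not reconcile with $1$ and left open. Your Riedtmann--Peng bookkeeping is also off. The factor $1/|\Hom(A,B)|$ need not equal $1$. Moreover, $\Aut(A\oplus B)$ is triangular because $\Hom(B,A)=0$, so $|\Aut(A\oplus B)|=(q-1)^2|\Hom(A,B)|$. In case (2), your step ``any isomorphism of middle terms preserves $B$'' requires the composite $B\to E\to E'\to A$ to vanish. That is $\Hom(B,A)=0$, not the $\Hom(A,B)=0$ you cite. In case (2) both spaces do vanish, since $2k<d$, but not for the reason you give. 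Finally, $\End(E)=\bfk$ and the single-orbit claim for $|\Ext^1(A,B)_E|$ are left as expectations. As written, the proposal therefore establishes $F=1$ in neither case.

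Once the correct vanishing $\Hom(B,A)=0$ is used, both cases close at once and uniformly. Apply $\Hom(-,A)$ to $0\to B\to E\to A\to 0$. This gives $\Hom(E,A)\cong\End(A)=\bfk$, so every nonzero map $E\to A$ is a scalar multiple of the given epimorphism and has kernel $B$. Hence $F^E_{A,B}=(q-1)/|\Aut A|=1$ for both split and non-split $E$. In the split case this matches your own count: the subobjects are indexed by $g\in\Hom(B,A)=0$, so there is exactly one. This is precisely how Proposition~\ref{ext between O(x) and O(n)} yields $F=1$ in the paper, applied to $E(-r\vc)$ with $n=d-1-2r$. It also makes computing $\End(E)$ and $|\Ext^1(A,B)_E|$ unnecessary for this lemma.
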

	
	\begin{proof}
		Note that $(d-1-r)\vc-(r\vc+\vec{x}_1)=\vec{x}_1-(2r+1)\vc\not\geq0$.
		So there are no element $\vec{y}$
		satisfying $r\vc+\vec{x}_1\leq \vec{y}\leq (d-1-r)\vc$. Hence, 
		the middle term of any non-split extension in $\Ext^1(\co((d-1-r)\vec{c}), \co(r\vec{c}+\vec{x}_1))$ must be
		indecomposable. Then the first statement follows from the definition of $\M_{r+1}$, and the second one follows from Proposition \ref{ext between O(x) and O(n)}.
	\end{proof}
	
	As an immediate consequence of Lemma \ref{lem:OOx1}, we have
	\begin{align}\notag
		&[\![\co((d-1-r)\vec{c})]\!] *[\![\co(r\vec{c}+\vec{x}_1)]\!]\\
		\label{product one}
		=&\sqq^{2r+2-d}\Big([\![\co((d-1-r)\vec{c})\oplus \co(r\vec{c}+\vec{x}_1)]\!]
		+\sum_{E\in\M_{r+1}}[\![E]\!]\Big).
	\end{align}

	\subsection{The product $[\co((d-1-r)\vec{c}+\vec{x}_1)] *[\co(r\vec{c})]$}
	
	In this subsection, we compute the product $[\co((d-1-r)\vec{c}+\vec{x}_1)] *[\co(r\vec{c})]$ for $0\leq r\leq d-1$.

	\begin{lemma}
		\label{lem:Ox1O}
		The middle term $E$ of any exact sequence in $\Ext^1(\co((d-1-r)\vec{c}+\vec{x}_1), \co(r\vec{c}))$ has one of the following forms:
		\begin{itemize}
			\item[(1)] $E=\co((d-j)\vec{c})\oplus\co((j-1)\vec{c}+\vec{x}_1)$ for $r+1\leq j\leq d-r$;
			\item[(2)] $E\in \M_i\cap \cn_j$ for $1\leq i<j\leq d/2$;
			\item[(3)] $E\in\M_i\cap \overline{\cn}$ for $1\leq i<d/2$.
		\end{itemize}
		Moreover, the Hall number $F_{\co((d-1-r)\vec{c}+\vec{x}_1), \co(r\vec{c})}^{E}$ equals to $\frac{\varphi_{i,j}^{d-r}}{q-1}$ and $\frac{\varphi_{i,1}^{d-r}}{q-1}$ for the cases (2), (3) respectively, and for the case (1),
		\begin{align*}
			F_{\co((d-1-r)\vec{c}+\vec{x}_1), \co(r\vec{c})}^{\co((d-j)\vc)\oplus \co(\vec{x}_1+(j-1)\vc)}  
			=&\begin{cases}q^{\{d-2r\}_+} & \text{ if } j=d-r,\\
				q^{d-2r}-q^{d-2r-2}& \text{ if } r+1<j<d-r,\\
				q^{d-2r}-1& \text{ if } r+1=j<d-r,\\
				0& \text{ else. } \\
			\end{cases}
		\end{align*}
		Here $\{a\}_+:=\max\{a,0\}$ for any $a\in\Z$.
	\end{lemma}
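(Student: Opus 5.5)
The argument has two parts. First classify the possible middle terms $E$ of extensions $0\to\co(r\vc)\to E\to\co((d-1-r)\vc+\vx_1)\to0$. Then compute the Hall numbers case by case, using the surjection counts $\varphi$ and direct $\Hom$ computations in the graded ring $\bS$ for type $\left(\begin{array}{c}2\\ d\end{array}\right)$.

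\emph{Classification.} Since $E$ has rank $2$, it is either indecomposable or a direct sum of two line bundles $\co(\vec y)\oplus\co(\vec z)$ with $\vec y+\vec z=(d-1)\vc+\vx_1$.
\begin{itemize}
\item[(a)] In the decomposable case, write $E=\co(\vec y)\oplus\co(\vec z)$. The monomorphism from $\co(r\vc)$ forces one summand, say $\co(\vec y)$, to receive a nonzero map from $\co(r\vc)$. The epimorphism onto $\co((d-1-r)\vc+\vx_1)$ forces a nonzero map from some summand to it, and a rank count shows that each summand must map nontrivially.
\item[(b)] By the normal form in $\bL(\bp,\bd)$, with $\vx_1$ of order $2$ modulo $\vc$, the only possibility is $\{\vec y,\vec z\}=\{(d-j)\vc,\ (j-1)\vc+\vx_1\}$ with $r\le d-j$ and $(j-1)\vc+\vx_1\le(d-1-r)\vc+\vx_1$. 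The two inequalities give $r+1\le j\le d-r$.
\item[(c)] In the indecomposable case, $\Hom(\co(r\vc),\co((d-1)\vc))\neq0$ and the same Hom-argument as in Proposition~\ref{Mi and Nj properties}(2) produce a surjection of $E$ onto a line bundle $\co((d-i)\vc)$. Pulling back, one checks that $E\in\M_i$ for some $i<d/2$.
\item[(d)] By the decomposition \eqref{small pieces}, $E$ then lies either in $\M_i\cap\cn_j$ with $i<j\le d/2$ or in $\M_i\cap\overline{\cn}$.
\end{itemize}

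\emph{Hall numbers, indecomposable case.} Use the formula $F^E_{A,C}=|\{\text{surjections }E\to A\text{ with kernel}\cong C\}|/|\Aut A|$, with $|\Aut A|=q-1$ for a line bundle $A$. Any surjection from a rank-$2$ bundle $E$ onto a line bundle has line-bundle kernel, and here the kernel has class $\widehat{E}-\widehat{A}$, so it is automatically isomorphic to $\co(r\vc)$. Taking $A=\co(\vx_1+(d-r-1)\vc)$, which is $\co(\vx_1+(k-1)\vc)$ with $k=d-r$, the count is $\varphi^{d-r}_{i,j}$ or $\varphi^{d-r}_{i,1}$. The independence of $E$ within each stratum, recorded in Remark~\ref{independent} and Lemmas~\ref{lem:varphi ijk}, \ref{lem:varphi i1k}, gives the stated values.

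\emph{Hall numbers, decomposable case.} Let $E=\co((d-j)\vc)\oplus\co((j-1)\vc+\vx_1)$ and count surjections $(f,g)\colon E\to\co((d-1-r)\vc+\vx_1)$, then divide by $q-1$. The component $g$ lies in $\bS_{(d-j-r)\vc}$, of dimension $d-j-r+1$. The component $f$ lies in $\bS_{(j-1-r)\vc+\vx_1}$; since $X_1^2=f_1$ has degree $d\vc$, this space is spanned by $X_1\bS_{(j-1-r)\vc}$ and has dimension $j-r$.
\begin{itemize}
\item[(i)] Surjectivity means that $(f,g)$ generate the target away from torsion, i.e.\ the cokernel vanishes. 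Equivalently, $f$ and $g$ have no common zero on $\X$, including at the exceptional point where $X_1$ vanishes.
\item[(ii)] If $j=d-r$, then $g$ is a scalar. Surjectivity requires $g\neq0$, and $f$ is arbitrary in a space of dimension $d-2r$, giving $q^{\{d-2r\}_+}$ after dividing by $q-1$.
\item[(iii)] For $r+1\le j<d-r$, count coprime pairs of binary forms. The pair is $(X_1 h,g)$ with $\deg h=j-1-r$ and $\deg g=d-j-r$, and we need $g(\bla_1)\neq0$ together with $\gcd(h,g)=1$.
\item[(iv)] Use the classical count of coprime pairs of forms over $\F_q$, either via a M\"obius inversion over common divisors or inductively via the standard $q^{a+b+2}-q^{a+b}$-type formula. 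Then subtract the pairs where $f_1$ divides $g$ to impose $g(\bla_1)\neq0$.
\item[(v)] This should give $q^{d-2r}-q^{d-2r-2}$ in general, and $q^{d-2r}-1$ when $j=r+1$, where $h$ is a constant and the only condition is $g\neq0$ with $f_1\nmid g$ (the count of such $g$ is $q^{d-2r}-1$ since $\deg g=d-2r-1<d$).
\end{itemize}

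I expect the main obstacle to be the classification: confirming that every indecomposable middle term really lies in some $\M_i$, and that no other decomposable forms occur. I would try the Hom-into-$\co((d-1)\vc)$ argument of Proposition~\ref{Mi and Nj properties}(2) first. The coprimality count in step (iv) is routine but needs care at the exceptional point.
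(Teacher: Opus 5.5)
You follow the paper's route. Indecomposable middle terms are placed in the strata of \eqref{small pieces} via the Hom-argument of Proposition~\ref{Mi and Nj properties}(2). In fact such an $E$ lies in $\cn_{d-r}$ by definition, so that proposition applies verbatim with $k=d-r$. Their Hall numbers are then the surjection counts $\varphi^{d-r}_{i,j}/(q-1)$ and $\varphi^{d-r}_{i,1}/(q-1)$, and the decomposable case is a count of surjections $(f,g)$ reduced to coprime binary forms as in \cite[Lemma 9]{BKa01}.

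However, step (b) does not establish the range of $j$. The two inequalities you impose, $r\le d-j$ and $(j-1)\vc+\vx_1\le (d-1-r)\vc+\vx_1$, both reduce to $j\le d-r$; neither gives $j\ge r+1$. You need that lower bound for the ``$0$ else'' clause, i.e.\ to rule out $\co((d-j)\vc)\oplus\co((j-1)\vc+\vx_1)$ with $j\le r$, $j\neq d-r$. The missing input is that in a non-split extension every component of the epimorphism is nonzero. If, say, $\co((d-j)\vc)\to\co((d-1-r)\vc+\vx_1)$ vanished, the other component would be a surjection between line bundles, hence an isomorphism, and the sequence would split. Then $\Hom(\co((d-j)\vc),\co((d-1-r)\vc+\vx_1))=\bS_{(j-1-r)\vc+\vx_1}\neq 0$ forces $j\ge r+1$; this is what the paper's pushout square encodes.

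Correspondingly, your claim in (a) that each summand maps nontrivially is false for the split extension. That is the case $j=d-r$, and it must be treated separately. When $2r\ge d$ the component $\co(r\vc)\to\co((d-1-r)\vc+\vx_1)$ is forced to vanish and $j=d-r<r+1$, which is why that entry of the formula reads $q^{\{d-2r\}_+}$.

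Two smaller points on (iv). The count you need is $\mathfrak{s}(a,b)=(q-1)(q^{a+b+1}-q^{a+b-1})$ for $a,b\ge 1$ and $\mathfrak{s}(0,b)=(q-1)(q^{b+1}-1)$. A ``$q^{a+b+2}-q^{a+b}$'' formula is off by a factor of $q$. With $a=j-1-r$ and $a+b=d-2r-1$, dividing by $q-1$ gives exactly the two stated values. Also, subtracting pairs with $f_1\mid g$ is vacuous: in the range $r+1\le j<d-r$ the form $g$ is nonzero of degree $d-r-j<d$. As you note in (v), and as the paper says, this is how the exceptional point is disposed of.
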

	
	\begin{proof}
		Thanks to Proposition \ref{Mi and Nj properties}, Lemma \ref{Hall number for j=1} and \eqref{small pieces}, we only need to calculate the Hall number 
		$F_{\co((d-1-r)\vec{c}+\vec{x}_1), \co(r\vec{c})}^{\co((d-j)\vc)\oplus \co(\vec{x}_1+(j-1)\vc)}$.
		If $j=d-r$, then it equals to $q^{\{d-2r\}_+}$ 
		since $\Hom(\co(r\vc), \co(\vec{x}_1+(d-1-r)\vc))\cong {\bfk}^{\{d-2r\}_+}$.

		Now assume $j\neq d-r$. Then $F_{\co(\vec{x}_1+(d-1-r)\vc),\co(r\vc)}^{\co((d-j)\vc)\oplus \co(\vec{x}_1+(j-1)\vc)} \neq 0$ if and only if there exists a pushout commutative diagram as follows:
		$$\xymatrix{\co(r\vc)\ar[r]\ar[d]& \co((d-j)\vc)\ar[d]^{f}\\
			\co(\vec{x}_1+(j-1)\vc)\ar[r]_{g}&\co(\vec{x}_1+(d-1-r)\vc).}
		$$
		So we have $r+1\leq j<d-r$. In this case, $f=X_1\cdot f_{j-1-r}$ and $g=g_{d-r-j}$ for some homogeneous polynomials $f_{j-1-r}$ and $g_{d-r-j}$ in $\bfk[T_0,T_1]$ of degree $j-1-r$ and $d-r-j$ respectively.
		Since $d-r-j<d$, we know that $X_1$ and $g=g_{d-r-j}$ are coprime. 
		If $(f,g): \co((d-j)\vc)\oplus \co(\vec{x}_1+(j-1)\vc)\to \co(\vec{x}_1+(d-1-r)\vc$ is surjective, then the rank of $\ker((f,g))$ is $1$, which is a line bundle, and then must be $\co(r\vc)$. 
		Hence the Hall number is given by $$\frac{1}{q-1}|\{(f,g): \co((d-j)\vc)\oplus \co(\vec{x}_1+(j-1)\vc)\to \co(\vec{x}_1+(d-1-r)\vc)\mid (f,g) \text{\,is\, surjective}\}|,$$
		which equals 
		\begin{align*}\frac{\mathfrak{s}(j-1-r, d-r-j)}{q-1}=&\begin{cases}
				q^{d-2r}-q^{d-2r-2}& \text{ if } j-1-r\neq 0,\\
				q^{d-2r}-1& \text{ if } j-1-r=0. 
		\end{cases}\end{align*} 
		Here, $\mathfrak{s}(a,b)$ denotes the cardinality of the set of pairs $(f,g)\in\mathbf{k}[T_0, T_1]$ consisting of non-zero coprime homogeneous polynomials of degree $a$ and $b$ respectively, and the last equality follows from \cite[Lemma 9]{BKa01}.
	\end{proof}
	
	Consequently, we obtain the following formula:
	\begin{align}
		[\![\co((d-1-r)&\vec{c}+\vec{x}_1)]\!] *[\![\co(r\vec{c})]\!]
		=q^{r+1-d}\Big(q^{\{d-2r\}_+}[\![\co((d-1-r)\vec{c}+\vec{x}_1)\oplus \co(r\vec{c})]\!]\notag\\
		&+ \sum_{r+1<j<d-r}(q^{d-2r}-q^{d-2r-2})[\![ \co((d-j)\vec{c})\oplus\co((j-1)\vec{c}+\vec{x}_1)]\!]\notag \\
		&+\delta\{2r+1<d\}\cdot (q^{d-2r}-1)[\![\co((d-1-r)\vec{c})\oplus \co(r\vec{c}+\vec{x}_1)]\!]
		\notag\\ \label{product two}
		&+\sum_{1\leq i<j\leq d/2}\sum_{E\in\M_i\cap \cn_j}\frac{\varphi_{i,j}^{d-r}}{q-1}[\![E]\!]+\sum_{1\leq i<d/2}\sum_{E\in\M_i\cap \overline{\cn}}\frac{\varphi_{i,1}^{d-r}}{q-1}[\![E]\!]\Big).
	\end{align}
	\subsection{Proof of Proposition \ref{Serre relation in specail case}}
	
	
	\begin{proof}[Proof of Proposition \ref{Serre relation in specail case}]
		
		Let us plug the formulas \eqref{product one} and \eqref{product two} into the left hand side of \eqref{eq:Serre2-reduced}. Then \eqref{eq:Serre2-reduced} holds by analysing the coefficients of all terms, see the following.
		
		For any term $[\![\co((d-1-s)\vec{c}+\vec{x}_1)\oplus \co(s\vec{c})]\!]$ with $0\leq s\leq d-1$, its coefficient is 
		\begin{align*}
			&\sqq^{d-1-2s+2s+2-2d+2\{d-2s\}_+} + \sum_{r:r<s,r<d-s-1} \sqq^{d-1-2r+2r+2-2d}(\sqq^{2d-4r}-\sqq^{2d-4r-4})
			\\
			&+\delta(2s+1>d) \sqq^{d-1-2r+2r+2-2d}(\sqq^{4s+4-2d}-1) 
			-\sqq^{d+1}
			\\
			=&\sqq^{1-d+2\{d-2s\}_+} + \sum_{r:r<s,r<d-s-1}\sqq^{1-d} (\sqq^{2d-4r}-\sqq^{2d-4r-4})+\delta(2s+1>d)\sqq^{1-d} (\sqq^{4s+4-2d}-1) \\&
			-\sqq^{d+1}
			=0.
		\end{align*}
		
		For any term $\llbracket E\rrbracket$ with $E\in\cm_a\cap \ov{\cn}$, by Proposition \ref{Mi and Nj properties} and  Lemma 
		\ref{card of Mi and Nj}, we know $1\leq a<d/2$ and $d$ is odd. Assume $d=2t+1$.  
		Then by Lemma \ref{lem:varphi i1k}, its coefficient is 
		\begin{align*}
			& \sum_{r=0}^{2t} \sqq^{d-1-2r} q^{r+1-d} \frac{\varphi_{a1}^{d-r}}{q-1}-\sqq^{d+1}
			=q^{-t}(q+1+\sum_{r=0}^{t-1} q^{d-2r-2}(q^2-1)-1)-q^{t+1}
			=0.
		\end{align*}
		

		For any term $\llbracket E\rrbracket$ with  $E\in\cm_a\cap\cn_b$, where $1\leq a<b\leq d/2$, its coefficient 
		is
		\begin{align*}
			&\sum_{r=0}^{d-1} \sqq^{d-1-2r} q^{r+1-d}\cdot\delta\{b\leq d/2\}\frac{\varphi_{a,b}^{d-r}}{q-1}-\sqq^{d+1-2a}\sqq^d\sqq^{2a-d}\cdot \delta\{a\leq  (d-1)/2\}
			\\
			&=\sum_{r=0}^{d-1} \sqq^{1-d}\frac{\varphi_{a,b}^{d-r}}{q-1}-\sqq^{d+1}.
		\end{align*}
		If $d$ is odd, then by Lemma \ref{lem:varphi ijk}, the coefficient equals to
		\begin{align*}
			& \sqq^{1-d}+\sum_{r=0,r\neq d-b}^{d-1} \sqq^{1-d}\big( q^{d-2r-2}(q^2-1)\delta\{r\leq b-2\} +q^{d-2r}\delta_{r,b-1}-\delta_{r,a-1} \big)-\sqq^{d+1}
			\\
			&= \sqq^{1-d}+\sum_{r=0}^{b-2} \sqq^{1-d}\big( q^{d-2r-2}(q^2-1)\big) +\sqq^{1-d}q^{d-2(b-1)}-\sqq^{1-d}-\sqq^{d+1}
			\\
			&=0.
		\end{align*}
		If $d$ is even, the argument is  similar except $b=d/2$, in which case the coefficient equals 
		\begin{align*}
			&\sqq^{1-d}+q^2\sqq^{1-d}+\sum_{r=0,r\neq b,b-1}^{d-1} \sqq^{1-d}\big( q^{d-2r-2}(q^2-1)\delta\{r\leq b-2\} +q^{d-2r}\delta_{r,b-1}-\delta_{r,a-1} \big)-\sqq^{d+1}
			\\
			&=\sqq^{1-d}+\sqq^{3-d}+\sum_{r=0}^{b-2} \sqq^{1-d}\big( q^{d-2r-2}(q^2-1)\big)  -\sqq^{1-d}-\sqq^{d+1}
			\\
			&=0.
		\end{align*}
		The proof is completed.		
	\end{proof}

	
	
	

	\appendix
	
	\section{Weighted projective lines of weight type 
		$\left(\begin{array}{ccc}
			2 \\ d 
		\end{array}\right)$}\label{Appendix C}

	In this section, we consider the weighted projective line $\bX$ obtained from the projective line by inserting weight 2 to a closed point of degree $d$.
	More precisely, let $\bla_1=(f)$ be a homogeneous prime ideal, where $f$ is an irreducible polynomial in $\bfk[T_0,T_1]$ with $\deg(f)=d$.
	Then $\bX$ is obtained from $\PL$ by attaching the weight $p_1=2$ to the point $\bla_1$.
	In this case, $\bL(\bp,\bd)=\bL(2,d)=\mathbb{Z} \vx_1\oplus \mathbb{Z}\vc/(2\vx_1-d\vc)$, and the dualizing element $$\vw=(d-2)\vc-\vx_1=\vx_1-2\vc.$$

	
	\subsection{Extensions between $\co(n\vc)$ by $\co(\vec{x}_1)$}
	
	In this subsection, we shall study extensions between $\co(n\vc)$ by $\co(\vec{x}_1)$ for $0\leq n\leq d-1$.

	Observe that $\Ext^1(\co(n\vc),\co(\vec{x}_1))\cong \bS_{n\vc-\vx_1+\vw}=\bS_{(n-2)\vc}\neq 0$ if and only if $n\geq 2$. 
	
	
	\begin{proposition}
		\label{ext between O(x) and O(n)}
		For any fixed $n$ with $2\leq n\leq d-1$ and any non-split exact sequence \begin{equation}\label{middle term E}
			\xymatrix{
				0\ar[r]&\co(\vec{x}_1)\ar[r]& E \ar[r]& \co(n\vc)\ar[r]&0,
			}
		\end{equation}
		we have $$ \Hom(E, \co(n\vc))\cong \End(E)\cong {\bf{k}}.$$ Consequently, the middle term $E$ is indecomposable and $F_{\co(n\vc),\co(\vec{x}_1)}^{E}=1$. Moreover, there are exactly $\frac{q^{n-1}-1}{q-1}$ isomorphism classes of such objects $E$.
	\end{proposition}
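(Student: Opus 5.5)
Fix $2\le n\le d-1$ and a non-split sequence $0\to\co(\vx_1)\xrightarrow{a} E\xrightarrow{b}\co(n\vc)\to0$. I will first compute $\Hom(E,\co(n\vc))$, then $\End(E)$, and then read off indecomposability, the Hall number and the number of isoclasses.

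For $\Hom(E,\co(n\vc))$, apply $\Hom(-,\co(n\vc))$ to the sequence:
$$0\to\Hom(\co(n\vc),\co(n\vc))\to\Hom(E,\co(n\vc))\to\Hom(\co(\vx_1),\co(n\vc))=\bS_{n\vc-\vx_1}.$$
Since $n\vc-\vx_1=\vx_1+(n-d)\vc$ and $n<d$, this element is not $\ge0$, so $\bS_{n\vc-\vx_1}=0$. Hence $\Hom(E,\co(n\vc))\cong\End(\co(n\vc))=\bfk$, spanned by $b$.

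For $\End(E)$, apply $\Hom(E,-)$ to get
$$0\to\Hom(E,\co(\vx_1))\to\End(E)\to\Hom(E,\co(n\vc))=\bfk b.$$
I first show $\Hom(E,\co(\vx_1))=0$. Applying $\Hom(-,\co(\vx_1))$ gives
$$0\to\Hom(\co(n\vc),\co(\vx_1))\to\Hom(E,\co(\vx_1))\to\End(\co(\vx_1))\xrightarrow{\partial}\Ext^1(\co(n\vc),\co(\vx_1)).$$
Here $\Hom(\co(n\vc),\co(\vx_1))=\bS_{\vx_1-n\vc}=0$ because $n\ge2>d/2\cdot0$; concretely $\vx_1-n\vc\ge0$ would force $n\le0$. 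The connecting map $\partial$ sends $1$ to the class of the sequence, which is nonzero, so $\partial$ is injective. Therefore $\Hom(E,\co(\vx_1))=0$ and $\dim\End(E)\le1$, so $\End(E)=\bfk$. It follows that $E$ is indecomposable. Since $\Hom(E,\co(n\vc))=\bfk b$, the epimorphisms $E\to\co(n\vc)$ are the $\lambda b$ with $\lambda\in\bfk^\times$, all with kernel $\co(\vx_1)$. Hence
$$F^{E}_{\co(n\vc),\co(\vx_1)}=\frac{q-1}{|\Aut\co(n\vc)|}=1.$$

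For the count, use the Riedtmann--Peng formula:
$$|\Ext^1(\co(n\vc),\co(\vx_1))_E|=F\cdot\frac{|\Aut\co(n\vc)|\,|\Aut\co(\vx_1)|\,|\Hom(\co(n\vc),\co(\vx_1))|}{|\Aut E|}=\frac{(q-1)^2}{q-1}=q-1.$$
So each isoclass accounts for exactly $q-1$ nonzero extension classes. There are $|\bS_{(n-2)\vc}|-1=q^{n-1}-1$ nonzero classes in total, since $\dim\bS_{(n-2)\vc}=n-1$. This gives exactly $\frac{q^{n-1}-1}{q-1}$ isoclasses.

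The main point to check carefully is the vanishing of $\bS_{n\vc-\vx_1}$ and $\bS_{\vx_1-n\vc}$ in the normal form of $\bL(2,d)$. Write $n\vc-\vx_1=\vx_1+(n-d)\vc$, which is positive only if $n\ge d$. This is exactly where the hypothesis $n\le d-1$ enters.
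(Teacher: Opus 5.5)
Your proof is correct and follows essentially the paper's argument: long exact $\Hom$-sequences, the vanishing of $\Hom(\co(\vx_1),\co(n\vc))$ and $\Hom(\co(n\vc),\co(\vx_1))$, non-splitness forcing the connecting map to be nonzero, and then Riedtmann--Peng together with $\dim\Ext^1(\co(n\vc),\co(\vx_1))=n-1$ for the count. The only difference is that you bound $\End(E)$ dually, via $\Hom(E,-)$ and $\Hom(E,\co(\vx_1))=0$, whereas the paper uses $\Hom(-,E)$ with $\Hom(\co(n\vc),E)=0$ and $\Hom(\co(\vx_1),E)\cong\bfk$; your version reuses $\Hom(E,\co(n\vc))\cong\bfk$, which is needed anyway for the Hall number, and so saves one computation.
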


	
	\begin{proof}
		Firstly, applying $\Hom(\co(\vec{x}_1),-)$ to \eqref{middle term E}, we obtain an exact sequence 
		\begin{equation*}
			\xymatrix{
				0\ar[r]&\Hom(\co(\vec{x}_1),\co(\vec{x}_1)) \ar[r]& \Hom(\co(\vec{x}_1), E) \ar[r]& \Hom(\co(\vec{x}_1),\co(n\vc)). \\
			}
		\end{equation*}
		Since $n\leq d-1$, we have $\Hom(\co(\vec{x}_1),\co(n\vc))=0$, and then 
		$$\Hom(\co(\vec{x}_1), E) \cong \Hom(\co(\vec{x}_1),\co(\vec{x}_1))\cong \bf{k}.$$
		
		Applying $\Hom(\co(n\vec{c}),-)$ to  \eqref{middle term E}, we obtain an exact sequence 
		\begin{align*}				
			\Hom(\co(n\vc),\co(\vec{x}_1))\longrightarrow&\Hom(\co(n\vc), E) \longrightarrow \Hom(\co(n\vc),\co(n\vc)) 
			\stackrel{f}{\longrightarrow}\Ext^1(\co(n\vc),\co(\vec{x}_1)).
		\end{align*}
		Note that $\Hom(\co(n\vc),\co(\vec{x}_1)) =0$ and $\Hom(\co(n\vc),\co(n\vc))\cong\bfk$. The connection map $f$ is nonzero since \eqref{middle term E} is non-split, which implies 
		$\Hom(\co(n\vc), E)=0.$
		
		Now applying $\Hom(-,E)$ to \eqref{middle term E}, we obtain an exact sequence 
		\begin{align*}
			0\longrightarrow\Hom(\co(n\vc),E) \longrightarrow \Hom(E, E) \longrightarrow \Hom(\co(\vec{x}_1),E). 
		\end{align*}
		It follows that $\End(E)\cong \bfk$, and hence $E$ is indecomposable.
		
		Applying $\Hom(-,\co(n\vc))$ to \eqref{middle term E}, we obtain
		$\Hom(E, \co(n\vc))\cong\Hom(\co(n\vc),\co(n\vc))\cong \bfk$.
		Therefore, $$F_{\co(n\vc),\co(\vec{x}_1)}^{E}=\frac{|\{f:E\to \co(n\vc)\,|\, f {\text{\,is\, surjective}}\}|}{|\Aut(\co(n\vc))|}=\frac{q-1}{q-1}=1.$$
		By Riedtmann-Peng formula, we obtain 
		\begin{align*}
			|\Ext^1(\co(n\vc),\co(\vec{x}_1))_E|=F_{\co(n\vc),\co(\vec{x}_1)}^{E} 
			\cdot\frac{|\Aut(\co(n\vc))|\cdot|\Aut(\co(\vec{x}_1))|}{|\Aut(E)|}
			=\frac{(q-1)(q-1)}{q-1}=q-1.
		\end{align*}
		Then the last statement follows from \begin{align*}\dim\Ext^1(\co(n\vc),\co(\vx_1))&=\dim \bS_{n\vc-\vx_1+\vw}=\dim \bS_{(n-2)\vc}=n-1. \qedhere
		\end{align*}
	\end{proof}
	
	Recall that $\cn_1=\emptyset$.
	As a consequence of Proposition \ref{ext between O(x) and O(n)}, we have

	\begin{corollary}
		\label{End of E} 
		For any $E\in\mathcal{M}_k$ with $1\leq k<\frac{d}{2}$, or $E\in\mathcal{N}_k$ with $2\leq k\leq d$,
		we have 
		$\End(E)\cong {\bf{k}}$.
	\end{corollary}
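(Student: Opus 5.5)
The plan is to reduce both cases to Proposition \ref{ext between O(x) and O(n)} by a line-bundle twist, using Proposition \ref{Mi and Nj properties} for the $\cn_k$ case.

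\emph{The case $E\in\M_k$ with $1\leq k<d/2$.} By definition of $\M_k$, $E$ is indecomposable and sits in an exact sequence
$$0\longrightarrow \co(\vx_1+(k-1)\vc)\longrightarrow E\longrightarrow \co((d-k)\vc)\longrightarrow 0 .$$
Twisting by $-(k-1)\vc$ is an autoequivalence of $\coh(\X)$. It therefore preserves endomorphism rings, indecomposability and (non-)splitness. The twisted sequence is
$$0\longrightarrow \co(\vx_1)\longrightarrow E(-(k-1)\vc)\longrightarrow \co(n\vc)\longrightarrow 0,\qquad n:=d-2k+1 .$$
We check that $n$ lies in the range of Proposition \ref{ext between O(x) and O(n)}:
\begin{itemize}
\item since $k\geq 1$, we have $n\leq d-1$;
\item since $k$ is an integer with $k<d/2$, we have $2k\leq d-1$, hence $n\geq 2$.
\end{itemize}
The twisted sequence is non-split, because a split sequence would make $E(-(k-1)\vc)\cong\co(\vx_1)\oplus\co(n\vc)$ decomposable. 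Proposition \ref{ext between O(x) and O(n)} then gives $\End(E(-(k-1)\vc))\cong\bfk$, and hence $\End(E)\cong\bfk$.

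\emph{The case $E\in\cn_k$ with $2\leq k\leq d$.} By Proposition \ref{Mi and Nj properties}(2), $E\in\M_j$ for some $1\leq j<k$. Since $\M_j\neq\emptyset$, Proposition \ref{Mi and Nj properties}(1) forces $j<d/2$. The first case applied to $\M_j$ then gives $\End(E)\cong\bfk$.

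The only points needing care are the two range checks: that $n=d-2k+1$ satisfies $2\leq n\leq d-1$, and that membership of $E$ in $\M_j$ already forces $j<d/2$. I do not expect a genuine obstacle.
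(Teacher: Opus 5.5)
Your proposal is correct and follows the paper's proof. You twist $E\in\M_k$ by $-(k-1)\vc$ to reach the setting of Proposition \ref{ext between O(x) and O(n)} with $n=d+1-2k$, and you reduce the case $E\in\cn_k$ to some $\M_j$ with $j<d/2$ via Proposition \ref{Mi and Nj properties}(1)--(2). Your explicit check that $2\leq n\leq d-1$, and your remark that non-splitness follows from the indecomposability of $E$, fill in details the paper leaves implicit.
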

	
	\begin{proof}
		For any $E\in\mathcal{M}_k$, up to degree shift, we obtain that 
		$E((1-k)\vc)$ fits into the exact sequence \eqref{middle term E} with $n=d+1-2k\leq d-1$. Then $\End(E)\cong {\bf{k}}$ by Proposition \ref{ext between O(x) and O(n)}. 
		
		If $E\in\mathcal{N}_k$, then the result follows from Proposition \ref{Mi and Nj properties}.
	\end{proof}

	
	

	\subsection{Cardinalities for $\M_i, \cn_j$ and $\overline{\cn}$}\label{Cardinalities}
	
	Recall $\M_k,\cn_k$  for $1\leq k\leq d$ defined in Section \ref{Mk and Nk},    
	and $\ov{\cn}$ defined in \eqref{def:coN}. 
	
	
	
	\begin{lemma} 
		\label{card of Mi and Nj} Let $1\leq i<j\leq d/2$, and $E\in\M_i, E'\in\cn_j$. Then 
		\begin{itemize}
			\item[(1)]  $\Hom(E, \co((d-i)\vc))\cong\End(E)\cong \bfk$, and  $|\M_i|=\frac{q^{d-2i}-1}{q-1}$;
			\item[(2)] $\Hom(E', \co(\vx_1+(j-1)\vc))\cong \End(E')\cong \bfk$, and $|\cn_j|=\frac{q^{2j-2}-1}{q-1}$.
		\end{itemize}
		Consequently, $\overline{\cn}=\emptyset$ if $d$ is even, and $|\overline{\cn}|=\frac{q^{d-1}-1}{q^2-1}$ if $d$ is odd.
	\end{lemma}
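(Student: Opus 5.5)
For (1), fix $E\in\M_i$ with $1\le i<d/2$. Corollary \ref{End of E} gives $\End(E)\cong\bfk$. Twisting by $(1-i)\vc$ turns the defining sequence of $\M_i$ into \eqref{middle term E} with $n=d+1-2i$. Since $1\le i<d/2$, we have $2\le n\le d-1$. Proposition \ref{ext between O(x) and O(n)} then gives $\Hom(E,\co((d-i)\vc))\cong\bfk$.

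For the cardinality of $\M_i$, we use the last statement of Proposition \ref{ext between O(x) and O(n)}. Non-split extensions of $\co(n\vc)$ by $\co(\vx_1)$ have $\frac{q^{n-1}-1}{q-1}$ isoclasses of middle terms, and all of them are indecomposable. Hence $|\M_i|=\frac{q^{n-1}-1}{q-1}=\frac{q^{d-2i}-1}{q-1}$.

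For (2), fix $E'\in\cn_j$ with $2\le j\le d/2$ and argue dually. Apply $\Hom(\co(\vx_1+(j-1)\vc),-)$ and $\Hom(-,\co(\vx_1+(j-1)\vc))$ to the sequence $0\to\co((d-j)\vc)\to E'\to\co(\vx_1+(j-1)\vc)\to0$. We need $\Hom(\co(\vx_1+(j-1)\vc),\co((d-j)\vc))=\bS_{(d-2j+1)\vc-\vx_1}$, and this vanishes once one writes $-\vx_1=\vx_1-d\vc$: its degree then has $\vc$-coefficient $1-2j<0$. Non-splitness makes the connecting map $\End(\co(\vx_1+(j-1)\vc))\to\Ext^1$ nonzero, so $\Hom(\co(\vx_1+(j-1)\vc),E')=0$. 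From this we get $\End(E')\hookrightarrow\Hom(\co((d-j)\vc),E')\cong\bfk$ and $\Hom(E',\co(\vx_1+(j-1)\vc))\cong\bfk$. Both computations mirror the proof of Proposition \ref{ext between O(x) and O(n)} with the roles of the end terms swapped.

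For the count of $\cn_j$, use Serre duality: $\Ext^1(\co(\vx_1+(j-1)\vc),\co((d-j)\vc))\cong D\bS_{\vx_1+(j-1)\vc-(d-j)\vc+\vw}=D\bS_{(2j-3)\vc+2\vx_1-d\vc}=D\bS_{(2j-3)\vc}$. This space has dimension $2j-2$. The Riedtmann--Peng argument from Proposition \ref{ext between O(x) and O(n)} gives $F=1$ and $|\Ext^1_{E'}|=q-1$ for each class, provided every non-split middle term is indecomposable. That holds because $\End(E')\cong\bfk$, and $j=1$ is covered by $\cn_1=\emptyset$. Hence $|\cn_j|=\frac{q^{2j-2}-1}{q-1}$.

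The consequence follows by counting with \eqref{small pieces} and Proposition \ref{Mi and Nj properties}(3). The sets $\cn_j$ for $2\le j\le d/2$ are pairwise disjoint and contained in $\bigcup_k\M_k$, and the $\M_k$ are disjoint. Therefore
\[
|\overline{\cn}|=\sum_{1\le k<d/2}\frac{q^{d-2k}-1}{q-1}-\sum_{2\le j\le d/2}\frac{q^{2j-2}-1}{q-1}.
\]
If $d=2t$, the first sum has exponents $2,4,\dots,2t-2$ and the second has exponents $2,\dots,2t-2$, so they cancel and $|\overline{\cn}|=0$. If $d=2t+1$, the first sum has exponents $3,5,\dots,2t-1$ together with the term $q-1$ (plus lower ones), against even exponents $2,\dots,2t-2$ in the second. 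Summing the two geometric series gives $\frac{q^{d-1}-1}{q^2-1}$; for example, $d=3$ gives $1$ and $d=5$ gives $q^2+1$.

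The main obstacle I expect is the indecomposability needed to count $\cn_j$. If a middle term could decompose, the count would be off. To rule this out I would first try a line-bundle argument like the one in Proposition \ref{Mi and Nj properties}(1), though since $j\ge k$ may occur that argument may not apply directly; the fallback is the containment $\cn_j\subseteq\bigcup\M$ combined with the endomorphism computation above.
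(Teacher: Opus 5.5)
Your proposal is correct and follows essentially the paper's route: (1) and both counts come from Proposition \ref{ext between O(x) and O(n)}, and $|\overline{\cn}|$ comes from the same disjointness-and-subtraction count. For (2), twisting by $\vx_1-(d-j)\vc$ turns the $\cn_j$-sequence directly into \eqref{middle term E} with $n=2j-1$, a shortcut for your mirrored computation; and since your computation of $\End(E')$ uses only non-splitness, it already shows that every non-split middle term is indecomposable, so the ``main obstacle'' you flag is already resolved.
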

	
	\begin{proof}
		The first two statements follow from Proposition \ref{ext between O(x) and O(n)} and Corollary \ref{End of E}.
		
		If $d$ is even, then 
		\begin{align}
			\sum\limits_{i=2}^{d/2}|\cn_i|=&\sum\limits_{i=2}^{d/2}\frac{q^{2i-2}-1}{q-1}
			=\sum\limits_{i=1}^{d/2-1}\frac{q^{2i}-1}{q-1}
			=\sum\limits_{k=1}^{d/2-1}\frac{q^{d-2k}-1}{q-1}=\sum\limits_{k=1}^{d/2-1}|\M_k|,
		\end{align}
		where we set $k=\frac{d}{2}-i$ in the third equality.
		Hence $\overline{\cn}=\emptyset$ by using \eqref{def:coN}.
		
		If $d$ is odd, then 
		\begin{align}
			\sum\limits_{i=2}^{\frac{d-1}{2}}|\cn_i|=&\sum\limits_{i=2}^{\frac{d-1}{2}}\frac{q^{2i-2}-1}{q-1}
			=\sum\limits_{i=1}^{\frac{d-3}{2}}\frac{q^{2i}-1}{q-1}
			=\sum\limits_{k=1}^{\frac{d-3}{2}}\frac{q^{d-2k-1}-1}{q-1},
		\end{align}
		where we set $k=\frac{d-1}{2}-i$ in the third equality.
		
		By using \eqref{def:coN}, we have \begin{align*}|\overline{\cn}|
			&=\sum\limits_{k=1}^{\frac{d-1}{2}}|\M_k|- \sum\limits_{i=2}^{\frac{d-1}{2}}|\cn_i|
			=1+\sum\limits_{k=1}^{\frac{d-3}{2}}\frac{q^{d-2k}-q^{d-2k-1}}{q-1}
			=1+\sum\limits_{k=1}^{\frac{d-3}{2}}q^{d-2k-1}
			=\frac{q^{d-1}-1}{q^2-1}. \qedhere
		\end{align*}
	\end{proof}

	
	We denote by $\mathbb N (\PL)$ the set of all functions ${\bf m}: \PL \rightarrow \N$ such that ${\bf m}_x \neq 0$ for only finitely many $x\in \PL$. We sometimes write $\bm \in \mathbb N (\PL)$ as ${\bf m} =({\bf m}_x)_{x\in \PL}$, and define \begin{align*}
		|| {\bf m} || :=\sum_{x\in \P^1_{\mathbf{k}}} d_x {\bf m}_x.
	\end{align*}
	For any $k\geq 1$, denote  
	$$\mathcal{S}_{k}:=\{S_{\bf{n}} \,|\,{\bf{n}}\in \mathbb N (\PL), {\bf n}_{\bla_1}=0, || {\bf n} ||=k\}.$$
	Then we can view $\mathcal{S}_{k}$  in $\coh(\X_\bfk)$ via the embedding $\mathbb{F}_{\X,\P^1}: \coh(\P^1_\bfk)\rightarrow\coh(\X_\bfk)$.
	
	

	\begin{lemma}
		\label{Mi intersect with Nj for i<j} 
		For $1\leq i<j\leq d/2$, we have $|\M_{i}\cap \cn_{j}|=\frac{1}{q-1}\sum\limits_{[A]\in\Iso(\mathcal{S}_{j-i})} |\Aut(A)|$.
	\end{lemma}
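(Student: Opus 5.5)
We will compute $|\M_i\cap\cn_j|$ by counting pairs (object $E\in\cn_j$, morphism data) in two ways, in the style of a Riedtmann–Peng argument. Fix $1\le i<j\le d/2$. For $E\in\cn_j$, Lemma \ref{card of Mi and Nj}(2) gives $\End(E)\cong\bfk$. By the proof of Proposition \ref{Mi and Nj properties}(2), $E\in\M_{i'}$ for the $i'$ such that the image of a nonzero map $E\to\co((d-1)\vc)$ is $\co((d-i')\vc)$. By Proposition \ref{Mi and Nj properties}(3)(i) this $i'$ is unique. So we must count those $E\in\cn_j$ whose ``top'' line bundle quotient is $\co((d-i)\vc)$. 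Equivalently, we will characterise $E\in\M_i\cap\cn_j$ by a diagram: the composite $\co((d-j)\vc)\hookrightarrow E\twoheadrightarrow\co((d-i)\vc)$ is a nonzero map, hence multiplication by a homogeneous polynomial $g$ of degree $j-i$ in $\bfk[T_0,T_1]$. We will show that $g$ must be coprime to $f$, the polynomial defining $\bla_1$ (otherwise $\deg g\ge d>j-i$ is impossible anyway, so coprimality is automatic). Its cokernel $A:=\coker(g)$ is then a torsion sheaf in $\mathcal S_{j-i}$ (viewed via $\mathbb F_{\X,\P^1}$).

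The key steps, in order, are as follows.

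\emph{Step 1.} Take the $3\times3$ diagram built from the two short exact sequences defining $E\in\M_i$ and $E\in\cn_j$. The snake lemma shows that the kernel of the induced map $\co(\vx_1+(i-1)\vc)\to\co(\vx_1+(j-1)\vc)$ vanishes. Its cokernel is isomorphic to $A=\coker(\co((d-j)\vc)\to\co((d-i)\vc))$. The map $\co(\vx_1+(i-1)\vc)\to\co(\vx_1+(j-1)\vc)$ is multiplication by the same $g$, so $E$ is the pullback
$$E\cong \co(\vx_1+(j-1)\vc)\times_A\co((d-i)\vc).$$

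\emph{Step 2.} Conversely, given $A\in\mathcal S_{j-i}$ together with surjections $\co(\vx_1+(j-1)\vc)\twoheadrightarrow A$ and $\co((d-i)\vc)\twoheadrightarrow A$, form the pullback $E$. Then $E$ is an extension of $\co(\vx_1+(j-1)\vc)$ by $\co((d-j)\vc)$ and of $\co((d-i)\vc)$ by $\co(\vx_1+(i-1)\vc)$; here the kernels are line bundles of the right degree by rank and degree. We will verify that $E$ is indecomposable, using $\Hom$-vanishing between the line bundles involved together with the fact that the maps to $A$ are surjective. This puts $E\in\M_i\cap\cn_j$.

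\emph{Step 3.} Counting. For fixed $A$, pairs of surjections modulo the action of $\Aut(A)$ determine $E$. Surjections $\co(\vec y)\to A$ with $A$ having cyclic components correspond to generators. For $A=\coker(g)$, both line bundles surject with kernel a line bundle, and the set of surjections $\co(\vec y)\twoheadrightarrow A$ is an $\Aut(A)$-torsor. Hence the pairs modulo diagonal $\Aut(A)$ are in bijection with $\Aut(A)$. We then quotient by automorphisms of $E$ fixing the data: since $\End(E)\cong\bfk$, this gives the factor $\frac1{q-1}$, provided that isomorphisms of pullbacks correspond exactly to the scalar rescalings. Summing over $[A]\in\Iso(\mathcal S_{j-i})$ yields the formula.

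The main obstacle is Step 3: making precise that distinct pairs modulo $\Aut(A)$ give nonisomorphic $E$, up to precisely the $\bfk^\times$ of scalars. We will try to show that any isomorphism $E\to E'$ of pullbacks is compatible with the canonical filtrations. These are canonical because $\Hom(E,\co((d-i)\vc))\cong\bfk$ and $\Hom(\co((d-j)\vc),E)$ is one-dimensional by Lemma \ref{card of Mi and Nj}. Any such isomorphism therefore induces scalars on the outer terms and an automorphism of $A$. As a sanity check, summing the resulting counts over $i<j$ should reproduce $|\cn_j|=\frac{q^{2j-2}-1}{q-1}$ minus the $\overline{\cn}$ contribution, consistent with Lemma \ref{card of Mi and Nj}.
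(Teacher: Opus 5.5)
Your argument is correct, but it gets the count by a different mechanism from the paper.

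\emph{The paper's route.} The paper writes down the same $3\times 3$ diagram. It shows that the subbundles $\co((d-j)\vc)$ and $\co(\vx_1+(i-1)\vc)$ of $E$ meet trivially, so the corner is a torsion sheaf $A\in\mathcal S_{j-i}$. It then applies Green's formula to the four line bundles. Two checks remain: every $E$ contributing on the left is indecomposable (so it lies in $\M_i\cap\cn_j$ and has $|\Aut(E)|=q-1$), and all four Hall numbers equal $1$. The identity then collapses to $\frac{1}{q-1}|\M_i\cap\cn_j|=\frac{1}{(q-1)^2}\sum_{[A]}|\Aut(A)|$.

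\emph{Your route.} You construct $E$ explicitly as the pullback $\co(\vx_1+(j-1)\vc)\times_A\co((d-i)\vc)$ and count orbits by hand. This is Green's formula unwound in this special case:
\begin{itemize}
\item your torsor statement for surjections from a line bundle onto $A$ is exactly $F^{\co((d-i)\vc)}_{A,\co((d-j)\vc)}=F^{\co(\vx_1+(j-1)\vc)}_{A,\co(\vx_1+(i-1)\vc)}=1$;
\item uniqueness up to scalars of the two projections of $E$ (Lemma \ref{card of Mi and Nj}) is what makes the other two Hall numbers equal $1$.
\end{itemize}
Your approach is self-contained and gives an explicit parametrisation of $\M_i\cap\cn_j$ by $[A]$ together with an element of $\Aut(A)/\bfk^\times$. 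The paper's approach makes all the orbit bookkeeping automatic.

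\emph{Points to tighten in a write-up.}
\begin{itemize}
\item Give the reason the composite in Step 1 is nonzero: $\Hom(\co((d-j)\vc),\co(\vx_1+(i-1)\vc))=0$ because $i+j<d$.
\item In Step 3, the factor $\frac{1}{q-1}$ is not literally a quotient by $\Aut(E)$. Put $X=\co(\vx_1+(j-1)\vc)$ and $M=\co((d-i)\vc)$. The group $\Aut(X)\times\Aut(M)\cong(\bfk^\times)^2$ acts on the $|\Aut(A)|$ classes of pairs of surjections. Its orbits are exactly the fibres over isoclasses of $E$; this uses $\Hom(E,X)\cong\bfk\cong\Hom(E,M)$. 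The diagonal acts trivially, since it is absorbed by scalars in $\Aut(A)$. The complementary $\bfk^\times$ acts freely because $A\neq 0$. So each fibre has $q-1$ elements, and the stabiliser is the image of $\Aut(E)=\bfk^\times$.
\item In Step 2, rank and degree do not determine a line bundle when $d$ is even: $\vx_1-\frac{d}{2}\vc$ is a nonzero element of $\bL(2,d)$ of degree $0$. Identify the kernels by their classes in $K_0$, or directly as images of multiplication by $g$.
\item Your sanity check needs no $\overline{\cn}$ correction. For $j\le d/2$, $\cn_j$ is the disjoint union of the sets $\M_i\cap\cn_j$ with $i<j$. Since $\sum_{[A]\in\Iso(\mathcal S_m)}|\Aut(A)|=q^{2m}-q^{2m-2}$ for $1\le m<d$, summing the lemma over $i<j$ gives exactly $|\cn_j|=\frac{q^{2j-2}-1}{q-1}$.
\end{itemize}
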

	
	\begin{proof}
		Consider the following commutative diagram, where all rows and columns are short exact sequences in $\coh(\X)$:		\begin{align}\label{Green commutative diagram}\xymatrix{D\ar[r]\ar[d]	&\co(\vec{x}_1+(i-1)\vc)\ar[r] \ar[d] &B\ar[d]\\
				\co((d-j)\vc)\ar[r]\ar[d]&E \ar[r]\ar[d]&\co(\vx_1+(j-1)\vc)\ar[d]\\
				C\ar[r]&\co((d-i)\vc)\ar[r]&A.}
		\end{align}
		Note that $\rank(B)+\rank(D)=1$ from the exactness of the first row, and both $B$ and $D$ are subsheaves of line bundles. So either $B=0$ or $D=0$.
		Since $1\leq i<j\leq d/2$,  we have $\Hom( \co((d-i)\vc), \co(\vec{x}_1+(j-1)\vc))=0$. Hence $B\neq 0$, and then
		$D=0$, $B=\co(\vec{x}_1+(i-1)\vc)$, $C= \co((d-j)\vc)$ and $A\in\mathcal{S}_{j-i}$.

		By Green's formula \cite{Gr95}, we obtain
		\begin{align}
			\label{Green1}
			&\sum_{[E]\in\Iso(\coh(\X))} F_{ \co((d-i)\vc), \co(\vec{x}_1+(i-1)\vc)}^{E} F_{\co(\vec{x}_1+(j-1)\vc), \co((d-j)\vc)}^{E} \frac{1}{|\Aut(E)|}
			\\\notag
			&=\sum_{[A]\in\Iso(\mathcal{S}_{j-i})} F_{A, \co((d-j)\vc)}^{\co((d-i)\vc)} F_{A, \co(\vx_1+(i-1)\vc)}^{\co(\vx_1+(j-1)\vc)} \frac{|\Aut(A)|}{(q-1)^2}.
		\end{align}
		We focus on nonzero terms on the left-hand side. We claim that $E$ is indecomposable. Otherwise,  the second column of \eqref{Green commutative diagram} splits by Proposition \ref{ext between O(x) and O(n)}. Then
		$\Hom( \co((d-i)\vc), \co(\vec{x}_1+(j-1)\vc))=0$ yields a contradiction to the exactness of the second row. 
		Then by definition, $E\in \M_{i}\cap \cn_{j}$. It follows that $|\Aut(E)|=q-1$ and all the Hall numbers appearing in \eqref{Green1} equal to $1$. Therefore, we have
		\begin{align*}|\M_{i}\cap \cn_{j}|&=\frac{1}{q-1}\sum_{[A]\in\Iso(\mathcal{S}_{j-i})} |\Aut(A)|. 
			\qedhere
		\end{align*}
	\end{proof}

	\subsection{Calculation of $\varphi_{i,j}^k$}\label{A3}
	
	In this subsection, we always assume $1\leq i<j\leq d/2$ and $1\leq k\leq d$. 
	

	

	
	
	
	\begin{lemma}\label{Hom from E to O(x1)}
		For any $2\leq  j\leq d/2\leq k\leq d$ and any $E\in\cn_j$,  
		we have  \begin{align*}\dim\Hom(E, \co(\vx_1+(k-1)\vc))=k-j+1+\{k+j-d\}_+ 
			=&\begin{cases}k-j+1 & \text{ if } k+j< d,\\
				2k-d+1& \text{ else. } \\
			\end{cases}
		\end{align*}
	\end{lemma}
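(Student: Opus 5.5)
The plan is to compute $\dim\Hom(E,\co(\vx_1+(k-1)\vc))$ by applying $\Hom(-,\co(\vx_1+(k-1)\vc))$ to the defining sequence of $E\in\cn_j$,
$$0\longrightarrow\co((d-j)\vc)\longrightarrow E\longrightarrow\co(\vx_1+(j-1)\vc)\longrightarrow0,$$
and to show that the relevant $\Ext^1$-term vanishes. Then the dimension is additive over the two outer terms. Write $L:=\co(\vx_1+(k-1)\vc)$. The long exact sequence begins
$$0\to\Hom(\co(\vx_1+(j-1)\vc),L)\to\Hom(E,L)\to\Hom(\co((d-j)\vc),L)\to\Ext^1(\co(\vx_1+(j-1)\vc),L).$$

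\emph{Step 1: the two Hom-spaces.} I use $\Hom(\co(\vx),\co(\vec y))\cong\bS_{\vec y-\vx}$ together with the description of homogeneous components of $\bS=\bfk[T_0,T_1,X_1]/(X_1^2-f)$, where $\deg f=d$. Every element of $\bL(2,d)$ is uniquely $l_1\vx_1+l\vc$ with $l_1\in\{0,1\}$. In this normal form, $\bS_{l\vc}$ is the degree-$l$ part of $\bfk[T_0,T_1]$, of dimension $\{l+1\}_+$. Likewise $\bS_{\vx_1+l\vc}=X_1\cdot\bfk[T_0,T_1]_l$, also of dimension $\{l+1\}_+$; this uses $X_1^2=f$ to reduce higher powers of $X_1$.
\begin{itemize}
\item[(a)] $\Hom(\co(\vx_1+(j-1)\vc),L)\cong\bS_{(k-j)\vc}$ has dimension $k-j+1$, since $k\ge d/2\ge j$.
\item[(b)] $\Hom(\co((d-j)\vc),L)\cong\bS_{\vx_1+(k+j-1-d)\vc}$ has dimension $\{k+j-d\}_+$.
\end{itemize}

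\emph{Step 2: vanishing of the Ext-term.} By Serre duality with $\vw=\vx_1-2\vc$,
$$\Ext^1(\co(\vx_1+(j-1)\vc),L)\cong D\Hom\big(L,\co(\vx_1+(j-1)\vc+\vw)\big)\cong D\,\bS_{\vec z},$$
with $\vec z=(j-3)\vc+2\vx_1-\vx_1-(k-1)\vc$. Using $2\vx_1=d\vc$, this is $\vec z=(d+j-k-2)\vc-\vx_1$. Rewriting $-\vx_1=\vx_1-d\vc$ puts it in normal form $\vec z=\vx_1+(j-k-2)\vc$. Since $j\le d/2\le k$, we have $j-k-2\le-2$, so $\bS_{\vec z}=0$. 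Hence the connecting map is zero, and $\Hom(E,L)\to\Hom(\co((d-j)\vc),L)$ is surjective.

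\emph{Step 3: conclusion.} Combining Steps 1 and 2 gives $\dim\Hom(E,L)=k-j+1+\{k+j-d\}_+$. Splitting into the cases $k+j<d$ and $k+j\ge d$ gives the two displayed values $k-j+1$ and $2k-d+1$. Non-splitness of the sequence plays no role here.

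The only delicate point is the normal-form bookkeeping in $\bL(2,d)$, in particular identifying $\bS_{\vec z}$ via $2\vx_1=d\vc$ and confirming that the coefficient of $\vc$ is negative. I will double-check that step with the hypotheses $j\le d/2\le k$ before trusting the vanishing.
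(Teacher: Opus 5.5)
Your proof is correct and is essentially the paper's argument: apply $\Hom(-,\co(\vx_1+(k-1)\vc))$ to the defining sequence of $E\in\cn_j$, use Serre duality to show that $\Ext^1(\co(\vx_1+(j-1)\vc),\co(\vx_1+(k-1)\vc))=0$, and add the dimensions $k-j+1$ and $\{k+j-d\}_+$ of the two outer Hom-spaces. Your normal-form computation $\vec z=\vx_1+(j-k-2)\vc$ with $j-k-2\le -2$ is exactly what gives the vanishing.
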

	
	\begin{proof}
		By assumption, $E$ fits into the following exact sequence: $$0\longrightarrow \co((d-j)\vc)\longrightarrow E\longrightarrow \co(\vx_1+(j-1)\vc)\longrightarrow 0.$$ 
		Applying $\Hom(-, \co(\vx_1+(k-1)\vc))$, we obtain an exact sequence: 
		\begin{align*}
			0&\longrightarrow \Hom(\co(\vx_1+(j-1)\vc),  \co(\vx_1+(k-1)\vc))\longrightarrow\Hom(E, \co(\vx_1+(k-1)\vc))\\
			&\longrightarrow\Hom(\co((d-j)\vc), \co(\vx_1+(k-1)\vc))\longrightarrow 0,
		\end{align*}
		since $\Ext^1(\co(\vx_1+(j-1)\vc), \co(\vx_1+(k-1)\vc))=0$ with the help of Serre duality.
		Then the result follows from $$\dim\Hom(\co(\vx_1+(j-1)\vc), \co(\vx_1+(k-1)\vc))=k-j+1$$ and 
		\begin{align*}\dim\Hom(\co((d-j)\vc), \co(\vx_1+(k-1)\vc))&=\{k+j-d\}_+.\qedhere
		\end{align*}
	\end{proof}
	
	For any $x \in \R$, we denote by $\lfloor x\rfloor$ the largest integer not exceeding $x$.
	
	\begin{lemma} 
		\label{lem:varphi ijk} 
		For $1\leq i<j\leq d/2$ and $1\leq k\leq d$, we have
		\begin{align}
			\varphi_{i,j}^k=&\begin{cases} 
				\delta_{j,k}\cdot (q-1), &\text{ if }1\leq k\leq d/2,\\
				\delta_{j,d/2}\cdot q^2(q-1), &\text{ if }k=\lfloor d/2\rfloor+1,\\ 
				\delta(k+j\geq d+2) \cdot q^{2k-d-2}(q^3-q^2-q+1) \\\qquad +\delta_{k+j,d+1}\cdot q^{2k-d}(q-1) -\delta_{k+i,d+1}\cdot (q-1)
				,&\text{ if }k\geq \lfloor d/2\rfloor+2. \\
			\end{cases}
		\end{align}
	\end{lemma}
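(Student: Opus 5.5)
We count surjections $E\to \co(\vx_1+(k-1)\vc)$ for a fixed $E\in\M_i\cap\cn_j$ by splitting $\Hom(E,\co(\vx_1+(k-1)\vc))$ into the non-surjective maps and subtracting. Write $L:=\co(\vx_1+(k-1)\vc)$. For $E\in\cn_j$ we have the sequence $0\to\co((d-j)\vc)\to E\to\co(\vx_1+(j-1)\vc)\to0$. For $E\in\M_i$ we have $0\to\co(\vx_1+(i-1)\vc)\to E\to\co((d-i)\vc)\to0$. Since $E$ has rank $2$ and $L$ is a line bundle, a nonzero $f:E\to L$ has image a line subbundle $\co(\vec y)\subseteq L$. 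Its kernel is a line bundle $\co(\vec z)$ with $\vec z+\vec y=\det E=\vx_1+(d-1)\vc$. The map $f$ is surjective iff $\vec y=\vx_1+(k-1)\vc$, i.e.\ iff the kernel is $\co((d-k)\vc)$. So
\[
\varphi_{i,j}^k=|\Hom(E,L)|-\sum_{\vec y<\vx_1+(k-1)\vc}\#\{f:E\to L \text{ with } \Im f\cong\co(\vec y)\}-1,
\]
and the main task is to enumerate the possible images $\co(\vec y)$, i.e.\ the line bundle quotients of $E$.

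\emph{Step 1: the quotients of $E$.} We classify the rank one quotients of $E$ by using the two filtrations. Composing a surjection $E\to\co(\vec y)$ with the inclusion $\co((d-j)\vc)\to E$ either gives zero, in which case it factors through $\co(\vx_1+(j-1)\vc)$, or gives a nonzero map $\co((d-j)\vc)\to\co(\vec y)$. The analogous dichotomy holds for the $\M_i$ filtration. From this we expect the minimal quotients to be $\co(\vx_1+(j-1)\vc)$ and $\co((d-i)\vc)$. Every other quotient should be a subsheaf of one of these twisted up, with cokernel a torsion sheaf.

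\emph{Step 2: count $\Hom(E,L)$ and the maps with a given image.} For $k\ge d/2$, Lemma \ref{Hom from E to O(x1)} gives $\dim\Hom(E,L)$. For $k<d/2$ we use the same long exact sequence: then $\Hom(\co((d-j)\vc),L)=0$, so $\Hom(E,L)\cong\Hom(\co(\vx_1+(j-1)\vc),L)$, which is nonzero only when $k\ge j$. Such a map is surjective iff $k=j$, and surjectivity is read off from the $\bS$-graded pieces. This already yields the case $1\le k\le d/2$, namely $\delta_{j,k}(q-1)$.

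For $k$ large, a map $E\to L$ corresponds to a pair $(g,h)$: $h\in\Hom(\co(\vx_1+(j-1)\vc),L)$, a homogeneous polynomial of degree $k-j$ in $T_0,T_1$, and a lift of $g\in\Hom(\co((d-j)\vc),L)=\bS_{\vx_1+(k+j-d-1)\vc}$. Following the argument of Lemma \ref{lem:Ox1O}, surjectivity becomes a coprimality condition on the resulting pair of sections, with $X_1$ playing the role of a factor of degree $d$. The count should then follow from \cite[Lemma 9]{BKa01} via the numbers $\mathfrak s(a,b)$, after subtracting the non-surjective maps with image a strictly smaller line bundle. In the case $k+i=d+1$, one extra family of non-surjective maps factors through the quotient $\co((d-i)\vc)$, because then $\Hom(\co((d-i)\vc),L)\cong\bS_{\vx_1}$ is nonzero. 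We expect this family to be responsible for the correction term $-\delta_{k+i,d+1}(q-1)$. In the case $k+j=d+1$, the map $\co((d-j)\vc)\to L$ is a multiple of $X_1$ of degree exactly zero in $T$, which should give the boundary term $q^{2k-d}(q-1)$.

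\emph{Step 3: the middle case.} For $k=\lfloor d/2\rfloor+1$ we treat the two parities of $d$ separately. If $j=d/2$, the whole space $\Hom(E,L)$ should consist of surjections together with zero, up to a $q^2$ factor coming from $\Hom(\co(\vx_1+(j-1)\vc),L)$ of dimension $2$. Otherwise no surjection exists, because the relevant coprime pairs have a component of negative degree.

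The main obstacle is Step 2. The extension class of $E$ means that $\Hom(E,L)$ is not naturally a direct sum, so the surjectivity of $f$ cannot be tested componentwise. We will try to test surjectivity by restricting $f$ to the subsheaf $\co(\vx_1+(i-1)\vc)\oplus\co((d-j)\vc)\hookrightarrow E$, whose cokernel is the torsion sheaf $A\in\mathcal S_{j-i}$ from diagram \eqref{Green commutative diagram}. We then count how the torsion part $A$ interacts with the cokernel of the restricted map. The counts should not depend on the choice of $E$, consistent with the independence claimed in Remark \ref{independent}.
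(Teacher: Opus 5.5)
Your starting point, namely $\varphi_{i,j}^k=|\Hom(E,L)|-1-\sum_{\vec y}\#\{f:\Im f\cong\co(\vec y)\}$ with image and kernel line bundles satisfying $\vec y+\vec z=\vx_1+(d-1)\vc$, is exactly the paper's. Your argument for $1\le k\le d/2$ is also correct: there $\Hom(\co((d-j)\vc),L)=0$, so every map factors through $E\to\co(\vx_1+(j-1)\vc)$. This is a little more direct than the paper's appeal to Proposition~\ref{Mi and Nj properties}(3)(ii).

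The gap is the range $k\ge\lfloor d/2\rfloor+1$, which is where the content of the lemma lies.

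\begin{itemize}
\item Step~1 only ``expects'' a list of quotients. It omits the ones that matter, namely $\co(\vx_1+(\ell-1)\vc)$ for $\lfloor d/2\rfloor<\ell<k$.
\item Step~2 proposes a coprimality count modelled on Lemma~\ref{lem:Ox1O}. As you note, this does not apply because $E$ is a non-split extension. The proposed repair through the torsion cokernel $A\in\mathcal S_{j-i}$ is never specified. Moreover, $A$ genuinely varies with $E$ (cf.\ Lemma~\ref{Mi intersect with Nj for i<j}), so independence of $E$ would be one more thing to prove.
\item Your attribution of $-\delta_{k+i,d+1}(q-1)$ to the maps through $\co((d-i)\vc)$ is not right as stated. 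There are $q^{k+i-d}-1$ such non-surjective maps for every $k+i\ge d+2$ as well, yet the formula has no $i$-dependence there. Those contributions cancel only against the maps with intermediate images, which your plan never counts.
\item In Step~3, take $d$ even, $j=d/2$, $k=d/2+1$. Then $|\Hom(E,L)|=q^3$, and the $q^2-1$ nonzero maps factoring through $\co(\vx_1+(j-1)\vc)$ are not surjective. So, as written, the space is not ``surjections together with zero''.
\end{itemize}

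The missing idea is that no direct count is needed. Maps with an intermediate image are counted by the same quantity at smaller $k$, so your subtraction formula becomes a recursion.

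Proposition~\ref{Mi and Nj properties} pins down the images of nonzero non-surjective $f$:
\begin{itemize}
\item image $\co((d-s)\vc)$ forces $E\in\M_s$, hence $s=i$ by (1) and (3)(i);
\item image $\co(\vx_1+(\ell-1)\vc)$ with $\ell<k$ forces $E\in\cn_\ell$, hence $\ell=j$ whenever $\ell\le d/2$ by (3)(ii).
\end{itemize}

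The number of $f$ with image $\co(\vec y)$ equals the number of epimorphisms $E\to\co(\vec y)$, times the number of nonzero maps $\co(\vec y)\to L$, divided by $q-1$. By Lemma~\ref{card of Mi and Nj}, the first factor is $q-1$ for $\co((d-i)\vc)$ and for $\co(\vx_1+(j-1)\vc)$. For $\co(\vx_1+(\ell-1)\vc)$ it is $\varphi_{i,j}^\ell$ by definition. Combined with Lemma~\ref{Hom from E to O(x1)}, this yields
\[
\varphi_{i,j}^k=q^{k-j+1+\{k+j-d\}_+}-q^{\{k+i-d\}_+}-q^{k-j+1}+1-\sum_{\lfloor d/2\rfloor<\ell<k}\frac{q^{k-\ell+1}-1}{q-1}\,\varphi_{i,j}^\ell .
\]

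This settles $k=\lfloor d/2\rfloor+1$ and $k=\lfloor d/2\rfloor+2$ at once. For larger $k$, subtract $q$ times the $(k-1)$-instance and then take the difference of consecutive $k$. This is how the paper argues. It needs no information about $A$, and independence of the choice of $E$ comes for free.
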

	
	\begin{proof}
		Fix any object $E\in\M_i\cap \cn_j$. First we consider $1\leq k\leq d/2$. Then $\varphi_{i,j}^k\neq 0$ means there exists an epimorphism $f:E\rightarrow \co(\vx_1+(k-1)\vc))$, which implies $E\in \cn_{k}$ by definition of $\cn_k$. Hence $k=j$ by Proposition \ref{Mi and Nj properties}. In this case, $\varphi_{i,j}^k=q-1$ by Lemma \ref{card of Mi and Nj}.

		In the following, we assume $j\leq d/2<k\leq d$. 
		
		For any non-surjective map $0\neq f:E\rightarrow \co(\vx_1+(k-1)\vc)$, since $E\in\M_i\cap \cn_j$, by Proposition \ref{Mi and Nj properties} we know that the image $\Im(f)$ belongs to the set 
		$$\{\co((d-i)\vc)),\co(\vx_1+(j-1)\vc)), \co(\vx_1+(\ell-1)\vc))\mid  \lfloor d/2\rfloor<\ell<k\}.$$
		By Lemma \ref{card of Mi and Nj} we have $$\Hom(E, \co((d-i)\vc))\cong \bfk \quad
		\text{and}\quad \Hom(\co((d-i)\vc), \co(\vx_1+(k-1)\vc))\cong \bfk^{\{k+i-d\}_{+}}.$$ Hence 
		$$
		|\{f:E\to \co(\vx_1+(k-1)\vc)\mid \Im(f)\cong \co((d-i)\vc))\}|=\frac{(q-1)(q^{\{k+i-d\}_{+}}-1)}{q-1}=q^{\{k+i-d\}_{+}}-1.
		$$ 
		Similarly, 
		$$|\{f:E\to \co(\vx_1+(k-1)\vc))\mid \Im (f)\cong \co(\vx_1+(j-1)\vc))\}|=q^{k-j+1}-1,$$
		$$|\{f:E\to \co(\vx_1+(k-1)\vc))\mid \Im (f)\cong \co(\vx_1+(\ell-1)\vc)) \}|=\frac{q^{k-\ell+1}-1}{q-1}\varphi_{ij}^\ell.$$
		
		By Lemma \ref{Hom from E to O(x1)} we obtain a recursive formula for the number of surjective maps:
		\begin{align}\notag
			\varphi_{i,j}^k&=(q^{k-j+1+\{k+j-d\}_+}-1)-(q^{\{k+i-d\}_{+}}-1)-(q^{k-j+1}-1)-\sum_{\lfloor d/2\rfloor<\ell<k}\frac{q^{k-\ell+1}-1}{q-1}\varphi_{ij}^{\ell}\\\label{ph_iijk recursion}
			&=q^{k-j+1+\{k+j-d\}_+}-q^{\{k+i-d\}_{+}}-q^{k-j+1}+1-\sum_{\lfloor d/2\rfloor<\ell<k}\frac{q^{k-\ell+1}-1}{q-1}\varphi_{ij}^\ell.
		\end{align}
		We consider the following three cases.
		
		\underline{{\bf Case 1}: $k=\lfloor d/2\rfloor+1$}. In this case, 
		\begin{align*}
			\varphi_{i,j}^k=q^{k-j+1+\{k+j-d\}_+}-q^{\{k+i-d\}_{+}}-q^{k-j+1}+1.
		\end{align*}
		
		If $d$ is even, then 
		\begin{align*}
			\varphi_{ij}^{d/2+1}=\delta_{j,d/2}\cdot (q^3-q^{2}).
		\end{align*}

		If $d$ is odd, then 
		\begin{align*}
			\varphi_{ij}^{(d+1)/2}=0.
		\end{align*}

		\underline{{\bf Case 2}: $k=\lfloor d/2\rfloor+2$}. In this case, 
		\begin{align*}
			\varphi_{i,j}^k=
			q^{k-j+1+\{k+j-d\}_+}-q^{\{k+i-d\}_+}-q^{k-j+1}+1-(q+1)\varphi_{ij}^{\lfloor d/2\rfloor+1}.
		\end{align*}
		
		If $d$ is even, then 
		
		\begin{align*}
			\varphi_{ij}^{d/2+2}=\delta_{i+1,d/2}(1-q)+\begin{cases}
				q^5-q^4-q^3+q^2,&\text{ if }j=d/2,
				\\
				q^5-q^4,&\text{ if } j= d/2-1,
				\\
				0,&\text{ if } j\leq d/2-2.
			\end{cases}
		\end{align*}

		If $d$ is odd, then 
		\begin{align*}
			\varphi_{ij}^{\frac{d+3}{2}}=\delta_{j,\frac{d-1}{2}}(q^4-q^{3})-\delta_{i,\frac{d-1}{2}}(q-1).
		\end{align*}
		
		\underline{{\bf Case 3}: $k>\lfloor d/2\rfloor+2$}.
		In this case, \eqref{ph_iijk recursion} can be reformulated as
		\begin{align}\label{reform 1 for phi_ijk}
			\sum_{\lfloor d/2\rfloor<\ell\leq k}\frac{q^{k-\ell+1}-1}{q-1}\varphi_{ij}^\ell=q^{k-j+1+\{k+j-d\}_+}-q^{k-j+1}+1-q^{\{k+i-d\}_+}.
		\end{align}
		Replacing $k$ by $k-1$, we obtain
		\begin{align}\label{reform 2 for phi_ijk}
			\sum_{\lfloor d/2\rfloor<\ell\leq k-1}\frac{q^{k-\ell}-1}{q-1}\varphi_{ij}^\ell=q^{k-j+\{k-1+j-d\}_+}-q^{k-j}+1-q^{\{k+i-d-1\}_+}.
		\end{align}
		Then $\eqref{reform 1 for phi_ijk}-q\cdot \eqref{reform 2 for phi_ijk}$ yields
		\begin{align*}
			\sum_{\lfloor d/2\rfloor<\ell\leq k} \varphi_{ij}^\ell= q^{k-j+1+\{k+j-d\}_+}-q^{k-j+1+\{k-1+j-d\}_+}-q+1-q^{\{k+i-d\}_+}+q^{\{k+i-d-1\}_++1}.
		\end{align*}
		Therefore,
		\begin{align*}
			\varphi_{i,j}^k=&q^{k-j+1}\big(q^{\{k+j-d\}_+}-q^{\{k+j-d-1\}_+}-q^{-1+\{k+j-d-1\}_+}+q^{-1+\{k+j-d-2\}_+}\big)
			\\
			&- \big(q^{\{k+i-d\}_+}-q^{\{k+i-d-1\}_++1}-q^{\{k+i-d-1\}_+}+q^{\{k+i-d-2\}_++1}\big)
			\\
			=& \delta(k+j\geq d+2) \cdot q^{2k-d-2}(q^2-1)(q-1) +\delta_{k+j,d+1}\cdot q^{2k-d}(q-1) -\delta_{k+i,d+1}\cdot (q-1).
		\end{align*}
		
		This finishes the proof.
	\end{proof}



	
	
	
	
	\subsection{Calculation of $\varphi_{i,1}^k$}	
	\label{A4}	
	
	In this subsection, we always assume $1\leq i< d/2$ and $1\leq k\leq d$. 
	

	

	\begin{lemma}\label{Hom from  overline{N} to O(x1)}
		For any $d/2< k\leq d$ and any $E\in \overline{\cn}$, we have  $$\dim\Hom(E, \co(\vx_1+(k-1)\vc))=2k-d+1.$$
	\end{lemma}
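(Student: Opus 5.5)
The plan is to compute $\dim\Hom(E,\co(\vx_1+(k-1)\vc))$ from a defining sequence of $E$, and to use the hypothesis $E\notin\cn_j$ $(2\le j\le d/2)$ to pin down the one connecting map whose rank is not forced by dimensions. Since $\overline{\cn}\subseteq\bigcup_{1\le i<d/2}\M_i$, fix $i$ with $E\in\M_i$, so there is a non-split sequence
$$0\to\co(\vx_1+(i-1)\vc)\to E\to\co((d-i)\vc)\to0.$$
Write $L:=\co(\vx_1+(k-1)\vc)$ and apply $\Hom(-,L)$. This gives
$$0\to\Hom(\co((d-i)\vc),L)\to\Hom(E,L)\to\Hom(\co(\vx_1+(i-1)\vc),L)\xrightarrow{\ \delta\ }\Ext^1(\co((d-i)\vc),L).$$
Computing in $\bS$ and using Serre duality with $\vw=\vx_1-2\vc$, the three outer terms have dimensions $\{k+i-d\}_+$, $k-i+1$ and $\{d-i-k\}_+$. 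Hence
$$\dim\Hom(E,L)=\{k+i-d\}_++k-i+1-\rank\delta .$$

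If $k+i\ge d$, the target of $\delta$ is zero, and the formula gives $2k-d+1$ at once. If $k+i<d$, the claim $\dim\Hom(E,L)=2k-d+1$ is equivalent to $\delta$ being surjective onto the $(d-i-k)$-dimensional space $\Ext^1(\co((d-i)\vc),L)$. This surjectivity is the heart of the proof and the step I expect to be hardest. It genuinely depends on $E\in\overline{\cn}$: for $E\in\M_i\cap\cn_j$, Lemma \ref{Hom from E to O(x1)} gives the larger value $k-j+1$ when $k+j<d$, so $\delta$ fails to be surjective there.

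I would first try to argue by contradiction. Suppose $\dim\Hom(E,L)>2k-d+1$. The strategy is to exhibit a nonzero map $E\to L$ whose image is a line bundle $\co(\vx_1+(j-1)\vc)$ with $2\le j\le d/2$. Any subsheaf of $L$ has this form or is $\co(m\vc)$ with $m\le d-i$ (cf. the proof of Lemma \ref{lem:varphi ijk}). If such an image occurs, the map $E\to\co(\vx_1+(j-1)\vc)$ is an epimorphism whose kernel is a line bundle; by rank and degree this kernel must be $\co((d-j)\vc)$. Then $E\in\cn_j$, contradicting $E\in\overline{\cn}$.

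To find such a map, I would count $\Hom(E,L)$ by images, exactly as in the recursion \eqref{ph_iijk recursion}. The images allowed for $E\in\overline{\cn}$ are $\co((d-i)\vc)$, contributing $q^{\{k+i-d\}_+}-1$ maps, together with $\co(\vx_1+(\ell-1)\vc)$ for $d/2<\ell\le k$. I would then compare these counts with the total $q^{\dim\Hom(E,L)}-1$.

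This image-counting argument is only an inductive check and may not close on its own. The fallback is to make $\delta$ explicit. Here $\delta$ is Yoneda multiplication by the extension class $\eta\in\Ext^1(\co((d-i)\vc),\co(\vx_1+(i-1)\vc))\cong D\bS_{(d-1-2i)\vc}$. Under Serre duality, surjectivity of $\delta$ becomes injectivity of the dual multiplication map
$$\bS_{(d-i-k-1)\vc}\to D\bS_{(k-i)\vc},\qquad u\mapsto(h\mapsto\eta(hu)).$$
A nonzero $u$ in the kernel should yield, by the usual pushout and pullback construction, a non-split extension of $\co(\vx_1+(j-1)\vc)$ by $\co((d-j)\vc)$ with middle term $E$, for some $j\le d/2$ determined by $\deg u$. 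That again contradicts $E\in\overline{\cn}$. I would carry this out using the description of $\M_i\cap\cn_j$ through the diagram \eqref{Green commutative diagram}.
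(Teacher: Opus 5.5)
\textbf{There is a genuine gap: the surjectivity of $\delta$, which you correctly identify as the whole content of the case $k+i<d$, is never proved.} Your setup is correct and is the paper's first step. From $0\to\co(\vx_1+(i-1)\vc)\to E\xrightarrow{g}\co((d-i)\vc)\to0$ you get $\dim\Hom(E,L)=\{k+i-d\}_++(k-i+1)-\rank\,\delta$, and this settles $k+i\ge d$.

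The image-counting route cannot close. The number of maps $E\to L$ with image $\co(\vx_1+(\ell-1)\vc)$ is $\frac{q^{k-\ell+1}-1}{q-1}\varphi_{i,1}^{\ell}$. The recursion \eqref{recursion for phi_i1} that determines $\varphi_{i,1}^{\ell}$ takes the present lemma as input, so comparing these counts with $q^{\dim\Hom(E,L)}-1$ is circular. The duality fallback only asserts that a nonzero $u\in\ker\delta^\vee$ ``should yield'' a presentation exhibiting $E\in\cn_j$, without giving a mechanism. The diagram \eqref{Green commutative diagram} is not the relevant tool for this.

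\textbf{The missing idea: use your list of admissible images to prove existence, not to count.} By Lemma \ref{card of Mi and Nj}, $\Hom(E,\co((d-i)\vc))\cong\bfk$, so every map $E\to L$ with image $\co((d-i)\vc)$ factors through $g$. If every nonzero map had that image, then $\Hom(\co((d-i)\vc),L)\to\Hom(E,L)$ would be onto. That contradicts your own bound $\dim\Hom(E,L)\ge\{k+i-d\}_++(k-i+1)-\{d-k-i\}_+>\{k+i-d\}_+$, which is strict because $k>d/2>i$.

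Since $E\in\overline{\cn}$ rules out images $\co(\vx_1+(\ell-1)\vc)$ with $\ell\le d/2$, some map has image $\co(\vx_1+(\ell-1)\vc)$ with $d/2<\ell\le k$. This gives a second presentation $0\to\co((d-\ell)\vc)\to E\to\co(\vx_1+(\ell-1)\vc)\to0$. For it, $\Ext^1(\co(\vx_1+(\ell-1)\vc),L)\cong D\bS_{\vx_1+(\ell-k-2)\vc}=0$. Hence $\dim\Hom(E,L)=(k+\ell-d)+(k-\ell+1)=2k-d+1$, with no connecting map to control. This is the paper's argument.

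Alternatively, your fallback can be completed as follows.
\begin{itemize}
\item Read $u\in\bS_{(d-i-k-1)\vc}$ as a map $\co(\vx_1+(i-1)\vc)\to\co(\vx_1+(d-k-2)\vc)$.
\item Its obstruction $u_*\eta\in\Ext^1(\co((d-i)\vc),\co(\vx_1+(d-k-2)\vc))\cong D\bS_{(k-i)\vc}$ is the functional $h\mapsto\eta(hu)$. So it vanishes exactly when $u\in\ker\delta^\vee$.
\item Then $u$ extends to a map $E\to\co(\vx_1+(d-k-2)\vc)$ with nonzero restriction to $\co(\vx_1+(i-1)\vc)$.
\item Its image must be $\co(\vx_1+(j-1)\vc)$ with $2\le j\le d-k-1<d/2$. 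An image $\co((d-s)\vc)$ would need $s\ge k+2$, making the resulting extension split; non-splitness also forces $j\ge2$.
\item Hence $E\in\cn_j$, contradicting $E\in\overline{\cn}$.
\end{itemize}
This symmetry is exactly what your sketch omits.
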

	
	\begin{proof}
		By assumption, $E$ fits into the following exact sequence for some $1\leq i<d/2$: 
		$$0\longrightarrow \co(\vx_1+(i-1)\vc)\longrightarrow E\stackrel{g}{\longrightarrow} \co((d-i)\vc)\longrightarrow0.$$ 
		Applying $\Hom(-, \co(\vx_1+(k-1)\vc))$,  we obtain a long exac sequence: 
		\begin{align}
			\notag
			&0\longrightarrow\Hom(\co((d-i)\vc), \co(\vx_1+(k-1)\vc))\stackrel{\theta_1}{\longrightarrow}  \Hom(E, \co(\vx_1+(k-1)\vc)) \longrightarrow \\\notag
			&\Hom(\co(\vx_1+(i-1)\vc), \co(\vx_1+(k-1)\vc))\stackrel{\theta_2}{\longrightarrow}  \Ext^1(\co((d-i)\vc), \co(\vx_1+(k-1)\vc)).
		\end{align}
		Note that 
		\begin{align}&\dim\Hom(\co((d-i)\vc), \co(\vx_1+(k-1)\vc))=\{k+i-d\}_+, \label{dim line 1}
			\\
			&\dim\Hom(\co(\vx_1+(i-1)\vc), \co(\vx_1+(k-1)\vc))=k-i+1>0,\label{dim line 2}
			\\
			&\dim\Ext^1(\co((d-i)\vc), \co(\vx_1+(k-1)\vc))=\{d-k-i\}_+.\label{dim line 3}
		\end{align}
		Then $(k-i+1)-(d-k-i)=2k-d+1\geq 1$ implies $\Hom(E, \co(\vx_1+(k-1)\vc))\neq 0$.
		
		We claim that there exists  $ f:E\rightarrow \co(\vx_1+(k-1)\vc)$, such that $\Im (f)\cong \co(\vx_1+(j-1)\vc)$ for some $d/2<j\leq k$.
		Otherwise, for any 
		$f:E\rightarrow \co(\vx_1+(k-1)\vc)$, 
		$\Im (f)$ has the form  $\co((d-s)\vc)$ for some $1\leq s\leq d$. Then $E\in\cm_i\cap\cm_s$, which forces $s=i$ by Lemma 10.2. Note that $\Hom(E, \co(\vx_1+(d-i)\vc))\cong \mathbf{k}$ by Lemma \ref{card of Mi and Nj}.  Therefore, $f$ factors through $g$. This implies $\theta_1$ is surjective and hence an isomorphism. Then from \eqref{dim line 1} we get $k+i\geq d+1$, and then \eqref{dim line 3} vanishes, yielding a contradiction to 
		\eqref{dim line 2} since $\theta_2$ is injective. This proves the claim.
		
		
		Therefore, we obtain an exact sequence for some $d/2<j\leq k$:
		$$ 0\longrightarrow \co((d-j)\vc)\longrightarrow E\longrightarrow \co(\vx_1+(j-1)\vc)\longrightarrow 0.$$
		It follows that 
		\begin{align*}&\dim\Hom(E, \co(\vx_1+(k-1)\vc))\\
			=&\dim\Hom(\co((d-j)\vc), \co(\vx_1+(k-1)\vc))+\dim\Hom(\co(\vx_1+(j-1)\vc), \co(\vx_1+(k-1)\vc))\\
			=&(k+j-d)+(k-j+1)\\
			=&2k-d+1.\qedhere
		\end{align*}
	\end{proof}
	
	\begin{lemma}
		\label{lem:varphi i1k}
		If $d$ is even or $1\leq k\leq d/2$, then $\varphi_{i,1}^k=0$.  Otherwise,
		\begin{align}
			\varphi_{i,1}^k=&\begin{cases}  
				q^2-1, &\text{ if }k=\lfloor d/2\rfloor+1,\\    
				q^{2k-d-2}(q^{2}-1)(q-1)-\delta_{k+i,d+1}\cdot (q-1),&\text{ if }k\geq \lfloor d/2\rfloor+2.
			\end{cases}
		\end{align}
	\end{lemma}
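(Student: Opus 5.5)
Fix $E\in\M_i\cap\overline{\cn}$. I would follow the proof of Lemma \ref{lem:varphi ijk}: count all maps $E\to\co(\vx_1+(k-1)\vc)$, subtract the non-surjective ones classified by their image, and solve the resulting recursion.

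\textbf{Case $k\le d/2$.} A surjection $E\to\co(\vx_1+(k-1)\vc)$ would give a short exact sequence $0\to\co((d-k)\vc)\to E\to\co(\vx_1+(k-1)\vc)\to0$, since the kernel is a rank-one subsheaf of degree $\deg E-\deg\co(\vx_1+(k-1)\vc)$, hence a line bundle of the right class. This means $E\in\cn_k$. For $k\ge2$ this contradicts $E\in\overline{\cn}$, and for $k=1$ it contradicts $\cn_1=\emptyset$. So $\varphi_{i,1}^k=0$.

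\textbf{Case $d$ even.} Here $\overline{\cn}=\emptyset$ by Lemma \ref{card of Mi and Nj}, so the statement is vacuous. From now on $d=2\mathfrak t+1$ and $k\ge \mathfrak t+1$.

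\textbf{Classifying images.} Let $0\ne f:E\to\co(\vx_1+(k-1)\vc)$ be non-surjective. Its image is a line subsheaf, so it is either $\co((d-s)\vc)$ or $\co(\vx_1+(\ell-1)\vc)$ with $\ell<k$.
\begin{itemize}
\item In the first case $E\in\M_s$, so $s=i$ by Proposition \ref{Mi and Nj properties}(3)(i). These maps factor through $g$ (as in Lemma \ref{Hom from  overline{N} to O(x1)}), which gives $q^{\{k+i-d\}_+}-1$ maps.
\item In the second case $E\in\cn_\ell$, so we need $\ell>d/2$, i.e.\ $\mathfrak t+1\le\ell<k$. Each such image contributes $\frac{q^{k-\ell+1}-1}{q-1}\varphi_{i,1}^\ell$ maps.
\end{itemize}

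\textbf{Recursion.} With the total $\dim\Hom(E,\co(\vx_1+(k-1)\vc))=2k-d+1$ from Lemma \ref{Hom from  overline{N} to O(x1)}, we get
\begin{align*}
\varphi_{i,1}^k=q^{2k-d+1}-q^{\{k+i-d\}_+}-\sum_{\mathfrak t+1\le\ell<k}\frac{q^{k-\ell+1}-1}{q-1}\varphi_{i,1}^\ell .
\end{align*}

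\textbf{Solving it.}
\begin{itemize}
\item For $k=\mathfrak t+1$: the sum is empty and $k+i-d\le0$, so $\varphi_{i,1}^{\mathfrak t+1}=q^2-1$.
\item For $k=\mathfrak t+2$: the formula gives $q^4-q^{\{i-\mathfrak t+1\}_+}-(q+1)(q^2-1)$. Since $i\le\mathfrak t$, this equals $q^2(q^2-1)(q-1)-\delta_{i,\mathfrak t}(q-1)$, which matches the claimed formula.
\item For $k\ge\mathfrak t+3$: form (recursion at $k$) $-\,q\cdot$(recursion at $k-1$), both written with all $\varphi$ terms on one side, exactly as in Case 3 of Lemma \ref{lem:varphi ijk}. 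This leaves $\sum_{\ell\le k}\varphi_{i,1}^\ell$ in closed form. Differencing once more gives
\begin{align*}
q^{2k-d-1}(q^2-1)-q^{2k-d-3}(q^2-1)=q^{2k-d-2}(q^2-1)(q-1)
\end{align*}
plus the boundary term from the $q^{\{k+i-d\}_+}$ contributions. That term equals $-(q-1)$ exactly when $k+i=d+1$ and vanishes otherwise.
\end{itemize}

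\textbf{Main obstacle.} The delicate step is justifying the image classification. One must show that a non-surjective map whose image is of type $\co((d-s)\vc)$ forces $s=i$. One must also rule out images $\co(\vx_1+(\ell-1)\vc)$ with $\ell\le d/2$, which uses $E\notin\cn_\ell$. Finally, the count of maps with a given image must be multiplicative, which needs $\End(E)\cong\bfk$ and uniqueness of the surjection up to scalar (Lemma \ref{card of Mi and Nj}). The remaining risk is keeping the $\{\cdot\}_+$ exponents straight in the double differencing.
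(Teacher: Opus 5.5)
Your proposal follows the paper's proof essentially step for step: the same classification of images of non-surjective maps into $\co((d-i)\vc)$ and $\co(\vx_1+(\ell-1)\vc)$ with $\mathfrak t<\ell<k$, the same recursion built on $\dim\Hom(E,\co(\vx_1+(k-1)\vc))=2k-d+1$, and the same trick of subtracting $q$ times the recursion at $k-1$ from the recursion at $k$.

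Two arithmetic slips need fixing, though both final values are correct:
\begin{itemize}
\item For $k=\mathfrak t+2$ one has $2k-d-2=1$, so the value is $q(q^2-1)(q-1)-\delta_{i,\mathfrak t}(q-1)=q^4-q^3-q^2+q-\delta_{i,\mathfrak t}(q-1)$, not $q^2(q^2-1)(q-1)-\delta_{i,\mathfrak t}(q-1)$.
\item In the last differencing, the main part of $\sum_{\ell\le k}\varphi_{i,1}^\ell$ is $q^{2k-d+1}-q^{2k-d}$. The difference is therefore $q^{2k-d}(q-1)-q^{2k-d-2}(q-1)=q^{2k-d-2}(q^2-1)(q-1)$. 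The expression $q^{2k-d-1}(q^2-1)-q^{2k-d-3}(q^2-1)$ that you wrote does not equal this.
\end{itemize}
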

	
	\begin{proof}
		If $d$ is even, then by Lemma \ref{card of Mi and Nj} we have $\overline{\cn}=\emptyset$; if $1\leq k\leq d/2$, then $N_{k}\cap \overline{\cn}=\emptyset$; in both cases we have  $\varphi_{i,1}^k=0$. 
		
		In the following we assume $d$ is odd and $d/2< k\leq d$. Write $d=2\mathfrak{t}+1$, then $\mathfrak{t}<k\leq d$.
		For any non-surjective map $0\neq f:E\rightarrow \co(\vx_1+(k-1)\vc)$, since $E\in\M_i\cap \overline{\cn}$, by Proposition \ref{Mi and Nj properties} we know that the image $\Im(f)$ can only be in  $$\{\co((d-i)\vc))\;  \;\co(\vx_1+(\ell-1)\vc)) \mid \mathfrak{t}<\ell<k\}.$$
		By Lemma \ref{Hom from  overline{N} to O(x1)} we obtain the following recursive formula for the number of surjective maps:
		\begin{align}\label{recursion for phi_i1}
			\varphi_{i,1}^k=q^{2k-d+1}-q^{\{k+i-d\}_+}-\sum_{t<l<k}\frac{q^{k-\ell+1}-1}{q-1}\varphi_{i1}^\ell.
		\end{align}

		
		We consider the following three cases.
		
		\underline{{\bf Case 1}: $k=\mathfrak{t}+1$}. In this case, $2k-d+1=2$ and $k+i\leq d$. Hence
		\begin{align*}
			\varphi_{i1}^{\mathfrak{t}+1}=q^{2k-d+1}-q^{\{k+i-d\}_+}=q^2-1.
		\end{align*}
		
		

		\underline{{\bf Case 2}: $k=\mathfrak{t}+2$}. In this case, $2k-d+1=4$, $k+i\leq d$ for $1\leq i<t$, and $k+i=d+1$ for $i=\mathfrak{t}$. Hence
		\begin{align*}
			\varphi_{i1}^{\mathfrak{t}+2}
			=&q^{2k-d+1}-q^{\{k+i-d\}_+}-(q+1)\varphi_{i1}^{\mathfrak{t}+1}
			\\
			=&\begin{cases}
				q^4-q^3-q^2+q, & \text{ if } 1\leq i<\mathfrak{t},\\
				q^4-q^3-q^2+1,& \text{ if } i=\mathfrak{t},
			\end{cases}
			\\
			=& q^4-q^3-q^2+q-\delta_{i,\mathfrak{t}}\cdot(q-1).
		\end{align*}

		\underline{{\bf Case 3}: 
			$k>\mathfrak{t}+2$}. In this case, we have \eqref{recursion for phi_i1} can be reformulated as
		\begin{align}\label{reform 1 for phi_i1}
			\sum_{\mathfrak{t}<\ell\leq k}\frac{q^{k-\ell+1}-1}{q-1}\varphi_{i1}^\ell=q^{2k-d+1}-q^{\{k+i-d\}_+}.
		\end{align}
		Replace $k$ by $k-1$, we obtain
		\begin{align}\label{reform 2 for phi_i1}
			\sum_{t<\ell\leq k-1} \frac{q^{k-\ell}-1}{q-1}\varphi_{i1}^\ell=q^{2k-d-1}-q^{\{k-1+i-d\}_+}.
		\end{align}
		Then $\eqref{reform 1 for phi_i1}-q\cdot\eqref{reform 2 for phi_i1}$ yields


		\begin{align*}
			\sum_{\mathfrak{t}<\ell\leq k}\varphi_{i1}^\ell =q^{2k-d+1}-q^{\{k+i-d\}_+}- q^{2k-d}+q^{\{k-1+i-d\}_++1}.
		\end{align*}
		Therefore, 
		\begin{align*}
			\varphi_{i,1}^k=&q^{2k-d-2}(q^3-q^2-q+1)+ \big(q^{\{k+i-d-1\}_++1}-q^{\{k+i-d\}_+}-q^{\{k+i-d-2\}_++1}+q^{\{k+i-d-1\}_+}\big)
			\\
			=&q^{2k-d-2}(q^{2}-1)(q-1)-\delta_{k+i,d+1}\cdot (q-1).
		\end{align*}
		This finishes the proof.
	\end{proof}

	As a consequence, we obtain the following corollary.
	
	\begin{corollary}
		\label{Nk nonempty} 
		For any $1\leq k\leq d$,
		$\cn_k=\emptyset$ if and only if $k=1$.
	\end{corollary}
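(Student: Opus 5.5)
The plan is to combine the inclusion $\cn_1=\emptyset$, which we already have, with a nonemptiness argument for every $k\ge2$. That argument reads off existence of surjections from the Hall-number computations of \S\ref{A3}--\S\ref{A4}.

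\emph{The case $k=1$.} Proposition \ref{Mi and Nj properties}(2) gives $\cn_1\subseteq\bigcup_{1\le j<1}\M_j=\emptyset$.

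\emph{A recognition criterion for $\cn_k$.} Let $E$ be indecomposable with $\widehat E=\widehat{\co(\vx_1)}+(d-1)\delta$, and let $f:E\to\co(\vx_1+(k-1)\vc)$ be surjective. Then $\ker f$ is a rank-one subsheaf of a vector bundle, hence a line bundle. Its class is $\widehat E-\widehat{\co(\vx_1+(k-1)\vc)}=\widehat{\co((d-k)\vc)}$. A line bundle on $\X$ is determined by its class in $K_0$ (rank $1$ together with the degree and the $\bL$-coset), so $\ker f\cong\co((d-k)\vc)$ and $E\in\cn_k$. Every $E\in\M_i\cap\cn_j$ or $E\in\M_i\cap\overline{\cn}$ is indecomposable of this class. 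Hence it suffices, for each $2\le k\le d$, to exhibit such an $E$ for which the relevant count $\varphi^k_{i,j}$ or $\varphi^k_{i,1}$ is positive.

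\emph{The range $2\le k\le d/2$.} Here $d\ge4$, so $1<d/2$. I would apply Lemma \ref{Mi intersect with Nj for i<j} with $i=1$, $j=k$, which gives
\[
|\M_1\cap\cn_k|=\frac{1}{q-1}\sum_{[A]\in\Iso(\mathcal S_{k-1})}|\Aut(A)|.
\]
This is positive because $\mathcal S_{k-1}\neq\emptyset$: for instance $S_x^{(k-1)}$ with $x$ a rational point, and $x\neq\bla_1$ automatically since $\deg\bla_1=d\ge4$.

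\emph{The range $d/2<k\le d$, $d$ odd.} Write $d=2\mathfrak t+1$. By Lemma \ref{card of Mi and Nj}, $|\overline{\cn}|=\frac{q^{d-1}-1}{q^2-1}>0$ once $d\ge3$, so some $\M_i\cap\overline{\cn}$ is nonempty. Lemma \ref{lem:varphi i1k} gives the following.
\begin{itemize}
\item For $k=\mathfrak t+1$: $\varphi^k_{i,1}=q^2-1>0$.
\item For $k\ge\mathfrak t+2$: $\varphi^k_{i,1}=q^{2k-d-2}(q^2-1)(q-1)-\delta_{k+i,d+1}(q-1)$. Since $q^{2k-d-2}(q^2-1)\ge q^2-1>1$, this is positive.
\end{itemize}
If $d=1$, the only value of $k$ is $1$, so there is nothing to prove.

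\emph{The range $d/2<k\le d$, $d$ even.} Take $j=d/2$ and $i=1$. For $d\ge4$ the set $\M_1\cap\cn_{d/2}$ is nonempty by the previous step. Lemma \ref{lem:varphi ijk} gives the following.
\begin{itemize}
\item For $k=d/2+1$: $\varphi^k_{1,d/2}=q^2(q-1)>0$.
\item For $k\ge d/2+2$: here $k+j\ge d+2$, so $\varphi^k_{1,d/2}=q^{2k-d-2}(q^2-1)(q-1)-\delta_{k+1,d+1}(q-1)$, which is positive by the same estimate as in the odd case.
\end{itemize}

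\emph{Main obstacle: small $d$.} For $d=2$ the statement fails. Proposition \ref{Mi and Nj properties}(2) gives $\cn_2\subseteq\M_1$, and $\M_1=\emptyset$ by Proposition \ref{Mi and Nj properties}(1) because $1\not<d/2=1$. A direct Riedtmann--Peng count confirms this independently: every nonsplit extension of $\co(\vx_1+\vc)$ by $\co$ has middle term $\co(\vc)\oplus\co(\vx_1)$. The corollary as stated should therefore be read with $d\ne2$ excluded, i.e. for $d\ge3$ (plus the trivial case $d=1$). I would also double-check, in each range above, that the chosen index pair $(i,j)$ satisfies the hypotheses $1\le i<j\le d/2$, respectively $i<d/2$, of the lemmas invoked.
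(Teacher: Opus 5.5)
Your proof is correct wherever the statement is true, and your diagnosis of $d=2$ is right. There $\cn_2\subseteq\M_1$ by Proposition~\ref{Mi and Nj properties}(2), and $\M_1=\emptyset$ by part (1), so the corollary fails as stated. The paper's proof silently needs $d\ge 3$: for $d$ even and $k=\lfloor d/2\rfloor+1$ it takes $1\le i<j=d/2$, which is vacuous when $d=2$. The corollary is not used elsewhere, so nothing downstream is affected.

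Your route differs from the paper's in the choice of witnesses.
\begin{itemize}
\item For $2\le k\le d/2$, the paper simply quotes $|\cn_k|=\frac{q^{2k-2}-1}{q-1}>0$ from Lemma~\ref{card of Mi and Nj}. Your detour through $|\M_1\cap\cn_k|$ and Lemma~\ref{Mi intersect with Nj for i<j} also works, but is one step longer.
\item For $k\ge\lfloor d/2\rfloor+2$, the paper uses the moving index $j=d+1-k$ and the term $\delta_{k+j,d+1}q^{2k-d}(q-1)$ of Lemma~\ref{lem:varphi ijk}. This gives a clean monomial, but it breaks down at $k=d$: there $j=1$, and no $i$ with $1\le i<j$ exists.
\item Your fixed witnesses are $\M_i\cap\overline{\cn}$ for $d$ odd and $\M_1\cap\cn_{d/2}$ for $d$ even. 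Combined with your observation that the leading term $q^{2k-d-2}(q^2-1)(q-1)$ dominates the only possible negative correction $-(q-1)$, they cover every $k>d/2$ uniformly, including $k=d$.
\item You also make explicit why a surjection onto $\co(\vx_1+(k-1)\vc)$ has kernel $\co((d-k)\vc)$; the paper only asserts this.
\end{itemize}

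There is one slip to fix. The class of any $E\in\M_i$ (or $E\in\cn_j$) is $\widehat{\co(\vx_1)}+\widehat{\co}+(d-1)\delta$, which has rank $2$. You wrote $\widehat{\co(\vx_1)}+(d-1)\delta$; with that formula $E$ would have rank $1$, and the kernel would have class $(d-k)\delta$ and rank $0$, contradicting your ``rank-one subsheaf''. With the corrected class, your subtraction gives $\widehat{\co((d-k)\vc)}$ as intended.

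Your parenthetical reason that a line bundle is determined by its class is also vague. Here is a concrete version for type $(2,d)$. The pairing $\langle -,\widehat{S_{1,0}}\rangle$ equals $d$ on every $\co(l\vc)$ and $0$ on every $\co(\vx_1+l\vc)$. Once the family is known, the degree ($2l$, resp.\ $d+2l$) fixes $l$.
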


	
	\begin{proof} By Proposition \ref{Mi and Nj properties} we have $\cn_1=\emptyset$. 
		If $2\leq k\leq d/2$, then $\cn_k\neq\emptyset$ by Lemma \ref{card of Mi and Nj}.
		
		For $\lfloor d/2\rfloor+2\leq k\leq d$, we take $1\leq i<j=d+1-k$. Then by Lemma \ref{lem:varphi ijk} we have $\varphi_{i,j}^k=q^{d+2-2j}(q-1)\neq 0$. By definition there is a surjective map 
		$f:E\to \co(\vx_1+(k-1)\vc)$ for any $E\in\M_i\cap\cn_j$. Note that $\ker (f)\cong \co((d-k)\vc)$ in this case. Hence $E\in\cn_k$ and then $\emptyset\neq\M_i\cap\cn_j\subseteq \cn_k$ by Proposition \ref{Mi and Nj properties}.
		
		Now assume $k=\lfloor d/2\rfloor+1$. If $d$ is even, we take $1\leq i<j=d/2$, then by Lemma \ref{lem:varphi ijk} we have $\varphi_{i,j}^k=q^3-q^{2}\neq 0$, hence $\emptyset\neq\M_i\cap\cn_j\subseteq \cn_k$ by similar arguments as above; if $d$ is odd, then by Lemma \ref{lem:varphi i1k} we have $\varphi_{i,1}^k=q^2-1\neq 0$ for any $1\leq i<d/2$. In this case, we have $\emptyset\neq\M_i\cap\overline{\cn} \subseteq\cn_k$ by Lemma \ref{card of Mi and Nj}.
	\end{proof} 
	
	\begin{remark}
		\label{independent}
		From Subsections \ref{A3} and \ref{A4}, we see that $\varphi_{i,j}^k$ and  $\varphi_{i,1}^k$ are independent with the choices of $E$ and $E'$ respectively, which justify the notations.
	\end{remark}


\end{document}